\title{Minimax rates for latent position estimation in the generalized random dot product graph}
\author{Hao Yan and Keith D. Levin\\
Department of Statistics, University of Wisconsin--Madison}
\begin{document}
\newtheorem{theorem}{Theorem}
\newtheorem{lemma}{Lemma}
\newtheorem{proposition}{Proposition}
\newtheorem{observation}{Observation}
\newtheorem{remark}{Remark}
\newtheorem{example}{Example}
\newtheorem{definition}{Definition}
\newtheorem{corollary}{Corollary}
\newtheorem{assumption}{Assumption}

\renewcommand{\AA}[1]{A^{(#1)}}
\newcommand{\HH}[1]{H^{(#1)}}
\newcommand{\nub}[1]{(\nu_{#1} + b_{#1}^2)}

\newcommand{\ssigma}[1]{\rho^{(#1)}}
\newcommand{\mxbernsymbol}{\eta}

\renewcommand{\Pr}{\mathbb{P}}
\newcommand{\E}{\mathbb{E}}
\newcommand{\N}{\mathbb{N}}
\newcommand{\R}{\mathbb{R}}
\newcommand{\Z}{\mathbb{Z}}

\newcommand{\bbB}{\mathbb{B}}
\newcommand{\bbO}{\mathbb{O}}
\newcommand{\bbP}{\mathbb{P}}
\newcommand{\bbR}{\mathbb{R}}

\newcommand{\Od}{\bbO_d}
\newcommand{\Opq}{\bbO_{p,q}}

\newcommand{\Rnonneg}{\R_{\ge 0}}

\newcommand{\calA}{\mathcal{A}}
\newcommand{\calC}{\mathcal{C}}
\newcommand{\calD}{\mathcal{D}}
\newcommand{\calL}{\mathcal{L}}
\newcommand{\calN}{\mathcal{N}}
\newcommand{\calP}{\mathcal{P}}
\newcommand{\calR}{\mathcal{R}}
\newcommand{\calS}{\mathcal{S}}
\newcommand{\calU}{\mathcal{U}}
\newcommand{\calV}{\mathcal{V}}
\newcommand{\calX}{\mathcal{X}}

\newcommand{\Stiefel}{\calS}

\newcommand{\Ahat}{\hat{A}}
\newcommand{\Ohat}{\hat{O}}
\newcommand{\Phat}{\hat{P}}
\newcommand{\Rhat}{\hat{R}}
\newcommand{\Xhat}{\hat{X}}
\newcommand{\dhat}{\hat{d}}
\newcommand{\uhat}{\hat{u}}
\newcommand{\what}{\hat{w}}
\newcommand{\rhohat}{\hat{\rho}}
\newcommand{\sigmahat}{\hat{\sigma}}
\newcommand{\tauhat}{\hat{\tau}}
\newcommand{\nuhat}{\hat{\nu}}
\newcommand{\chat}{\hat{c}}

\newcommand{\ccheck}{\check{c}}
\newcommand{\Xcheck}{\check{X}}
\newcommand{\Aml}{\mathring{A}}
\newcommand{\Vml}{\mathring{V}}
\newcommand{\Xml}{\mathring{X}}
\newcommand{\wml}{\mathring{w}}

\newcommand{\Abar}{\bar{A}}
\newcommand{\Pbar}{\bar{P}}
\newcommand{\Xbar}{\bar{X}}
\newcommand{\cbar}{\bar{c}}
\newcommand{\nubar}{\bar{\nu}}
\newcommand{\bbar}{\bar{b}}
\newcommand{\vbar}{\bar{v}}

\newcommand{\Atilde}{\tilde{A}}
\newcommand{\UAtilde}{U_{\Atilde}}
\newcommand{\SAtilde}{S_{\Atilde}}
\newcommand{\Stilde}{{\tilde{S}}}
\newcommand{\Otilde}{\tilde{O}}
\newcommand{\Ptilde}{\tilde{P}}
\newcommand{\Xtilde}{\tilde{X}}
\newcommand{\dtilde}{\tilde{d}}
\newcommand{\wtilde}{\tilde{w}}
\newcommand{\rhotilde}{\tilde{\rho}}
\newcommand{\calDtilde}{\tilde{\calD}}
\newcommand{\calXtilde}{\tilde{\calX}}

\newcommand{\dH}{d_{\text{H}}}
\newcommand{\dHtilde}{\dtilde_{\text{H}}}

\newcommand{\bB}{\mathbf{B}}
\newcommand{\bZ}{\mathbf{Z}}

\newcommand{\Xstar}{X^*}
\newcommand{\bstar}{b^*}
\newcommand{\nustar}{\nu^*}
\newcommand{\kappastar}{\kappa_{\star}}
\newcommand{\lambdastar}{\lambda_{\star}}
\newcommand{\nmin}{n_{\min}}

\newcommand{\wopt}{\wml}

\newcommand{\SA}{S_A}
\newcommand{\SM}{S_M}
\newcommand{\SP}{S_P}
\newcommand{\UP}{U_P}
\newcommand{\UA}{U_A}
\newcommand{\UM}{U_M}

\newcommand{\RDPG}{\operatorname{RDPG}}
\newcommand{\GRDPG}{\operatorname{GRDPG}}
\newcommand{\ASE}{\operatorname{ASE}}
\newcommand{\KL}{\operatorname{KL}}
\newcommand{\VAR}{\operatorname{Var}}
\newcommand{\COV}{\operatorname{Cov}}
\newcommand{\wtd}{\operatorname{wtd}}
\newcommand{\unif}{\operatorname{unif}}
\newcommand{\Bernoulli}{\operatorname{Bern}}
\newcommand{\supp}{\operatorname{supp}}
\newcommand{\rank}{\operatorname{rank}}
\newcommand{\diag}{\operatorname{diag}}
\newcommand{\sign}{\operatorname{sign}}
\newcommand{\tr}{\operatorname{tr}}
\newcommand{\tti}{2,\infty}

\newcommand{\dtildetti}{\dtilde_{\tti}}

\newcommand{\Phatwtd}{\Ptilde}
\newcommand{\Phatunif}{\Pbar}

\newcommand{\onevec}{\vec{\mathbf{1}}}
\newcommand{\ivec}{\vec{i}}
\newcommand{\jvec}{\vec{j}}
\newcommand{\indicator}{\mathbb{I}}
\newcommand{\indic}{\indicator}

\newcommand{\marginal}[1]{\marginpar{\raggedright\scriptsize #1}}
\newcommand{\as}{\text{~~~a.s.}}
\newcommand{\whp}{\text{~~~w.h.p.}}
\newcommand{\inlaw}{\xrightarrow{\calL}}
\newcommand{\inprob}{\xrightarrow{P}}
\newcommand{\iid}{\stackrel{\text{i.i.d.}}{\sim}}
\newcommand{\eqdist}{\stackrel{d}{=}}
\newcommand{\Perm}{\Pi}


\newcommand{\mH}{\mathbf{H}}
\newcommand{\mJ}{\mathbf{J}}
\newcommand{\mK}{\mathbf{K}}
\newcommand{\mU}{\mathbf{U}}
\newcommand{\mV}{\mathbf{V}}
\newcommand{\mW}{\mathbf{W}}
\newcommand{\mA}{\mathbf{A}}
\newcommand{\mB}{\mathbf{B}}
\newcommand{\mI}{\mathbf{I}}
\newcommand{\mP}{\mathbf{P}}
\newcommand{\mX}{\mathbf{X}}
\newcommand{\mY}{\mathbf{Y}}
\newcommand{\mZ}{\mathbf{Z}}
\newcommand{\mQ}{\mathbf{Q}}
\newcommand{\mR}{\mathbf{R}}
\newcommand{\mG}{\mathbf{G}}
\newcommand{\mzero}{\mathbf{0}}
\newcommand{\mLambda}{\boldsymbol{\Lambda}}
\newcommand{\mPi}{\boldsymbol{\Pi}}
\newcommand{\mXhat}{\hat{\mathbf{X}}}
\newcommand{\vx}{\mathbf{x}}
\newcommand{\vy}{\mathbf{y}}
\newcommand{\ve}{\mathbf{e}}
\newcommand{\vu}{\mathbf{u}}
\newcommand{\vv}{\mathbf{v}}
\newcommand{\vo}{\mathbf{0}}
\newcommand{\vw}{\mathbf{w}}
\newcommand{\va}{\mathbf{a}}
\newcommand{\vz}{\mathbf{z}}
\newcommand{\vmu}{\mathbf{\mu}}
\newcommand{\tmU}{\tilde{\mU}}
\newcommand{\tmV}{\tilde{\mV}}
\newcommand{\mSig}{\boldsymbol{\Sigma}}
\newcommand{\mDelta}{\boldsymbol{\Delta}}
\newcommand{\mGamma}{\boldsymbol{\Gamma}}
\newcommand{\tmSig}{\tilde{\boldsymbol{\Sigma}}}
\newcommand{\tmLambda}{\tilde{\boldsymbol{\Lambda}}}
\newcommand{\tildV}{\tilde{V}}
\newcommand{\tmDelta}{\Tilde{\boldsymbol{\Delta}}}
\newcommand{\mM}{\mathbf{M}}
\newcommand{\tmM}{\Tilde{\mM}}

\maketitle
\abstract{
Latent space models play an important role in the modeling and analysis of network data. Under these models, each node has an associated latent point in some (typically low-dimensional) geometric space, and network formation is driven by this unobserved geometric structure. The random dot product graph (RDPG) and its generalization (GRDPG) are latent space models under which this latent geometry is taken to be Euclidean. These latent vectors can be efficiently and accurately estimated using well-studied spectral embeddings. In this paper, we develop a minimax lower bound for estimating the latent positions in the RDPG and the GRDPG models under the two-to-infinity norm, and show that a particular spectral embedding method achieves this lower bound. We also derive a minimax lower bound for the related task of subspace estimation under the two-to-infinity norm that holds in general for low-rank plus noise network models, of which the RDPG and GRDPG are special cases. The lower bounds are achieved by a novel construction based on Hadamard matrices.
}

\section{Introduction}

Networks encoding relations among entities are a common form of data in a broad range of scientific disciplines.
In neuroscience, networks encode the strength of connections among brain regions \citep{sporns_complex}.
In biology, networks encode which pairs of genes or proteins are co-expressed or are involved in the same pathways \citep{PPIsurvey}.
In the social sciences, networks arise naturally in the form of social network data \citep{Granovetter1973,TraMucPor2012,legramanti2022extended}.

Network embeddings are a broadly popular tool for exploring and analyzing network data.
These methods seek to represent the vertices of a network in a lower-dimensional (typically Euclidean) space, in such a way that the geometry of these embeddings reflects some network structure of interest.
Most commonly, these embeddings arise either via spectral methods \citep{RohChaYu2011,SusTanFisPri2012,tang2018limit}, which construct embeddings from the leading eigenvalues and eigenvectors of the adjacency matrix, or via representation learning methods \citep{node2vec,LinSusIsh2021}.
Embeddings are especially appropriate in settings where we believe that data is well-approximated by a latent space network model \cite{hoff2002latent}.
Under these models, each vertex has an associated latent variable (often a point in Euclidean space), and network formation is driven by these latent variables, with pairs of vertices more likely to form edges if their latent variables are ``similar'' according to some measure (e.g., proximity in space).
Examples of such models include Hoff models \citep{hoff2002latent,JMLR:v21:17-470}, random geometric graphs \citep{penrose2003random}, graph root distributions \citep{lei2021network} and graphons \citep{Lovasz2012}, to name just a few.

Among these latent space models is the random dot product graph \citep[RDPG;][]{young2007random,RDPGsurvey} and its generalization \citep[GRDPG;][]{Rubin_2017}.
Under this model, each node $v$ has an associated low-dimensional vector $\vx_v \in \R^d$, called its {\em latent position}.
Conditional on these latent positions, the probability of two nodes $u$ and $v$ sharing an edge is given by the inner product of the associated vectors $ \vx_u^T\vx_v $. 
Although the RDPG is simple and widely applicable, one limitation of the model is that it can only produce graphs whose expected adjacency matrices are positive semidefinite.
To overcome this drawback, \cite{Rubin_2017} introduced the generalized random dot product graph (GRDPG), which allows this expected adjacency matrix to be indefinite.
This model includes many classical models as special cases, including the stochastic block model \citep{HolLasLei1983}, degree corrected stochastic block model \citep{KarNew2011} and mixed membership stochastic block model \citep{AirBleFieXin2008}. 

Under the RDPG and GRDPG, the most basic inferential problem involves estimation of the latent positions based on an observed network.
Once estimates of the latent positions are obtained, they can be used in many downstream tasks such as clustering \citep{SusTanFisPri2012, LyzSusTanAthPri2014}, graph hypothesis testing \citep{tang2017semiparametric, tang2017nonparametric}, and bootstrapping \citep{levin2019bootstrapping}.
A widely-used approach to estimating the latent positions in the RDPG is the adjacency spectral embedding \citep[ASE;][]{SusTanFisPri2012}.
The consistency of the ASE has been established previously under both the spectral \citep{SusTanPri2014} and two-to-infinity \citep{LyzSusTanAthPri2014} norms and the asymptotic distributional behavior of this estimate was further explored in \cite{AthLyzMarPriSusTan2016,LevAthTanLyzPri2017}.
For other related approaches to estimating the latent positions under the RDPG, see~\cite{tang2018limit,xie2019optimal,WuXie2022,XieXu2023}. 
The latent positions of the GRDPG can also be estimated consistently using a slight modification of the ASE \citep{Rubin_2017}, with similar asymptotic distributional behavior to that established in previous work for the RDPG \citep{AthLyzMarPriSusTan2016,tang2018limit,LevAthTanLyzPri2017}.

These previous results suggest that the estimation rate, as measured in two-to-infinity norm, obtained by the ASE and related methods should be optimal, perhaps up to logarithmic factors.
In this paper, we show that this is indeed the case (see Theorem~\ref{thm:main:main}), establishing minimax lower bounds for estimation of the latent positions in a class of low-rank network models that includes both the RDPG and GRDPG.
This matches estimation upper bounds previously established in the literature \cite{LyzSusTanAthPri2014,Rubin_2017}, up to logarithmic factors, and is in accord with previous work by \cite{xie2019optimal} establishing the minimax rate under the Frobenius norm for the RDPG model.
Our proof is based on a novel construction using Hadamard matrices, which may be of interest to researchers in subspace estimation.
Indeed, as a corollary of our main result, we obtain minimax bounds for the closely related problem of singular subspace estimation in low-rank network models.
Previous results along these lines include \cite{cai2018rate}, who established a lower bound under Gaussian noise, and \cite{Zhou2021}, who provided a lower bound for random bipartite graphs under the spectral norm and Frobenius norm.

\paragraph{Notation.}
For a vector $\vx$, we use $\| \vx \|_2$ to denote its Euclidean norm.
For a matrix $\mA$, $\|\mA\|,\|\mA\|_F$ and $\|\mA\|_{2, \infty}$ denote the spectral, Frobenius, and two-to-infinity (see Equation~\eqref{eq:tti:definition}) norms, respectively.
We use $\mA_{ij}$ to denote the element in the $i$-th row and $j$-th column of the matrix $\mA$.
For a sequence of matrices, we use subscripts $\mA_{1}, \mA_{2}, \ldots, \mA_{n}$ to index them if we do not need to specify an element of them.
To specify the $(i, j)$ entry of a sequence of matrices, we use the notation  $\mA^{(1)}_{ij}, \mA^{(2)}_{ij}, \ldots, \mA^{(n)}_{ij}$.
Similarly, we use subscripts $\vx_{1}, \vx_2, \ldots, \vx_n$ to index a sequence of vectors.
We use letters $C$ and $c$ to denote constants, not depending on the problem size $n$, whose specific values may change from line to line.
$\bbO_d$ denotes the set of all $d\times d$ orthogonal matrices.
$\mI_d$ denotes the $d\times d$ identity matrix. 
$\mzero$ denotes a matrix of all zeros.
For a positive integer $n$, we let $[n] = \{1, 2, \ldots, n\}$.
We denote the standard basis in $\R^n$ by $\ve_1, \ve_2, \ldots, \ve_n$, where the components of $\ve_i$ are all zero, save for the $i$-th component, which is equal to 1.
We make use of standard use of Landau notation.
Thus, for positive sequences $(a_n)$ and $(b_n)$, if there exists a constant $C$ such that $a_n \leq C b_n$ for all suitably large $n$, then we write $a_n=O\left(b_n\right)$ or $a_n \lesssim b_n$, and we write $b_n = \Omega(a_n)$.
We write $a_n=\Theta\left(b_n\right)$ to denote that $a_n=O\left(b_n\right)$ and $b_n=O\left(a_n\right)$.
If $a_n / b_n \rightarrow 0$ as $n \rightarrow \infty$, then we write $a_n=o\left(b_n\right)$ and $b_n = \omega(a_n)$.

\section{Low-rank Models and Embeddings}
\label{sec:setup}

We are concerned in this paper with low-rank network models, in which the expected value of the adjacency matrix, perhaps conditional on latent variables, is of low rank.
These models are exemplified by the RDPG, where conditional on the latent positions, the adjacency matrix has expectation given by the Gram matrix of the latent positions.

\begin{definition}[RDPG; \cite{young2007random,RDPGsurvey}]
\label{def:RDPG}
Let $F$ be a distribution on $\bbR^d$ such that for all $\vx,\vy \in \supp F$, $0 \le \vx^T \vy \le 1$.
Let $\vx_1,\vx_2,\dots,\vx_n \in \bbR^{d}$ be drawn i.i.d.\ according to $F$, and collect them in
the rows of $\mX \in \bbR^{n \times d}$.
Conditional on $\mX$, generate a symmetric adjacency matrix $\mA \in \{0,1\}^{n \times n}$ according to
$\mA_{ij} \sim \Bernoulli( \vx_i^T \vx_j )$ independently over all $1 \le i < j \le n$.
Then we say that $\mA$ is the adjacency matrix of a {\em random dot product graph} (RDPG), and write
$(\mA,\mX) \sim \RDPG(F,n)$.
For a fixed choice of $\mX$, we write $\mA \sim \RDPG(\mX)$ and say that the resulting network is
distributed as a {\em conditional RDPG} with latent positions $\mX$.
\end{definition}

As defined, the (conditional) expected adjacency matrix $\E[ \mA \mid \mX ]$ is always positive semidefinite under the RDPG, restricting the range of network structures it can express.
The generalized RDPG (GRDPG) resolves this issue.

\begin{definition}[GRDPG; \cite{Rubin_2017}]
\label{def:GRDPG}
Let $d = p+q$ where $p,q \ge 0$ are integers, and define the matrix
\begin{equation} \label{eq:def:Ipq}
\mI_{p,q} = \diag( 1,1,\dots,1,-1,\dots,-1 ).
\end{equation}
Suppose that $F$ is a distribution on $\bbR^d$ such that $0 \le \vx^T \mI_{p,q} \vy \le 1$ for all $\vx,\vy \in \supp F$.
Draw $\vx_1,\vx_2,\dots,\vx_n \in \bbR^{d}$ i.i.d.\ according to $F$, and collect them in the rows of $\mX \in \bbR^{n \times d}$.
Conditional on $\mX$, generate a symmetric adjacency matrix $\mA \in \{0,1\}^{n \times n}$ according to
$\mA_{ij} \sim \Bernoulli( \vx_i^T \mI_{p,q} \vx_j )$ independently over all $1 \le i < j \le n$.
We say that $\mA$ is the adjacency matrix of a {\em generalized random dot product graph (GRDPG)} with signature $(p,q)$, and write
$(\mA,\mX) \sim \GRDPG(F,p,q,n)$.
For a fixed $\mX$ and signature $(p,q)$, we write $\mA \sim \GRDPG(\mX, p, q)$ and say that the resulting network is
distributed as a {\em conditional RDPG} with latent positions $\mX$ and signature $(p,q)$.
\end{definition}

We can naturally extend the conditional versions of these models to a generic ``low-rank plus noise'' network model, in which the expected adjacency matrix is low-rank.

\begin{definition}[Low rank network model]
\label{def:lrpn}
Let $d = p+q$ for non-negative integers $p$ and $q$, and let $\mI_{p,q}$ be as defined in Equation~\eqref{eq:def:Ipq}.
Let $\mX \in \bbR^{n \times d}$ be such that $\mP = \mX \mI_{p,q} \mX^T$ has all its entries between $0$ and $1$.
Given $\mP$, generate a symmetric binary adjacency matrix $\mA \in \{0,1\}^{n \times n}$ according to $\mA_{ij} \sim \Bernoulli( \mP_{ij} )$, independently over all $1 \le i < j \le n$.
We say that the resulting network is distributed according to a low-rank plus noise model with expectation $\mP$.
\end{definition}

Under both the RDPG and GRDPG as well as under their generalization in Definition~\ref{def:lrpn}, we have
\begin{equation*}
  \E[ \mA \mid \mX ] = \mP = \mX \mI_{p,q} \mX^T
\end{equation*}
for $\mI_{p,q}$ as in Equation~\eqref{eq:def:Ipq}.
Note that we recover the RDPG by taking $q=0$.
Under these models, the matrix $\mX \in \bbR^{n \times d}$ is a natural inferential target.
The aim of this paper is to establish the limits on estimating this low-rank part $\mX$ under network models like those in Definitions~\ref{def:RDPG},~\ref{def:GRDPG} and~\ref{def:lrpn}.

For non-negative integers $p,q$, define the set
\begin{equation} \label{eq:def:calX}
\calX_n^{(p, q)} = \{ \mX \in \R^{n \times d} : 0 \le \mX \mI_{p,q} \mX^T \le 1 \},
\end{equation}
where the inequality is meant entry-wise, so that for each $1 \le i < j \le n$, the element $(\mX \mI_{p,q} \mX^T)_{i,j}$ is a probability.
That is, the set $\calX_n^{(p,q)}$ corresponds to the collection of all possible collections of $n$ latent positions whose indefinite inner products under a signature $(p,q)$ are valid probabilities.
In other words, any $\mX \in \calX^{(p,q)}_n$ is a potential collection of latent positions under Definition~\ref{def:GRDPG} or~\ref{def:lrpn}.

When $p = d$, the GRDPG model recovers the random dot product graph (RDPG) model as a special case.
As such, we define
\begin{equation} \label{eq:def:calXd}
\calX^d_n = \{ \mX \in \R^{n \times d} : 0 \le \mX \mX^T \le 1 \}.
\end{equation}

To establish estimation rates for network latent positions (i.e., elements of the set $\calX^{(p,q)}_n$ or $\calX^d_n$), we must endow the set with a distance.
One such distance, surely the most studied in the context of network modeling, derives from the $(\tti)$-norm.
Given two matrices $\mX,\mY \in \bbR^{n \times d}$, this norm is defined according to
\begin{equation}
\label{eq:tti:definition}
\| \mX - \mY \|_{\tti} = \max_{i \in [n]} \| \mX_i - \mY_i \|_2,
\end{equation}
where $\|\cdot\|_2$ is the standard Euclidean norm in $\bbR^d$ and $\mX_{i} \in \bbR^d$ denotes the $i$-th row of $\mX \in \bbR^{n \times d}$, viewed as a column vector.

We will use this norm to construct a distance on the set $\calX^d_n$, once we account for a non-identifiability inherent to latent space models \citep{ShaAst2017}.
Observe that for any  orthogonal transformation $\mW \in \bbO_d$, we have $\mX \mX^T = \mX \mW (\mX \mW)^T$.
As a result, given an adjacency matrix $\mA$ generated from an RDPG, we can only hope to estimate a particular $\mX \in \calX^d_n$ up to such an orthogonal transformation.
We thus endow $\calX^d_n$ with an equivalence relation $\sim$, writing $\mX \sim \mY$ if $\mY = \mX\mW$ for some $\mW \in \bbO_d$.
Our notion of recovering the rows of the true $\mX$ up to orthogonal rotation yields a natural notion of distance on these equivalence classes.

\begin{definition} \label{def:dtildetti}
Let $\calXtilde^d_n$ denote the quotient set of $\calX^d_n$ by $\sim$.
Denoting elements of $\calXtilde^d_n$ by $[\mX]$ for any class representative $\mX \in \calX^d_n$, define a distance on $\calXtilde^d_n$ by
\begin{equation*}
\dtildetti\left( [\mX],[\mY] \right)
= \min_{\mW \in \bbO_d} \| \mX - \mY \mW \|_{\tti}.
\end{equation*}
\end{definition}

\begin{observation}
\label{obs:main:1}
$\dtildetti$ is a distance on $\calXtilde^d_n$.
\end{observation}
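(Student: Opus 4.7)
The plan is to verify the four properties of a distance (well-definedness on the quotient, non-negativity, identity of indiscernibles, symmetry, and the triangle inequality), with the single structural fact driving all of them being the right-orthogonal invariance of the $(2,\infty)$-norm: for any $\mA \in \R^{n \times d}$ and $\mW \in \Od$, $\|\mA \mW\|_{\tti} = \max_i \|\mW^T \mA_i\|_2 = \max_i \|\mA_i\|_2 = \|\mA\|_{\tti}$, since the Euclidean norm is orthogonally invariant. I would record this observation first and then deploy it in each item below.

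I would begin by checking that $\dtildetti$ is well-defined on equivalence classes. If $\mX' = \mX \mW_1$ and $\mY' = \mY \mW_2$ for some $\mW_1, \mW_2 \in \Od$, then $\|\mX' - \mY' \mW\|_{\tti} = \|\mX \mW_1 - \mY \mW_2 \mW\|_{\tti} = \|\mX - \mY \mW_2 \mW \mW_1^T\|_{\tti}$ by the invariance above. Since the map $\mW \mapsto \mW_2 \mW \mW_1^T$ is a bijection of $\Od$, the minimum over $\mW$ is unchanged, and so the value of $\dtildetti$ depends only on the equivalence classes. Non-negativity is immediate from non-negativity of $\|\cdot\|_{\tti}$.

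For the identity of indiscernibles, if $[\mX] = [\mY]$ then there exists $\mW \in \Od$ with $\mY = \mX \mW$, so $\mX = \mY \mW^T$ gives $\dtildetti([\mX],[\mY]) = 0$. Conversely, since $\Od$ is compact and $\mW \mapsto \|\mX - \mY \mW\|_{\tti}$ is continuous, the infimum is attained at some $\mW^\star$; if the value is $0$ then $\mX = \mY \mW^\star$ and $[\mX] = [\mY]$. Symmetry follows by applying the right-invariance to rewrite $\|\mX - \mY \mW\|_{\tti} = \|(\mX - \mY \mW) \mW^T\|_{\tti} = \|\mX \mW^T - \mY\|_{\tti}$ and relabelling $\mW^T$ as the new minimization variable, which still ranges over $\Od$.

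The only slightly substantive step is the triangle inequality. Given $[\mX], [\mY], [\mZ]$, let $\mW_1, \mW_2 \in \Od$ attain the minima in $\dtildetti([\mX],[\mY])$ and $\dtildetti([\mY],[\mZ])$ respectively (existence by compactness). Then inserting $\mY \mW_1$ and using the ordinary triangle inequality for $\|\cdot\|_{\tti}$,
\begin{equation*}
\|\mX - \mZ \mW_2 \mW_1\|_{\tti} \le \|\mX - \mY \mW_1\|_{\tti} + \|(\mY - \mZ \mW_2)\mW_1\|_{\tti}.
\end{equation*}
The second summand equals $\|\mY - \mZ \mW_2\|_{\tti}$ by right-orthogonal invariance. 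Since $\mW_2 \mW_1 \in \Od$, the left-hand side is an upper bound for $\dtildetti([\mX],[\mZ])$, yielding the triangle inequality. I do not anticipate any real obstacle; the only point that must be handled carefully is to route every appearance of the rotation through the invariance identity $\|\mA \mW\|_{\tti} = \|\mA\|_{\tti}$, which is what makes the quotient construction compatible with the $(2,\infty)$-norm in the first place.
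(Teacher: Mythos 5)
Your proof is correct and follows essentially the same approach as the paper's: everything is driven by right-orthogonal invariance of the $(2,\infty)$-norm, with the triangle inequality proved by inserting a rotated middle term and applying the ordinary triangle inequality. You are slightly more explicit about well-definedness on the quotient and about compactness (which justifies writing $\min$ rather than $\inf$ and underlies the identity-of-indiscernibles step), points the paper treats as immediate.
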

\begin{proof}
Symmetry and non-negativity of $\dtildetti$ are immediate from the definition and invariance of the $(\tti)$-norm under right-multiplication by elements of $\bbO_d$.
Similarly, it follows by definition that $\dtildetti([\mX],[\mY] )=0$ if and only if $[\mX]=[\mY]$.

To establish the triangle inequality, note that for $[\mX],[\mY],[\mZ] \in \calXtilde^d_n$, we have
\begin{equation*} \begin{aligned}
\dtildetti\left( [\mX],[\mY] \right)
&= \min_{\mW \in \bbO_d} \| \mX - \mY \mW \|_{\tti}
= \min_{\mW \in \bbO_d, \mW' \in \bbO_d} \| \mX - \mZ \mW' + \mZ \mW' - \mY \mW \|_{\tti} \\
&\le \min_{\mW \in \bbO_d, \mW' \in \bbO_d} \| \mX - \mZ \mW' \|_{\tti}
        +  \| \mZ \mW' - \mY \mW \|_{\tti} \\
&= \min_{\mW \in \bbO_d} \| \mX - \mZ \mW \|_{\tti}
        + \min_{\mW \in \bbO_d} \| \mZ - \mY \mW \|_{\tti} \\
&= \dtildetti\left( [\mX],[\mZ] \right) + \dtildetti\left( [\mZ],[\mY] \right),
\end{aligned} \end{equation*}
where we have used the fact that the $(\tti)$-norm is invariant under
right-multiplication by an orthogonal matrix.
\end{proof}

Under the GRDPG and other low-rank network models (i.e., Definitions~\ref{def:GRDPG} and~\ref{def:lrpn}), a similar non-identifiability occurs, but its structure is complicated by the presence of the matrix $\mI_{p,q}$.
Analogous to the orthogonal group $\bbO_d$, we denote the indefinite orthogonal group by
\begin{equation*}
\Opq
= \{\mQ \in \R^{d\times d} : \mQ \mI_{p,q} \mQ^T = \mI_{p, q}\}.
\end{equation*}
For any matrix $\mQ \in \Opq$ and any $\mX \in \calX_{n}^{(p,q)}$, we have $\mX\mI_{p,q}\mX^T = \mX\mQ\mI_{p,q}(\mX\mQ)^T$.
As a result, under the GRDPG, the conditional distribution of $\mA$ remains unchanged if we replace $\mX$ with $\mX\mQ$ for any $\mQ \in \Opq$.
Thus, we also consider an equivalence relation $\sim$ on $\calX_n^{(p, q)}$, whereby for $\mX,\mY \in \calX_n^{(p,q)}$, we write $\mX \sim \mY$ if and only if $\mY = \mX\mQ$ for some $\mQ \in \Opq$. 
Lemma~\ref{lem:setup:equiv} shows that the equivalence classes under this relation correspond precisely to the matrices $\mX \in \calX_n^{(p,q)}$ that give rise to the same distribution over networks.
A proof can be found in Appendix~\ref{sec:apx:setup:equiv:proof}

\begin{lemma} \label{lem:setup:equiv}
For $\mX, \mY \in \calX_n^{(p,q)}$, define respective probability matrices $\mP_{\mX} = \mX \mI_{p,q} \mX^T$ and $\mP_{\mY} = \mY \mI_{p,q} \mY^T$.
Then $\mX \sim \mY$ if and only if $\mP_{\mX}=\mP_{\mY}$.
\end{lemma}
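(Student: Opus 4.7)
The plan is to prove the two implications separately.

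The forward direction is immediate: if $\mY = \mX \mQ$ for some $\mQ \in \Opq$, then using $\mQ \mI_{p,q} \mQ^T = \mI_{p,q}$ we obtain
\[
\mP_\mY = \mX \mQ \mI_{p,q} \mQ^T \mX^T = \mX \mI_{p,q} \mX^T = \mP_\mX.
\]

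The substantive direction is $\mP_\mX = \mP_\mY \Rightarrow \mX \sim \mY$. The strategy is to exhibit a common canonical factorization to which both $\mX$ and $\mY$ are equivalent via $\Opq$. Set $\mP := \mP_\mX = \mP_\mY$ and take the spectral decomposition $\mP = \mU \mLambda \mU^T$, with $\mU \in \R^{n \times r}$ having orthonormal columns and $\mLambda \in \R^{r \times r}$ diagonal with nonzero entries ordered as $p'$ positive followed by $q'$ negative (so $r = p' + q' = \rank \mP$). Sylvester's law of inertia applied to $\mX \mI_{p,q} \mX^T = \mU \mLambda \mU^T$ yields $p' \le p$ and $q' \le q$. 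Set $\mZ := \mU |\mLambda|^{1/2}$, and form an $n \times d$ matrix $\mZ_0$ by inserting $p - p'$ and $q - q'$ zero columns into the positive and negative signature blocks of $\mZ$, respectively, so that $\mZ_0 \mI_{p,q} \mZ_0^T = \mP$.

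It then suffices to produce $\mQ_\mX, \mQ_\mY \in \Opq$ with $\mX \mQ_\mX = \mZ_0 = \mY \mQ_\mY$, since then $\mY = \mX (\mQ_\mX \mQ_\mY^{-1})$ with $\mQ_\mX \mQ_\mY^{-1} \in \Opq$ by the group property. In the main case when $\mX$ has full column rank $d$ (and hence $\mP$ has full rank $d$ as well), one can take $\mQ_\mX = (\mX^T \mX)^{-1} \mX^T \mZ_0$: the columns of $\mZ_0$ lie in the column space of $\mX$, giving $\mX \mQ_\mX = \mZ_0$, and a direct computation $\mQ_\mX \mI_{p,q} \mQ_\mX^T = (\mX^T \mX)^{-1} \mX^T \mP \mX (\mX^T \mX)^{-1} = (\mX^T \mX)^{-1}(\mX^T \mX) \mI_{p,q} (\mX^T \mX)(\mX^T \mX)^{-1} = \mI_{p,q}$ verifies the required identity.

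The hard part will be the rank-deficient case, in which $\mX$ may contain nonzero rows that are isotropic under $\mI_{p,q}$, so the form $\vx \mapsto \vx^T \mI_{p,q} \vx$ degenerates on the row span of $\mX$. Extending the partial isometry defined on the row span of $\mX$ to a full element of $\Opq$ then relies on a version of Witt's extension theorem for indefinite bilinear forms, which absorbs isotropic directions via hyperbolic rotations. This is precisely where the structure of $\Opq$ (as opposed to the compact group $\bbO_d$) becomes essential, since only $\Opq$ contains elements that can rescale null vectors and thereby match the isotropic components of $\mX$ to those of $\mY$.
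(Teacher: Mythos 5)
Your forward direction and your treatment of the full-rank case are both correct, and the latter takes a genuinely different and arguably cleaner route than the paper's. You reduce $\mX$ and $\mY$ to a common canonical representative $\mZ_0$ built from the spectral decomposition of $\mP$, exhibiting the explicit transformations $\mQ_{\mX} = (\mX^T\mX)^{-1}\mX^T\mZ_0$ and $\mQ_{\mY}$; the paper instead works directly with the block structure $\mX = (\mX_1, \mX_2)$, derives a $\mGamma$ from the matching column spaces of $[\mY_1~\mX_2]$ and $[\mX_1~\mY_2]$, shows $\mGamma_{22}$ is invertible, and solves for $\mQ$ from the resulting block relation. Your factorization makes the symmetry between $\mX$ and $\mY$ manifest and immediately generalizes to the RDPG case; the paper's argument is more self-contained but more computational.

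However, the deferral to Witt's extension theorem in the rank-deficient case cannot succeed, because the statement is simply false without a rank hypothesis. Take $p = q = 1$, $n = 2$, $\mX$ with both rows equal to $(1,1)$, and $\mY = \mzero$. Then $\mP_{\mX} = \mzero = \mP_{\mY}$, so both lie in $\calX_2^{(1,1)}$, but every $\mQ \in \bbO_{1,1}$ is invertible (since $\mQ\mI_{1,1}\mQ^T = \mI_{1,1}$ forces $\det \mQ = \pm 1$), so $\mX\mQ$ has rank $1$ while $\mY$ has rank $0$; hence $\mX \not\sim \mY$. The obstruction is exactly the isotropic direction $(1,1)$ you flagged, but no element of $\Opq$ can send a nonzero isotropic row to zero, and Witt's theorem applies to extending isometries between subspaces of equal dimension---here the relevant row spans have different dimensions, so there is nothing to extend. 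Note that the paper's own proof quietly assumes $\mX$ has full column rank $d$: it uses this twice, once to argue $\mGamma_{22}$ is invertible and once to pass from $\mX(\mI_{p,q} - \mQ\mI_{p,q}\mQ^T)\mX^T = \mzero$ to $\mI_{p,q} - \mQ\mI_{p,q}\mQ^T = \mzero$. Your write-up should state that same assumption rather than promise an extension argument that does not exist here.
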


In light of Lemma~\ref{lem:setup:equiv}, our equivalence relation can also be understood as $\mX \sim \mY$ if and only if $\mP_\mX = \mP_{\mY}$.
Under this equivalence relation, we denote by $\calXtilde_{n}^{(p, q)}$ the set of equivalence classes of $\calX_n^{(p,q)}$ under $\sim$.
When it is clear from the context, we also use $[\mX]$ to denote the element of $\calXtilde_n^{(p, q)}$ corresponding to the equivalence class of $\mX \in \calX^{(p,q)}_n$. 

In order to show minimax results for estimation of the latent positions in the GRDPG model and related low-rank network models, we first need to fix a notion of distance over the parameter set $\calXtilde_n^{(p, q)}$.
To account for non-identifiability in the GRDPG, it is natural to follow Definition~\ref{def:dtildetti} and define the distance between $[\mX]$ and $[\mY]$ according to
\begin{equation} \label{eq:ind-dist}
    \inf_{\mQ_1, \mQ_2 \in \Opq} \left\|\mX \mQ_1 - \mY \mQ_2\right\|_{\tti}.
\end{equation}
Unfortunately, this definition is not necessarily a valid distance.
To see a simple example, consider the case when $n=1$, $p=1$ and $q=1$. 
For any $\vx_0 = (x_{0,1}, x_{0,2}) \in \mR^2$ such that $x_{0,1}^2 - x_{0,2}^2 = r$, we observe that $\mQ \vx_0$ moves $\vx_0$ along the curve $C_r: x_1^2 - x_2^2 = r$.
Notice that for all $r \in \R$, $C_r$ shares a common asymptote $l: x_1 - x_2 = 0$.
Therefore, it follows that for any $\vx$ and $\vy \in \R^2$, 
\begin{equation*}
    \inf_{\mQ_1, \mQ_2 \in \mathcal{O}_{1,1}} \left\|\mQ_1 \vx - \mQ_2\vy\right\|_{2} = 0. 
\end{equation*}
Furthermore, the quantity defined in Equation~\eqref{eq:ind-dist} may not satisfy the triangle inequality.
We include an example for $n=2, p=1$ and $q = 1$ in Section \ref{sec:apx:dist}.
Instead, we must take a slightly more careful route to define a distance on $\calXtilde_n^{(p,q)}$.

We begin by noting that for any $\mX \in \calX_n^{(p,q)}$, Sylvester's law of inertia implies that $\mP_{\mX} = \mX \mI_{p,q} \mX^T$, has $p$ positive eigenvalues, $q$ negative eigenvalues and the remaining $n-p-q$ eigenvalues are zero.
Thus, we can always decompose $\mP_{\mX}$ as
\begin{equation*}
\mP_{\mX} = \mU_{\mX} \mLambda_{\mX}^{1/2} \mI_{p, q} \mLambda_{\mX}^{1/2} \mU_{\mX}^T,
\end{equation*}
where $\mU_{\mX} \in \R^{n\times d}$ is a matrix with orthonormal columns and $\mLambda_{\mX}\in \R^{d\times d}$ is a diagonal matrix with positive on-diagonal entries.
By Lemma~\ref{lem:setup:equiv}, we have $\mU_{\mX} \mLambda_{\mX}^{1/2} \in [\mX]$, since $\mU_{\mX} \mLambda_{\mX}^{1/2}$ and $\mX$ both produce the same probability matrix $\mP_{\mX}$. 
In light of this, we can define a distance $\dtilde_{2,\infty}$ on $\calXtilde_n^{(p,q)}$ according to
\begin{equation}
\label{eq:setup:dtildetti}
\dtildetti\left( [\mX],[\mY] \right)
= \min_{\mW \in \bbO_d\cap \Opq} \left\| \mU_{\mX}\mLambda_{\mX}^{1/2} - \mU_{\mY}\mLambda_{\mY}^{1/2} \mW \right\|_{\tti}.
\end{equation}
The reader may notice that we have used the same notation $\dtildetti$ as in Definition~\ref{def:dtildetti}.
This can be done without risk of confusion:
when $p = d$ and $q = 0$, since $\mU_{\mX}\mLambda_{\mX}^{1/2} \in [\mX]$ and $\mU_{\mY}\mLambda_{\mY}^{1/2} \in [\mY]$, there exist $\mW_{\mX}, \mW_{\mY} \in \Od$ such that $\mX\mW_{\mX} = \mU_{\mX}\mLambda_{\mX}^{1/2}$ and $\mY\mW_{\mY} = \mU_{\mY}\mLambda_{\mY}^{1/2}$. As a result, since $\Opq = \bbO_d$ when $p=d$, we have
\begin{equation*} \begin{aligned}
\dtildetti\left( [\mX],[\mY] \right)
&= \min_{\mW \in \bbO_d} \left\| \mU_{\mX}\mLambda_{\mX}^{1/2} - \mU_{\mY}\mLambda_{\mY}^{1/2} \mW \right\|_{\tti}\\
&= \min_{\mW \in \bbO_d} \left\| \mX\mW_{\mX} - \mY\mW_{\mY} \mW \right\|_{\tti}\\
&= \min_{\mW \in \bbO_d} \left\| \mX - \mY\mW \right\|_{\tti},
\end{aligned} \end{equation*}
which is precisely our definition of $\dtildetti$ for the RDPG as given in Definition~\ref{def:dtildetti}.

\begin{observation}
$\dtildetti$ is a distance on $\calXtilde_{n}^{(p, q)}$.
\end{observation}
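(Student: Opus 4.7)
The plan is to verify the four defining properties of a metric on equivalence classes in the order: a key structural preliminary, well-definedness, identity of indiscernibles, symmetry, and the triangle inequality. The preliminary I would establish first is the characterization $\bbO_d \cap \Opq = \bbO_p \times \bbO_q$, the group of block-diagonal orthogonal matrices $\diag(\mW_1, \mW_2)$ with $\mW_1 \in \bbO_p$ and $\mW_2 \in \bbO_q$. Combining $\mW^T \mW = \mI_d$ with $\mW \mI_{p,q} \mW^T = \mI_{p,q}$ forces $\mW \mI_{p,q} = \mI_{p,q} \mW$, which imposes the claimed block-diagonal structure. In particular, every such $\mW$ is orthogonal, so $\|\cdot\|_{\tti}$ is invariant under right multiplication by $\mW$, and $\bbO_d \cap \Opq$ is a group, hence closed under inverse and composition.

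Well-definedness is the main obstacle. By Lemma~\ref{lem:setup:equiv}, $\mP_\mX$ depends only on $[\mX]$, and fixing a convention on the ordering of its $p$ positive and $q$ negative eigenvalues pins down the diagonal matrix $\mLambda_\mX$. Suppose $\mU_1 \mLambda_\mX^{1/2} \mI_{p,q} \mLambda_\mX^{1/2} \mU_1^T = \mU_2 \mLambda_\mX^{1/2} \mI_{p,q} \mLambda_\mX^{1/2} \mU_2^T$ are two valid representations with $\mU_i \in \R^{n\times d}$ having orthonormal columns spanning the same non-null eigenspace of $\mP_\mX$; then $\mU_2 = \mU_1 \mR$ with $\mR := \mU_1^T \mU_2 \in \bbO_d$, and substituting forces $\mR\, \mLambda_\mX^{1/2} \mI_{p,q} \mLambda_\mX^{1/2}\, \mR^T = \mLambda_\mX^{1/2} \mI_{p,q} \mLambda_\mX^{1/2}$. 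The matrix on the right has strictly positive entries in its first $p$ diagonal coordinates and strictly negative entries in its last $q$; preservation of sign under orthogonal conjugation forces $\mR = \diag(\mR_1, \mR_2) \in \bbO_d \cap \Opq$, and the blockwise identity $\mR_i \mLambda^{(i)}_\mX \mR_i^T = \mLambda^{(i)}_\mX$ combined with $\mR_i^T \mR_i = \mI$ gives that $\mR_i$ commutes with $\mLambda^{(i)}_\mX$, hence with its square root. Therefore $\mU_2 \mLambda_\mX^{1/2} = \mU_1 \mLambda_\mX^{1/2} \mR$, and the ambiguity is exactly absorbed by the minimization over $\bbO_d \cap \Opq$.

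The remaining axioms then fall into place quickly. Non-negativity is immediate. For the identity of indiscernibles, $\dtildetti([\mX],[\mY])=0$ forces $\mU_\mX \mLambda_\mX^{1/2} = \mU_\mY \mLambda_\mY^{1/2} \mW$ for some $\mW \in \bbO_d \cap \Opq$, whence $\mP_\mX = \mU_\mY \mLambda_\mY^{1/2} \mW \mI_{p,q} \mW^T \mLambda_\mY^{1/2} \mU_\mY^T = \mP_\mY$ by $\mW \mI_{p,q} \mW^T = \mI_{p,q}$, and Lemma~\ref{lem:setup:equiv} gives $[\mX]=[\mY]$. Symmetry follows by moving $\mW$ onto the $\mX$-side via orthogonal invariance of $\|\cdot\|_{\tti}$ and using closure of $\bbO_d \cap \Opq$ under inversion. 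For the triangle inequality, mirroring the proof of Observation~\ref{obs:main:1}, if $\mW'$ and $\mW''$ attain the minima defining $\dtildetti([\mX],[\mZ])$ and $\dtildetti([\mZ],[\mY])$, then $\mW'' \mW' \in \bbO_d \cap \Opq$ is admissible for $\dtildetti([\mX],[\mY])$, and inserting $\mU_\mZ \mLambda_\mZ^{1/2} \mW'$ in the difference followed by orthogonal invariance of $\|\cdot\|_{\tti}$ under right multiplication by $\mW'$ yields the bound $\dtildetti([\mX],[\mZ]) + \dtildetti([\mZ],[\mY])$.
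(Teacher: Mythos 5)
Your proof is correct, and for the key technical step it takes a genuinely cleaner route than the paper. Both arguments must establish that any two eigendecompositions of the same probability matrix $\mP$ (producing candidate representatives $\mU\mLambda^{1/2}$) differ by right-multiplication by an element of $\bbO_d \cap \Opq$; this single fact drives both well-definedness and the direction $[\mX]=[\mY] \Rightarrow \dtildetti([\mX],[\mY])=0$. The paper proves it by invoking Lemma~\ref{lem:setup:equiv} to obtain a $\mQ \in \Opq$, introducing a separate $\mW \in \bbO_d$ and a permutation $\mPi$ relating the two $\mLambda$'s, and then grinding through commutation identities to deduce $\mQ \in \bbO_d$. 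You instead pin down $\mLambda_\mX$ by a sorting convention (eliminating $\mPi$ entirely), reduce to $\mR \mLambda^{1/2}\mI_{p,q}\mLambda^{1/2}\mR^T = \mLambda^{1/2}\mI_{p,q}\mLambda^{1/2}$ with $\mR \in \bbO_d$, and observe that since $\mR$ then commutes with this indefinite diagonal matrix, it must preserve the positive and negative eigenspaces, forcing $\mR$ into $\bbO_p \times \bbO_q = \bbO_d \cap \Opq$. Your opening identification $\bbO_d \cap \Opq = \bbO_p \times \bbO_q$ is a tidy structural fact that the paper never states, though it implicitly rediscovers it via the block form of $\mPi$. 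You also explicitly flag the well-definedness issue, which the paper addresses only implicitly through the converse direction of identity of indiscernibles. One minor omission: you never explicitly state the conclusion that $[\mX]=[\mY] \Rightarrow \dtildetti([\mX],[\mY])=0$, though it follows at once from your well-definedness argument by choosing identical decompositions for $\mX$ and $\mY$; it would be worth a sentence to close that loop.
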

\begin{proof}
    Symmetry of $\dtildetti$ follows from the fact that $\mW^T \in \bbO_d\cap \Opq$ whenever $\mW \in \bbO_d\cap \Opq$, and non-negativity is immediate from the fact that $\| \cdot \|_{\tti}$ is a norm.
    The triangle inequality follows from the same argument as given in Observation~\ref{obs:main:1}.
    
    It remains to show that
    \begin{equation} \label{eq:dtilde:zero:iff}
    \dtildetti\left( [\mX],[\mY] \right) = 0
    ~~~\text{ if and only if }~~~
    [\mX] = [\mY].
    \end{equation}

    Toward this end, suppose that $\dtildetti\left( [\mX],[\mY] \right) = 0$.
    Since $\bbO_d \cap \Opq$ is compact, there exists $\mW^\star \in \bbO_d \cap \Opq$ such that
    \begin{equation*}
    \left\| \mU_{\mX}\mLambda_{\mX}^{1/2} - \mU_{\mY}\mLambda_{\mY}^{1/2} \mW^\star \right\|_{2,\infty} = 0,
    \end{equation*}
    that is to say, $\mU_{\mX}\mLambda_{\mX}^{1/2} = \mU_{\mY}\mLambda_{\mY}^{1/2} \mW^\star$. 
    We therefore have
    \begin{equation*} \begin{aligned}
	\mP_{\mX}
	&= \mX\mI_{p,q}\mX^T
	= \mU_{\mX}\mLambda_{\mX}^{1/2} \mI_{p, q} \mLambda_{\mX}^{1/2} \mU_{\mX}^T \\
	&=  \mU_{\mY}\mLambda_{\mY}^{1/2} \mW^\star\mI_{p, q}\mW^{\star T} \mLambda_{\mY}^{1/2}\mU_{\mY}^T
        = \mU_{\mY}\mLambda_{\mY}^{1/2} \mI_{p, q} \mLambda_{\mY}^{1/2} \mU_{\mY}^T \\
	&= \mY\mI_{p,q}\mY^T = \mP_{\mY},
    \end{aligned} \end{equation*}
    and Lemma~\ref{lem:setup:equiv} implies that $\mX \sim \mY$.
    
    To show the other direction of the equivalence in Equation~\eqref{eq:dtilde:zero:iff}, let $\mX,\mY \in \calX^{(p,q)}_n$ be representatives of $[\mX],[\mY] \in \calXtilde^{(p,q)}_n$, respectively, and suppose that $[\mX] = [\mY]$.
    We will show there exists a matrix $\mW^\star \in \bbO_d \cap \Opq$ such that $\mU_{\mX}\mLambda_{\mX}^{1/2} = \mU_{\mY}\mLambda_{\mY}^{1/2} \mW^\star$, whence it will follow that $\dtildetti([\mX],[\mY]) = 0$.    
    Recall that we associate to $\mX$ and $\mY$ the probability matrices
    \begin{equation*} \begin{aligned}
    \mP_{\mX} &= \mX \mI_{p,q} \mX^T 
    = \mU_{\mX} \mLambda_{\mX}^{1/2}
        \mI_{p,q} \mLambda_{\mX}^{1/2} \mU_{\mX}^T~~~\text{ and }\\
    \mP_{\mY} &= \mY \mI_{p,q} \mY^T
    = \mU_{\mY} \mLambda_{\mY}^{1/2}
        \mI_{p,q} \mLambda_{\mY}^{1/2} \mU_{\mY}^T,
    \end{aligned} \end{equation*}
    where $\mU_{\mX}, \mU_{\mY} \in \bbR^{n \times d}$ both have orthonormal columns and $\mLambda_{\mX}, \mLambda_{\mY} \in \bbR^{d \times d}$ are diagonal and positive definite.

    Since $[\mX] = [\mY]$, by Lemma~\ref{lem:setup:equiv} there exists $\mQ \in \Opq$ such that 
    \begin{equation}
    \label{eq:Q:mid-step}
        \mU_{\mX} \mLambda_{\mX}^{1/2} = \mU_{\mY} \mLambda_{\mY}^{1/2} \mQ. 
    \end{equation}
    There also exists a $\mW \in \Od$ such that $\mU_\mX = \mU_{\mY} \mW$, since $\mU_\mX$ and $\mU_{\mY}$ corresponds to the same singular subspaces. We also have a permutation matrix $\mPi$ such that $\mLambda_{\mX}^{1/2}\mI_{p,q}\mLambda_{\mX}^{1/2} = \mPi\mLambda_{\mY}^{1/2}\mI_{p,q}\mLambda_{\mY}^{1/2} \mPi^T$. The presence of $\mI_{p,q}$ forces $\mPi$ to be of the form 
    \begin{equation*}
    \mPi = \begin{bmatrix}
    \mPi_p & 0 \\
    0 & \mPi_q
    \end{bmatrix},
    \end{equation*}
    where $\mPi_p \in \R^{p\times p}$ and $\mPi_q \in \R^{q\times q}$ are permutation matrices. 
    Hence, $\mPi \in \Opq\cap\Od$ and we also have that $\mLambda_{\mX}^{1/2} = \mPi\mLambda_{\mY}^{1/2}\mPi^T$. It follows from Equation~\eqref{eq:Q:mid-step} that
    \begin{equation*}
        \mQ\mPi = \mLambda_{\mY}^{-1/2}\mW\mPi\mLambda_{\mY}^{1/2}.
    \end{equation*}
    Denote $\mV = \mW\mPi$ for ease of notation.
    Since $\mQ\mPi\in\Opq$, we have
    \begin{equation*}
    \mLambda_{\mY}^{-1/2}\mV\mLambda_{\mY}^{1/2}\mI_{p,q}\mLambda_{\mY}^{1/2}\mV^T\mLambda_{\mY}^{-1/2} = \mI_{p,q}.
    \end{equation*}
    Rearranging and using the fact that diagonal matrices commute,
    \begin{equation*}
    \mV\mLambda_{\mY}\mI_{p,q} = \mLambda_{\mY}\mI_{p,q}\mV.
    \end{equation*}
    Therefore, for any $i, j \in [d]$, we have
    $
    \mV_{ij}(\mLambda_{\mY}\mI_{p,q})_{jj} = \mV_{ij}(\mLambda_{\mY}\mI_{p,q})_{ii}.
    $
    If $\mV_{ij} \neq 0$, we have $(\mLambda_\mY\mI_{p,q})_{jj} = (\mLambda_\mY\mI_{p,q})_{ii}$ and thus $(\mLambda_\mY^{1/2}\mI_{p,q})_{jj} = (\mLambda_\mY^{1/2}\mI_{p,q})_{ii}$. Otherwise, we have
    \begin{equation*}
    \mV_{ij}(\mLambda_{\mY}^{1/2}\mI_{p,q})_{jj} = \mV_{ij}(\mLambda_{\mY}^{1/2}\mI_{p,q})_{ii} = 0.
    \end{equation*}
    Hence, $\mV_{ij}(\mLambda_{\mY}^{1/2}\mI_{p,q})_{jj} = \mV_{ij}(\mLambda_{\mY}^{1/2}\mI_{p,q})_{ii}$ always holds and
    it follows that 
    \begin{equation*}
    \mV\mLambda_{\mY}^{1/2}\mI_{p,q} = \mLambda_{\mY}^{1/2}\mI_{p,q}\mV.
    \end{equation*}
    Thus, we have 
    \begin{equation*}
    \mQ\mPi~\mI_{p,q}
    = \mLambda_{\mY}^{-1/2}\mV\mLambda_{\mY}^{1/2}\mI_{p,q}
    = \mLambda_{\mY}^{-1/2}\mLambda_{\mY}^{1/2}\mI_{p,q}\mV
    = \mI_{p,q}\mV.
    \end{equation*}
    Moving $\mPi~\mI_{p,q}$ to the right hand side, we have $\mQ = \mI_{p,q}\mV~\mI_{p,q}\mPi^T$, implying that $\mQ$ is an orthogonal matrix, whence $\mQ \in \Opq \cap \Od$. Taking $\mW^\star = \mQ$ completes the proof.
\end{proof}

The minimax risk for estimating $\mX \in \calX^{(p,q)}_n$ under the $(\tti)$-norm after accounting for the equivalence structure encoded in $\calXtilde^{(p,q)}_n$ is given by \citep{Tsybakov2009}
\begin{equation*}
\inf_{\mXhat} \sup_{\mX \in \calX_{n}^{(p,q)}}
	\E \dtildetti\left( [\mXhat], [\mX] \right)
=
\inf_{\mXhat} \sup_{\mX \in \calX_{n}^{(p,q)}}
        \E \min_{\mW \in \bbO_d\cap\Opq} \left\| \hat{\mU}_{\mXhat}\hat{\mLambda}^{1/2}_{\mXhat} - \mU_{\mX}\mLambda^{1/2}_{\mX} \mW \right\|_{\tti},
\end{equation*}
where the infimum is over all estimators $\mXhat$.
Our goal in the remainder of this paper is to lower-bound this minimax risk.

\section{Main Results}
\label{sec:results}

We consider estimation (up to orthogonal non-identifiability) of a low-rank matrix $\mX = \mU \mLambda^{1/2}$, where $\mU$ is an element of the Stiefel manifold of all $d$-frames in $\R^d$,
\begin{equation*}
\Stiefel_d(\R^n) = \left\{\mU\in \R^{n\times d}: \mU^T \mU = \mI_d\right\}.
\end{equation*}
The structure of $\mLambda$ plays a crucial role in the estimation of $\mX$.
When the smallest eigenvalues of $\E[\mA \mid \mX]$ are especially close to zero, it is hard to distinguish the $d$ ``signal'' eigenvalues of $\mA$ from the ``noise'' associated with the remaining $n-d$ eigenvalues.
As such, we consider a particular structure on $\mLambda = \diag(\lambda_1,\lambda_2,\dots,\lambda_d)$.
Assuming without loss of generality that $\lambda_1 \geq \lambda_2 \geq \dots \geq \lambda_d$ and defining the condition number $\kappa = \kappa(\mLambda) = \lambda_1/ \lambda_d$, this spectral structure is captured by membership in the set
\begin{equation*} \begin{aligned}
\calC(\kappastar, \lambdastar)
= \Big\{\mLambda = \diag(\lambda_1,\lambda_2,\dots,\lambda_d) \in \R^{d\times d}: &~ \kappa(\mLambda) \leq \kappastar, \lambda_d \geq \lambdastar > 0\Big\}.
\end{aligned} \end{equation*}

With this notation in hand, we can state our main result.

\begin{theorem} \label{thm:main:main}
With the sets $\Stiefel_d(\R^n)$ and $\calC(\kappa_{\star},\lambda_{\star})$ as defined above, define the set
\begin{equation*}
\calP(\kappastar, \lambdastar, p, q) = \left\{\left(\mU, \mLambda\right): \mU\in \Stiefel_d(\R^n), \mLambda \in \mathcal{C}(\kappastar, \lambdastar), \mU\mLambda^{1/2} \in \mathcal{X}_n^{(p,q)}\right\}.
\end{equation*}
If $\kappa_{\star} = o\left(n\right)$, $\kappa_{\star} \geq 3d$ and $3\kappastar\lambdastar \leq n$, then
\begin{equation} \label{eq:main:minimax}
    \inf_{\left(\hat{\mU}, \hat{\mLambda} \right)}~
    \sup_{(\mU, \mLambda) \in \calP(\kappastar, \lambdastar, p, q)}~
        \E~
        \dtildetti\left( 
        \left[\hat{\mU}\hat{\mLambda}^{1/2}\right],
                \left[\mU\mLambda^{1/2}\right] \right)
        ~\gtrsim~ \sqrt{\frac{\kappa_{\star}(\lambda_\star \wedge \log n )}{n}}. 
\end{equation}
\end{theorem}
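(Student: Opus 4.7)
The plan is to apply the standard minimax-to-testing reduction via Fano's lemma and Le Cam's two-point method (Tsybakov 2009), splitting into two regimes corresponding to the two terms in $\lambdastar \wedge \log n$. When $\lambdastar \lesssim \log n$, a two-point Le Cam argument should yield the rate $\sqrt{\kappastar\lambdastar/n}$; when $\lambdastar \gtrsim \log n$, a Fano-type argument with $\asymp n$ hypotheses should yield the complementary rate $\sqrt{\kappastar \log n / n}$. The novelty lies in the packing construction, which uses Hadamard matrices to supply many mutually orthogonal $\pm 1/\sqrt{n}$-valued vectors in $\R^n$; this orthogonality is ultimately what produces the $\log n$ factor in the $(\tti)$-norm minimax rate.

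For both constructions, I start from a base point $\mX_0 = \mU_0 (\mLambda^\star)^{1/2}$, where $\mU_0$ consists of $d$ columns of a rescaled Hadamard matrix $m^{-1/2}H$ (with $m$ a nearby Hadamard number and zero-padding to reach size $n \times d$ if necessary) and $\mLambda^\star = \diag(\kappastar\lambdastar,\lambdastar,\dots,\lambdastar) \in \calC(\kappastar,\lambdastar)$. The hypotheses $3\kappastar\lambdastar \le n$ and $\kappastar \ge 3d$ let one verify $\mX_0 \in \calX_n^{(p,q)}$, with $\mP_0 = \mX_0 \mI_{p,q}\mX_0^T$ having entries bounded away from $0$ and $1$. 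The Le Cam hypothesis is obtained by perturbing a single row of $\mU_0$ by a Rademacher-type vector of norm $\asymp \sqrt{\kappastar\lambdastar/n}$; a direct computation shows the induced KL between graph distributions is $O(1)$. The Fano packing $\{\mX_\omega\}_{\omega=1}^N$ with $N \asymp n$ is obtained by perturbing the $\omega$-th row of $\mU_0$ along a Hadamard-prescribed direction of norm $\asymp \sqrt{\kappastar\log n/n}$. The mutual orthogonality of Hadamard columns makes $\|\mP_\omega - \mP_0\|_F^2$ small through cancellations in the outer product $\mX_\omega \mI_{p,q} \mX_\omega^T - \mX_0 \mI_{p,q} \mX_0^T$, allowing the bound
\begin{equation*}
\KL(\Pr_{\mP_\omega}\|\Pr_{\mP_0})
\lesssim \sum_{i<j} \frac{((\mP_\omega)_{ij}-(\mP_0)_{ij})^2}{(\mP_0)_{ij}(1-(\mP_0)_{ij})}
\lesssim \|\mP_\omega - \mP_0\|_F^2
\lesssim \log N,
\end{equation*}
as required by Fano's condition, while the single-row perturbation guarantees $\dtildetti([\mX_{\omega}], [\mX_{\omega'}]) \gtrsim \sqrt{\kappastar \log n / n}$ for distinct $\omega, \omega'$.

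The main technical obstacle, in my view, is twofold. First, keeping the hypotheses inside $\calP(\kappastar,\lambdastar,p,q)$: the perturbed $\mU_\omega$ must remain in the Stiefel manifold (so that $\mX_\omega$ retains spectrum $\mLambda^\star \in \calC$), and the perturbed $\mP_\omega$ must have all entries in $[0,1]$. Hadamard orthogonality handles the first requirement, but the second demands some slack; this is where the assumptions $3\kappastar\lambdastar \le n$ and $\kappastar \ge 3d$ enter. Second, and more subtly, controlling the quotient metric $\dtildetti$: the infimum over $\bbO_d \cap \Opq$ must be bounded below uniformly across the packing. For $q=0$ this reduces to a standard argument over $\bbO_d$, but for $q>0$ the interplay of the orthogonal and indefinite-orthogonal symmetries is delicate, and I expect the argument to use the ``rotation-robustness'' of the Hadamard packing: any $\mW \in \bbO_d\cap\Opq$ that nearly aligned $\mX_\omega$ with $\mX_{\omega'}$ for distinct $\omega,\omega'$ would force a linear relation among distinct Hadamard directions that contradicts their mutual orthogonality.
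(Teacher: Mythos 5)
Your proposed dichotomy --- a Le Cam two-point argument when $\lambdastar \lesssim \log n$ and a Fano-type argument when $\lambdastar \gtrsim \log n$ --- does not work, and in particular the claim that ``a direct computation shows the induced KL between graph distributions is $O(1)$'' in the first regime is false unless $\lambdastar = O(1)$. For a perturbation $\vx$ of a single row of $\mU_0$, the $(\tti)$-distance squares to roughly $\vx^T\mLambda\vx$, while the KL divergence (via the bound $\KL \lesssim n\|\mP_\omega - \mP_0\|_F^2/\lambda_1$ together with $\|\mP_\omega - \mP_0\|_F^2 \asymp \vx^T\mLambda^2\vx$) is of order $n\,\vx^T\mLambda^2\vx/\lambda_1$. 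Achieving distance $\gtrsim \sqrt{\kappastar\lambdastar/n}=\sqrt{\lambda_1/n}$ forces $\vx^T\mLambda\vx \gtrsim \lambda_1/n$, while $\KL \lesssim 1$ forces $\vx^T\mLambda^2\vx \lesssim \lambda_1/n$. Since every eigenvalue of $\mLambda$ is at least $\lambdastar$, we have $\vx^T\mLambda^2\vx \geq \lambdastar\,\vx^T\mLambda\vx$, so these two requirements are mutually consistent only if $\lambdastar = O(1)$. As soon as $\lambdastar \to\infty$ (even slowly, e.g.\ $\lambdastar = \log\log n$, which still lies in your ``Le Cam regime''), the two-point bound degrades to $\sqrt{\kappastar/n}$ and loses the $\sqrt{\lambdastar}$ factor. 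The fix is to use the multi-hypothesis reduction (Tsybakov Theorem~2.7, which is what the paper does) throughout: with $\Theta(n)$ hypotheses the KL budget is $\alpha\log n$ rather than a constant, and the natural single-row perturbation with $\KL \asymp \lambda_d\wedge\log n \le \log n$ fits comfortably inside it in every sub-regime.

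Two further structural gaps. First, you posit a single base point $\mU_0 = m^{-1/2}\mH$ with rows all of size $\Theta(1/\sqrt n)$, but the paper's case split is on $\kappastar$, not on $\lambdastar$ versus $\log n$, precisely because the construction must change: the constant-$\kappastar$ base point in Equation~\eqref{eq:par1:base_U} is the one you describe, but the Frobenius/KL control of Lemma~\ref{lem:main:kappaconst:frob_bound} needs $\kappa^2 d/n \to 0$ and so fails once $\kappastar$ grows faster than $\sqrt{n}$; in that regime the paper switches to the base point of Equation~\eqref{eq:part-ii-U0}, whose first $2^{k_0}$ rows are scaled by $\beta_d/\lambda_j$, and builds the packing by \emph{swapping} rows rather than perturbing them. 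Second, your statement that ``Hadamard orthogonality handles'' keeping $\mU_\omega$ in the Stiefel manifold is not correct: perturbing one row of an orthonormal matrix by a generic vector destroys column orthonormality. In the constant-$\kappastar$ case the paper repairs this by taking the polar factor $\mU_i = \tmU_i\tmV_i^T$ of the perturbed matrix, and then spends Lemmas~\ref{lem:tech:eigen}, \ref{lem:eigenbound}, \ref{lem:Utilde} and \ref{lem:tech:refined_eigen_bound} controlling how much that re-orthonormalization alters both the $(\tti)$-distance and $\|\mP_i - \mP_0\|_F$; the row-swap construction in the growing-$\kappastar$ case is introduced partly to avoid this entirely. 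As written, your plan addresses neither of these obstacles.
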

\begin{proof}
Our main tool is a standard packing argument \citep[see Theorem 2.7 in][]{Tsybakov2009}.
The main technical hurdle is constructing a collection of elements of $\Stiefel_d(\R^n)$ all of which produce valid elements of $\calP(\kappastar, \lambdastar, p, q)$ when paired with  a particular choice of $\mLambda$.
Our construction is based on stacking Hadamard matrices to form $\mU \in \Stiefel_d(\R^n)$.
In particular, we require very different constructions depending on the growth rate of the condition number $\kappastar$, and we divide our proof of Theorem~\ref{thm:main:main} into two cases accordingly.
Details are given in the Appendix.
\end{proof}

As a remark, we note that the factor $3$ in the conditions $\kappastar \geq 3d$ and $3\kappastar\lambdastar \leq n$ can each be relaxed to $(1+\epsilon)$ and $(2+\epsilon)$, respectively, for any constant $\epsilon > 0$.
Details are provided in the Appendix.

\subsection{Illustrative Examples and Applications}
\label{subsec:main:examples}

We now apply our main result to some well-studied special cases from the network modeling literature, starting with the GRDPG.
The assumption in Theorem~\ref{thm:main:main} that $\kappastar = \Omega(d)$ is a natural one for the RDPG and GRDPG setting.
To see this, we first state Lemma~\ref{lem:main:cond}.
\begin{lemma} \label{lem:main:cond}
Assume that $\mP = \mX \mX^T$, where the row vectors $\vx_1, \vx_2, \ldots, \vx_n \in \R^d$ of $\mX$ are independent identically distributed random vectors and let $\mDelta = \E\left[\vx_1 \vx_1^T\right]$.
For $n$ sufficiently large, it holds with probability at least $1 -  2n^{-1}$ that
    \begin{equation*}
    \frac{\lambda_1(\mDelta)-\delta}{\lambda_d(\mDelta)+\delta}\leq \kappa(\mP) \leq \frac{\lambda_1(\mDelta)+\delta}{\lambda_d(\mDelta)-\delta},
    \end{equation*}
    where $\delta = 4\sqrt{\frac{\log d}{n}} + \frac{8\log d}{3n}$. 
\end{lemma}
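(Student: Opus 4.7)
The plan is to reduce the condition number of $\mP$ to that of the empirical second-moment matrix $\widehat{\mDelta} = n^{-1}\mX^T\mX = n^{-1}\sum_{i=1}^n \vx_i \vx_i^T$ and then to control its spectrum via matrix concentration. Since $\mP = \mX\mX^T$ and $n\widehat{\mDelta} = \mX^T\mX$ share the same $d$ non-zero eigenvalues, we have
\begin{equation*}
\kappa(\mP) \;=\; \frac{\lambda_1(\widehat{\mDelta})}{\lambda_d(\widehat{\mDelta})}.
\end{equation*}
By Weyl's inequality, $|\lambda_k(\widehat{\mDelta}) - \lambda_k(\mDelta)| \leq \|\widehat{\mDelta} - \mDelta\|$ for every $k \in [d]$, and hence both of the claimed two-sided inequalities on $\kappa(\mP)$ follow at once (the denominator on the right remaining positive because $\lambda_d(\mDelta) > \delta$ for $n$ sufficiently large) once one shows
\begin{equation*}
\|\widehat{\mDelta} - \mDelta\| \;\leq\; \delta
\end{equation*}
with probability at least $1 - 2n^{-1}$.

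For this spectral-norm bound I would apply the matrix Bernstein inequality to the i.i.d.\ mean-zero symmetric summands $\mW_i = \vx_i\vx_i^T - \mDelta$. The RDPG support condition $0 \leq \vx^T\vy \leq 1$ in particular yields $\|\vx_i\|_2 \leq 1$ and therefore $\|\mDelta\| \leq 1$; since $\vx_i\vx_i^T$ and $\mDelta$ are positive semidefinite with spectral norm at most one, $\|\mW_i\| \leq 1$ almost surely. For the matrix variance,
\begin{equation*}
\E\bigl[\mW_i^2\bigr] \;=\; \E\bigl[\|\vx_i\|_2^2\, \vx_i\vx_i^T\bigr] - \mDelta^2 \;\preceq\; \mDelta,
\end{equation*}
so $\bigl\|\sum_{i=1}^n \E[\mW_i^2]\bigr\| \leq n\|\mDelta\| \leq n$. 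Plugging these two parameters into Tropp's matrix Bernstein tail bound and choosing the deviation to be $t = \delta = 4\sqrt{\log d /n} + 8\log d/(3n)$ drives the resulting failure probability below $2n^{-1}$ for $n$ large enough.

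The only obstacle is essentially bookkeeping: reconciling the standard Bernstein constants with the specific closed form of $\delta$ given in the statement. No new conceptual step is required beyond the elementary combination of (i) the identity between the non-zero spectra of $\mX\mX^T$ and $\mX^T\mX$, (ii) Weyl's inequality, (iii) the RDPG boundedness constraint, and (iv) matrix Bernstein.
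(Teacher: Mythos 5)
Your proposal follows essentially the same route as the paper: reduce $\kappa(\mP)$ to the condition number of $\mX^T\mX/n$, apply matrix Bernstein to bound $\|\mX^T\mX/n - \mDelta\|$, and then transfer to eigenvalues via Weyl's inequality. The only difference is that you use somewhat sharper Bernstein parameters ($\|\vx_i\vx_i^T - \mDelta\|\le 1$ from the two-sided PSD bound and $\E[\mW_i^2]\preceq\mDelta$, versus the paper's cruder bounds of $2$ and $4$); this would in fact yield a slightly smaller deviation term than the stated $\delta$, which of course still establishes the lemma as written.
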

\begin{proof}
Applying the definition of $\kappa$ and using basic properties of eigenvalues,
\begin{equation*}
\kappa(\mP) = \kappa(\mX \mX^T)
= \frac{\lambda_1\left(\mX^T \mX/n\right)}
	{\lambda_d\left(\mX^T \mX/n\right)}.
\end{equation*}
Since $\mP$ is a probability matrix, for any $i \in [n]$, we have $0 \le \vx_i^T\vx_i \leq 1$, and 
\begin{equation*} 
\left\|\vx_i \vx_i^T - \mDelta\right\|
\leq \left\|\vx_i \vx_i^T\right\| + \left\|\mDelta\right\|
\leq \left\|\vx_i\right\|_2^2 + \E \| \vx_i \|_2^2
\leq 2. 
\end{equation*}
Similarly, we also have 
    $
    \left\|\E\left[\left(\vx_i \vx_i^T - \mDelta \right)\left(\vx_i \vx_i^T - \mDelta \right)\right]\right\| \leq 4.
    $
Therefore, by a matrix version of Bernstein's inequality \citep[see Corollary 3.3 in][]{chen2021spectral}, with probability at least $1 - 2 n^{-1}$, we have 
\begin{equation*}
\left\|\frac{1}{n} \mX^T\mX - \mDelta\right\|
= \left\|\frac{1}{n}\sum_{i=1}^n \left(\vx_i\vx_i^T - \mDelta\right)\right\|
\leq 4\sqrt{\frac{\log d}{n}} + \frac{8\log d}{3n}. 
\end{equation*}
Hence, by Weyl's inequality, it follows that with probability at least $1 - 2n^{-1}$, 
\begin{equation*}
    \left|\lambda_1(\mDelta) - \lambda_1\left(\frac{1}{n} \mX^T\mX\right)\right| \leq \delta
~~~\text{ and }~~~
    \left|\lambda_d(\mDelta) - \lambda_d\left(\frac{1}{n} \mX^T\mX\right)\right| \leq \delta,
\end{equation*}
    where we set $\delta = 4\sqrt{\frac{\log d}{n}} + \frac{8\log d}{3n}$.
    Rearranging the inequalities completes the proof. 
\end{proof}

Put simply, Lemma~\ref{lem:main:cond} implies that under the RDPG, when $n$ is sufficiently large, we have $\kappa(\mP) \approx \kappa(\mDelta)$. Without loss of generality, we assume that $\vx_1, \vx_2, \ldots, \vx_n$ are sampled from a distribution whose support is a subset of $\bbB_{d}(1) \cap \R^d_{+}$, where $\bbB_{d}(1)$ is the unit ball in $\R^d$.
 Denote the covariance matrix as $\mSig = \E\left(\vx_1 - \vmu\right)\left(\vx_1 - \vmu\right)^T$. Notice that for any $\ell \in [d]$, $\vx_{1,\ell}^2 \leq \|\vx_{1}\|_2^2 \leq 1$, hence $\vx_{1,\ell}^2 \leq \vx_{1,\ell}$ and we have 
\begin{equation*} \begin{aligned}
    \vmu_\ell = \E  \vx_{1, \ell} \geq \E  \vx_{1, \ell}^2 = \mu_{\ell}^2 + \mSig_{\ell\ell}. 
\end{aligned} \end{equation*}
this implies that $\vmu_{\ell} \geq \mSig_{\ell\ell}$.  

If $\mSig = \gamma\mI_d$ for some $\gamma > 0$, then $\kappa(\mDelta) = \gamma^{-1}\vmu^T\vmu + 1 \geq \gamma d+1$, and hence $\kappa(\mP) = \Omega_{\bbP}(d)$.
One sufficient condition for this is that each element of $\vx_1$ be drawn i.i.d.
For example, if the entries of $\vx_{1}$ are generated i.i.d.~from the uniform distribution over $[0, 1/\sqrt{d}]$, then $\kappa(\mDelta) = 3d + 1$. 
As another example, if we sample $\vx_1, \vx_2, \ldots, \vx_n$ uniformly from $\bbB_{d}(1) \cap \R^d_{+}$, then one can show that $\kappa(\mDelta) = (2d + \pi - 2)/(\pi - 2) > d$.

The case for the GRDPG is more complicated, owing to replacing the RDPG's inner product with an indefinite inner product.
We first state Lemma~\ref{lem:main:grdpg-cond}, which allows us to relate the spectrum of the indefinite matrix $\mP =  \mX \mI_{p,q}\mX^T$ to the spectrum of the positive semidefinite $\mDelta = \E \vx_1 \vx_1^T$.

\begin{lemma} \label{lem:main:grdpg-cond}
    Assume that $\mP = \mX \mI_{p,q}\mX^T$, where the row vectors $\vx_1, \vx_2, \ldots, \vx_n \in \R^d$ of $\mX$ are i.i.d.~random vectors with second moment matrix $\mDelta = \E \vx_1 \vx_1^T$.
If there exists $0 < \delta < 1$ such that
    \begin{equation}
    \label{eq:cov-concen}
        \left\|\mX^T\mX/n - \mDelta\right\| \leq \delta \|\mDelta\|,
    \end{equation}
    then for a suitably chosen constant $C > 0$, we have 
\begin{equation*}
    \frac{\lambda_1\left(\mI_{p,q}\mDelta\right)  - C\sqrt{\delta} \|\mDelta\|}{\lambda_d\left(\mI_{p,q}\mDelta\right) + C\sqrt{\delta} \|\mDelta\|} \leq \kappa(\mP) \leq \frac{\lambda_1\left(\mI_{p,q}\mDelta\right) + C\sqrt{\delta} \|\mDelta\|}{\lambda_d\left(\mI_{p,q}\mDelta\right) - C\sqrt{\delta} \|\mDelta\|}.
\end{equation*}
\end{lemma}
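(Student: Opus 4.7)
The plan is to reduce the claim to Weyl's inequality applied to a pair of symmetric matrices that are \emph{similar} (in the linear-algebra sense) to the non-symmetric matrices $\mI_{p,q}\mDelta_n$ and $\mI_{p,q}\mDelta$, where I set $\mDelta_n := \mX^T\mX/n$.

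First I would note that the nonzero eigenvalues of $\mP = \mX\mI_{p,q}\mX^T$ coincide with those of $\mI_{p,q}\mX^T\mX = n\,\mI_{p,q}\mDelta_n$, so by scale-invariance of the condition number, $\kappa(\mP) = \kappa(\mI_{p,q}\mDelta_n)$. Under the hypothesis $\|\mDelta_n - \mDelta\| \le \delta\|\mDelta\|$ with $\delta < 1$, Weyl's inequality forces $\mDelta_n$ to be positive definite whenever $\mDelta$ is (an implicit assumption, since otherwise $\lambda_d(\mI_{p,q}\mDelta) = 0$ and the stated bound is vacuous). Consequently $\mI_{p,q}\mDelta_n$ is similar to the symmetric matrix $\mM_n := \mDelta_n^{1/2}\mI_{p,q}\mDelta_n^{1/2}$, and analogously $\mI_{p,q}\mDelta$ is similar to $\mM := \mDelta^{1/2}\mI_{p,q}\mDelta^{1/2}$. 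These similarities preserve eigenvalues; in particular the relevant spectra are real, and $\kappa(\mP) = \kappa(\mM_n)$.

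Next I would bound $\|\mM_n - \mM\|$ via the telescoping identity
\begin{equation*}
\mM_n - \mM = (\mDelta_n^{1/2} - \mDelta^{1/2})\,\mI_{p,q}\,\mDelta_n^{1/2}
 + \mDelta^{1/2}\,\mI_{p,q}\,(\mDelta_n^{1/2} - \mDelta^{1/2}).
\end{equation*}
Submultiplicativity and $\|\mI_{p,q}\| = 1$ yield $\|\mM_n - \mM\| \le \|\mDelta_n^{1/2} - \mDelta^{1/2}\|\bigl(\|\mDelta^{1/2}\| + \|\mDelta_n^{1/2}\|\bigr)$. For the square-root factor I would invoke the standard PSD perturbation bound $\|\mA^{1/2} - \mB^{1/2}\|^2 \le \|\mA - \mB\|$, giving $\|\mDelta_n^{1/2} - \mDelta^{1/2}\| \le \sqrt{\delta\|\mDelta\|}$, while Weyl's inequality again gives $\|\mDelta_n^{1/2}\| \le \sqrt{(1+\delta)\|\mDelta\|}$. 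Combining yields $\|\mM_n - \mM\| \le C\sqrt{\delta}\,\|\mDelta\|$ for an absolute constant $C$.

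Finally, I apply Weyl's inequality $|\lambda_i(\mM_n) - \lambda_i(\mM)| \le \|\mM_n - \mM\|$ at the two indices realizing the largest and smallest magnitude eigenvalues of $\mM$; by Sylvester's law of inertia these are index $1$ and one of indices $p$ or $p+1$, matching the notation $\lambda_1$ and $\lambda_d$ of the statement (understood as extreme magnitudes). This directly produces the advertised sandwich with $C\sqrt{\delta}\|\mDelta\|$ slack in both numerator and denominator. The one delicate step is the matrix square-root perturbation bound, which requires operator-monotone functional calculus rather than a naive Lipschitz estimate; everything else is algebraic bookkeeping.
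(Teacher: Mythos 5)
Your proposal follows essentially the same route as the paper's proof: reduce $\kappa(\mP)$ to the condition number of $\mDelta_n^{1/2}\mI_{p,q}\mDelta_n^{1/2}$, telescope the difference against $\mDelta^{1/2}\mI_{p,q}\mDelta^{1/2}$, control the square-root factor via the operator-monotone perturbation bound $\|\mA^{1/2}-\mB^{1/2}\|\le\|\mA-\mB\|^{1/2}$ (the paper cites Bhatia, Theorem~X.1.1 for exactly this), and finish with Weyl's inequality. The one small wrinkle: you justify passing to the symmetrized matrix via a similarity transform, which requires $\mDelta_n$ invertible; the paper instead observes directly that $\mI_{p,q}\mDelta_n$ and $\mDelta_n^{1/2}\mI_{p,q}\mDelta_n^{1/2}$ share the same nonzero eigenvalues (the usual $AB$ versus $BA$ fact), which avoids any positive-definiteness assumption on $\mDelta_n$ — note also that your claim ``Weyl's inequality forces $\mDelta_n$ to be positive definite'' from $\|\mDelta_n-\mDelta\|\le\delta\|\mDelta\|$ alone is not quite right, since $\delta\|\mDelta\|$ may exceed $\lambda_d(\mDelta)$; in any case $\mDelta_n=\mX^T\mX/n$ is PSD by construction and that suffices. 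Aside from these cosmetic points, the argument is the same.
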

\begin{proof}
Since $\mDelta$ is a symmetric positive semidefinite matrix, its square root $\mDelta^{1/2}$ is well-defined, as is that of $\tmDelta = \mX^T \mX/n$.
Using basic spectral properties,
\begin{equation}  \label{eq:grdpgkappa}
\kappa(\mP) = \kappa(\mX\mI_{p,q} \mX^T)
= \left|\frac{\lambda_1\left(\mI_{p,q}\mX^T \mX/n\right)}{\lambda_d\left(\mI_{p,q}\mX^T \mX/n\right)}\right|
= \left|\frac{\lambda_1\left(\tmDelta^{1/2}\mI_{p,q}\tmDelta^{1/2}\right)}{\lambda_d\left(\tmDelta^{1/2}\mI_{p,q}\tmDelta^{1/2}\right)}\right|.
\end{equation}
Applying the triangle inequality and basic properties of the spectral norm, we have
\begin{equation*} \begin{aligned}
 \left\|\tmDelta^{1/2}\mI_{p,q}\tmDelta^{1/2}
	- \mDelta^{1/2}\mI_{p,q}\mDelta^{1/2}\right\|
&\leq \left\|\tmDelta^{1/2}\mI_{p,q}\left(\tmDelta^{1/2} - \mDelta^{1/2}\right)\right\| \\
&~~~~~~~~~~~~+ \left\|\mDelta^{1/2}\mI_{p,q}\left(\tmDelta^{1/2} - \mDelta^{1/2}\right)\right\|\\
&\leq \left(\left\|\tmDelta^{1/2}\right\| + \left\|\mDelta^{1/2}\right\|\right) \left\|\tmDelta^{1/2} - \mDelta^{1/2}\right\|\\
        &\leq 2 \left\|\mDelta^{1/2}\right\|\left\|\tmDelta^{1/2} - \mDelta^{1/2}\right\| + \left\|\tmDelta^{1/2} - \mDelta^{1/2}\right\|^2. 
\end{aligned} \end{equation*}
Since $\tmDelta$ and $\mDelta$ are both positive semidefinite matrices, by Theorem X.1.1 in \cite{Bhatia1997}, we have 
\begin{equation*}
    \left\|\tmDelta^{1/2} - \mDelta^{1/2}\right\| \leq \left\|\tmDelta - \mDelta\right\|^{1/2} \text{ and } \left\|\mDelta^{1/2}\right\| = \left\|\mDelta\right\|^{1/2}.
\end{equation*}
Therefore, using the fact that $\delta \in (0,1)$, we obtain
\begin{equation*} \begin{aligned}
\left\|\tmDelta^{1/2}\mI_{p,q}\tmDelta^{1/2}
	- \mDelta^{1/2}\mI_{p,q}\mDelta^{1/2}\right\|
&\leq 2\left\|\mDelta\right\|^{1/2}\left\|\tmDelta - \mDelta\right\|^{1/2}
	+ \left\|\tmDelta - \mDelta\right\|\\
    &\leq (2\sqrt{\delta} + \delta)\|\mDelta\|
	\leq 3\sqrt{\delta}\|\mDelta\|. 
\end{aligned} \end{equation*}
Applying Weyl's inequality, it follows that
\begin{equation*} \begin{aligned}
\left|\lambda_1\left(\tmDelta^{1/2}\mI_{p,q}\tmDelta^{1/2}\right)
-\lambda_1\left(\mDelta^{1/2}\mI_{p,q}\mDelta^{1/2}\right)\right|
&\leq C\sqrt{\delta} \|\mDelta\|
\end{aligned} \end{equation*}
and
\begin{equation*} \begin{aligned}
\left|\lambda_d\left(\tmDelta^{1/2}\mI_{p,q}\tmDelta^{1/2}\right)
- \lambda_d\left(\mDelta^{1/2}\mI_{p,q}\mDelta^{1/2}\right)\right| &\leq C\sqrt{\delta} \|\mDelta\|.
\end{aligned} \end{equation*}
Applying these two bounds to Equation~\eqref{eq:grdpgkappa}, it follows that
\begin{equation*} 
\kappa(\mP)
\geq \frac{\lambda_1\left(\mDelta^{1/2}\mI_{p,q}\mDelta^{1/2}\right) - C\sqrt{\delta} \|\mDelta\|}{\lambda_d\left(\mDelta^{1/2}\mI_{p,q}\mDelta^{1/2}\right) + C\sqrt{\delta} \|\mDelta\|}\\
= \frac{\lambda_1\left(\mI_{p,q}\mDelta\right) - C\sqrt{\delta} \|\mDelta\|}{\lambda_d\left(\mI_{p,q}\mDelta\right) + C\sqrt{\delta} \|\mDelta\|},
\end{equation*}
and
\begin{equation*}
    \kappa(\mP)
\leq 
 \frac{\lambda_1\left(\mI_{p,q}\mDelta\right) + C\sqrt{\delta} \|\mDelta\|}{\lambda_d\left(\mI_{p,q}\mDelta\right) - C\sqrt{\delta} \|\mDelta\|},
\end{equation*}
completing the proof.
\end{proof}

For many distributions, Equation~\eqref{eq:cov-concen} holds with high probability for small choices of $\delta$.
As an example, suppose that for some constant $K \geq 1$, $\|\vx_i\|_2 \leq K(\E \|\vx_i\|_2^2)^{1/2}$ almost surely. Then 
\begin{equation*}
\| \frac{ \mX^T\mX }{ n } - \mDelta \| \leq C \left(\sqrt{\frac{K^2 d(\log d+\log n)}{n}} + \frac{K^2 d(\log d + \log n)}{n}\right) \|\mDelta\|
\end{equation*}
holds with probability at least $1 - 2n^{-1}$. See Theorem 5.6.1 and Exercise 5.6.4 in \cite{vershynin2018high}. 

As another example, if the first $p$ entries of $\vx_1$ are independently drawn from the uniform distribution over the interval $[1/(2\sqrt{p}), 1/\sqrt{p}]$ and the last $q$ entries are independently drawn from the uniform distribution over the interval $[0, 1/(2\sqrt{q})]$, then one can show that $\kappa(\mI_{p,q}\mDelta) \geq 5d - \frac{1}{2}$ and we can show that $\|\vx_i\|_2 \leq 3(\E \|\vx_i\|_2^2)^{1/2}$ almost surely, so that $\kappa(\mP) = \Omega_{\bbP}(d)$.

On the other hand, if we treat $d$ as a constant with respect to $n$, then $\kappa(\mP) = O_{\bbP}(1)$ and Theorem~\ref{thm:main:main} implies the following corollary.

\begin{corollary} \label{cor:results:latent}
Under the GRDPG, with latent dimension $d$ fixed with respect to $n$, suppose that the latent position matrix $\mX \in \R^{n\times d}$ satisfies $2d \leq \kappa(\mX\mI_{p,q}\mX^T) = O(1)$ and $\lambda_d(\mX\mI_{p,q}\mX^T) \geq \lambda_{\star}$.
Then
\begin{equation*} 
\inf _{\hat{\mX}} \sup _{\mX \in \calX_n^{(p, q)}}
	\E \tilde{d}_{2, \infty}([\hat{\mX}],[\mX])
	\gtrsim \sqrt{\frac{\lambda_\star \wedge \log n}{n}}. 
    \end{equation*}
\end{corollary}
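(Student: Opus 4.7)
The plan is to derive Corollary~\ref{cor:results:latent} as a direct specialization of Theorem~\ref{thm:main:main} to constant condition number, by matching the corollary's parameter set to $\calP(\kappastar, \lambdastar, p, q)$ and verifying the theorem's hypotheses. Given any $\mX \in \calX_n^{(p,q)}$ satisfying the corollary's constraints, the canonical decomposition $\mP_\mX = \mU_\mX \mLambda_\mX^{1/2} \mI_{p,q} \mLambda_\mX^{1/2} \mU_\mX^T$ used in Equation~\eqref{eq:setup:dtildetti} produces a representative $\mU_\mX \mLambda_\mX^{1/2} \in [\mX]$ with $\mU_\mX \in \Stiefel_d(\R^n)$, $\kappa(\mLambda_\mX) = \kappa(\mP_\mX)$, and $\lambda_d(\mLambda_\mX) \ge \lambdastar$. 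Choosing $\kappastar$ to be any constant upper bound on $\kappa(\mP_\mX)$ places $(\mU_\mX, \mLambda_\mX)$ in $\calP(\kappastar, \lambdastar, p, q)$, and since $\dtildetti$ depends only on equivalence classes, the corollary's supremum over $[\mX]$ equals the supremum over these canonical pairs.

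Next, I would verify the hypotheses of Theorem~\ref{thm:main:main}, using the relaxed conditions described in the remark following it. With $\kappastar$ a constant and $d$ fixed, the condition $\kappastar = o(n)$ is automatic. The corollary's assumption $\kappa(\mP_\mX) \ge 2d$ allows us to take $\kappastar \ge 2d$, which satisfies the relaxation $\kappastar \ge (1+\epsilon)d$ with, for example, $\epsilon = 1/2$. The only remaining condition, $(2+\epsilon)\kappastar \lambdastar \le n$, holds so long as $\lambdastar = O(n)$ with small enough leading constant, which is certainly true in the regime $\lambdastar = O(\log n)$.

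I would then conclude by splitting on the size of $\lambdastar$. When $\lambdastar \le c \log n$ for some fixed $c > 0$, we have $\kappastar \lambdastar = O(\log n) = o(n)$ and Theorem~\ref{thm:main:main} applies verbatim, yielding $\sqrt{\kappastar (\lambdastar \wedge \log n)/n} = \sqrt{\kappastar \lambdastar /n} \gtrsim \sqrt{(\lambdastar \wedge \log n)/n}$, which is the required bound. The remaining case $\lambdastar > c \log n$, in which $\lambdastar \wedge \log n = \log n$ and the target rate collapses to $\sqrt{\log n/n}$, is the principal obstacle: although the target no longer depends on $\lambdastar$, the parameter set $\{\lambda_d \ge \lambdastar\}$ is strictly more restrictive, and one must show that the Hadamard-based construction underlying the proof of Theorem~\ref{thm:main:main} can be shifted or rescaled so that its hypotheses all satisfy $\lambda_d \ge \lambdastar$ while retaining pairwise $\dtildetti$ separations of order $\sqrt{\log n/n}$. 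Once this adjustment is carried out, the same packing argument that proves Theorem~\ref{thm:main:main} delivers the desired bound.
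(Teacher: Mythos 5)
Your overall strategy---specializing Theorem~\ref{thm:main:main} to constant condition number and absorbing the factor $\sqrt{\kappastar}$ into the $\gtrsim$---is the correct and intended one, and the first half of your proposal is sound: the canonical representative $\mU_\mX\mLambda_\mX^{1/2}$ places the problem in $\calP(\kappastar,\lambdastar,p,q)$, and the relaxed constants from the remark following the theorem accommodate $\kappastar \ge 2d$.

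The gap is in your closing case split. You flag $\lambdastar > c\log n$ as a remaining obstacle requiring a rescaled Hadamard construction, but Theorem~\ref{thm:main:main} already delivers the bound $\gtrsim\sqrt{\kappastar(\lambdastar\wedge\log n)/n}$ for the \emph{full} range of $\lambdastar$ satisfying $(2+\epsilon)\kappastar\lambdastar\leq n$; there is no hidden restriction to $\lambdastar\lesssim\log n$. The $\wedge\log n$ factor is produced by the theorem's proof, not by any constraint you must place on $\lambdastar$: the perturbation vectors $\vx_i$ in Lemma~\ref{lem:constructxi} carry the scale $\sqrt{\lambda_1(\lambda_d\wedge\log n)/(nd)}$ (Equation~\eqref{eq:main:abs}), which automatically truncates at $\log n$-scale once $\lambda_d > \log n$, precisely so that the KL-divergence bound in Lemma~\ref{lem:main:kappaconst:KL_bound} stays below $\tfrac{1}{10}\log n$. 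Meanwhile the packing set is built with $\lambda_d = \lambdastar$ exactly (see the choice in Lemma~\ref{lem:main:kappaconst:packing_distance} and the corresponding paragraph of the proof of Theorem~\ref{thm:main:main}), so the restriction $\lambda_d\geq\lambdastar$ in $\calP$ is met with no shifting or rescaling. Once you see this, the case split disappears and the corollary follows in a single step: $\kappastar$ bounded above and bounded below by $2d\geq 2$ gives $\sqrt{\kappastar}=\Theta(1)$, hence $\sqrt{\kappastar(\lambdastar\wedge\log n)/n}\gtrsim\sqrt{(\lambdastar\wedge\log n)/n}$.
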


Under the RDPG, \cite{xie2019optimal} derived a similar minimax lower bound for estimation in Frobenius norm, rather than $(\tti)$-norm, under the setting where the latent dimension is a constant.
For the sake of comparison, we restate their lower bound using our notation.
\begin{theorem}[Theorem 2 in \cite{xie2019optimal}] \label{thm:results:xie2019}
Let $\mA \sim \RDPG(\mX)$ for some $\mX \in \calX_{n,d}$, where $d$ is a constant with respect to $n$.
Let $\hat{\mX}$ be an estimator of the latent position matrix $\mX$ satisfying $\|\hat{\mX}\|_{\mathrm{F}} \lesssim \sqrt{n}$ with probability one.
Then
\begin{equation*} 
\inf_{\hat{\mX}} \sup _{\mX \in \calX^d_n} \E \left\{\frac{1}{n} \inf _{\mW \in \bbO_d}\|\widehat{\mX}-\mX \mW \|_{\mathrm{F}}^2\right\} \gtrsim \frac{1}{n}.
\end{equation*}
\end{theorem}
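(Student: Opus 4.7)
The plan is a standard Fano-type argument using a Gilbert--Varshamov packing of perturbed Erd\H{o}s--R\'enyi RDPG hypotheses, reducing the minimax problem to multi-hypothesis testing. The goal is to exhibit a finite family $\{\mX_0, \mX_1, \dots, \mX_M\} \subset \calX^d_n$ such that the pairwise $\bbO_d$-adjusted squared Frobenius distances are uniformly $\Omega(1)$ while the pairwise KL divergences between the induced graph distributions are $O(\log M)$; Tsybakov's Theorem~2.7 \citep{Tsybakov2009} then yields $\inf_{\hat{\mX}} \sup_{\mX \in \calX_n^d} \E \inf_{\mW \in \bbO_d} \|\hat{\mX} - \mX \mW\|_F^2 \gtrsim 1$, equivalent to the claimed $\gtrsim 1/n$ after normalizing by $n$.

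Concretely, I would fix a unit vector $\ve \in \R^d$ and a constant $c \in (0, 1/2)$, and take latent positions $\mX_k = (c \mathbf{1}_n + \delta \omega_k) \ve^T$, where $\omega_0 = \mathbf{0}$ and $\omega_1, \dots, \omega_M \in \{-1, +1\}^n$ form a Gilbert--Varshamov code of size $M = 2^{c' n}$ with pairwise Hamming distance at least $n/4$. For $\delta$ sufficiently small every entry of $\mX_k \mX_k^T$ lies in $[0,1]$ and thus $\mX_k \in \calX_n^d$. Because the $\mX_k$ share the common direction $\ve$, minimization over $\bbO_d$ collapses to a sign choice $w \in \{\pm 1\}$, and the minimizer $w = +1$ yields $\|\mX_j - \mX_k\|_F^2 = \delta^2 \|\omega_j - \omega_k\|_2^2 \gtrsim n \delta^2$. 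Meanwhile, entries of $\mX_k \mX_k^T$ and $\mX_j \mX_j^T$ differ by $O(c \delta)$, so the elementary bound $\KL(\Bernoulli(p) \,\|\, \Bernoulli(q)) \lesssim (p-q)^2/[q(1-q)]$ gives total KL $O(n^2 \delta^2)$ between any two induced distributions. Choosing $\delta^2 \asymp 1/n$ balances these bounds: $n \delta^2 \asymp 1$ delivers the target Frobenius separation and $n^2 \delta^2 \asymp n$ matches $\log M = c' n$.

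The main obstacle is really just careful bookkeeping in the rotation-adjusted reduction: one must verify that the Frobenius minimum over $\bbO_d$ is attained at a benign group element (the shared direction $\ve$ trivializes this, since the $c \mathbf{1}_n \ve^T$ baseline forces any $w \ne +1$ to produce a distance of order $\sqrt{n}$), and check that the Bernoulli KL constant does not blow up (which requires $c$ bounded away from $0$ and $1$ so that $c^2(1-c^2)$ is a positive constant). The estimator-norm constraint $\|\hat{\mX}\|_F \lesssim \sqrt{n}$ is invoked only to justify the standard reduction from estimation to testing, and is automatically satisfied by any estimator within a constant factor of optimal, since $\|\mX\|_F = O(\sqrt{n})$ whenever $\mX \mX^T \in [0,1]^{n \times n}$. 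The absence of any $\log n$ factor here, in contrast to Theorem~\ref{thm:main:main}, reflects that the Frobenius norm averages across all $n$ rows, whereas the two-to-infinity norm picks out the worst row and consequently absorbs an extra logarithmic factor.
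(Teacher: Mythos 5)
The paper does not itself prove this result; Theorem~\ref{thm:results:xie2019} is a verbatim restatement of Theorem~2 of \cite{xie2019optimal}, so there is no internal argument to compare against. That said, your Fano-type packing construction is a valid and essentially self-contained proof of the stated rate. The rank-one design $\mX_k = (c\mathbf{1}_n + \delta\omega_k)\ve^T$ is well chosen: writing $v_j = c\mathbf{1}_n + \delta\omega_j$ and $t = \ve^T\mW^T\ve$, one has
\begin{equation*}
\|\mX_j - \mX_k\mW\|_F^2 = \|v_j\|_2^2 + \|v_k\|_2^2 - 2\,(v_j^Tv_k)\,t,
\end{equation*}
which is affine in $t \in [-1,1]$, so the minimum over $\mW \in \bbO_d$ is attained at $t = \sign(v_j^Tv_k)$; since $v_j^Tv_k \ge n(c-\delta)^2 - 2n\delta^2 > 0$ once $\delta$ is small relative to $c$, the minimizer is $t=1$ and the separation $\delta^2\|\omega_j-\omega_k\|_2^2 \gtrsim n\delta^2$ follows as you state. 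The KL accounting and the balance $\delta^2 \asymp n^{-1}$ against $\log M \asymp n$ are likewise correct. Two small points would tighten the writeup: first, Tsybakov's Theorem~2.7 as quoted controls $\E[\,d(\hat\theta,\theta)\,]$, so to obtain the squared-Frobenius statement you should pass through the underlying testing bound $\Pr\{d(\hat\theta,\theta)\ge s\}\ge c_\alpha$ and then use $\E[d^2]\ge s^2\Pr\{d\ge s\}$, rather than appealing to the expected-distance form directly; second, your construction nowhere uses the estimator-norm restriction $\|\hat{\mX}\|_F \lesssim \sqrt{n}$, so your argument in fact proves the unrestricted minimax lower bound --- that hypothesis is needed in \cite{xie2019optimal} for reasons internal to their more refined proof technique, and you should state explicitly that your global packing route dispenses with it.
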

Directly applying Theorem~\ref{thm:results:xie2019} in the RDPG setting and using the fact that
\begin{equation} \label{eq:results:tti-vs-frob}
    \|\mY\|_{\tti} \geq \frac{ \|\mY\|_F }{ \sqrt{n} }
\end{equation}
for any $\mY \in \R^{n\times d}$, we obtain a lower bound of $O(n^{-1/2})$.
This has a gap of order $\lambda_{\star}\wedge\sqrt{\log n}$ compared to our result in Corollary~\ref{cor:results:latent}.
Further, we note that the techniques used in \cite{xie2019optimal} are specialized to the RDPG, and it is not obvious how to adapt their strategy to the more general setting considered here.

\subsection{Singular Subspace Estimation}
\label{subsec:main:subspace}

For a matrix $\mP = \mU\mLambda\mU^T$, instead of estimating the latent positions, singular subspace estimation aims to estimate the matrix $\mU \in \R^{n\times d}$. There is a vast literature on singular subspace estimation, and we refer the interested reader to the recent survey by \cite{chen2021spectral}.
\cite{vu2013minimax} derives a minimax lower bound for subspace estimation for sparse high-dimensional principal component analysis (PCA), and \cite{cai2021optimal} provides a more general framework to establish lower bounds in structured PCA problems.
We note that PCA is distinct from the low-rank network models considered here, and that these two papers consider estimation in the Frobenius or spectral norm in the presence of Gaussian noise, while we are concerned with estimation under the $(\tti)$-norm with Bernoulli-distributed noise.
To the best of our knowledge, the prior work closest to the present manuscript is that by \cite{Zhou2021}, where the authors obtain minimax lower bounds for singular subspace estimation of random bipartite graphs.
A few existing works address minimax lower bounds for singular subspace estimation under the $(\tti)$-norm.
\cite{cai2021subspace} provides a lower bound under the $(\tti)$-norm for subspace recovery in an incomplete low-rank matrix setting.
Lower bounds can also be found in \cite{joshua2022estimating}, derived from lower bounds on the spectral norm.
Below, we discuss why such approaches result in lower bounds weaker than those proved in the present work.

As a corollary to Theorem~\ref{thm:main:main}, we also obtain a minimax lower bound for singular subspace estimation.
The proof uses the same construction as Theorem~\ref{thm:main:main}, and thus details are omitted.

\begin{corollary} \label{cor:main:subspace}
Under the same setup as Theorem \ref{thm:main:main}, we have
\begin{equation} \label{eq:main:subspace-minimax}
\inf_{\hat{\mU}} \sup_{\mU \in \Stiefel_d(\R^n)}
\E \min_{\mW \in \bbO_d} \left\| \hat{\mU}- \mU \mW \right\|_{\tti}
\gtrsim \sqrt{ \frac{\kappa_{\star}(\lambda_\star\wedge\log n)}
		{\lambda_\star n} }. 
\end{equation}
\end{corollary}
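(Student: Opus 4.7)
I would reuse the Hadamard-based packing $\{(\mU_1,\mLambda),\dots,(\mU_M,\mLambda)\} \subset \calP(\kappastar,\lambdastar,p,q)$ constructed in the proof of Theorem~\ref{thm:main:main} and invoke Tsybakov's Theorem~2.7 in the subspace $(2,\infty)$ metric rather than the latent-position metric. Because the probability matrices $\mP_i=\mU_i\mLambda\mU_i^T$ are unchanged, the Bernoulli Kullback--Leibler control $\KL(\bbP_i\|\bbP_j)\lesssim \|\mP_i-\mP_j\|_F^2$ from the earlier proof transfers verbatim, so the ``information term'' in Tsybakov's inequality is already handled; what remains is to re-derive the packing's separation in the subspace metric.

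Concretely, I would show that for all $i \ne j$,
\begin{equation*}
\min_{\mW\in\bbO_d}\bigl\|\mU_i - \mU_j\mW\bigr\|_{\tti} \gtrsim \sqrt{\frac{\kappastar(\lambdastar\wedge\log n)}{\lambdastar n}}.
\end{equation*}
The Hadamard construction is designed so that the distinguishing perturbations between $\mU_i$ and $\mU_j$ live in the eigen-direction(s) of $\mLambda$ associated with the smallest eigenvalue $\lambdastar$. Consequently, right-multiplication by $\mLambda^{1/2}$ rescales the relevant row-wise perturbations by exactly $\sqrt{\lambdastar}$, rather than by the worst-case factor $\sqrt{\lambda_1}=\sqrt{\kappastar\lambdastar}$ that a blanket operator-norm argument would give. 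Pulling this factor out of the latent-position separation $\gtrsim \sqrt{\kappastar(\lambdastar\wedge\log n)/n}$ established in Theorem~\ref{thm:main:main} yields precisely the target subspace separation. Plugging this into Tsybakov's Theorem~2.7 together with the KL bound above delivers the claimed lower bound.

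The main technical obstacle is the sharp $\sqrt{\lambdastar}$ rescaling. It amounts to a geometric check on the explicit row structure of the stacked Hadamard construction: that the distinguishing coordinates of the packing lie in the smallest-eigenvalue block of $\mLambda$, and that the minimizing orthogonal transformations in the two distances --- $\bbO_d$ for the subspace distance and $\bbO_d\cap\Opq$ for the latent-position distance --- can be matched up so as to preserve this localization rather than mixing it with larger-eigenvalue blocks. Once this geometric identity is in place, the remainder of the argument is a direct transcription of the Fano--Tsybakov step used in Theorem~\ref{thm:main:main}, which is why the authors omit the detailed write-up.
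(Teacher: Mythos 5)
Your proposal is correct and matches the paper's intended approach: reuse the same Hadamard packing and the unchanged KL bounds (the $\mP_i$ are identical), and re-derive the packing separation directly in the subspace $(\tti)$-metric, where it comes out a factor $\sqrt{\lambdastar}$ smaller than the latent-position separation because the perturbation vector $\vx_i$ (or $\vy_1-\vy_0$ in the $\kappa=\omega(1)$ case) is dominated by the coordinates that $\mLambda^{1/2}$ rescales by $\sqrt{\lambda_d}$ rather than $\sqrt{\lambda_1}$. One small clarification to your ``technical obstacle'' remark: there is no need to match up minimizers between $\bbO_d$ and $\bbO_d\cap\Opq$ --- the proofs of Lemmas~\ref{lem:main:kappaconst:packing_distance:prelim} and~\ref{lem:kappagrowing:packing} already lower-bound the distance uniformly over every $\mW\in\bbO_d$ (the $\Opq$ restriction is never invoked), which is precisely why the corollary can take its minimum over the larger group; likewise the $\sqrt{\lambdastar}$ factor is not literally ``pulled out'' of the latent-position bound but re-computed from $\|\vx_i\|_2$ once the $\mLambda^{1/2}$ factors are dropped from the same argument.
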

We note that the minimum in Equation~\eqref{eq:main:subspace-minimax} is taken over $\Od$ rather than $\Od\cap \Opq$, since our proof of Theorem~\ref{thm:main:main} only makes use of the fact that $\mW \in \Od$, while the restriction to $\Od\cap \Opq$ is necessary to ensure that our distance on $\calXtilde^{(p,q)}_n$ is well-defined.

We remark that lower bounds for subspace estimation derived from the Frobenius norm or the spectral norm cannot be optimal in the $(\tti)$-norm setting.
These lower bounds use the fact that for any $\mU \in \R^{n\times d}$,
\begin{equation} \label{eq:results:tti-vs-op}
    \|\mU\|_{\tti} \geq \frac{1}{\sqrt{n}} \|\mU\|
\end{equation}
Taking $\hat{\mU} = \mzero$ to be our estimator, we have
\begin{equation*} \begin{aligned}
    \inf_{\hat{\mU}} \sup_{\mU \in \Stiefel_d(\R^n)}
        \E \min_{\mW \in \bbO_d} \left\| \hat{\mU}- \mU \mW \right\| &\leq \sup_{\mU \in \Stiefel_d(\R^n)} \E \min_{\mW \in \bbO_d} \left\|\mU \mW \right\| = 1\\
\end{aligned} \end{equation*}
or 
\begin{equation*}
\begin{aligned}
    \inf_{\hat{\mU}} \sup_{\mU \in \Stiefel_d(\R^n)}
        \E \min_{\mW \in \bbO_d} \left\| \hat{\mU}- \mU \mW \right\|_F &\leq \sqrt{d},
\end{aligned}
\end{equation*}
where this second bound follows from Equation~\eqref{eq:results:tti-vs-frob}.
It follows that any lower bound on the $(\tti)$-norm minimax rate can be no larger than $O(\sqrt{d/n})$ if we derive it from the Frobenius norm or the spectral norm through Equation~\eqref{eq:results:tti-vs-frob} or Equation~\eqref{eq:results:tti-vs-op}, respectively.
Comparing this with Equation~\eqref{eq:main:subspace-minimax}, our lower bound in Corollary~\ref{cor:main:subspace} improves on this rate by a factor of order $\sqrt{(\lambdastar \wedge \log n) \kappastar / \lambdastar}$ if $d$ is bounded by a constant.  

\subsection{Upper bounds} \label{subsec:upper}

In order to see the tightness of our lower bounds in Theorem~\ref{thm:main:main} and Corollary~\ref{cor:main:subspace}, we now consider upper bounds on the $(\tti)$-norm estimation error in different asymptotic regimes.
Before doing so, we must introduce the concept of average node degree and sparsity of a network.

For a node in a network, its degree is defined as the number of edges connected to it. For a random network with $n$ nodes generated from a probability matrix $\mP$, the $i$-th node has an expected degree of $\sum_{j=1}^n \mP_{ij}$. We define the average node degree of a network as the expected degree of each nodes averaging over the entire network, which is given by $n^{-1}\sum_{i=1}^n\sum_{j=1}^n \mP_{ij}$. If the average node degree grows as $\Theta(n)$, we are in the dense network regime. Random networks generated by the GRDPG model are dense networks. In applications, networks are observed to be sparse: the average node degree grows as $o(n)$. To incorporate the sparse regime into the GRDPG model, we scale the probability matrix $\mP$ by a sparsity factor $\rho_n \in (0,1]$, so that the probability matrix becomes $\rho_n\mP$, and its average node degree grows as $\Theta(n \rho_n)$. When $\rho_n = 1$, we recover the dense regime. Allowing $\rho_n \to 0$ as $n \to \infty$ produces sparse networks.

For latent position estimation under the GRDPG model, Theorem 3 in \cite{Rubin_2017} established an upper bound on the estimation errors of the ASE under $(\tti)$-norm.
We restate this result here.

\begin{theorem}[Theorem 3 in \cite{Rubin_2017}]
\label{thm:main:upper}
There exists a universal constant $c>1$ and a matrix $\mW_{\star} \in \bbO_d \cap \Opq$ such that, provided the sparsity factor satisfies $n\rho_n = \omega\{\log^{4c} n\}$, 
\begin{equation*}
\left\|\hat{\mU}\hat{\mLambda}^{1/2} \mW_{\star}-\mU\mLambda^{1/2}\right\|_{\tti}
=O_{\bbP} \left(\frac{\log ^c n}{n^{1 / 2}}\right).
\end{equation*}
\end{theorem}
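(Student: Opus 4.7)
The plan is to adapt the two-to-infinity norm analysis of the adjacency spectral embedding developed for the RDPG in \cite{LyzSusTanAthPri2014} to the indefinite setting of the GRDPG. First I would decompose $\mP = \mU_P \mLambda_P \mU_P^T$, where $\mLambda_P \in \R^{d \times d}$ is diagonal with $p$ positive and $q$ negative entries, and let $\hat{\mU}, \hat{\mLambda}$ denote the analogous spectral components of $\mA$, i.e.\ the top $p$ positive and top $q$ negative eigenpairs. I would take $\mW_{\star}$ to be the Procrustes aligner between $\hat{\mU}$ and $\mU_P$; a block-diagonal argument of the same flavor as the one carried out in Section~\ref{sec:setup} when showing that $\dtildetti$ is well-defined on $\calXtilde_n^{(p,q)}$ establishes that this minimizer may be chosen in $\bbO_d \cap \Opq$, since the sign pattern of $\mLambda_P$ forces any compatible alignment to split into $p \times p$ and $q \times q$ orthogonal blocks.

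The heart of the argument is a first-order row-wise expansion
\begin{equation*}
\hat{\mU}\hat{\mLambda}^{1/2}\mW_{\star} - \mU_P \mLambda_P^{1/2}
\;=\; (\mA - \mP)\,\mU_P \mLambda_P^{-1/2}\mI_{p,q} \;+\; \mR,
\end{equation*}
obtained by manipulating $\mA\hat{\mU} = \hat{\mU}\hat{\mLambda}$ against $\mP\mU_P = \mU_P \mLambda_P$ and collecting all second-order noise into $\mR$. For each $i \in [n]$, the $i$-th row of the leading term is a sum of $n$ independent, mean-zero, bounded summands; combining the orthonormality of $\mU_P$ with the scaling $\lambda_d(\mP) \asymp n\rho_n$ gives a per-coordinate variance of order $1/n$, so a matrix Bernstein inequality produces a row-wise bound of order $\sqrt{\log n / n}$ with high probability, which already matches the target rate $\log^c n / \sqrt{n}$.

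The main obstacle, and the source of the constant $c > 1$ in the logarithmic exponent, is bounding the residual $\mR$ in the $(\tti)$-norm. The standard device is a leave-one-out coupling: for each $i \in [n]$, define $\mA^{(i)}$ by replacing the $i$-th row and column of $\mA$ with their expectations, and let $\hat{\mU}^{(i)}$ be the corresponding spectral embedding of $\mA^{(i)}$. Then $\hat{\mU}^{(i)}$ is independent of the $i$-th row of $\mA - \mP$, so Bernstein-type concentration can be applied to cross-terms like $\ve_i^T (\mA-\mP)\hat{\mU}^{(i)}$, and the transfer back to $\hat{\mU}$ proceeds by controlling $\|\hat{\mU} - \hat{\mU}^{(i)}\mW^{(i)}\|$ for a suitable aligner $\mW^{(i)}$ via a Davis--Kahan argument together with the operator-norm bound $\|\mA - \mP\| = O(\sqrt{n\rho_n})$ from matrix Bernstein. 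Iterating these estimates accumulates polylog factors into $\log^c n$, and the sparsity hypothesis $n\rho_n = \omega(\log^{4c} n)$ is precisely what is needed to keep the eigengap $\lambda_d(\mP) \asymp n\rho_n$ safely above the noise scale at every step. The indefinite signature enters only through the $\mI_{p,q}$ in the leading term and the refinement of $\mW_{\star}$ to lie in $\bbO_d \cap \Opq$, so the GRDPG bound follows from the RDPG analysis once this sign structure is carried through carefully.
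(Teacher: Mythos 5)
This theorem is \emph{not} proved in the paper at all; it is Theorem~3 of \cite{Rubin_2017}, quoted verbatim as a known upper bound to compare against the paper's minimax lower bound. There is therefore no internal proof to check your sketch against, and the honest answer is that you have written a plausible outline of how results of this kind are proved in the spectral-embedding literature rather than a reconstruction of anything in this manuscript.

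That said, a few substantive remarks on your outline. The overall skeleton — (i) reduce to a first-order term $(\mA-\mP)\mU_P(\cdot)$ plus a residual, (ii) matrix-Bernstein the rows of the first-order term, (iii) control the residual using a leave-one-out decoupling and Davis--Kahan — is a sound modern route to two-to-infinity bounds and would in fact yield the stated rate. However, your notation in the first-order term is internally inconsistent: you define $\mLambda_P$ to be diagonal with $p$ positive and $q$ \emph{negative} entries, and then write $\mLambda_P^{\pm 1/2}$, which is undefined over $\R$. You need to separate magnitudes and signs, as the paper itself does — write $\mP = \mU_P\,\mLambda_P^{1/2}\,\mI_{p,q}\,\mLambda_P^{1/2}\,\mU_P^T$ with $\mLambda_P$ positive diagonal — and then the leading term should read $(\mA-\mP)\mU_P\mI_{p,q}\mLambda_P^{-1/2}$ (up to the aligner), so that the $\mI_{p,q}$ cancels correctly against the signed eigenvalue matrix of $\mA$. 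The Procrustes-aligner claim that $\mW_\star$ may be taken in $\bbO_d\cap\Opq$ is correct and is handled essentially as you say, by the block structure forced by the sign pattern. Finally, whether one needs the full leave-one-out machinery depends on the target: \cite{Rubin_2017} argue the residual bound more directly in the spirit of \cite{LyzSusTanAthPri2014} without introducing $\mA^{(i)}$; the leave-one-out route you describe is a valid (and arguably cleaner) alternative but is not what that reference actually does, so you should present it as such rather than as a reconstruction of their argument.
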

In the setting of Theorem~\ref{thm:main:upper}, the condition number of the probability matrix satisfies $\kappa = O(1)$ and $\lambda_d = \Omega( n \rho_n ) = \omega( \log  n )$.
Applying Theorem~\ref{thm:main:main}, the lower bound in Equation~\eqref{eq:main:minimax} implies that the minimax estimation rate should be $n^{-1/2} \log^{1/2} n$, which matches the upper bound up to a polylogarithmic factor.
This also suggests the near optimality of the ASE in the GRDPG model. Note that Theorem~\ref{thm:main:upper} also applies to the RDPG model since the latter is a special case of the GRDPG model. 

For singular subspace estimation of low-rank plus noise models like that in Definition~\ref{def:lrpn}, an upper bound for the estimation error of the truncated SVD estimator $\hat{\mU}$ is given by Theorem 4.2 in \cite{chen2021spectral}. Adapted to our setting, Theorem 4.2 in \cite{chen2021spectral} states that there exists a matrix $\mW_{\star} \in \Od$, such that 
\begin{equation} \label{eq:main:sub-upper}
    \left\|\hat{\mU} \mW_{\star}-\mU\right\|_{\tti} \lesssim \frac{\kappa \sqrt{\rho_n \mu }+ \sqrt{\rho_n \log n}}{\lambda_d},
\end{equation}
where $\mu= n \left\|\mU\right\|_{\tti}/d$ is the incoherence parameter of the probability matrix $\mP$.
Notice that we always have $\mu \geq 1$.
Under the GRDPG, both $\mu$ and $\kappa$ are bounded by constants, and $\lambda_1/n = O(\rho_n)$.
Hence, the lower bound in Equation~\ref{eq:main:subspace-minimax} ensured by Theorem~\ref{thm:main:main} also matches the upper bound in Equation~\eqref{eq:main:sub-upper} up to a constant.

More generally, by the Perron-Frobenius theorem, for any probability matrix $\mP$, we have
\begin{equation*}
\lambda_1 \geq \min_{i\in[n]} \sum_{j = 1}^n \mP_{ij},
\end{equation*}
Hence, if we assume that $\mP = \rho_n \mP_0$ for some probability matrix $\mP_0$ with entries strictly bounded between 0 and 1, then $\lambda_1 = \Theta(n\rho_n)$, and our lower bound in Equation~\eqref{eq:main:subspace-minimax} can be rewritten as $\Omega( \sqrt{\rho_n(\lambda_d\wedge\log n)}/\lambda_d )$.
In this setting, if we further assume that $\mu = O(\log n)$, the upper bound in Equation~\eqref{eq:main:sub-upper} becomes $ O( \sqrt{ \rho_n (\lambda_d \wedge \log n) }/\lambda_d ) $, and we see that there is a $O(\kappa)$ gap (up to log factors) between the upper bound derived by \cite{chen2021spectral} and our lower bound in Corollary~\ref{cor:main:subspace}.
We study this gap through simulations in Section~\ref{sec:exp} (see Figure~\ref{fig:kappa_log} and Table~\ref{table:kappa}).
Based on those experiments, we conjecture that the upper bound in \cite{chen2021spectral} can be improved to match our lower bound (up to logarithmic factors), but we leave further exploration of this point for future work.

\section{Experiments}
\label{sec:exp}

In this section, we compare our theoretical lower bounds from Section~\ref{sec:results} with empirical estimation performance obtained by the ASE which according to existing results (e.g., Theorem~\ref{thm:main:upper}), matches this lower bound up to logarithmic factors.
Recall that for a pair of estimates $(\hat{\mU}, \hat{\mLambda})$, the $(\tti)$-norm between it and the ground truth $(\mU_0, \mLambda_0)$ is given by
\begin{equation} \label{eq:exp:tti}
\min_{\mW \in \Od \cap \Opq} \|\hat{\mU} \hat{\mLambda}^{1/2} \mW - \mU_0 \mLambda_0^{1/2} \|_{\tti}.
\end{equation}
Finding the exact minimizer of Equation~\eqref{eq:exp:tti} is non-trivial.
Instead, we approximate it by first solving a similar Procrustes problem under the Frobenius norm,
\begin{equation} \label{eq:exp:frob}
\min_{\mW \in \Od \cap \Opq} \|\hat{\mU} \hat{\mLambda}^{1/2} \mW - \mU_0 \mLambda_0^{1/2} \|_{F}
\end{equation}
and then plugging in the minimizer to the $(\tti)$-distance.
In practice, the minimizer under the Frobenius norm provides a good approximation to the exact minimizer.
As a matter of fact, we note that the matrix $\mW^\star$ in Theorem \ref{thm:main:upper} is the same minimizer of the Procrustes problem under the Frobenius norm, and therefore, the same upper bound for latent position estimation error still holds when $\kappa = O(1)$.
For details, we refer the reader to the proof of Theorem 3 in \cite{Rubin_2017}.
In general, approximating the problem in Equation~\eqref{eq:exp:tti} with the minimizer of Equation~\eqref{eq:exp:frob} serves as a valid upper bound for Equation~\eqref{eq:exp:tti}, and if it matches the lower bound, Equation~\eqref{eq:exp:tti} will as well.

Recall from Section~\ref{subsec:upper} that when $\kappa = O(1)$, our minimax lower bounds in Theorem~\ref{thm:main:main} and Corollary~\ref{cor:main:subspace} match the corresponding upper bounds up to logarithmic factors.
On the other hand, when $\kappa = \omega(1)$, as discussed in Section~\ref{subsec:upper}, there is no matching upper bound to our lower bound.
Rather, the best upper bound of which we are aware has a $O(\kappa\sqrt{\mu/\log n})$ gap with our minimax lower bound.
In light of this, we consider two different asymptotic regimes, both under the sparse GRDPG as discussed in Section~\ref{subsec:upper}.
In the first, we fix $\kappa$ to be a constant and vary the growth rate of the sparsity factor $\rho_n$.
In the second, where $\kappa = \omega(1)$, we fix the sparsity $\rho_n$ to be a constant and vary the growth rate of $\kappa$.
To emphasize the dependence of $\kappa$ on $n$, we also write $\kappa$ as $\kappa_n$ below. 


In both asymptotic regimes, we consider networks generated from a GRDPG with latent position dimension $d = 3$, and signature $(p,q)=(2,1)$.
The probability matrix $\mP_0 \in [0, 1]^{n\times n}$ is set to be $\mP_0 = \rho_n \mU_0 \mLambda_0 \mU_0$, where $\mU_0 \in \R^{n\times d}$ is constructed according to Equation~\eqref{eq:part-ii-U0} with suitably chosen constants and 
\begin{equation*}
\mLambda_0 = \diag \left(\frac{n}{3}, \frac{n}{3\kappa_n}, -\frac{n}{3\kappa_n}\right).
\end{equation*}
We vary $n$ from $9,000$ to $20,000$ with a step size of $1000$. 
In the setting where $\kappa = O(1)$, we fix $\kappa_n = 6$ and vary
\begin{equation}\label{eq:rhovals}
\rho_n \in \left\{0.2, 20n^{-1/2}, 90n^{-2/3}, 190n^{-3/4}, 300n^{-4/5}, 400n^{-5/6}, 1800n^{-1}\right\},
\end{equation}
where the constants are chosen so that all the $\rho_n$ are approximately equal to $0.2$ when $n = 9000$. 
In the second setting, where $\kappa = \omega(1)$, we fix $\rho_n = 0.9$ and vary 
\begin{equation} \label{eq:kappavals}
\begin{aligned}
\kappa_n \in \Bigg\{ & \frac{1207}{500} n^{1 / 10},\frac{971}{1000} n^{1 / 5},\frac{391}{1000} n^{3 / 10},\frac{157}{1000} n^{2 / 5},  \\
&~~~~~~~~~\frac{63}{1000} n^{1 / 2},\frac{1}{40} n^{3 / 5}, \frac{1}{100} n^{7 / 10}, \frac{1}{250} n^{4 / 5}\Bigg\}.
\end{aligned} \end{equation}
 The constants here are chosen to satisfy that all $\kappa_n$ are approximately equal to $6$ when $n = 9000$.
For each combination of $(n, \rho_n, \kappa_n)$, we generate $240$ Monte Carlo trials when we keep $\kappa_n = 6$ and $200$ trials when we keep $\rho_n = 0.9$.
We approximate their latent position and subspace estimation errors as described by Algorithm~\ref{alg:general}. 

\begin{algorithm}
\caption{Simulation procedure for expected adjacency matrix $\mP_0 = \mU_0 \mLambda^{1/2}_0 \mI_{p,q} \mLambda^{1/2}_0 \mU_0$ with signature $(p,q)$, based on $M$ Monte Carlo trials. We assume access to a function $\operatorname{TopEig}( \mA, k )$ for obtaining the top $k$ eigenvalues and eigenvectors of a matrix $\mA$.}\label{alg:general}
\begin{algorithmic}
\Require $\mP_0 \in \R^{n\times n}, d=p+q, M$. 
\For{$1 \leq i \leq M$}
    \State Sample an adjacency matrix $\mA_i$ from $\mP_0$.
    \State $(\hat{\mU}_{i, p}, \hat{\mLambda}_{i, p}) \gets \operatorname{TopEig}( \mA_i, p );~~~(\hat{\mU}_{i, q}, \hat{\mLambda}_{i, q}) \gets \operatorname{TopEig}( -\mA_i, q )$ 
    \State $\hat{\mU}_i \gets (\hat{\mU}_{i, p}, \hat{\mU}_{i, q})$;~~~ $\hat{\mLambda}_i \gets \diag(\hat{\mLambda}_{i, p}, \hat{\mLambda}_{i, q})$ .  
    \State $\mW_{1i} \gets \arg \min _{\mW \in \Od \cap \Opq} \|\hat{\mU}_i \hat{\mLambda}_i^{1/2} \mW - \mU_0 \mLambda_0^{1/2} \|_F$, 
    \State $\mW_{2i} \gets \arg \min _{\mW \in \Od \cap \Opq} \|\hat{\mU}_i \mW - \mU_0\|_F$.
    \State $\ell_{1i} \gets \|\hat{\mU}_i \hat{\mLambda}_i^{1/2} \mW_{1i} - \mU_i \mLambda_i^{1/2}\|_{2, \infty}$; $\quad \ell_{2i} \gets \|\hat{\mU}_i \mW_{2i} - \mU_i\|_{2, \infty}.$ 
\EndFor
\State $\ell_{\text{latent}} \gets \frac{1}{M}\sum_{i=1}^M \ell_{1i}$; $\quad \ell_{\text{subspace}} \gets \frac{1}{M}\sum_{i=1}^M \ell_{2i}.$\\
\Return $\ell_{\text{latent}}$, $\ell_{\text{subspace}}$. 
\end{algorithmic}
\end{algorithm}

Figure~\ref{fig:rho_log} shows the results when we fix $\kappa_n = 6$ and vary $\rho_n$.
The left subplot shows the estimation errors for the latent positions as a function of the number of vertices $n$.
We see that the lines by and large overlap one another, indicating that the growth rate of $\rho_n$ has little effect on the latent position estimation error rate, in agreement with what our lower bounds suggest.
The right subplot shows the estimation error for subspace recovery, again as a function of the number of vertices $n$.
Examining the different lines in the plot, we see that as the growth rate of $\rho_n$ gets smaller, the estimation error has a slower convergence rate, as suggested by our lower bound.
Of course, our lower bounds make predictions about the precise slope these lines should have, a point we explore in more detail below (see Table~\ref{table:rho}and discussion thereof).

\begin{figure}
    \centering
    \includegraphics[width=\textwidth]{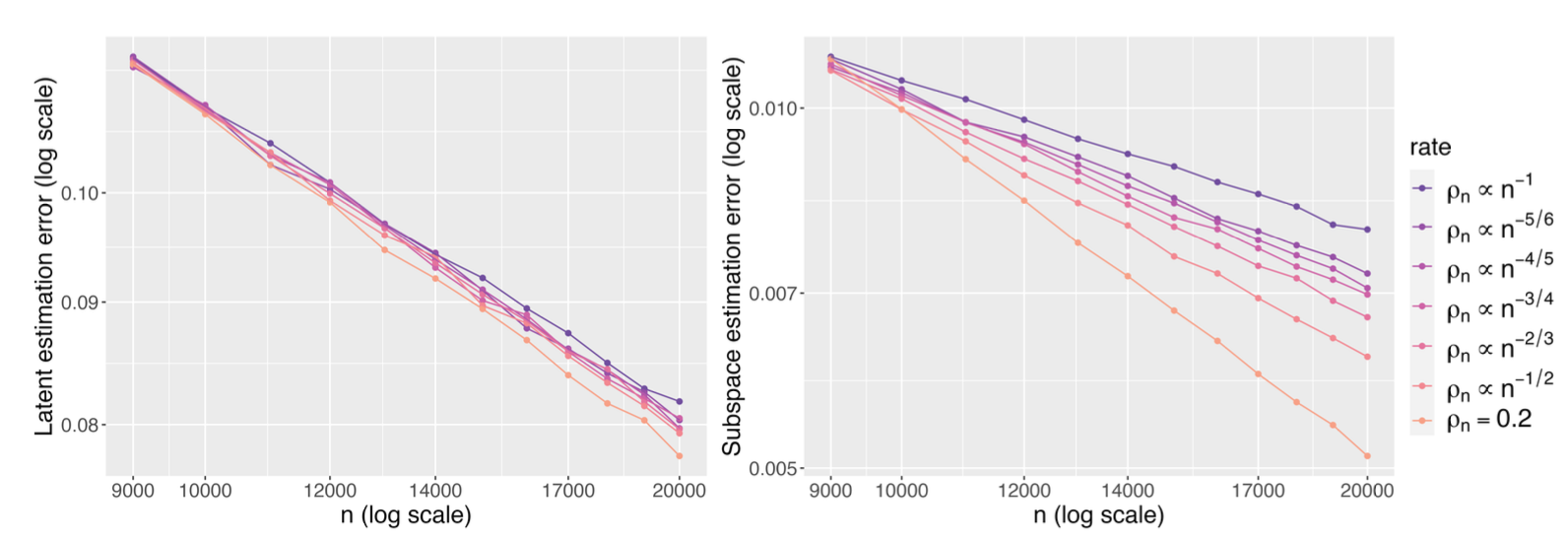}
    \caption{$\log$-$\log$ plots of latent positions estimation errors (left) and subspace estimation errors (right) as a function of the number of vertices $n$ when $\kappa_n = 6$. The $x$-axis displays the number of vertices $n$ in log scale and the $y$-axis displays the estimation error in log scale. Lines connect $(n,\rho_n)$ pairs that have the same scaling of $\rho_n$ with $n$. Lines with darker colors correspond to sparser networks while lighter colors correspond to denser networks with $\rho_n$ varying as in Equation~\eqref{eq:rhovals}.}
    \label{fig:rho_log}
\end{figure}

Figure~\ref{fig:kappa_log} shows the results of the same experiment when we fix $\rho_n = 6$ and vary $\kappa_n$, once again showing estimation error as a function of the number of vertices $n$.
The left subplot shows the estimation error for the latent positions while the right subplot shows the log-estimation error for the subspaces.
In both subplots, the estimation error has a slower convergence rate as the growth rate of $\kappa_n$ gets larger, again in agreement with our lower bounds in Theorem~\ref{thm:main:main} and Corollary~\ref{cor:main:subspace}.

\begin{figure}
    \centering
    \includegraphics[width=\textwidth]{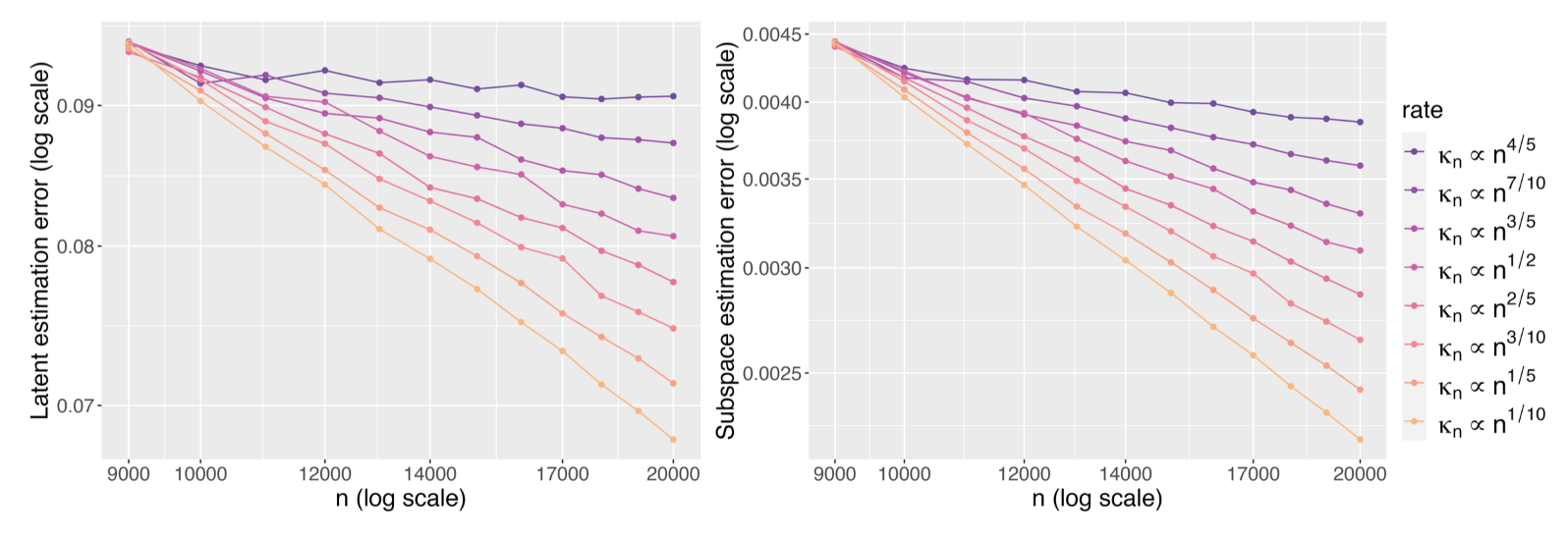}
    \caption{$\log$-$\log$ plots of latent positions estimation error (left) and subspace estimation error (right) as a function of the number of vertices $n$ when $\rho_n = 0.9$ and the condition number $\kappa_n$ varies. The $x$-axis displays number of vertices $n$ on a log scale and the $y$-axis displays the estimation error on log scale. Lines connect $(n,\kappa_n)$ pairs that have the same scaling of $\kappa_n$ with $n$. Lines with darker colors correspond to networks generated with larger $\kappa_n$ while lighter colors correspond to smaller $\kappa_n$ with $\kappa_n$ varies as in Equation~\eqref{eq:kappavals}. }
    \label{fig:kappa_log}
\end{figure}
 
The plots in Figures~\ref{fig:rho_log} and~\ref{fig:kappa_log} suggest a roughly log-log linear relationship between the estimation error and the number of vertices $n$.
Given a pair $(\rho_n, \kappa_n)$, if the estimation error is of order $n^{\alpha}$, then the log estimation error should be of order $\alpha\log n$.
Therefore, the slope of a line in the log-log plot provides an estimate of the exponent of the growth rate of the estimation error.
To better compare the growth rate obtained from the simulations against our lower bounds in Theorem~\ref{thm:main:main} and Corollary~\ref{cor:main:subspace}, regression the log estimation errors against $\log n$ for each $(\rho_n, \kappa_n)$-pair in our simulation.
That is, we fit a linear model to the points in each line in Figures~\ref{fig:rho_log} and~\ref{fig:kappa_log}.
The estimated slopes are listed in Tables~\ref{table:rho} and~\ref{table:kappa} in the columns labeled ``latent rate'' and ``subspace rate''.
We wish to compare these estimation rates against our theoretical lower bounds from Theorem~\ref{thm:main:main} and Corollary~\ref{cor:main:subspace}.
We note that these lower bounds include logarithmic factors, which have no bearing on the predicted slope of the lines in Figures~\ref{fig:rho_log} and~\ref{fig:kappa_log} when $n$ tends to infinity, but may lead to appreciably different lower bounds for finite $n$.
To account for this, we fit a second linear model, this time regression the logarithm of our minimax lower bound against $\log n$.
The estimated slopes are listed in Tables~\ref{table:rho} and~\ref{table:kappa} in the columns labeled ``latent lower'' and ``subspace lower''.
As an example, if we exclude the $\log n$ factor from our minimax lower bound in Theorem~\ref{thm:main:main}, then the ``latent lower'' column of Table~\ref{table:rho} would be all be equal to $-0.5$, since our lower bound becomes $\Omega(n^{-1/2})$.
In comparison, fitting a linear model to the lower bounds with logarithmic terms included yields a fitted slope of $-0.447$, in better agreement with the observed estimation rate.

Examining Tables~\ref{table:rho} and~\ref{table:kappa}, we see that the estimated error rates are close to the rates suggested by our lower bounds. 
We note, however, that for most $(n, \rho_n, \kappa_n)$ triples, the estimated error rates are slightly larger than predicted by the lower bounds. One reason for this might be that the ASE method is minimax optimal up to logarithmic factors. Since the minimax lower bounds are obtained for estimators that minimize the worst case risk, it might be the case that the ASE method is near optimal in some of the worst cases and the logarithmic factors in its rate will affect the estimated rate in finite sample cases, therefore making the estimated rate slightly larger. It is also possible that randomness in our simulations still has some significant effect on our estimated slopes in the two tables, though we doubt this is the case. 
All told, we do not necessarily expect the estimated error rates to be exactly those appearing in our minimax lower bounds. Nonetheless, our simulations do seem to suggest that our lower bounds are near optimal. 

As mentioned in the beginning of this section, one of our goals is to see how the estimation errors grow when $\kappa_n$ grows with $n$, since in this setting there is a gap between our minimax results and the best known upper bound on subspace recovery.
When we vary $\kappa_n$, we see in Table~\ref{table:kappa} that the estimation error rates hew closely to our lower bounds, rather than approaching the upper bound in Equation~\eqref{eq:main:sub-upper}, in agreement with our conjecture in Section~\ref{subsec:upper}.

\begin{table}[h!]
    \centering
    \begin{tabular}{ccccc}
    \toprule
     \bfseries $\rho_n$ & \multicolumn{1}{p{3cm}}{\centering \bfseries{latent \\ rate }} & \multicolumn{1}{p{3cm}}{\centering \bfseries{latent \\ lower }} &\multicolumn{1}{p{3cm}}{\centering \bfseries{subspace \\ rate }} &\multicolumn{1}{p{3cm}}{\centering \bfseries{subspace \\ lower }}\\
     \hline
     $0.2$            & $-0.465~(\pm0.009)$ & $-0.447$ & $-0.952~(\pm0.009)$ & $-0.947$\\
     $n^{-1 / 2}$  & $-0.443~(\pm0.009)$ & $-0.447$ & $-0.688~(\pm0.009)$ & $-0.697$\\
     $n^{-2 / 3}$  & $-0.435~(\pm0.009)$ & $-0.447$ & $-0.596~(\pm0.009)$ & $-0.614$ \\
     $n^{-3 / 4}$ & $-0.436~(\pm0.009)$ & $-0.447$ & $-0.559~(\pm0.009)$ & $-0.572$\\
     $n^{-4 / 5}$ & $-0.433~(\pm0.009)$ & $-0.447$ & $-0.529~(\pm0.009)$ & $-0.547$\\
     $n^{-5 / 6}$ & $-0.432~(\pm0.009)$ & $-0.447$ & $-0.510~(\pm0.009)$ & $-0.531$\\
     $n^{-1}$    & $-0.418~(\pm0.009)$ & $-0.447$ & $-0.420~(\pm0.009)$ & $-0.447$\\
    \bottomrule
    \end{tabular}
    \caption{Error rates for different choices of sparsity $\rho_n$. The ``latent rate'' and ``subspace rate'' columns show simulated estimation error rates for latent positions and subspaces using the ASE method when $\kappa_n$ is set to be 6 and we vary the growth rate of $\rho_n$. The values in the brackets correspond to a $95\%$ confidence interval.  The first column $\rho_n$ shows the growth rate of the sparsity factor $\rho_n$ up to constants, whose exact choices of constants are given in Equation~\eqref{eq:rhovals}. The third  column ``latent lower'' and the last columns ``subspace lower'' give the corresponding error rate lower bounds for latent position estimation and subspace estimation. }
    \label{table:rho}
\end{table}

\begin{table}[h!]
\centering
\begin{tabular}{ccccc}
\toprule
 \bfseries $\kappa_n$ & \multicolumn{1}{p{3cm}}{\centering \bfseries{latent \\ rate }} & \multicolumn{1}{p{3cm}}{\centering \bfseries{latent \\ lower }} &\multicolumn{1}{p{3cm}}{\centering \bfseries{subspace \\ rate }} &\multicolumn{1}{p{3cm}}{\centering \bfseries{subspace \\ lower }}\\
 \hline
 $n^{1/10}$ &    $-0.4072~(\pm0.0096)$ & $-0.3974$ & $-0.8551~(\pm0.0096)$ & $-0.8474$\\
 $n^{1 / 5}$ &   $-0.3471~(\pm0.0098)$ & $-0.3474$ & $-0.7439~(\pm0.0098)$ & $-0.7475$\\
 $ n^{3 / 10}$ & $-0.2974~(\pm0.0097)$ & $-0.2974$ & $-0.6436~(\pm0.0098)$ & $-0.6474$ \\
 $ n^{2 / 5}$ &  $-0.2416~(\pm0.0100)$ & $-0.2474$ & $-0.5390~(\pm0.0100)$ & $-0.5478$\\
 $ n^{1 / 2}$ &   $-0.2072~(\pm0.0095)$ & $-0.1974$ & $-0.4562~(\pm0.0096)$ & $-0.4486$\\
 $ n^{3 / 5}$ &      $-0.1524~(\pm0.0098)$ & $-0.1474$ & $-0.3567~(\pm0.0099)$ & $-0.3528$\\
 $n^{7/10}$ &      $-0.0957~(\pm0.0101)$ & $-0.0974$ & $-0.2513~(\pm0.0102)$ & $-0.2545$\\
 $n^{4/5}$ &       $-0.0471~(\pm0.0097)$ & $-0.0474$ & $-0.1536~(\pm0.0100)$ & $-0.1563$\\
\bottomrule
\end{tabular}
\caption{Error rates for different choices of condition number $\kappa_n$. The ``latent rate'' and ``subspace rate'' columns show simulated estimation error rates for latent positions and subspaces using the ASE method  when $\rho_n$ is set to be $0.9$ and we vary $\kappa_n$. The values in the brackets correspond to a $95\%$ confidence interval. The first column $\kappa_n$ shows the growth rate of $\kappa_n$ up to constants, whose exact choices of constants are given in Equation~\eqref{eq:kappavals}. The third  column ``latent lower'' and the last columns ``subspace lower'' give the corresponding error rate lower bounds for latent position estimation and subspace estimation. }
\label{table:kappa}
\end{table}

\section{Discussion} \label{sec:disco}

We have presented minimax lower bounds for estimation error of the latent positions and singular subspaces in the generalized random dot product graph and more general low-rank network models. We addressed the identifiability that arises due to the use of the indefinite inner product in the GRDPG model. To account for this nonidentifiability, we defined a distance on the equivalence classes of latent positions. This distance includes as special case a commonly used distance defined for the well-studied RDPG model.
To derive our minimax lower bounds, we constructed packing sets of singular subspaces for probability matrices by stacking Hadamard matrices. We divided our analysis into two parts based on different regimes of the condition number $\kappa = \lambda_1/\lambda_d$ of these probability matrices. 

When $\kappa = O(1)$, we proved minimax lower bounds that hold for sparse GRDPG models with a bounded latent position dimension $\kappa > 3d$. We note that this bound on $d$ can be relaxed to $\kappa > (1+\epsilon) d$ for any constant $\epsilon > 0$; we have used $3$ here for the sake of simplicity. The resulting lower bounds show that the adjacency spectral embedding \citep{SusTanFisPri2012} for estimating the latent positions is minimax optimal up to logarithmic factors. We provided examples to show that the assumption $\kappa > (1+\epsilon) d$ is not a stringent condition under both the GRDPG model and the RDPG model. 

In the regim where $\kappa = \omega(1)$, we established minimax lower bounds that also hold for growing latent dimension $d$, as long as $\kappa > 3d$. Here again, the constant $3$ can be relaxed to $1+\epsilon$ for any constant $\epsilon > 0$. Under this regime, we are not aware of any matching upper bound for latent position estimation or subspace estimation. The best upper bound currently known to us has a gap of $O\left(\kappa \sqrt{\mu/\log n}\right)$ compared to our bound. To evaluate how close our lower bounds are compared to the actual performance of the adjacency spectral embedding, we conducted simulations under different regimes of $\kappa$. The results are in agreement with our lower bounds.

In our future work, we would like to relax the assumption on $\kappa$. The main difficulty is that constructing packing sets for singular subspaces of probability matrices with small $\kappa$ is nontrivial, as it requires a careful combinatorial analysis of the positive and negative patterns of Hadamard matrices or other construction techniques. In addition, we would like to close the theoretical gap between the upper bounds and lower bounds when $\kappa = \omega(1)$. As suggested by our simulation results, we conjecture that in the regime where the condition number is allowed to grow, the existing upper bounds are not sharp. A tighter upper bound requires a more careful study of how noise perturbs singular subspaces and singular values of probability matrices. Lastly, low-rank matrices with a growing rank $d$ are a less studied regime, yet this provides a more realistic model for many real world networks \citep{sansford2023implications}. Future work should investigate the estimation error when $d \geq \kappa$. 

\bibliographystyle{apalike}
\bibliography{biblio}

\newpage

\appendix

\section{Proof of Lemma~\ref{lem:setup:equiv}}
\label{sec:apx:equiv}

\label{sec:apx:setup:equiv:proof}
\begin{proof}[Proof of Lemma~\ref{lem:setup:equiv}]
By definition of our equivalence relation, if $\mX \sim \mY$, then there exists $\mQ \in \Opq$ such that $\mY = \mX\mQ$, so that, expanding our definition of $\mP_{\mX}$,
 \begin{equation*}
 \mP_{\mY} = \mY \mI_{p,q} \mY^T
 = \mX \mQ \mI_{p,q} \mQ^T \mX^T
 = \mX \mI_{p,q} \mX^T
 = \mP_{\mX}.
\end{equation*}

Conversely, suppose that $\mP_{\mX} = \mP_{\mY}$.
    Write $\mX = (\mX_1, \mX_2)$, where $\mX_1 \in \R^{n \times p}$ has its $p$ columns corresponding to the ``positive'' part of $\mP_{\mX}$ and $\mX_2$ corresponds to the $q$ negative eigenvalues of $\mP_{\mX}$.
Writing $\mY = (\mY_1, \mY_2)$ similarly, since $\mP_\mX = \mP_\mY$, we have 
\begin{equation*}
    \mX_1\mX_1^{T} - \mX_2\mX_2^{T}
    = \mX\mI_{p,q}\mX^{T}
    = \mP_{\mX}
    = \mP_\mY
    = \mY\mI_{p,q}\mY^{T}
    = \mY_1\mY_1^{T} - \mY_2\mY_2^{T}
\end{equation*}
Rearranging, we have
\begin{equation*}
    [\mY_1~ \mX_2]\begin{bmatrix}
        \mY_1^T\\
        \mX_2^T
    \end{bmatrix} = [\mX_1~ \mY_2]\begin{bmatrix}
        \mX_1^T\\
        \mY_2^T
    \end{bmatrix} ,
\end{equation*}
and it follows that $[\mY_1, \mX_2]^T$ and $[\mX_1, \mY_2]^T$ have the same null space and thus $[\mY_1, \mX_2]$ and $[\mX_1, \mY_2]$ span the same column space.
    As a result, there exists a matrix $\mGamma \in \R^{d\times d}$ such that 
    \begin{equation} \label{eq:Gamma:mxeq}
    [\mY_1~ \mX_2] = [\mX_1~ \mY_2]\mGamma.
    \end{equation}
    Writing $\mGamma$ in block matrix form,
    \begin{equation*}
    \mGamma = \begin{bmatrix}
        \mGamma_{11} & \mGamma_{12}\\
        \mGamma_{21} & \mGamma_{22}\\
    \end{bmatrix}
    \end{equation*}
    where $\mGamma_{11} \in \R^{p\times p}$, $\mGamma_{12}, \mGamma_{21}^T \in \R^{p\times q}$ and $\mGamma_{22} \in \R^{q\times q}$.
    Rearranging Equation~\eqref{eq:Gamma:mxeq}, we have
    \begin{equation} \label{eq:setup:xy-rel}
    \begin{aligned}
        \mX_1 \mGamma_{11} &= \mY_1 - \mY_2 \mGamma_{21} \\
        \mX_2 - \mX_1 \mGamma_{12}&= \mY_2 \mGamma_{22} .
    \end{aligned}
    \end{equation}
    
    Writing Equation~\eqref{eq:setup:xy-rel} in matrix form, we have 
    \begin{equation} \label{eq:setup:mat-form}
        [\mX_1~ \mX_2] \begin{bmatrix}
        \mGamma_{11} & -\mGamma_{12}\\
        \mzero & \mI_q
    \end{bmatrix} = [\mY_1~ \mY_2] \begin{bmatrix}
        \mI_{p} & \mzero \\
        -\mGamma_{21} & \mGamma_{22}
    \end{bmatrix}.
    \end{equation}

    We note that $\mGamma_{22}$ is invertible, since otherwise there exists a nonzero vector $\va \in \R^q$ such that $\mGamma_{22} \va = 0$, from which it would follow that $\mX_2 \va - \mX_1 \mGamma_{12} \va = \mzero$, which contradicts the fact that $\mX$ has full column rank.
    Since $\mGamma_{22}$ is invertible, we can invert the matrix on the right-hand side, and rearranging Equation~\eqref{eq:setup:mat-form},
    it follows that there exists a matrix $\mQ \in \R^{d\times d}$ such that $\mY = \mX\mQ$.
To see that $\mQ \in \Opq$, note that since $\mP_\mX = \mP_{\mY}$, we have $\mX\left(\mI_{p, q} - \mQ \mI_{p,q} \mQ^T\right) \mX^T = \mzero$.
Since $\mX$ has full column rank, we must have $\mI_{p, q} - \mQ \mI_{p,q} \mQ^T  = 0$ and therefore $\mQ \in \Opq$.
\end{proof}

\section{Example: Equation~\eqref{eq:ind-dist} is not a distance}
\label{sec:apx:dist}

In Section~\ref{sec:setup}, we made a first attempt at defining a distance on the set $\calX^{(p,q)}_n$ according to Equation~\eqref{eq:ind-dist}, which we restate here for the sake of convenience:
\begin{equation*}
\inf_{\mQ_1, \mQ_2 \in \Opq} \left\|\mX \mQ_1 - \mY \mQ_2\right\|_{\tti}.
\end{equation*}
We stated in the text that this quantity fails to be a distance.
We illustrate that point here by constructing a triple of points in $\calX^{(1,1)}_2$ for which the triangle inequality appears to fail.

We have $n=2$, $p=1$ and $q=1$. By Proposition 6.1 and 6.2 in \cite{gallier2020differential}, any $\mQ \in \bbO_{1,1}$ is of the form 
\begin{equation*}
\mQ(\alpha) \mGamma = \begin{bmatrix}
    \cosh{\alpha} & \sinh{\alpha}\\
    \sinh{\alpha} & \cosh{\alpha}\\
\end{bmatrix} \mGamma,
\end{equation*}
where $\alpha \in \R$ and $\mGamma$ is one of the matrices
\begin{equation*}
\begin{bmatrix}
    1 & 0 \\
    0 & 1
\end{bmatrix}, 
\begin{bmatrix}
    -1 & 0  \\
    0 & 1
\end{bmatrix},
\begin{bmatrix}
    1 & 0 \\
    0 & -1
\end{bmatrix}
\text{ or }
\begin{bmatrix}
    -1 & 0 \\
    0 & -1
\end{bmatrix}.
\end{equation*}
For $\mX, \mY \in \R^{2\times 2}$, write
\begin{equation*}
\mX = \begin{bmatrix}
    x_{11} & x_{12} \\
    x_{21} & x_{22}
\end{bmatrix}, \mY = \begin{bmatrix}
    y_{11} & y_{12} \\
    y_{21} & y_{22}
\end{bmatrix},
\end{equation*}
observe that we have
\begin{equation*} \begin{aligned}
\inf_{\mQ_1, \mQ_2} \|\mX\mQ_1 - \mY\mQ_2\|_{\tti}
&= \inf_{\alpha_1, \alpha_2, \mGamma_1, \mGamma_2}
	\left\|\mX\mQ(\alpha_1)\mGamma_1
		- \mY\mQ(\alpha_2)\mGamma_2\right\|_{\tti} \\
&= \inf_{\alpha_1, \alpha_2, \mGamma_1, \mGamma_2}
  \left\|\mX\mQ(\alpha_1)- \mY\mQ(\alpha_2)\mGamma_2\mGamma_1 \right\|_{\tti} \\
&= \inf_{\alpha_1, \alpha_2, \mGamma}
	\left\|\mX\mQ(\alpha_1)- \mY\mQ(\alpha_2)\mGamma \right\|_{\tti}. \\
\end{aligned} \end{equation*}
Since
\begin{equation*} \begin{aligned}
    \begin{bmatrix}
    \cosh{\alpha} & \sinh{\alpha}\\
    \sinh{\alpha} & \cosh{\alpha}\\
    \end{bmatrix} 
    \begin{bmatrix}
    1 & 0 \\
    0 & -1 \\
    \end{bmatrix} &=  
    \begin{bmatrix}
    \cosh{\alpha} & -\sinh{\alpha}\\
    \sinh{\alpha} & -\cosh{\alpha}\\
    \end{bmatrix}
= \begin{bmatrix}
    \cosh(-\alpha) & \sinh(-\alpha)\\
    -\sinh(-\alpha) & -\cosh(-\alpha)\\
    \end{bmatrix}\\
&= \begin{bmatrix}
    1 & 0 \\
    0 & -1 \\
    \end{bmatrix}
    \begin{bmatrix}
    \cosh(-\alpha) & \sinh(-\alpha)\\
    \sinh(-\alpha) & \cosh(-\alpha)\\
    \end{bmatrix}
\end{aligned} \end{equation*}
and a similar commutative property holds for $-\mI_{1,1}$, we have 
\begin{equation} \label{eq:XQYGQ}
    \inf_{\mQ_1, \mQ_2} \|\mX\mQ_1 - \mY\mQ_2\|_{\tti} = \inf_{\alpha_1, \alpha_2, \mGamma} \left\|\mX\mQ(\alpha_1)- \mY\mGamma \mQ(\alpha_2)\right\|_{\tti}.
\end{equation}

Suppose that the first columns of $\mX$ and $\mY$ are strictly positive and that 
\begin{equation*}
    x_{i1}^2 - x_{i2}^2,  y_{i1}^2 - y_{i2}^2> 0
    ~~~\text{ for }~~~i=1,2.
\end{equation*}
Then neither $\mGamma = -\mI_{1,1}$ nor $\mGamma = -\mI_{2}$ will be the minimizer of the quantity on the right-hand side of Equation~\ref{eq:XQYGQ}. To see this, we notice that $(\mX\mQ(\alpha_1))_{i1} > 0$ and $(\mY\mQ(\alpha_2))_{i1} > 0$ hold for $i=1,2$ and any $\alpha_1, \alpha_2 \in \R$. It follows that for $\mGamma = -\mI_{1,1}$ or $-\mI_2$, $(\mY\mGamma\mQ(\alpha_2))_{i1}$ is always strictly negative, and we always have $\left\|\mX\mQ(\alpha_1)- \mY\mGamma \mQ(\alpha_2)\right\|_{\tti} > \left\|\mX\mQ(\alpha_1)- \mY\mGamma(-\mI_{1,1}) \mQ(\alpha_2)\right\|_{\tti} $ by flipping the term $(\mY\mGamma\mQ(\alpha_2))_{i1}$ to be strictly positive. Therefore, we need only to consider
\begin{equation}
\label{eq:dist:f}
\begin{aligned}
    f(\mX, \mY) &= \min\left\{\inf_{\alpha_1, \alpha_2} \left\|\mX\mQ(\alpha_1)- \mY\mQ(\alpha_2)\right\|_{\tti}, \inf_{\alpha_1, \alpha_2}\left\|\mX\mQ(\alpha_1)- \mY\mI_{1,1}\mQ(\alpha_2)\right\|_{\tti}\right\}\\
    &= \min\{\inf_{\alpha_1, \alpha_2}g(\mX, \mY, \alpha_1, \alpha_2), \inf_{\alpha_1, \alpha_2}g(\mX, \mY\mI_{1,1}, \alpha_1, \alpha_2)\},
\end{aligned}
\end{equation}
where we define 
\begin{equation*}
g(\mX, \mY, \alpha_1, \alpha_2) = \left\|\mX\mQ(\alpha_1)- \mY\mQ(\alpha_2)\right\|_{\tti}.
\end{equation*}

Now consider three matrices
\begin{equation*}
\mX = \begin{bmatrix}
    1.9 & 1.2\\
    4 & -3.8
\end{bmatrix}, 
\mY = \begin{bmatrix}
    12.7 & -9.8\\
    4.1 & -0.9
\end{bmatrix}, 
\text{ and }
\mZ = \begin{bmatrix}
    0.03 & -0.02\\
    2.3 & -1.9
\end{bmatrix}.
\end{equation*}
Noting that if we divide $\mX$, $\mY$ and $\mZ$ by a sufficiently large constant $C$, then they become valid latent positions for probability matrices.
Hence, if the triangle inequality does not hold for $\mX$, $\mY$ and $\mZ$, then Equation~\eqref{eq:ind-dist} is also not a valid distance when we restrict the matrices in its arguments to be latent positions of probability matrices.

For our choice of $\mX$, $\mY$ and $\mZ$, we use gradient descent to find the approximate values of $f(\mX, \mY), f(\mX, \mZ)$ and $f(\mY, \mZ)$.
We find that $f(\mX, \mY) \approx 2.7324, f(\mX, \mZ) \approx 1.2291, f(\mY, \mZ) \approx 7.8288$ and from this approximation, we have $f(\mX, \mY) + f(\mX, \mZ) < f(\mY, \mZ)$.
Finding the value of $f(\mX, \mY)$ turns out to be a nonconvex optimization problem, and we have no guarantee of finding the global minimum with gradient descent methods.
To better understand the landscape of the optimization problem, we provide contour plots of $g(\mX, \mY, \alpha_1, \alpha_2)$, $g(\mX, \mY\mI_{1,1}, \alpha_1, \alpha_2)$, $g(\mX, \mZ, \alpha_1, \alpha_2)$, $g(\mX, \mZ\mI_{1,1}, \alpha_1, \alpha_2)$, $g(\mZ, \mY, \alpha_1, \alpha_2)$, and $g(\mZ, \mY\mI_{1,1} \alpha_1, \alpha_2)$ as functions of $\alpha_1$ and $\alpha_2$ in Figures~\ref{fig:xyplot},~\ref{fig:xzplot} and~\ref{fig:yzplot}.

\begin{figure}
    \centering
    \begin{subfigure}[t]{0.45\textwidth}
        \centering
        \includegraphics[width=\textwidth]{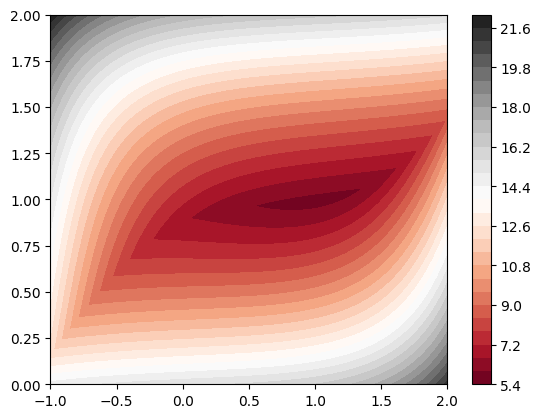}
    \end{subfigure}%
    ~ 
    \begin{subfigure}[t]{0.45\textwidth}
        \centering
        \includegraphics[width=\textwidth]{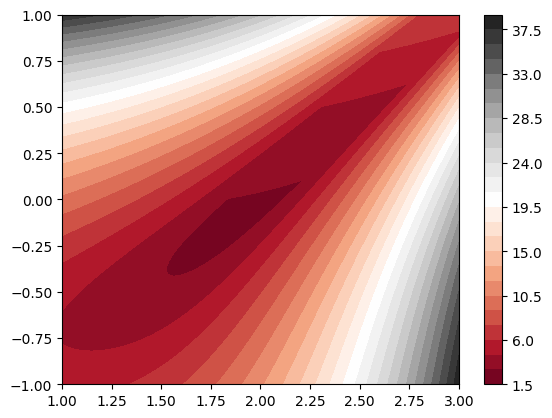}
    \end{subfigure}
    \caption{The left subplot shows a contour plot of $\|\mX\mQ(\alpha_1) - \mY\mQ(\alpha_2)\|_{\tti}$ as a function of $\alpha_1$ and $\alpha_2$, and the right subplot shows a contour plot of $\|\mX\mQ(\alpha_1) - \mY\mI_{1,1}\mQ(\alpha_2)\|_{\tti}$. Regions with a darker red color corresponds to smaller function values, and darker black color corresponds to larger function values. The $x$-axis corresponds to $\alpha_1$ and the $y$-axis corresponds to $\alpha_2$.}
    \label{fig:xyplot}
\end{figure}

\begin{figure}
    \centering
    \begin{subfigure}[t]{0.45\textwidth}
        \centering
        \includegraphics[width=\textwidth]{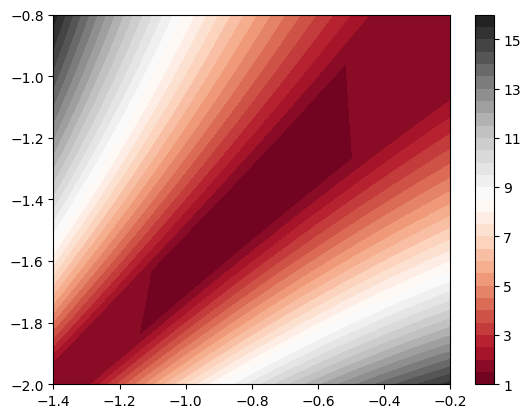}
    \end{subfigure}%
    ~ 
    \begin{subfigure}[t]{0.45\textwidth}
        \centering
        \includegraphics[width=\textwidth]{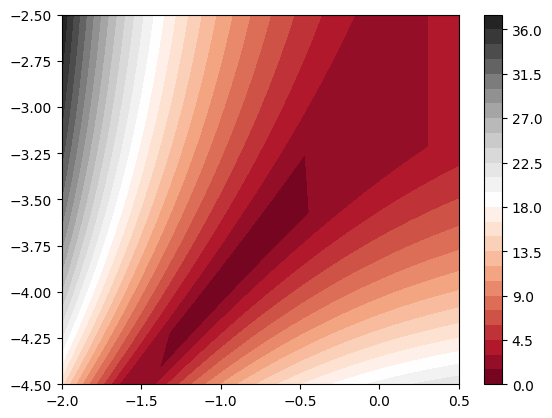}
    \end{subfigure}
    \caption{The left subplot shows a contour plot of $\|\mX\mQ(\alpha_1) - \mZ\mQ(\alpha_2)\|_{\tti}$ as a function of $\alpha_1$ and $\alpha_2$, and the right subplot shows a contour plot of $\|\mX\mQ(\alpha_1) - \mZ\mI_{1,1}\mQ(\alpha_2)\|_{\tti}$. Regions with a darker red color corresponds to smaller function values, and darker black color corresponds to larger function values. The $x$-axis corresponds to $\alpha_1$ and the $y$-axis corresponds to $\alpha_2$.}
    \label{fig:xzplot}
\end{figure}

\begin{figure}
    \centering
    \begin{subfigure}[t]{0.45\textwidth}
        \centering
        \includegraphics[width=\textwidth]{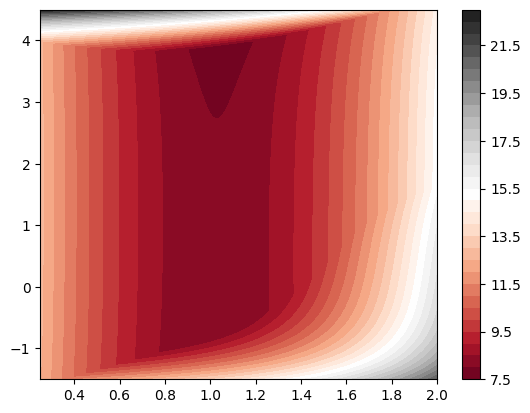}
    \end{subfigure}%
    ~ 
    \begin{subfigure}[t]{0.45\textwidth}
        \centering
        \includegraphics[width=\textwidth]{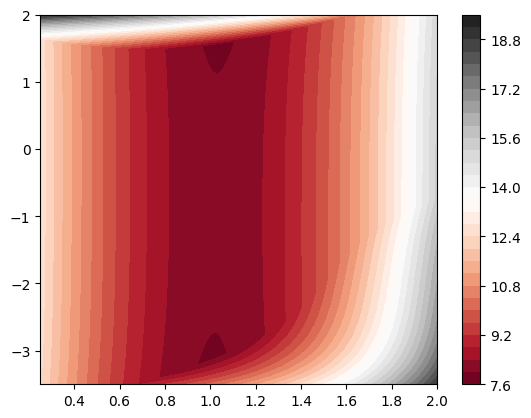}
    \end{subfigure}
    \caption{The left subplot shows a contour plot of $\|\mY\mQ(\alpha_1) - \mZ\mQ(\alpha_2)\|_{\tti}$ as a function of $\alpha_1$ and $\alpha_2$, and the right subplot shows a contour plot of $\|\mY\mQ(\alpha_1) - \mZ\mI_{1,1}\mQ(\alpha_2)\|_{\tti}$. Regions with a darker red color corresponds to smaller function values, and darker black color corresponds to larger function values. The $x$-axis corresponds to $\alpha_1$ and the $y$-axis corresponds to $\alpha_2$.}
    \label{fig:yzplot}
\end{figure}

The contour plots in Figures~\ref{fig:xyplot} through \ref{fig:yzplot} suggest that the global minimizers of all six of these functions lie in bounded regions.
We provide an intuitive argument to show this.
Noting that 
\begin{equation*} \begin{aligned}
    \begin{bmatrix}
    \cosh{\alpha} & \sinh{\alpha} \\
    \sinh{\alpha} & \cosh{\alpha} 
\end{bmatrix} &= 
\begin{bmatrix}
    \frac{1}{\sqrt{2}} & \frac{1}{\sqrt{2}} \\
    \frac{1}{\sqrt{2}} & -\frac{1}{\sqrt{2}}
\end{bmatrix}
\begin{bmatrix}
    e^{\alpha} & 0 \\
    0 & e^{-\alpha}
\end{bmatrix}
\begin{bmatrix}
    \frac{1}{\sqrt{2}} & \frac{1}{\sqrt{2}} \\
    \frac{1}{\sqrt{2}} & -\frac{1}{\sqrt{2}}
\end{bmatrix}\\
&=: \mW^* \begin{bmatrix}
    e^{\alpha} & 0 \\
    0 & e^{-\alpha}
\end{bmatrix} \mW^*,
\end{aligned} \end{equation*}
we denote $\Tilde{\mX} := \mX\mW^*$ and $\Tilde{\mY} := \mY\mW^*$ and have 
\begin{equation*} \begin{aligned}
g(\mX, \mY, \alpha_1, \alpha_2) &= \left\|\Tilde{\mX} \begin{bmatrix}
    e^{\alpha_1} & 0\\
    0 & e^{-\alpha_1}
\end{bmatrix} - \Tilde{\mY}\begin{bmatrix}
    e^{\alpha_2} & 0\\
    0 & e^{-\alpha_2}
\end{bmatrix}\right\|_{\tti}.
\end{aligned} \end{equation*}
Letting $t_1 = e^{\alpha_1}$ and $t_2 = e^{\alpha_2}$, it follows that
\begin{equation*}
[ g(\mX, \mY, \alpha_1, \alpha_2) ]^2
= \max_{i=1, 2}\left\{(t_1\Tilde{x}_{i1} - t_2\Tilde{y}_{i1})^2+\Big(\frac{1}{t_1}\Tilde{x}_{i2} - \frac{1}{t_2}\Tilde{y}_{i2}\Big)^2\right\}.
\end{equation*}
Clearly, if $(\alpha_1, \alpha_2) \to (-\infty, \infty)$ or $(\alpha_1, \alpha_2) \to (\infty, -\infty)$, then $g(\mX, \mY, \alpha_1, \alpha_2) \to \infty$.
On the other hand, we have 
\begin{equation}
\label{eq:g:lower1}
[ g(\mX, \mY, \alpha_1, \alpha_2) ]^2 \geq \max_{i=1, 2}\left\{(t_1\Tilde{x}_{i1} - t_2\Tilde{y}_{i1})^2\right\},
\end{equation}
and
\begin{equation}
\label{eq:g:lower2}
[ g(\mX, \mY, \alpha_1, \alpha_2) ]^2 \geq \max_{i=1, 2}\left\{\Big(\frac{1}{t_1}\Tilde{x}_{i2} - \frac{1}{t_2}\Tilde{y}_{i2}\Big)^2\right\}.
\end{equation}
Therefore, if $\tilde{x}_{11}\tilde{y}_{21} \neq \tilde{x}_{21}\tilde{y}_{11}$, then $g(\mX, \mY, \alpha_1, \alpha_2) \to \infty$ when $(\alpha_1, \alpha_2) \to (\infty, \infty)$.
Similarly, if $\tilde{x}_{12}\tilde{y}_{22} \neq \tilde{x}_{22}\tilde{y}_{12}$, then  $g(\mX, \mY, \alpha_1, \alpha_2) \to \infty$ when $(\alpha_1, \alpha_2) \to (-\infty, -\infty)$.
Thus, combining all the cases, it follows that $g(\mX, \mY, \alpha_1, \alpha_2)$ is coercive if 
\begin{equation} \label{eq:coer-cond}
    \tilde{x}_{1i}\tilde{y}_{2i} \neq \tilde{x}_{2i}\tilde{y}_{1i} \quad \text{ for } i=1,2.
\end{equation}
One can verify that $\mX, \mY, \mZ, \mY\mI_{1,1}$ and $\mZ\mI_{1,1}$ satisfy the condition in Equation~\eqref{eq:coer-cond}.
Hence, we indeed have that the global minimizers of all the functions plotting in Figures~\ref{fig:xyplot} through~\ref{fig:yzplot} lie in bounded regions.

For a more careful characterization of these bounded regions, we use polar coordinates to represent $\tilde{\mX}, \tilde{\mY}$ and $(t_1, t_2)$. For $i, j \in \{1, 2\}$, define
\begin{equation*} \begin{aligned}
(\tilde{x}_{ij}, \tilde{y}_{ij}) &= r_{ij}(\cos \theta_{ij}, \sin \theta_{ij}) \\
(t_1, t_2) &= s_1(\cos \psi, \sin \psi) \\
(1/t_1, 1/t_2) &= s_2(\cos \phi, \sin \phi)
\end{aligned} \end{equation*}
for $\psi, \phi \in [0, \pi/2]$.
Noting that by our choice of $\mX$, since $\mX_{i1} > |\mX_{i2}|$ holds for $i = 1, 2$, we have $\tilde\mX = \mX\mW^* > 0$ holds elementwise.
The same holds for $\mY$, $\mZ$, $\mY\mI_{1,1}$ and $\mZ\mI_{1,1}$.
Hence, we can restrict $\theta_{ij} \in [0, \pi/2]$.
Assuming that $r_{ij} \geq l_j > 0$ for $i, j \in \{1,2\}$, we have  
\begin{equation*} \begin{aligned}
    \max_{i=1, 2}\left\{(t_1\Tilde{x}_{i1} - t_2\Tilde{y}_{i1})^2\right\}
    &= s_1^2 \max
    \left\{r_{11}^2 \cos^2(\theta_{11} - \psi),
    ~r_{21}^2 \cos^2(\theta_{21} - \psi)
    \right\}\\ 
    &\geq s_1^2 l_1^2 \max\{\cos^2(\theta_{11} - \psi), ~\cos^2(\theta_{21} - \psi)\}.
\end{aligned} \end{equation*}
Note that for $\alpha, \beta, \psi \in [0, \pi/2]$, if $\alpha \neq \beta$, then the minimum of 
$$
\max\{\cos^2{(\alpha - \psi)}, \cos^2{(\beta - \psi)}\} = \frac{1}{2} +\frac{1}{2}\max\{-\cos{(2\psi-2\alpha)}, -\cos{(2\psi-2\beta)}\}
$$
occurs at $\psi = (\beta+\alpha)/2$.
Therefore, we have 
$$
\max_{i=1, 2}\left\{(t_1\Tilde{x}_{i1} - t_2\Tilde{y}_{i1})^2\right\} \geq \frac{s_1^2 l_1^2}{2} \left(1 - \cos (\theta_{11} - \theta_{21}) \right),
$$
and similarly,
$$
\begin{aligned}
    \max_{i=1, 2}\left\{(\Tilde{x}_{i2}/t_1 - \Tilde{y}_{i2}/t_2)^2\right\} &\geq \frac{s_2^2 l_2^2}{2} \left(1 - \cos (\theta_{12} - \theta_{22}) \right).
\end{aligned}
$$
Therefore, it follows from Equations~\eqref{eq:g:lower1} and~\eqref{eq:g:lower2} that 
\begin{equation}
\label{eq:g:lower}
    g(\mX, \mY, \alpha_1, \alpha_2)^2 \geq \min\Big\{\frac{s_1^2 l_1^2}{2} (1 - \cos (\theta_{11} - \theta_{21})), \frac{s_2^2 l_2^2}{2} (1 - \cos (\theta_{12} - \theta_{22}))\Big\}.
\end{equation}
Based on Equation~\eqref{eq:g:lower}, one can verify that as long as $s_i \geq 1000$ or $|\alpha_i| \geq 7$ for either $i=1$ or $i=2$, then each of $g(\mX, \mY, \alpha_1, \alpha_2)$, $g(\mX, \mY\mI_{1,1}, \alpha_1, \alpha_2)$, $g(\mX, \mZ, \alpha_1, \alpha_2)$, $g(\mX, \mZ\mI_{1,1}, \alpha_1, \alpha_2)$, $g(\mZ, \mY, \alpha_1, \alpha_2)$, and $g(\mZ, \mY\mI_{1,1} \alpha_1, \alpha_2)$ is all greater than $8$. 
Further providing bounds for $f(\mX, \mY), f(\mX, \mZ)$ and $f(\mY, \mZ)$ within $|\alpha_i| \leq 7$ for both $i = 1$ and $i=2$ would show that the triangle inequality does not hold.
Rather than providing exact bounds, we evaluate each function on a $1000$-by-$1000$ grid in $[-7, 7]\times[-7, 7]$, and the minima on this grid do not get lower than the values found by the results provided by gradient descent.
Combining the approximations provided by gradient descent and the contour plots in Figures~\ref{fig:xyplot} through \ref{fig:yzplot}, our results suggest that Equation~\eqref{eq:ind-dist} fails to obey the triangle inequality for certain triples of points, and hence is not a distance.

\section{Proof of Theorem~\ref{thm:main:main}}
\label{sec:apx:mainproof}

Here, we give a detailed proof of Theorem~\ref{thm:main:main}, drawing on a number of technical results that can be found in later sections of this appendix.
Our main tool is \cite{Tsybakov2009} Theorem 2.7, which we restate here for ease of reference.
\begin{theorem}[\cite{Tsybakov2009}, Theorem 2.7]
\label{thm:main:tsybakov}
Let $\Theta$ be a set of parameters endowed with a semi-distance $\delta$, let $M \geq 2$ and suppose that $\Theta$ contains elements $\theta_0, \theta_1, \ldots, \theta_M$ such that:
    \begin{itemize}
        \item[(i)] $\delta\left(\theta_j, \theta_k\right) \geq 2 s>0, \quad \forall 0 \leq j<k \leq M$
        \item[(ii)] Letting $P_0,P_1,\dots,P_M$ be probability measures associated to respective parameters $\theta_0,\theta_1,\dots,\theta_M$, it holds for all $j=1,2,\dots,M$ that $P_j \ll P_0$
        and
\begin{equation*}
\frac{1}{M} \sum_{j=1}^M \KL\left(P_j \| P_0\right) \leq \alpha \log M,
\end{equation*}
with $0<\alpha<1 / 8$.
\end{itemize}
Then we have
\begin{equation*}
\inf _{\hat{\theta}} \sup_{\theta \in \Theta} \E_\theta\left[ \delta(\hat{\theta}, \theta)\right] \geq c_\alpha s,
\end{equation*}
where $\inf_{\hat{\theta}}$ denotes the infimum over all estimators and $c_{\alpha}>0$ is a constant depending only on $\alpha$.
\end{theorem}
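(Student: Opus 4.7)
The plan is to invoke Tsybakov's Theorem 2.7 (restated as Theorem~\ref{thm:main:tsybakov}) on the quotient space $\calXtilde_n^{(p,q)}$ equipped with the semi-distance $\dtildetti$. The central task is to exhibit, for each admissible $(\kappastar, \lambdastar, p, q)$, a family of candidates $(\mU_0, \mLambda^\star), (\mU_1, \mLambda^\star), \ldots, (\mU_M, \mLambda^\star) \in \calP(\kappastar, \lambdastar, p, q)$ all sharing a common spectrum $\mLambda^\star$, such that (i) every $\mP_k = \mU_k \mLambda^{\star,1/2} \mI_{p,q} \mLambda^{\star,1/2} \mU_k^T$ has entries bounded away from $\{0,1\}$ by a universal constant, (ii) $\dtildetti([\mU_j \mLambda^{\star,1/2}], [\mU_k \mLambda^{\star,1/2}]) \geq 2s$ with $s \asymp \sqrt{\kappastar(\lambdastar \wedge \log n)/n}$, and (iii) the average Kullback–Leibler divergence between the induced product Bernoulli laws satisfies $M^{-1} \sum_k \KL(P_k \Vert P_0) \leq \alpha \log M$ for some $\alpha < 1/8$.

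For the construction I would take $\mLambda^\star$ realizing the spectral constraints exactly, namely $\lambda_1^\star = \kappastar \lambdastar$ and $\lambda_d^\star = \lambdastar$ with chosen signs compatible with the signature $(p,q)$. For the Stiefel factor I would build a base $\mU_0$ by stacking a $d \times d$ Hadamard matrix (or a $d$-column truncation of a larger Sylvester–Hadamard matrix) scaled by $1/\sqrt{n}$, so that every row of $\mU_0$ has magnitude $\sqrt{d/n}$. This equinorm property ensures every entry of $\mU_0 \mLambda^\star \mU_0^T \mI_{p,q}$ is of order $\lambdastar d/n$, which under the hypotheses $\kappastar \geq 3d$ and $3\kappastar \lambdastar \leq n$ lies comfortably inside $[0,1]$. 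The alternative $\mU_k$'s will be produced by perturbing a controlled subset of rows of $\mU_0$ along combinatorially structured directions derived from sign patterns of Hadamard columns, followed by a small reorthonormalization that keeps every $\mU_k$ on the Stiefel manifold.

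The reason for splitting into two regimes, corresponding to different stackings of Hadamard matrices, is to balance the trade-off between the number of candidates $M$, the separation $s$, and the KL budget. In a Varshamov–Gilbert-style family of $M \asymp 2^{m}$ candidates that differ in $m$ rows by perturbations of scale $\epsilon$, one gets $(\tti)$-separation of order $\epsilon \sqrt{\kappastar \lambdastar}$ and KL of order $m \epsilon^2 \lambdastar^2$ (after accounting for the Bernoulli variance). When $\lambdastar \lesssim \log n$, the Bernoulli noise is informative enough that one can afford $m \asymp n$ perturbed rows and saturate the KL budget, yielding the rate $\sqrt{\kappastar \lambdastar/n}$. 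When $\lambdastar \gtrsim \log n$, only $\log n$ bits are distinguishable, corresponding to a smaller but more cleverly placed set of perturbations and the alternate branch $\sqrt{\kappastar \log n /n}$; the growth rate of $\kappastar$ (bounded versus diverging) dictates which Hadamard block geometry is feasible while preserving the probability bounds and the $\Stiefel_d(\R^n)$ constraint.

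The main obstacle will be discharging three conditions simultaneously: keeping every perturbed probability matrix inside $[c,1-c]^{n \times n}$ (this is precisely where $3\kappastar \lambdastar \leq n$ and $\kappastar \geq 3d$ are consumed), preserving exact Stiefel membership of each $\mU_k$ after a perturbation, and — most delicate — translating the naive $\|\mU_j \mLambda^{\star,1/2} - \mU_k \mLambda^{\star,1/2}\|_{\tti}$ lower bound into a lower bound on $\dtildetti([\mU_j \mLambda^{\star,1/2}], [\mU_k \mLambda^{\star,1/2}])$, which requires ruling out that some $\mW \in \bbO_d \cap \Opq$ could align two candidates more closely than their row-wise perturbations suggest. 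The rigidity of the $\pm 1$ Hadamard entries, combined with the eigengap enforced by $\mLambda^\star$, is exactly what is needed to preclude such accidental alignments; demonstrating this uniformly over $\mW$ is where the bulk of the technical effort (and the case split by growth rate of $\kappastar$) will reside, with the remaining steps being routine verifications plugged into Theorem~\ref{thm:main:tsybakov}.
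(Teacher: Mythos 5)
There is a genuine gap, and it is structural: you have not proved the statement you were asked to prove. The statement is Tsybakov's Theorem 2.7 itself — the general reduction-to-testing lower bound asserting that an $(M+1)$-point family that is $2s$-separated in the semi-distance $\delta$ and has average KL divergence at most $\alpha\log M$ with $\alpha<1/8$ forces $\inf_{\hat\theta}\sup_\theta \E_\theta\,\delta(\hat\theta,\theta)\ge c_\alpha s$. Your proposal opens with ``the plan is to invoke Tsybakov's Theorem 2.7'' and then devotes itself entirely to constructing a Hadamard-based packing of $\calP(\kappastar,\lambdastar,p,q)$, controlling entries of the probability matrices, and bounding $\dtildetti$ separations and KL divergences. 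That is an (application-level) sketch of the proof of Theorem~\ref{thm:main:main}, not of Theorem~\ref{thm:main:tsybakov}; it uses the statement in question as a black box, which is circular relative to the task. In the paper this statement carries no internal proof at all — it is quoted verbatim from \cite{Tsybakov2009} precisely so that the packing construction (the content of your sketch) can be plugged into it.

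A proof of the actual statement would look quite different and involves none of your construction. One reduces estimation to testing: given any estimator $\hat\theta$, define the test $\psi=\arg\min_{0\le j\le M}\delta(\hat\theta,\theta_j)$; by the triangle inequality and the $2s$-separation, $\psi\ne j$ implies $\delta(\hat\theta,\theta_j)\ge s$, so Markov's inequality gives $\sup_\theta \E_\theta\,\delta(\hat\theta,\theta)\ge s\,\max_j P_j(\psi\ne j)$. One then lower-bounds the minimax error of any $(M+1)$-ary test by a positive constant using the absolute continuity $P_j\ll P_0$ and the averaged KL condition, via the Fano-type bound (Tsybakov's Proposition 2.3 together with the machinery behind his Theorem 2.5), where the threshold $\alpha<1/8$ enters to guarantee the resulting constant $c_\alpha$ is strictly positive. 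If your intent was instead to prove Theorem~\ref{thm:main:main}, your outline is broadly consonant with the paper's strategy (a base $\mU_0$ built from stacked Sylvester--Hadamard blocks, row-level perturbations or row swaps split by whether $\kappastar$ is bounded or growing, entrywise control of $\mP$, and uniform-over-$\mW$ separation bounds), but it would still need to be redirected at the correct target and filled in with the quantitative lemmas the appendix supplies.
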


\begin{proof}[Proof of Theorem~\ref{thm:main:main}]
To apply Theorem~\ref{thm:main:tsybakov}, we must construct a collection of elements of $\calP(\kappastar,\lambdastar,p,q)$ whose pairwise distances as measured by $\dtilde$ are lower-bounded, but whose pairwise KL-divergences are close (i.e., pairs of these elements give rise to similar distributions over the set of $n$-vertex networks).
We break our proof into two separate cases, based on the growth rate of $\kappastar$.
These two different regimes require slightly different constructions, owing to the different spectral structures they imply.
We first consider the case where $\kappastar= O(1)$.

For latent dimension $d$, let $k_0$ be such that $2^{k_0-1} < d \le 2^{k_0}$, and define $m = \lfloor n/2^{k_0} \rfloor$
Lemma~\ref{lem:main:kappaconst:packing_distance}, proved in Section~\ref{subsec:apx:kappaconst:packing}, ensures the existence of a collection of $M = 2^{k_0}m = \Omega(n)$ matrices
\begin{equation} \label{eq:results:kappaconst:packingset}
\calU = \{\mU_i : i = 0,1,2,\dots, M\} \subset \Stiefel_d(\R^n)
\end{equation}
such that, with any $\lambda_1 \leq n/3$ and $\lambda_d = \lambda_1/\kappa$, for 
\begin{equation*}
\mLambda = \diag \left( \lambda_1, \lambda_1/\kappa,
        \dots,  \lambda_1/\kappa \right),
\end{equation*}
it holds for all $i \in [M] \cup \{0\}$ and all $j \in [M]$ not equal to $i$,
\begin{equation*}
    \min_{\mW \in \bbO_d \cap \Opq}
    \left\| \mU_i\mLambda^{1/2} - \mU_j\mLambda^{1/2} \mW \right\|_{\tti}
    \ge C \sqrt{\frac{\kappa (\log n \wedge \lambda_d)}{n}}
\end{equation*}
for a suitably-chosen constant $C > 0$.

Taking $\lambda_d = \lambdastar$ and $\lambda_1 = \kappastar\lambdastar$, our assumption that $3 \kappastar \lambdastar \le n$ implies
$\lambda_1 \leq n/3$ and $\kappa = \kappastar$.
Therefore, the pairs $(\mU, \mLambda)$, where $\mU \in \calU$, are indeed elements of $\calP(\kappastar,\lambdastar,p,q)$.
Thus, the set of matrices in Equation~\eqref{eq:results:kappaconst:packingset} are a $2s$-packing set of $ \calP(\kappastar, \lambdastar, p, q)$ under the $(\tti)$-norm, where $s = C\sqrt{(\log n \wedge \lambdastar) \kappastar / n }$ for suitably chosen constant $C > 0$.

Writing $\tmLambda = \mLambda^{1/2} \mI_{p,q} \mLambda^{1/2}$ for ease of notation and taking $\mP_i = \mU_i \tmLambda \mU_i^T$ for all $i \in [M]\cup\{0\}$, we note that $(\mU_i,\mLambda)$ induces a distribution over $n$-vertex networks via $\mP_i$.
Lemma~\ref{lem:main:kappaconst:KL_bound}, proved in Section~\ref{subsec:apx:kappaconst:KL}, upper bounds the KL divergences between these distributions over networks as
\begin{equation*}
\KL\left( \mP_i \| \mP_0 \right)
\le
\frac{1}{10} \log n~~~\text{ for all }~~~i\in [M]
\end{equation*}
for all suitably large $n$.
Averaging over $i \in [M]$,
\begin{equation*}
\frac{1}{M} \sum_{j=1}^M \KL( \mP_i \| \mP_0 )
\le \frac{1}{10} \log n.
\end{equation*}
Thus, applying Theorem~\ref{thm:main:tsybakov} with $s = C\sqrt{(\log n \wedge \lambdastar) \kappastar / n }$ and $\alpha = 1/10$, our result holds for the setting where $\kappastar = O(1)$.

In the setting where $\kappastar = \omega(1)$, we use a different construction, but our proof largely parallels the argument given above.
Let $\kappa=\kappastar$, set $\lambda_2=\lambda_3=\dots=\lambda_d = \lambdastar$ and $\lambda_1 = \lambdastar\kappastar$, by assumption, we have $\lambda_1 \leq n/3$. Define the matrix $\mLambda = \diag(\lambda_1,\lambda_2,\dots,\lambda_d) \in \R^{d \times d}$. 
By Lemma~\ref{lem:kappagrowing:packing}, proven in Section~\ref{subsec:apx:kappagrowing:packing}, there exists a collection
\begin{equation*}
\calU = \{ \mU_i : i=1,2,\dots, \lfloor n/2 \rfloor \} 
\subset \Stiefel_d(\R^n)
\end{equation*}
such that for any pair of indices $0 \le i < j \le \lfloor n/2 \rfloor$ and any $\zeta_d \le 1/\sqrt{640 d}$,
\begin{equation*}
\min_{\mW \in \Od \cap \Opq}
\left\|\mU_i \mLambda^{1/2} \mW - \mU_j \mLambda^{1/2}\right\|_{2, \infty}
\geq \frac{\zeta_d \sqrt{d-1}}{2}\sqrt{\frac{\kappa(\lambda_d \wedge \log n)}{n}}.
\end{equation*}
Since $\kappa = \kappastar$ and $\lambda_d = \lambdastar$ by construction,
the set $\calU$ constitutes a $2s$-packing set for $\calP(\kappastar,\lambdastar,p,q)$ with $s = C\sqrt{ (\log n\wedge \lambdastar) \kappastar / n}$ for $C>0$ chosen suitably small.

It remains for us to upper bound the KL divergence between the distributions induced by $\mLambda$ and the elements of $\calU$.
Recall our notation $\tmLambda = \mLambda^{1/2} \mI_{p,q} \mLambda^{1/2}$ and $\mP_i = \mU_i \tmLambda \mU_i^T$ for $i=0,1,2,\dots,\lfloor n/2 \rfloor$.
Applying Lemma~\ref{lem:kappagrowing:KL}, proven in Section~\ref{subsec:apx:kappagrowing:KL}, for any $i =1,2,\dots,\lfloor n/2 \rfloor$,
\begin{equation*}
\left\| \mP_i - \mP_0 \right\|_F^2
\le \frac{1}{80} \frac{ \lambda_1 (\lambda_d \wedge \log n)}{n}.
\end{equation*}
Lemma~\ref{lem:main:kappagrowing:P0exists}, also proven in Section~\ref{subsec:apx:kappagrowing:KL}, ensures that $\mP_0$ has entries bounded by
\begin{equation*}
    \frac{ \lambda_1 }{ 3n } \le \mP^{(0)}_{ij} \le \frac{2}{3}~~~\text{for all}~~~i,j.
\end{equation*}
Thus, applying Lemma~\ref{lem:Zhou2021}, for any $i=1,2,\dots,\lfloor n/2 \rfloor$,
\begin{equation*}
\KL( \mP_i \| \mP_0 )
\le
\frac{9 n}{ \lambda_1} \left\| \mP_i - \mP_0 \right\|_F^2
\le \frac{9}{80} \log n.
\end{equation*}
Averaging over our packing set,
\begin{equation*}
\frac{1}{\lfloor n/2 \rfloor}
\sum_{i=1}^{ \lfloor n/2 \rfloor} \KL( \mP_i \| \mP_0 )
\le \frac{9}{80} \log n.
\end{equation*}
Applying Theorem~\ref{thm:main:tsybakov} with $s = C\sqrt{ (\log n\wedge \lambdastar) \kappastar / n}$ and $\alpha = 9/80 < 1/8$ establishes our result in the regime where $\kappastar = \omega(1)$, completing the proof.
\end{proof}

\section{Theorem~\ref{thm:main:main}: Constant condition number}
\label{sec:apx:kappaconst}

Here, we prove Theorem~\ref{thm:main:main} in the regim where $\kappa = \lambda_1/\lambda_d = O(1)$.
Recall that we have $\mP = \mU\mLambda^{1/2}\mI_{p,q}\mLambda^{1/2}\mU^T$, where $\mLambda$ is a diagonal matrix with positive on-diagonal entries $\lambda_1 \geq \lambda_2 \geq \ldots \geq \lambda_d$.
By assumption in Theorem~\ref{thm:main:main}, $\kappa \geq 3d$, so the latent position dimension $d$ may be considered bounded throughout this section.

\subsection{Constructing a Packing Set}
\label{subsec:apx:kappaconst:packing}

We begin by constructing our collection of elements of $\calP(\kappastar,\lambdastar,p,q)$ and establishing a lowerbound on their pairwise distances under $\dtildetti$.
As we mentioned in Section \ref{sec:results}, our construction makes use of Hadamard matrices to construct a collection of matrices with orthonormal columns, which will correspond to the singular subspaces of a collection of probability matrices.
We will then show that this collection of singular subspaces, multiplied by a diagonal matrix of suitable eigenvalues, constitute the representatives of equivalence classes that form a packing set over $\calP(\kappastar,\lambdastar,p,q)$.
In Section~\ref{subsec:apx:kappaconst:KL}, we establish that the KL divergences of their associated probability matrices are suitably bounded.  

Recall that a Hadamard matrix of order $n$ is an $n$-by-$n$ matrix whose entries are drawn from $\{-1,1\}$ and whose rows are mutually orthogonal.
In particular, Hadamard matrices have the useful property that if $\mH$ is a Hadamard matrix of order $n$, then the matrix
\begin{equation*}
\begin{bmatrix}
\mH & \mH \\
\mH & -\mH
\end{bmatrix} \in \{-1,1\}^{2n \times 2n}
\end{equation*}
is a Hadamard matrix of order $2n$.
According to Sylvester's construction \citep{seberry2005on}, we can construct Hadamard matrices of order $2^k$ recursively by
\begin{equation*}
\mH_1 = \begin{bmatrix}
1
\end{bmatrix}, \quad
\mH_2 = \begin{bmatrix}
1 & 1 \\
1 & -1
\end{bmatrix},
\end{equation*}
and
\begin{equation*}
\mH_{2^{k+1}}=\begin{bmatrix}
\mH_{2^{k}} & \mH_{2^{k}} \\
\mH_{2^{k}} & -\mH_{2^{k}}
\end{bmatrix}
\end{equation*}
for any integer $k \geq 0$.
We write $\mH_n$ to denote a Hadamard matrix of order $n = 2^k$ for integer $k \ge 0$ constructed in this manner.

\begin{lemma} \label{lem:kappaconst:U0exists}
Under the conditions of Theorem~\ref{thm:main:main}, suppose that $\kappastar = O(1)$.
There exists a matrix $\mU_0 \in \R^{n \times d}$ with orthonormal columns such that
\begin{equation} \label{eq:U0:max}
    \max_{j\in[n], k\in[d]} \left|\mU^{(0)}_{jk}\right| \leq \frac{1}{\sqrt{n-r}},
\end{equation}
\begin{equation} \label{eq:U0:max-i}
    \frac{1}{\sqrt{n}} \leq \max_{k\in[d]} \left|\mU^{(0)}_{ik}\right| \leq \frac{1}{\sqrt{n-r}}
    ~~~\text{ for all } i \in [2^{k_0}m],
\end{equation}
and
\begin{equation} \label{eq:U0:l2}
    \sqrt{\frac{d}{n}} \leq \sqrt{\sum_{k=1}^d \left(\mU^{(0)}_{ik}\right)^2} \leq \sqrt{\frac{d}{n-r}}
    ~~~\text{ for all } i \in [2^{k_0}m].
\end{equation}
\end{lemma}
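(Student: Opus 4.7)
The plan is to construct $\mU_0$ by vertically tiling a normalized block of a Hadamard matrix. Let $\mH = \mH_{2^{k_0}}$ denote the Hadamard matrix of order $2^{k_0}$ from Sylvester's construction, and let $\mH^{(d)}$ denote the $2^{k_0} \times d$ submatrix formed by its first $d$ columns (well-defined since $d \leq 2^{k_0}$ by the choice of $k_0$). Set $r = n - 2^{k_0} m$, so that $r \in \{0,1,\dots,2^{k_0}-1\}$ and $n - r = 2^{k_0} m$. Define
\begin{equation*}
\mU_0 = \frac{1}{\sqrt{2^{k_0} m}}
\begin{bmatrix}
\mathbf{1}_m \otimes \mH^{(d)} \\
\mzero_{r \times d}
\end{bmatrix} \in \R^{n \times d},
\end{equation*}
that is, $m$ vertically stacked copies of $\mH^{(d)}$ followed by $r$ rows of zeros, all scaled by $1/\sqrt{2^{k_0} m}$.

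First I would verify orthonormality of the columns of $\mU_0$. For any $k, k' \in [d]$, the inner product of columns $k$ and $k'$ equals $m \cdot (2^{k_0} m)^{-1} \langle \mH \ve_k, \mH \ve_{k'}\rangle$; since distinct columns of $\mH$ are orthogonal and each has squared norm $2^{k_0}$, this evaluates to $\delta_{kk'}$, as required.

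Next, every nonzero entry of $\mU_0$ has magnitude exactly $1/\sqrt{2^{k_0} m} = 1/\sqrt{n-r}$, which immediately yields Equation~\eqref{eq:U0:max}. For any $i \in [2^{k_0} m]$, the $i$-th row lies in one of the tiled Hadamard blocks and therefore has all $d$ entries of magnitude exactly $1/\sqrt{n-r}$, so
\begin{equation*}
\max_{k\in[d]} \bigl|\mU^{(0)}_{ik}\bigr| = \frac{1}{\sqrt{n-r}}
~~\text{ and }~~
\sqrt{\sum_{k=1}^d \bigl(\mU^{(0)}_{ik}\bigr)^2} = \sqrt{\frac{d}{n-r}}.
\end{equation*}
The upper bounds in Equations~\eqref{eq:U0:max-i} and~\eqref{eq:U0:l2} match these values exactly, while the lower bounds $1/\sqrt{n}$ and $\sqrt{d/n}$ follow from the trivial inequality $n \geq n - r$.

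There is no substantial obstacle here — the construction is essentially forced once we ask for an orthonormal basis of $d$ columns in $\R^n$ whose nonzero entries all have equal magnitude, and the Hadamard structure guarantees that column orthogonality is preserved under vertical tiling. The only nontrivial condition to check is $d \leq 2^{k_0}$, which holds by the definition of $k_0$ and ensures that $\mH^{(d)}$ is well-defined.
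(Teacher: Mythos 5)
Your construction does prove the lemma as stated: the tiled $\mU_0 = (2^{k_0}m)^{-1/2}[\mathbf{1}_m\otimes\mH^{(d)};\,\mzero]$ has orthonormal columns and satisfies Equations~\eqref{eq:U0:max}--\eqref{eq:U0:l2}. However, it diverges from the paper's construction in a way that breaks the argument downstream. Your $\mU_0$ has all of its last $r = n - 2^{k_0}m$ rows identically zero, whereas the paper (Equation~\eqref{eq:par1:base_U}) keeps the first column equal to $\onevec/\sqrt{n}$ for \emph{all} $n$ rows and only zeroes out columns $2,\dots,d$ in the trailing $r$ rows. This difference is not cosmetic: Lemma~\ref{lem:main:kappaconst:P0exists} asserts that every entry of $\mP_0 = \mU_0 \mLambda^{1/2}\mI_{p,q}\mLambda^{1/2}\mU_0^T$ satisfies $\lambda_1/(3n) \le \mP^{(0)}_{ij}$, and its proof begins from the estimate $\mP^{(0)}_{ij} \ge \lambda_1/n - (n-r)^{-1}\sum_{\ell\ge 2}\lambda_\ell$, which relies precisely on the first column being $1/\sqrt{n}$ in every row. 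With your $\mU_0$, whenever $r>0$ the last $r$ rows of $\mP_0$ vanish identically, so $\mP^{(0)}_{ij} = 0$ for $i > 2^{k_0}m$; that destroys the lower bound, and with it the KL-divergence control (Lemma~\ref{lem:Zhou2021} requires $a \le \mP^{(0)}_{ij}$ with $a>0$, and the packing argument in Theorem~\ref{thm:main:tsybakov} needs $\mP_i \ll \mP_0$).

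The fix is small: handle the remainder rows the way the paper does. Keep the first column constant at $1/\sqrt{n}$ across all rows (using the fact that the first column of a Sylvester Hadamard matrix is all ones, so normalization over the full $n$ rows still yields a unit vector orthogonal to columns $2,\dots,d$), scale columns $2,\dots,d$ by $1/\sqrt{n-r}$ over the first $2^{k_0}m$ rows, and pad only those columns with zeros in the last $r$ rows. The resulting matrix still satisfies Equations~\eqref{eq:U0:max}--\eqref{eq:U0:l2} — the per-row $\ell_2$ norm in Equation~\eqref{eq:U0:l2} becomes $\sqrt{1/n + (d-1)/(n-r)}$, which lies in $[\sqrt{d/n},\sqrt{d/(n-r)}]$ — and, crucially, has no zero rows, so $\mP_0$ stays entrywise positive.
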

\begin{proof}
For a given latent space dimension $d = p + q$, we let $k_0 > 0$ be the integer such that $2^{k_0-1} < d \leq 2^{k_0}$.
Let $\mH_{2^{k_0}, d}$ denote the matrix obtained by retaining only the first $d$ columns of the Hadamard matrix $\mH_{2^{k_0}}$.
By construction, $\mH_{2^{k_0}, d}$ is a $2^{k_0}\times d$ matrix with orthogonal columns.
Assume that $n = 2^{k_0} m + r$ where $m>0$ is an integer and $r$ is a remainder term such that $0 \leq r < 2^{k_0} < 2d$.
To obtain an $n\times d$ matrix with orthonormal columns, we first stack $m$ copies of $\mH_{2^{k_0}, d}$ together vertically to obtain a matrix $\mJ_m \in \R^{m 2^{k_0} \times d}$ given by
\begin{equation*}
\mJ_m = \left. \begin{bmatrix}
    \mH_{2^{k_0}, d} \\
    \mH_{2^{k_0}, d} \\
    \vdots\\
    \mH_{2^{k_0}, d} 
\end{bmatrix}\right\} m \text{ copies of } \mH_{2^{k_0}, d}.
\end{equation*}
Defining
$\mK_r = \begin{bmatrix} \onevec_{r} & \mzero\\
\end{bmatrix} \in \R^{r \times d}$,
we construct a matrix $\mU_0 \in \R^{n \times d}$ with orthonormal columns by stacking $\mJ_m$ and $\mK_r$ and rescaling their columns.
For $i \in [2^{k_0}]$ and $j \in [d]$, let $h_{i, j}$ be the $(i, j)$ entry of $\mH_{2^{k_0}, d}$.
Noting that $h_{i,1} = 1$ for all $i \in [2^{k_0}]$, our construction of $\mU_0$ is then given by
\begin{equation} \label{eq:par1:base_U}
    \mU_0 = \begin{bmatrix}
    \frac{1}{\sqrt{n}} & \frac{h_{12}}{\sqrt{n-r}} & \frac{h_{13}}{\sqrt{n-r}} & \ldots & \frac{h_{1 d}}{\sqrt{n-r}} \\
    \vdots & \vdots & \vdots & \cdots & \vdots \\
    \frac{1}{\sqrt{n}} & \frac{h_{2^{k_0},2}}{\sqrt{n-r}} & \frac{h_{2^{k_0}, 3}}{\sqrt{n-r}} & \ldots & \frac{h_{2^{k_0}, d}}{\sqrt{n-r}} \\
    \vdots & \vdots & \vdots & \cdots & \vdots \\
    \frac{1}{\sqrt{n}} & \frac{h_{1,2}}{\sqrt{n-r}} & \frac{h_{1,3}}{\sqrt{n-r}} & \ldots & \frac{h_{1, d}}{\sqrt{n-r}} \\
    \vdots & \vdots & \vdots & \cdots & \vdots \\
    \frac{1}{\sqrt{n}} & \frac{h_{2^{k_0},2}}{\sqrt{n-r}} & \frac{h_{2^{k_0}, 3}}{\sqrt{n-r}} & \ldots & \frac{h_{2^{k_0}, d}}{\sqrt{n-r}} \\
    \onevec_{r}/\sqrt{n} &  & \mzero &
\end{bmatrix}.
\end{equation}
Noting that $|h_{i,j}| = 1$ for all $i \in [2^{k_0}]$ and $j \in [d]$, Equations~\eqref{eq:U0:max}-\eqref{eq:U0:l2} all follow from the construction in Equation~\eqref{eq:par1:base_U}. 
\end{proof}

To form our packing set, we will construct a collection of matrices that are far from $\mU_0$ (and from one another) in $(\tti)$-distance, but yield similar distributions over networks as measured by KL-divergence.
We will do this by selectively modifying one row of $\mU_0$ at a time.
Toward this end, Lemma~\ref{lem:constructxi} ensures the existence of a collection of vectors from which we will construct these perturbed versions of $\mU_0$.

\begin{lemma} \label{lem:constructxi}
Under the conditions of Theorem~\ref{thm:main:main}, suppose that $\kappastar = O(1)$.
Let $\mU_0$ be the matrix guaranteed by Lemma~\ref{lem:kappaconst:U0exists} and let $\lambda_1 \ge \lambda_2 \ge \cdots \ge \lambda_d > 0$ be arbitrary.
For each $i \in [n]$, let $\vu_i \in \bbR^d$ denote the $i$-th row of $\mU_0$.
For latent space dimension $d$, let $k_0$ be such that $2^{k_0-1} < d \le 2^{k_0}$ and let $m = \lfloor n/2^{k_0} \rfloor$.
For $n$ sufficiently large, for each $i \in [2^{k_0}m]$, there exists a vector $\vx_i$ such that
\begin{equation} \label{eq:main:sgn}
    \vx_i^T \vu_i \geq 0,
\end{equation}
\begin{equation} \label{eq:main:cos}
\left|\frac{\vx_i^T \vu_i}{\left\|\vx_i\right\|_2\left\|\vu_i\right\|_2}\right| < \frac{\sqrt{3}}{2},
\end{equation}
and for any constant $c_0>0$,
\begin{equation} \label{eq:main:abs}
    \left|\vx_{i,\ell}\right|
    = \frac{c_0}{\lambda_\ell} \sqrt{\frac{\lambda_1 (\lambda_d \wedge \log n)}{n d}}, ~~~ \ell \in [d],
\end{equation}
\end{lemma}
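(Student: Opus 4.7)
The plan is to set the magnitudes $a_\ell := |\vx_{i,\ell}|$ according to~\eqref{eq:main:abs}, so that the only remaining degrees of freedom are the signs $\sigma_\ell \in \{-1,+1\}$ with $\vx_{i,\ell} = \sigma_\ell a_\ell$. The norm $\|\vx_i\|_2$ is then fixed, and both sides of the cosine ratio in~\eqref{eq:main:cos} depend on the signs only through $\vx_i^T \vu_i$. Moreover, simultaneously flipping every $\sigma_\ell$ negates $\vx_i^T \vu_i$ while leaving all norms invariant, so condition~\eqref{eq:main:sgn} will come for free once~\eqref{eq:main:cos} has been secured: after producing signs that make the cosine small, I flip them globally if needed to make the inner product nonnegative.

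For~\eqref{eq:main:cos} itself, I plan to use a second-moment (probabilistic-method) argument. Draw $\sigma_1,\dots,\sigma_d$ independently and uniformly from $\{-1,+1\}$; then $\E[\vx_i^T \vu_i] = 0$, and the entry bound $|\vu_{i,\ell}| \leq 1/\sqrt{n-r}$ supplied by~\eqref{eq:U0:max} yields
\begin{equation*}
\E\bigl[(\vx_i^T \vu_i)^2\bigr] = \sum_{\ell=1}^d a_\ell^2\, \vu_{i,\ell}^2 \leq \frac{1}{n-r}\sum_{\ell=1}^d a_\ell^2 = \frac{\|\vx_i\|_2^2}{n-r}.
\end{equation*}
Hence there exists at least one realization of the signs for which $|\vx_i^T \vu_i| \leq \|\vx_i\|_2/\sqrt{n-r}$, and I fix such a realization.

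Combining this with the lower bound $\|\vu_i\|_2 \geq \sqrt{d/n}$ from~\eqref{eq:U0:l2} gives
\begin{equation*}
\frac{|\vx_i^T \vu_i|}{\|\vx_i\|_2\,\|\vu_i\|_2} \leq \sqrt{\frac{n}{d(n-r)}},
\end{equation*}
and the Hadamard-tiling construction in~\eqref{eq:par1:base_U} guarantees the remainder satisfies $r < 2^{k_0} \leq 2d$. The assumptions $\kappa_\star \geq 3d$ and $\kappa_\star = O(1)$ preclude $d = 1$ (for which $\kappa$ would equal $1$), so $d \geq 2$, and the inequality $\sqrt{n/(d(n-r))} < \sqrt{3}/2$ reduces to $4n < 3d(n-r)$, which holds for all sufficiently large $n$. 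A final global sign flip (if needed) secures~\eqref{eq:main:sgn}.

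The only substantive obstacle is this last numerical inequality; everything else is bookkeeping driven by the explicit entry and row-norm bounds for $\mU_0$ packaged in Lemma~\ref{lem:kappaconst:U0exists}. In particular, the prescribed magnitudes $a_\ell$ never enter the argument except through the fixed quantity $\|\vx_i\|_2$, so the specific form of~\eqref{eq:main:abs} plays no role in obtaining either~\eqref{eq:main:sgn} or~\eqref{eq:main:cos} and is simply inherited from the lemma's hypothesis.
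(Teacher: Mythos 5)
Your proof is correct and takes a genuinely different route from the paper's. The paper constructs the sign pattern for $\vx_i$ deterministically: it forms an auxiliary unit vector $\va_i$ carrying the same magnitudes you use but with the signs of $\vu_i$, invokes Lemma~\ref{lem:tech:clever} to produce a sign vector $\vz_i$ with $|\va_i^T\vz_i| < \sqrt{2/3}$, assigns those signs to $\vx_i$, and then bounds the cosine by $\|\vw_i - \vz_i\|_2 + \sqrt{2/3} \le 1/21 + \sqrt{2/3} \le \sqrt{3}/2$ after showing that the rows of $\mU_0$ are nearly ``flat'' (each entry close to $1/\sqrt{d}$ in magnitude). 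Your argument replaces all of this with a second-moment probabilistic method over independent Rademacher signs, using only the entry bound $|\mU^{(0)}_{i\ell}| \le 1/\sqrt{n-r}$ and the row-norm lower bound $\|\vu_i\|_2 \ge \sqrt{d/n}$. This is shorter, avoids Lemma~\ref{lem:tech:clever} entirely, and in fact delivers a stronger cosine bound (asymptotically $1/\sqrt{d}$ rather than a constant squeezed just under $\sqrt{3}/2$). The global sign flip to enforce Equation~\eqref{eq:main:sgn} mirrors the paper's closing step.

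One small quibble: your justification that $d\geq 2$ --- ``the assumptions $\kappa_\star \geq 3d$ and $\kappa_\star = O(1)$ preclude $d=1$'' --- does not quite hold as stated, since $\kappa_\star$ is only an upper bound on the condition number and is compatible with $d=1$ (where $\kappa(\mLambda)=1 \le \kappa_\star$ trivially). The need for $d\geq 2$ is nonetheless genuine: for $d=1$ the cosine in Equation~\eqref{eq:main:cos} is identically $\pm1$, so the lemma cannot hold. The paper has the same implicit restriction (Lemma~\ref{lem:tech:clever} is stated only for $d\ge 2$), so this is a shared gap rather than a flaw unique to your argument, but it would be cleaner to state the $d\ge 2$ requirement explicitly rather than deriving it from the $\kappa_\star$ hypotheses.
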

\begin{proof}
Fix $i \in [2^{k_0} m]$.
We will construct $\vx_i \in \bbR^d$ satisfying Equations~\eqref{eq:main:sgn},~\eqref{eq:main:cos} and~\eqref{eq:main:abs}.
Toward this end, consider the vector
\begin{equation*}
\vy = c_0 \sqrt{\frac{\lambda_1 (\lambda_d \wedge \log n)}{n d}}
	\left( \lambda_1^{-1}, \lambda_2^{-1}, \dots,
		\lambda_d^{-1} \right)^T \in \bbR^d,
\end{equation*}
where $c_0>0$ is any constant of our choosing.
Define $\va_i \in \bbR^d$ by
\begin{equation} \label{eq:def:va}
\va_{i, \ell}
= \frac{1}{\|\vy_i\|_2}\sign( \vu_{i, \ell}) \left|\vy_{i,\ell} \right|
~~~~~~ \ell \in [d],
\end{equation}
where for each $i \in [2^{k_0}m]$, we denote the $i$-th row of $\mU_0$ as $\vu_i \in \bbR^d$.
By Lemma~\ref{lem:tech:clever}, there exists a vector $\vz_i \in \R^d$ such that
\begin{equation} \label{eq:vavz:ub}
|\va_i^T \vz_i| < \sqrt{2/3}.
\end{equation}
We define $\vx_i \in \bbR^d$ by $\vx_{i, \ell} = \sign(\vz_{i, \ell}) \left|\vy_{i, \ell}\right|$, and Equation~\eqref{eq:main:abs} is satisfied trivially.

Define vector $\vw \in \bbR^d$ according to
\begin{equation*} 
\vw_{i, \ell} = \frac{ \sign(\vz_{i, \ell}) \left|\vu_{i, \ell} \right|}
			{\|\vu_i\|_2}
\end{equation*}
for $\ell \in [d]$.
Substituting and applying the triangle inequality,
\begin{equation*} 
\left| \frac{ \vx_i^T \vu_i }{ \| \vx_i \|_2 \| \vu_i \|_2 } \right|
\le \left| \va_i^T (\vw_i - \vz_i ) \right| + \left| \va_i^T \vz_i \right|
\le \left\| \vw_i - \vz_i \right\|_2 + \sqrt{\frac{2}{3}},
\end{equation*}
where the second inequality follows from Cauchy-Schwarz, the fact that $\| \va_i \| = 1$ and Equation~\eqref{eq:vavz:ub}.
Plugging in the definitions of $\vw_i$ and $\vz_i$,
\begin{equation} \label{eq:wz:bound}
\left\| \vw_i - \vz_i \right\|_2
=
\sqrt{\sum_{\ell = 1}^d
	\left(\frac{\left|\vu_{i, \ell} \right|}{\|\vu_i\|_2}
		- \frac{1}{\sqrt{d}}\right)^2}.
\end{equation}
From Equations~\eqref{eq:U0:max-i} and \eqref{eq:U0:l2}, we have 
\begin{equation*}
\left(\sqrt{\frac{n-r}{n}} - 1\right)\cdot{\frac{1}{\sqrt{d}}} \leq \frac{|\vu_{i, \ell}|}{\|\vu_i\|_2 } - \frac{1}{\sqrt{d}}\leq \left(\sqrt{\frac{n}{n-r}} - 1\right)\cdot{\frac{1}{\sqrt{d}}}
\end{equation*}
for any $\ell \in [d]$.
As a result, Equation~\eqref{eq:wz:bound} is further bounded by
\begin{equation*} \begin{aligned}
\left\| \vw_i - \vz_i \right\|_2
&\leq \max\left\{\left(1-\sqrt{\frac{n-r}{n}}\right),
		\left(\sqrt{\frac{n}{n-r}} - 1\right) \right\} \\
&\leq \max\left\{\frac{r}{n}, \frac{r}{2(n-r)}\right\},
\end{aligned} \end{equation*}
where the second inequality follows from the fact that for any $x \in [0,1]$, we have 
\begin{equation*}
1 - \sqrt{1 - x} \leq x \quad \text{ and } \quad \sqrt{1 + x} - 1 \leq \frac{x}{2}.
\end{equation*}
Hence, choosing an $n$ sufficiently large, for example, $n \geq 42 d \geq 21 r$, and it follows that
\begin{equation*}
\left| \frac{ \vx_i^T \vu_i }{ \| \vx_i \|_2 \| \vu_i \|_2 } \right|
\le
\frac{1}{21}
+ \sqrt{\frac{2}{3}} \le \frac{\sqrt{3}}{2}.
\end{equation*}

To see that $\vx_i$ can be chosen to satisfy Equation~\eqref{eq:main:sgn}, simply note that if $\vx_i^T \vu_i < 0$, we may replace $\vx_i$ with $-\vx_i$ without violating Equations~\eqref{eq:main:cos} and~\eqref{eq:main:abs}.
\end{proof}

With Lemma~\ref{lem:constructxi} in hand, we are ready to construct perturbations of the matrix $\mU_0$ guaranteed by Lemma~\ref{lem:kappaconst:U0exists}.
Our packing set argument in Theorem~\ref{thm:main:tsybakov} requires that the matrices $\{\mU_i : i=0,1,2,\dots,2^{k_0}m \}$ be suitably well separated in $(\tti)$-distance.
Lemma~\ref{lem:main:kappaconst:packing_distance} establishes that this is the case.

\begin{lemma}
\label{lem:main:kappaconst:packing_distance:prelim}
Under the setting of Theorem~\ref{thm:main:main}, suppose that $\kappa = O(1)$.
For latent space dimension $d$, let $k_0$ be such that $2^{k_0-1} < d \le 2^{k_0}$ and let $m = \lfloor n/2^{k_0} \rfloor$.
Let $\mLambda = \diag(\lambda_1,\lambda_2,\dots,\lambda_d)$ for $\lambda_1 \ge \lambda_2 \ge \cdots \ge \lambda_d > 0$ obeying $\kappa = \lambda_1 / \lambda_d$.
For all suitably large $n$, there exists a collection of matrices $\{ \mU_i : i =0,1,2,\dots,2^{k_0} m \}$ such that for all $i \in [2^{k_0}m]$ and $j \in [2^{k_0}m] \cup \{0\}$ not equal to $i$, 
\begin{equation} \label{eq:main:packing_distance}
    \min_{\mW \in \Od\cap \Opq} \|\mU_i\mLambda^{1/2} \mW - \mU_j \mLambda^{1/2}\|_{\tti} \geq \frac{1}{4} \|\vx_i^T \mLambda^{1/2} \|_2,
\end{equation}
where for each $i \in [2^{k_0}m]$, $\vx_i$ is the vector guaranteed by Lemma~\ref{lem:constructxi}.
\end{lemma}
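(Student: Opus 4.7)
The plan is to construct each $\mU_i$ as a rank-two perturbation of $\mU_0$ that moves only two rows while keeping $\mU_i$ in $\Stiefel_d(\R^n)$, then lower-bound the $(\tti)$-norm by isolating those two rows via a parallelogram-type inequality. The construction exploits the stacked structure of $\mU_0$ from Lemma~\ref{lem:kappaconst:U0exists}: because $\mU_0$ consists of $m \geq 2$ identical copies of $\mH_{2^{k_0},d}$ stacked vertically, every row $\vu_i$ with $i \in [2^{k_0}m]$ has a ``partner'' index $i^* \neq i$ with $\vu_{i^*} = \vu_i$. I would fix a non-involutive assignment $i \mapsto i^*$ (for instance a cyclic shift by $2^{k_0}$, with a small correction near the boundary so that most pairs $(i,j)$ satisfy $\{j, j^*\} \cap \{i, i^*\} = \emptyset$), and define
\begin{equation*}
    \mU_i := \operatorname{Polar}\bigl(\mU_0 + (\ve_i - \ve_{i^*}) \vx_i^T\bigr).
\end{equation*}

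Because $\mU_0^T(\ve_i - \ve_{i^*}) = \vu_i - \vu_{i^*} = \mzero$, the perturbation $\mT_i := (\ve_i - \ve_{i^*})\vx_i^T$ is tangent to $\Stiefel_d(\R^n)$ at $\mU_0$, so the polar retraction only corrects at second order in $\vx_i$ and its correction is supported in the two-dimensional subspace spanned by $\ve_i$ and $\ve_{i^*}$. Consequently
\begin{equation*}
    (\mU_i)_i = \vu_i + \vx_i + \boldsymbol{\delta}_i, \qquad
    (\mU_i)_{i^*} = \vu_i - \vx_i + \boldsymbol{\delta}'_i,
\end{equation*}
with $(\mU_i)_k = \vu_k$ for all $k \notin \{i, i^*\}$ and $\max(\|\boldsymbol{\delta}_i\|_2, \|\boldsymbol{\delta}'_i\|_2) = O(\|\vx_i\|_2^2)$. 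To prove the bound, fix $i \in [2^{k_0}m]$ and $j \in [2^{k_0}m]\cup\{0\}$ with $j \neq i$ and (after handling a finite number of boundary pairs) $\{j, j^*\} \cap \{i, i^*\} = \emptyset$; the case $j = 0$ is the easiest since then $\mU_j = \mU_0$ has no modifications. For any $\mW \in \Od \cap \Opq$, rows $i$ and $i^*$ of $\mU_j \mLambda^{1/2}$ both equal $\vu_i^T \mLambda^{1/2}$, so up to the $\boldsymbol{\delta}$ corrections the same rows of $\mU_i \mLambda^{1/2} \mW - \mU_j \mLambda^{1/2}$ equal
\begin{equation*}
    \vu_i^T \mLambda^{1/2}(\mW - \mI) \pm \vx_i^T \mLambda^{1/2} \mW.
\end{equation*}
The elementary identity $\max(\|a+b\|_2, \|a-b\|_2) \ge \|b\|_2$ (from $2b = (a+b)-(a-b)$ and the triangle inequality), combined with $\|\vx_i^T \mLambda^{1/2} \mW\|_2 = \|\vx_i^T \mLambda^{1/2}\|_2$ since $\mW \in \Od$, gives $\|\mU_i \mLambda^{1/2} \mW - \mU_j \mLambda^{1/2}\|_{\tti} \ge \|\vx_i^T \mLambda^{1/2}\|_2 - O(\|\vx_i\|_2^2 \sqrt{\lambda_1})$. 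For $n$ sufficiently large, Equation~\eqref{eq:main:abs} forces the error term below $\tfrac{3}{4}\|\vx_i^T \mLambda^{1/2}\|_2$, delivering the claimed $\tfrac{1}{4}\|\vx_i^T\mLambda^{1/2}\|_2$ bound.

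The main obstacle is choosing the partner map $i \mapsto i^*$ coherently so that every pair $i \neq j$ in the packing yields at least two ``clean'' rows (modified in $\mU_i$ but untouched in $\mU_j$) on which to apply the parallelogram identity; an involutive pairing would allow $\vx_i$ and $\vx_{i^*}$ to cancel under a reflection $\mW$, which is why cyclic, non-involutive pairings are essential and why one wants Equation~\eqref{eq:main:cos} of Lemma~\ref{lem:constructxi} to prevent alignment between $\vx_i$ and $\vu_i$. A secondary bookkeeping issue is the uniform control of the polar correction $\boldsymbol{\delta}_i$; this reduces to inverting the $d \times d$ rank-one update $\mI_d + 2 \mLambda^{-1/2}\vx_i \vx_i^T \mLambda^{-1/2}$ inside the polar factor, which in turn follows from the explicit bound on $|\vx_{i,\ell}|$ in Equation~\eqref{eq:main:abs} together with the implicit assumption $n = \omega(d)$ in the regime $\kappastar = O(1)$.
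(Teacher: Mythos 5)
Your approach is genuinely different from the paper's and has real appeal, but it contains one misstated claim and one substantive gap.

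\textbf{How your route differs from the paper's.} The paper perturbs a \emph{single} row: $\mG_i = \mU_0 + \ve_i\vx_i^T$. Because $\mU_0^T\ve_i = \vu_i \neq 0$, the resulting polar correction is first order in $\vx_i$, which is why the paper needs the machinery of Lemmas~\ref{lem:tech:eigen}, \ref{lem:eigenbound}, and \ref{lem:Utilde} to show that correction is still at most $\tfrac{1}{8}\|\mLambda^{1/2}\vx_i\|_2$. To lower-bound $\|\mG_i\mLambda^{1/2}\mW - \mG_j\mLambda^{1/2}\|_{\tti}$, the paper splits on whether $\|\mLambda^{1/2}\vx_i\|_2 \ge 2\|(\mW^T - \mI)\mLambda^{1/2}\vu_i\|_2$; when that fails, it uses the fact that $m\ge 3$ to find a third index $\ell\notin\{i,j\}$ with $\vu_\ell = \vu_i$, so that the unmodified row $\ell$ carries the cost of $\mW$ being far from $\mI$. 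Your symmetric two-row perturbation is tangent to the Stiefel manifold (since $\mU_0^T(\ve_i - \ve_{i^*}) = 0$), which makes the polar correction genuinely second order, and the $\max(\|a+b\|,\|a-b\|)\ge\|b\|$ identity is cleaner than the paper's case split. That is a real structural advantage and a nice idea.

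\textbf{A minor error.} The polar correction is \emph{not} supported only on rows $i,i^*$. Writing $\mG_i' = \mU_0 + (\ve_i-\ve_{i^*})\vx_i^T$, one has $(\mG_i')^T\mG_i' = \mI_d + 2\vx_i\vx_i^T$, so the polar factor is $\mU_i = \mG_i'(\mI_d + 2\vx_i\vx_i^T)^{-1/2} = \mG_i' + c_i\,\mG_i'\vx_i\vx_i^T$ for a scalar $c_i$. Every row $k$ of $\mG_i'\vx_i\vx_i^T$ equals $(\vu_k^T\vx_i)\vx_i^T$ (plus $\pm\|\vx_i\|_2^2\vx_i^T$ for $k\in\{i,i^*\}$), and $\vu_k^T\vx_i$ is generally nonzero. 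So $(\mU_i)_k \ne \vu_k$ for $k\notin\{i,i^*\}$. These corrections are of magnitude $O(\|\vu_k\|_2\|\vx_i\|_2^2) = O(\sqrt{d/n}\,\|\vx_i\|_2^2)$ and hence absorbable into your error budget, but they make the ``clean rows'' of $\mU_j$ for $j\ne i$ also only approximately clean, and this extra term must appear explicitly when you subtract.

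\textbf{The substantive gap.} The pairs $(i,j)$ with $\{i,i^*\}\cap\{j,j^*\}\ne\emptyset$ are not ``a finite number of boundary pairs'' --- under a cyclic shift by $2^{k_0}$, every $i\in[2^{k_0}m]$ has at least one $j$ (namely $j=i^*$, or $j$ with $j^*=i$) that collides, so there are $\Theta(n)$ such pairs, and the lemma requires the bound for every one of them. For a colliding pair, say $j=i^*$, $\mU_i$ modifies rows $i$ and $i^*$ while $\mU_j$ modifies rows $i^*$ and $i^*+2^{k_0}$, so only row $i$ is clean on the $\mU_j$ side. A single clean row gives you only $\|a+b\|$, and $\|a+b\|\ge\|b\|$ is false in general; you cannot apply the parallelogram identity with one row. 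To rescue the argument you would need either a case split like the paper's (when $\|a\|$ is small, $\|a+b\|\ge\|b\|-\|a\|\ge\|b\|/2$; when $\|a\|$ is large, find a row where only $a$ appears), or a three-row argument exploiting rows $i$, $i^*$, $i^*+2^{k_0}$ whose contributions are $a+b$, $a-b-b'$, and $a+b'$ with $b'=\vx_j^T\mLambda^{1/2}\mW$ and $\|b'\|_2 = \|b\|_2$ (note the magnitudes of $\vx_i$ and $\vx_j$ agree entrywise by Equation~\eqref{eq:main:abs}). Either route is workable, but neither is implicit in what you wrote; as sketched, the proof is incomplete for all colliding pairs, which is $\Theta(n)$ of the pairs the lemma must cover.
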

\begin{proof}
For each $i \in [2^{k_0}m]$, define the matrix
\begin{equation} \label{eq:def:G}
\mG_i = \mU_0 + \ve_i\vx_i^T \in \R^{n \times d} ,
\end{equation}
Then, denoting the SVD of $\mG_i$ by  
$\mG_i = \tmU_{i} \tmSig_i \tmV^T_i$, define for each $i \in [2^{k_0}]$, the matrix
\begin{equation*}
    \mU_i = \tmU_i \tmV_i^T \in \R^{ n \times d }.
\end{equation*}

For the sake of simplicity, we prove Equation~\eqref{eq:main:packing_distance} for $i, j \in [2^{k_0} m]$.
When either $i=0$ or $j=0$, the proof follows a similar idea.
For a fixed $\mW \in \Od\cap \Opq$, by the triangle inequality, we have 
\begin{equation*} \begin{aligned}
\|\mU_i \mLambda^{1/2} \mW - \mU_j \mLambda^{1/2} \|_{2, \infty}
    &\geq \|\mG_i \mLambda^{1/2} \mW - \mG_j \mLambda^{1/2} \|_{2, \infty} \\
    &~~~- \|\mU_i\mLambda^{1/2} - \mG_i\mLambda^{1/2}\|_{2,\infty}
    - \|\mU_j\mLambda^{1/2} - \mG_j\mLambda^{1/2}\|_{2,\infty}. 
\end{aligned} \end{equation*}
Thus, to obtain our desired lower bound on $\|\mU_i \mLambda^{1/2} \mW - \mU_j \mLambda^{1/2} \|_{2, \infty}$, it will suffice to prove
\begin{enumerate}[label=(\emph{\roman*}), ref=(\emph{\roman*})]
\item an upper bound for every $\|\mU_i\mLambda^{1/2} - \mG_i\mLambda^{1/2}\|_{2,\infty}$ and \label{subgoal:packing:i}
\item a lower bound for every $\|\mG_i \mLambda^{1/2} \mW - \mG_j \mLambda^{1/2} \|_{2, \infty}$. \label{subgoal:packing:ii}
\end{enumerate}

To establish Item~\ref{subgoal:packing:i}, note that by our definitions and using basic properties of the $(\tti)$-norm and the operator norm,
\begin{equation*} \begin{aligned}
\|\mU_i\mLambda^{1/2} - \mG_i\mLambda^{1/2}\|_{2,\infty}
    &= \|\tmU_i \tmV_i^T \mLambda^{1/2} - \tmU_i \tmSig_i \tmV_i^T \mLambda^{1/2}\|_{2, \infty}\\
    &\leq \left\|\tmU_i\right\|_{2, \infty} \left\|\left(\tmSig_i - \mI_d\right)\tmV_i^T\mLambda^{1/2}\right\| \\
    &\leq \sqrt{\lambda_1} \left\|\tmU_i\right\|_{2, \infty} \left\|\tmSig_i - \mI_d\right\| .
    \end{aligned} \end{equation*}
By our choice of $\vx_i$ in Equation~\eqref{eq:main:abs}, we have
\begin{equation}
\label{eq:xi-l2-bound}
\|\vx_i\|^2_2 \leq \frac{c_0^2 \kappa}{n} \frac{\lambda_d \wedge \log n}{\lambda_d} \leq \frac{c_0^2\kappa}{n},
\end{equation}
where $c_0 > 0$ is a constant of our choosing.
For $n > c_0^2\kappa$, we have $\|\vx_i\|_2 < 1$, so that Lemmas~\ref{lem:eigenbound} and \ref{lem:Utilde} apply, and it follows that
\begin{equation*}
\|\mU_i\mLambda^{1/2} - \mG_i\mLambda^{1/2}\|_{2,\infty}
    \leq 2\sqrt{\lambda_1} \sqrt{\frac{d}{n-r} + \frac{\|\vx_i\|_2}{1 - \|\vx_i\|_2}} \left(\sqrt{\frac{d}{n-r}} + \frac{1}{4} \|\vx_i\|_2\right)\|\vx_i\|_2.
\end{equation*}
Observing that $\| \mLambda^{1/2} \vx_i \| \ge \sqrt{\lambda_d} \| \vx_i \|$,
\begin{equation*}
    \|\mU_i\mLambda^{1/2} - \mG_i\mLambda^{1/2}\|_{2,\infty}
\leq 2\sqrt{\kappa} \sqrt{\frac{d}{n-r} + \frac{\|\vx_i\|_2}{1 - \|\vx_i\|_2}} \left(\sqrt{\frac{d}{n-r}} + \frac{1}{4} \|\vx_i\|_2\right)\left\|\mLambda^{1/2}\vx_i\right\|_2.
\end{equation*}
Hence, in order to show that
\begin{equation}
\label{eq:subgoal:i}
    \|\mU_i\mLambda^{1/2} - \mG_i\mLambda^{1/2}\|_{2,\infty} \leq \frac{1}{8} \left\|\mLambda^{1/2}\vx_i\right\|_2,
\end{equation}
which will suffice for our upper bound in Item~\ref{subgoal:packing:i}, it suffices to have 
\begin{equation*} \begin{aligned}
 \sqrt{\frac{d}{n-r} + \frac{\|\vx_i\|_2}{1 - \|\vx_i\|_2}} \left(\sqrt{\frac{d}{n-r}} + \frac{ \|\vx_i\|_2 }{4} \right) 
     &\leq \frac{3d}{2(n-r)} + \frac{\|\vx_i\|_2}{1 - \|\vx_i\|_2} + \frac{\|\vx_i\|_2^2}{32}
     \leq \frac{1}{16\sqrt{\kappa}}
\end{aligned} \end{equation*}
where the first inequality uses the fact that $a(b+c) \leq a^2 + \frac{1}{2}b^2 + \frac{1}{2}c^2$.
This can be satisfied by requiring
\begin{equation*}
\frac{d}{n-r} \leq \frac{1}{96\sqrt{\kappa}}, \quad \frac{\|\vx_i\|_2}{1-\|\vx_i\|_2} \leq \frac{1}{48\sqrt{\kappa}} \quad \text{ and } \|\vx_i\|_2^2 \leq \frac{2}{3\sqrt{\kappa}},
\end{equation*}
where the last inequality is satisfied once $n \geq \max\{(96\sqrt{\kappa} + 2)d, 2500c_0^2 \kappa^2\}$.

Turning our attention to Item~\ref{subgoal:packing:ii}, by construction, we have for any $\mW \in \Od\cap \Opq$,
\begin{equation} \label{eq:append:tti}
    \begin{aligned}
    \|\mG_i\mLambda^{1/2}\mW - \mG_j\mLambda^{1/2}\|_{2, \infty}
= \max &\left\{\vphantom{\max_{\substack{\ell\in[n]\\ \ell\neq i,j}}}
	\left\|\mW^T \mLambda^{1/2} \left(\vu_i + \vx_i\right) 
		-  \mLambda^{1/2}\vu_i\right\|_2,\right.\\
&~~~\left. \left\| \mW^T \mLambda^{1/2} \left( \vu_j + \vx_j\right) 
		-  \mLambda^{1/2} \vu_j \right\|_2, \right. \\
&~~~\left. \max_{\substack{\ell\in[n]\\ \ell\neq i,j}}
	\left\| \mW^T \mLambda^{1/2} \vu_\ell
		- \mLambda^{1/2} \vu_{\ell}\right\|_2 
\right\}.
\end{aligned} \end{equation}
If $\| \mLambda^{1/2} \vx_i\|_2 \geq 2\left\| \mW^T \mLambda^{1/2} \vu_i - \mLambda^{1/2} \vu_i \right\|_2$, then trivially
\begin{equation*} 
\left\| \mW^T \mLambda^{1/2} \left(\vu_i + \vx_i\right)
	- \mLambda^{1/2} \vu_i \right\|_2
\geq \| \mLambda^{1/2} \vx_i \|_2 
	- \| \mW^T \mLambda^{1/2} \vu_i - \mLambda^{1/2} \vu_i \|_2
    \geq \frac{1}{2}\| \mLambda^{1/2} \vx_i\|_2.
\end{equation*}

Otherwise, suppose that $\| \mLambda^{1/2} \vx_i\|_2 < 2\left\|\mW^T \mLambda^{1/2} \vu_i - \mLambda^{1/2} \vu_i \right\|_2$.
When $n \geq 8 d$, we have $m \geq 3$.
Note that $n\ge 8d$ holds eventually, since $3d \le \kappa = O(1)$ by assumption.
From our construction and using the fact that $m\geq 3$, we can always find an $\ell \in [2^{k_0}m]$ distinct from $i$ and $j$ such that $\vu_{\ell} = \vu_i$ and  
\begin{equation*}
\left\|\mW^T \mLambda^{1/2}\vu_\ell - \mLambda^{1/2} \vu_\ell \right\|_2
= \left\| \mW^T \mLambda^{1/2} \vu_i - \mLambda^{1/2} \vu_i \right\|_2
\geq \frac{1}{2} \|\mLambda^{1/2} \vx_i\|_2.
\end{equation*}
Thus, combining the two cases with Equation~\eqref{eq:append:tti}, 
\begin{equation*}
\|{\mG_i} \mLambda^{1/2}\mW - {\mG_j} \mLambda^{1/2}\|_{2, \infty} \geq \frac{1}{2} \| \mLambda^{1/2} \vx_i\|_2.
\end{equation*}
Combining this with the upper bound in Equation~\eqref{eq:subgoal:i}, we have
\begin{equation*} \begin{aligned}
\|\mU_i \mLambda^{1/2} \mW - \mU_j \mLambda^{1/2} \|_{2, \infty}
&\geq \frac{ \|\mLambda^{1/2} \vx_i\|_2 }{ 2 }
	- \frac{ \| \mLambda^{1/2} \vx_i\|_2 }{ 8 }
	- \frac{ \| \mLambda^{1/2} \vx_i\|_2 }{ 8 } 
\geq \frac{1}{4} \| \mLambda^{1/2} \vx_i\|_2. 
\end{aligned} \end{equation*}
Noting that the right-hand side does not depend on $\mW$, minimizing over $\mW\in \Od \cap \Opq$ completes the proof.
\end{proof}

\begin{lemma} \label{lem:main:kappaconst:packing_distance}
Under the conditions of Theorem~\ref{thm:main:main}, suppose that $\kappa = O(1)$.
For latent space dimension $d$, let $k_0$ be such that $2^{k_0-1} < d \le 2^{k_0}$ and let $m = \lfloor n/2^{k_0} \rfloor$.
Let $\mLambda = \diag(\lambda_1,\lambda_2,\dots,\lambda_d)$ for $\lambda_1 \ge \lambda_2 \ge \cdots \ge \lambda_d > 0$ obeying $\kappa = \lambda_1 / \lambda_d$.
For all suitably large $n$, there exists $\mLambda = \diag(\lambda_1,\lambda_2,\dots,\lambda_d) \in \R^d$ and a collection of matrices $\{ \mU_i : i =0,1,2,\dots,2^{k_0} m \}$ such that for all distinct $i, j \in [2^{k_0}m]\cup \{0\}$,
\begin{equation*}
    \min_{\mW \in \Od\cap \Opq} \|\mU_i\mLambda^{1/2} \mW - \mU_j \mLambda^{1/2}\|_{\tti}
    \geq \frac{ c_0 }{ 8} \
    \sqrt{ \frac{ (\lambda_d \wedge \log n)  \kappa }{ n }},
\end{equation*}
where $c_0 > 0$ is as in Lemma~\ref{lem:constructxi}.
\end{lemma}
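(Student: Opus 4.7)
The plan is to derive this statement as a direct corollary of Lemma~\ref{lem:main:kappaconst:packing_distance:prelim} by plugging in the explicit entries of the perturbation vectors $\vx_i$ furnished by Lemma~\ref{lem:constructxi}. All the geometric heavy lifting (construction of the packing, separation from the base matrix $\mU_0$, and the perturbation bound relating $\mG_i$ to $\mU_i$ via SVD truncation) has already been done in the preliminary lemma, so what remains is essentially a bookkeeping computation.

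First I would invoke Lemma~\ref{lem:main:kappaconst:packing_distance:prelim} with $\mLambda = \diag(\lambda_1,\dots,\lambda_d)$ obeying $\lambda_1/\lambda_d = \kappa$. This yields the collection $\{\mU_i : i=0,1,\dots,2^{k_0}m\}$ together with the inequality
\begin{equation*}
\min_{\mW \in \Od\cap\Opq}
\bigl\| \mU_i \mLambda^{1/2}\mW - \mU_j\mLambda^{1/2}\bigr\|_{\tti}
\;\geq\; \tfrac{1}{4} \bigl\| \mLambda^{1/2}\vx_i \bigr\|_2
\end{equation*}
for all distinct $i,j \in [2^{k_0}m]\cup\{0\}$ (with $i \ge 1$, which can be assumed by symmetry since we are free to relabel the pair).

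Next I would compute $\|\mLambda^{1/2}\vx_i\|_2$ explicitly. Using Equation~\eqref{eq:main:abs} of Lemma~\ref{lem:constructxi},
\begin{equation*}
\bigl\|\mLambda^{1/2}\vx_i\bigr\|_2^2
= \sum_{\ell=1}^d \lambda_\ell\,\vx_{i,\ell}^2
= \frac{c_0^2\,\lambda_1\,(\lambda_d \wedge \log n)}{n\,d}
   \sum_{\ell=1}^d \frac{1}{\lambda_\ell}.
\end{equation*}
Dropping all but the $\ell=d$ term in the sum and using $\lambda_1/\lambda_d = \kappa$, this is at least
$c_0^2 \kappa (\lambda_d \wedge \log n)/(n d)$, so
$\|\mLambda^{1/2}\vx_i\|_2 \geq c_0 \sqrt{\kappa(\lambda_d \wedge \log n)/(nd)}$.

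Finally I would absorb the harmless $\sqrt{d}$ into the prefactor. Since we are in the regime $\kappastar = O(1)$ with $\kappastar \geq 3d$ by hypothesis, the latent dimension $d$ is bounded by an absolute constant, so $1/\sqrt{d} = \Theta(1)$ and combining the two displays gives the claimed lower bound $(c_0/8)\sqrt{(\lambda_d \wedge \log n)\kappa/n}$ after relabeling constants. The main obstacle is not really an obstacle but a verification: one must check that the hypotheses of Lemma~\ref{lem:main:kappaconst:packing_distance:prelim} and Lemma~\ref{lem:constructxi} (in particular the lower bound on $n$ needed for $\|\vx_i\|_2 < 1$ and $n \geq (96\sqrt{\kappa}+2)d$) hold under the assumption ``$n$ sufficiently large'' of the present lemma; this is immediate because $\kappastar = O(1)$ and $d \leq \kappastar/3$ are constant in $n$.
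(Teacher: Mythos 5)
Your high-level plan matches the paper's: invoke Lemma~\ref{lem:main:kappaconst:packing_distance:prelim} to reduce the claim to lower-bounding $\|\mLambda^{1/2}\vx_i\|_2$, then evaluate that norm using Equation~\eqref{eq:main:abs}. The symmetry remark handling pairs $(0,j)$ is also sound, since right-multiplication by orthogonal matrices preserves the $(\tti)$-norm, so the minimization over $\mW$ is symmetric in $i$ and $j$.

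The gap is in the final estimate. After dropping all but the $\ell = d$ term you obtain
\begin{equation*}
\tfrac{1}{4}\|\mLambda^{1/2}\vx_i\|_2 \;\ge\; \frac{c_0}{4\sqrt{d}}\sqrt{\frac{\kappa(\lambda_d \wedge \log n)}{n}},
\end{equation*}
which is \emph{strictly weaker} than the stated $\frac{c_0}{8}\sqrt{\kappa(\lambda_d\wedge\log n)/n}$ whenever $d > 4$. You cannot recover the difference by ``relabeling constants,'' because $c_0$ is not free here: it is the constant chosen in Lemma~\ref{lem:constructxi}, and the same $c_0$ is what the probability-bound and KL-divergence lemmas (\ref{lem:i:kappaconst:prob}, \ref{lem:main:kappaconst:frob_bound}) require to be small. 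So the constant in the conclusion is pinned down, not a dummy. The paper sidesteps this by actually exercising the ``there exists $\mLambda$'' clause: it takes $\lambda_2 = \cdots = \lambda_d = \lambda_1/\kappa$, so that every term $\lambda_1/\lambda_\ell$ with $\ell \ge 2$ equals $\kappa$, and
\begin{equation*}
\|\mLambda^{1/2}\vx_i\|_2^2 = \frac{c_0^2(\lambda_d\wedge\log n)}{nd}\bigl(1+(d-1)\kappa\bigr) \;\ge\; \frac{c_0^2\kappa(\lambda_d\wedge\log n)}{4n},
\end{equation*}
the $(d-1)\kappa$ term cancelling the $1/d$ prefactor. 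Keeping all $d-1$ of the small-eigenvalue contributions, rather than just one, is exactly what recovers the missing $\sqrt{d}$. This is a minor fix, and the rate (and hence Theorem~\ref{thm:main:main}) is unaffected since $d$ is bounded in this regime, but as a proof of the lemma with the stated constant, your argument only goes through for $d \le 4$.
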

\begin{proof}
Let $\lambda_1$ be such that $\lambda_1 \le n/2$ and
\begin{equation*} 
\lambda_2=\lambda_3 =\cdots=\lambda_d = \lambda_1/\kappa,
\end{equation*}
and set $\mLambda = \diag( \lambda_1,\lambda_2,\dots,\lambda_d) \in \R^{d\times d}$.
By Lemma~\ref{lem:main:kappaconst:packing_distance:prelim}, there exists a collection of matrices $\{ \mU_i : i =0,1,2,\dots,2^{k_0} m \} \subset \Stiefel_d(\R^n)$ such that for all $i \in [2^{k_0}m]$ and $j \in [2^{k_0}m] \cup \{0\}$ not equal to $i$, 
\begin{equation} \label{eq:kappaconst:finalLB}
\min_{\mW \in \Od\cap \Opq} \|\mU_i\mLambda^{1/2} \mW - \mU_j \mLambda^{1/2}\|_{\tti} \geq \frac{1}{4} \|\vx_i^T \mLambda^{1/2} \|_2,
\end{equation}
where, letting $\vu_i \in \R^d$ denote the $i$-th row of $\mU_0$, $\vx_i$ satisfies
\begin{equation*}
\left| \vx_{i,\ell} \right|
= \frac{ c_0 }{ \lambda_\ell}
    \sqrt{ \frac{ \lambda_1 (\lambda_d \wedge \log n)}
            { n d } }.
\end{equation*}
Expanding and plugging in our choice of $\mLambda$,
\begin{equation*} 
\|\vx_i^T \mLambda^{1/2} \|_2^2
= \sum_{j=1}^d \vx_{i,j}^2 \lambda_j
= \frac{ c_0^2 }{ nd } \sum_{j=1}^d 
    \frac{ \lambda_1 (\lambda_d \wedge \log n)}
            { \lambda_j  }
= \frac{ c_0^2 }{ nd }
  \left( 1 + (d-1)\kappa \right)
        (\lambda_d \wedge \log n).
\end{equation*}
Using the fact that $\kappa=1$ when $d=1$ and that $(d-1) \ge 1$ otherwise, it follows that, taking square roots and using the fact that $d$ is a constant,
\begin{equation*}
\|\vx_i^T \mLambda^{1/2} \|_2
\ge c_0
  \sqrt{\frac{1}{d} + \left(1-\frac{1}{d}\right)\kappa}
        \sqrt{\frac{\lambda_d \wedge \log n}{n}}
\ge \frac{ c_0 }{ 2}
    \sqrt{ \frac{ (\lambda_d \wedge \log n)  \kappa }{ n }}.
\end{equation*}
Plugging this lower-bound into Equation~\eqref{eq:kappaconst:finalLB}, it follows that
\begin{equation*}
\min_{\mW \in \Od\cap \Opq} \|\mU_i\mLambda^{1/2} \mW - \mU_j \mLambda^{1/2}\|_{\tti} \geq
\frac{ c_0 }{ 8} \
    \sqrt{ \frac{ (\lambda_d \wedge \log n)  \kappa }{ n }},
\end{equation*}
completing the proof.
\end{proof}

Lemma~\ref{lem:main:kappaconst:packing_distance} guarantees the existence of a collection elements of $\Stiefel_d(\R^n)$ that are well separated in $(\tti)$-norm after right-multiplication by some $\mLambda = \diag( \lambda_1,\lambda_2,\dots,\lambda_d) \in \R^d$.
To construct a collection of valid expected adjacency matrices from this collection of $d$-frames, we must choose $\mLambda$ so that $\mU \mLambda^{1/2} \mI_{p,q} \mLambda^{1/2} \mU^T$ has all entries between $0$ and $1$ for every $\mU$ in our collection.
Lemma~\ref{lem:main:kappaconst:P0exists} ensures that this is possible.

\begin{lemma} \label{lem:main:kappaconst:P0exists}
Under the conditions of Theorem~\ref{thm:main:main}, suppose that $\kappa = O(1)$.
Let $\mU_0 \in \Stiefel_d(\R^n)$ be the matrix guaranteed by Lemma~\ref{lem:kappaconst:U0exists}.
There exist $\lambda_1 \ge \lambda_2 \ge \cdots \ge \lambda_d > 0$ such that, letting $\mLambda = \diag( \lambda_1,\lambda_2,\dots,\lambda_d) \in \R^{d \times d}$, the matrix
\begin{equation} \label{eq:def:P0:kappaconst}
\mP_0 = \mU_0 \mLambda^{1/2}\mI_{p,q}\mLambda^{1/2} \mU_0^T
\end{equation}
obeys
\begin{equation}
    \label{eq:P0-bounds}
    \frac{\lambda_1}{3n} \leq \mP^{(0)}_{ij} \leq \frac{2}{3}.
\end{equation}
for all $i,j \in [n]$ and all $n$ sufficiently large. 
\end{lemma}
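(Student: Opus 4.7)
The plan is to make a concrete choice of $\mLambda$ tailored to the structure of $\mU_0$ given in Equation~\eqref{eq:par1:base_U}, then verify \eqref{eq:P0-bounds} by a direct expansion. Specifically, I would set $\lambda_1 = \kappastar \lambdastar$ and $\lambda_2 = \lambda_3 = \cdots = \lambda_d = \lambdastar$, so that $\kappa(\mLambda) = \kappastar$ and the assumption $3 \kappastar \lambdastar \le n$ becomes simply $3\lambda_1 \le n$.

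The key structural fact I would exploit is that the first column of $\mU_0$ is the constant vector $\onevec/\sqrt{n}$, while columns $2,\ldots,d$ have entries $\pm 1/\sqrt{n-r}$ on the first $2^{k_0} m$ rows and $0$ on the $r$ remainder rows. Writing $\epsilon_k = +1$ for $k \le p$ and $\epsilon_k = -1$ for $k > p$, we have
\begin{equation*}
\mP^{(0)}_{ij} \;=\; \sum_{k=1}^{d} \epsilon_k \lambda_k \, \mU^{(0)}_{ik} \mU^{(0)}_{jk}
\;=\; \frac{\lambda_1}{n} \;+\; \sum_{k=2}^{d} \epsilon_k \lambda_k \, \mU^{(0)}_{ik} \mU^{(0)}_{jk},
\end{equation*}
where the first term is exact and does not depend on $(i,j)$. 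So the whole argument reduces to controlling the ``fluctuation'' from the last $d-1$ modes.

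Next I would bound that fluctuation. For rows $i,j \in [2^{k_0}m]$ we have $|\mU^{(0)}_{ik}\mU^{(0)}_{jk}| = 1/(n-r)$ for all $k \ge 2$, so the triangle inequality and the choice $\lambda_k = \lambda_1/\kappastar$ for $k \ge 2$ give
\begin{equation*}
\Bigl|\sum_{k=2}^{d} \epsilon_k \lambda_k \, \mU^{(0)}_{ik} \mU^{(0)}_{jk}\Bigr|
\;\le\; \frac{(d-1)\lambda_1}{\kappastar (n-r)}
\;\le\; \frac{\lambda_1}{3(n-r)},
\end{equation*}
using $\kappastar \ge 3d$ in the last step. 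Since $r < 2^{k_0} < 2d$, for all sufficiently large $n$ (for instance $n \ge 4d$, which is implied by $\kappastar \ge 3d$ together with $n \to \infty$) we have $n - r \ge n/2$, and the fluctuation is at most $2\lambda_1/(3n)$. Combined with the constant term $\lambda_1/n$, this yields
\begin{equation*}
\frac{\lambda_1}{3n} \;\le\; \mP^{(0)}_{ij} \;\le\; \frac{5\lambda_1}{3n}.
\end{equation*}
The upper bound $5\lambda_1/(3n) \le 2/3$ is then equivalent to $\lambda_1 \le 2n/5$, which follows from $3\lambda_1 \le n$. For pairs $(i,j)$ where at least one index lies in the remainder block, columns $2,\ldots,d$ contribute zero and $\mP^{(0)}_{ij} = \lambda_1/n$ exactly, which sits inside the same window.

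I do not expect a real obstacle here: the entire proof is an explicit expansion using the block structure of $\mU_0$, and the two hypotheses $\kappastar \ge 3d$ and $3\kappastar\lambdastar \le n$ are exactly calibrated to make the fluctuation term negligible relative to $\lambda_1/n$ and to force $\lambda_1/n$ itself into $[0, 2/3]$. The only care needed is in checking that $n - r \ge n/2$ for all sufficiently large $n$, and in recording the three cases for $(i,j)$ depending on whether each row index belongs to the Hadamard block or the remainder block; both are routine.
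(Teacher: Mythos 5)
Your proof is correct and follows essentially the same route as the paper's: pick $\lambda_1 = \kappastar\lambdastar$, $\lambda_2 = \cdots = \lambda_d = \lambdastar$, expand $\mP^{(0)}_{ij}$ using the explicit Hadamard-block structure of $\mU_0$, bound the ``fluctuation'' from columns $2,\dots,d$ via the triangle inequality and $\kappastar \ge 3d$, and observe $n-r \ge n/2$ for large $n$. The only cosmetic difference is that the paper parameterizes the slack by a free $\epsilon>0$ (so $\kappa \ge (1+\epsilon)d$ and $\lambda_1 \le n/(2+\epsilon)$) and specializes to $\epsilon = 1$ at the end, whereas you work directly with the constants $3$ and $3$ appearing in the theorem; both routes verify the same three cases (both indices Hadamard, one in the remainder, both in the remainder) and land on the same window.
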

\begin{proof}
We will choose $\lambda_1 \ge \lambda_2 \ge \cdots \ge \lambda_d > 0$ so that 
\begin{equation} \label{eq:main:eigen}
    \sum_{j=2}^d \lambda_j \leq \frac{\lambda_1}{1+\epsilon}  
~~~\text{ and }~~~
    \sum_{j=1}^d \lambda_j \leq \frac{n}{1+\epsilon},
\end{equation}
for any constant $\epsilon > 0$. 
So long as $\kappa = \lambda_1/\lambda_d \geq (1+\epsilon)d$, we can satisfy Equation~\eqref{eq:main:eigen} by taking
$\lambda_1 \leq n/(2+\epsilon)$ and $\lambda_d = \lambda_{d-1} =\ldots = \lambda_2 = \lambda_1/\kappa$, so that 
\begin{equation*}
\sum_{j=2}^d \lambda_j = \frac{d-1}{\kappa} \lambda_1 \leq \frac{\lambda_1}{1+\epsilon} 
\end{equation*}
and 
\begin{equation*}
    \sum_{j=1}^d \lambda_1 \leq \frac{2+\epsilon}{1+\epsilon} \lambda_1 \leq \frac{n}{1+\epsilon}. 
\end{equation*}

To find lower and upper bounds for each entry of $\mP_0$, we unroll the definition in Equation~\eqref{eq:def:P0:kappaconst} to write
\begin{equation} \label{eq:P0LB:kappaconst}
\mP^{(0)}_{ij} \geq \frac{1}{n} \lambda_1 - \frac{1}{n - r}\sum_{l = 2}^d \lambda_\ell
\geq \left(\frac{1}{n} - \frac{1}{(1+\epsilon)(n-r)}\right) \lambda_1
\geq \frac{\epsilon\lambda_1}{(1+2\epsilon)n},
\end{equation}
where the last inequality holds for $n$ sufficiently large.

To upper bound the entries of $\mP_0$, we have
\begin{equation*}
\mP^{(0)}_{ij} \leq \frac{1}{n} \lambda_1 + \frac{1}{n - r}\sum_{l = 2}^d \lambda_\ell
\leq \frac{n}{(1+\epsilon)(n-r)} \leq \frac{1+\epsilon}{1+2\epsilon},
\end{equation*}
where the second inequality holds for sufficiently large $n$.
Combining this with Equation~\eqref{eq:P0LB:kappaconst} and taking $\epsilon = 1$,
\begin{equation*}
    \frac{\lambda_1}{3n} \leq \mP^{(0)}_{ij} \leq \frac{2}{3},
\end{equation*}
as we set out to show.
\end{proof}

Our perturbation of $\mU_0$ to obtain $\mU_i$ comes from $\vx_i$ (guaranteed by Lemma~\ref{lem:constructxi}), whose norm $\|\vx_i\|_2$ is of order $O(n^{-1/2})$ as established in Equation~\eqref{eq:xi-l2-bound}.
As a result, it is not hard to imagine that this perturbation should not change the entries of $\mP_0$ too much.
Lemma \ref{lem:i:kappaconst:prob} shows that this is indeed the case.

\begin{lemma} \label{lem:i:kappaconst:prob}
Under the setting of Theorem~\ref{thm:main:main}, suppose that $\kappa = O(1)$.
For latent space dimension $d$, let $k_0$ be such that $2^{k_0-1} < d \le 2^{k_0}$ and define $m = \lfloor n/2^{k_0} \rfloor$.
Let $\{ \mU_i : i \in [2^{k_0}m] \}$ be the collection of $d$-frames guaranteed by Lemma~\ref{lem:main:kappaconst:packing_distance}, and let $\mLambda \in \R^{d \times d}$ be the diagonal matrix guaranteed by Lemma~\ref{lem:main:kappaconst:P0exists}.
For sufficiently large $n$, letting $\tmLambda = \mLambda^{1/2} \mI_{p,q} \mLambda^{1/2} $ it holds for all $i \in [2^{k_0}m]$ that $\mP_i = \mU_i \tmLambda \mU_i^T$ has all entries strictly bounded between $0$ and $1$.
\end{lemma}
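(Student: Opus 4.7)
The plan is to show that every entry of $\mP_i = \mU_i \mLambda^{1/2} \mI_{p,q} \mLambda^{1/2} \mU_i^T$ differs from the corresponding entry of $\mP_0$ by at most $C c_0 \lambda_1 / n$ for an absolute constant $C$, and then to invoke the entrywise bounds on $\mP_0$ in Lemma~\ref{lem:main:kappaconst:P0exists}, together with the freedom to shrink the constant $c_0$ from Lemma~\ref{lem:constructxi}, to conclude that the entries of $\mP_i$ stay strictly inside $(0,1)$. Writing $\mathbf{D} = \mU_i \mLambda^{1/2} - \mU_0 \mLambda^{1/2}$, I would expand
\begin{equation*}
\mP_i - \mP_0 = \mathbf{D}\, \mI_{p,q}\, \mathbf{D}^T \;+\; \mathbf{D}\, \mI_{p,q} \mLambda^{1/2} \mU_0^T \;+\; \mU_0 \mLambda^{1/2} \mI_{p,q}\, \mathbf{D}^T,
\end{equation*}
and use Cauchy--Schwarz together with $|\va^T \mI_{p,q} \vb| \le \|\va\|_2 \|\vb\|_2$ to bound the $(j,k)$ entry by $\|\mathbf{D}_{j\cdot}\|_2 \|\mathbf{D}_{k\cdot}\|_2 + \|\mathbf{D}_{j\cdot}\|_2 \|\mLambda^{1/2}(\mU_0)_{k\cdot}\|_2 + \|\mLambda^{1/2}(\mU_0)_{j\cdot}\|_2 \|\mathbf{D}_{k\cdot}\|_2$.

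Two row-wise estimates then close the loop. For the first, the triangle inequality combined with the bound $\|\mU_i \mLambda^{1/2} - \mG_i \mLambda^{1/2}\|_{\tti} \le \tfrac{1}{8} \|\mLambda^{1/2} \vx_i\|_2$ already established inside the proof of Lemma~\ref{lem:main:kappaconst:packing_distance:prelim} (see Equation~\eqref{eq:subgoal:i}) and the identity $\|\mG_i \mLambda^{1/2} - \mU_0 \mLambda^{1/2}\|_{\tti} = \|\ve_i \vx_i^T \mLambda^{1/2}\|_{\tti} = \|\mLambda^{1/2}\vx_i\|_2$ yields $\|\mathbf{D}\|_{\tti} \le \tfrac{9}{8} \|\mLambda^{1/2} \vx_i\|_2$. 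The explicit formula for $|\vx_{i,\ell}|$ in Equation~\eqref{eq:main:abs}, together with the choice $\lambda_2 = \cdots = \lambda_d = \lambda_1/\kappa$ used to construct $\mP_0$, gives $\|\mLambda^{1/2}\vx_i\|_2^2 = c_0^2 (1 + (d-1)\kappa)(\lambda_d \wedge \log n)/(nd) \le c_0^2 \lambda_1 / n$, where the last step uses $\lambda_d \wedge \log n \le \lambda_d = \lambda_1/\kappa$. For the second, the explicit form of $\mU_0$ in Equation~\eqref{eq:par1:base_U} gives $\|\mLambda^{1/2}(\mU_0)_{k\cdot}\|_2^2 \le \lambda_1/n + (d-1)(\lambda_1/\kappa)/(n-r)$, and the hypotheses $\kappa \ge 3d$ and $n$ sufficiently large (so that $n-r \ge n/2$ because $r < 2d$) reduce this to at most $2\lambda_1/n$ uniformly in $k$, including the $r$ zero-padded rows where the bound $\lambda_1/n$ is trivial.

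Putting these ingredients together yields $|(\mP_i - \mP_0)_{jk}| \le C c_0 \lambda_1/n$ for an absolute constant $C$. Since Lemma~\ref{lem:main:kappaconst:P0exists} guarantees $\lambda_1/(3n) \le (\mP_0)_{jk} \le 2/3$, fixing $c_0$ small enough that $C c_0 \le 1/10$ gives the lower bound $(\mP_i)_{jk} \ge \lambda_1/(3n) - \lambda_1/(10n) = 7\lambda_1/(30n) > 0$ and, using $\lambda_1/n \le 1/3$ (the hypothesis $3\kappastar\lambdastar \le n$), the upper bound $(\mP_i)_{jk} \le 2/3 + 1/30 < 1$. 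The only delicate point I anticipate is keeping the choice of $c_0$ consistent across the earlier lemmas: shrinking $c_0$ only rescales the packing lower bound in Lemma~\ref{lem:main:kappaconst:packing_distance} by the same multiplicative constant, which is harmless because Theorem~\ref{thm:main:main} asserts a lower bound only up to an absolute constant factor, so we may select $c_0$ at this stage without disturbing any preceding estimate.
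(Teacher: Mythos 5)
Your proof is correct, and it takes a genuinely different route from the paper's. The paper's proof works \emph{entry-wise}: it decomposes $\mU_i - \mU_0 = \tmU_i(\mI_d - \tmSig_i)\tmV_i^T + \ve_i\vx_i^T$, bounds the absolute value of every entry of the first term using the explicit singular-value formulas from Lemma~\ref{lem:tech:eigen} together with Lemmas~\ref{lem:eigenbound} and~\ref{lem:Utilde}, concludes $\max_{j,k}|\mU^{(i)}_{jk} - \mU^{(0)}_{jk}| \le 2c_0\sqrt{\kappa/(dn)}$, and then telescopes $\sum_{\ell}\lambda_\ell|\mU^{(0)}_{j\ell}\mU^{(0)}_{k\ell} - \mU_{j\ell}\mU_{k\ell}|$ to control $|(\mP_i)_{jk}-(\mP_0)_{jk}|$. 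You instead work \emph{row-wise}: you expand $\mP_i - \mP_0$ as a quadratic in $\mathbf{D} = (\mU_i - \mU_0)\mLambda^{1/2}$, apply Cauchy--Schwarz to each entry, and observe that $\|\mathbf{D}\|_{\tti}$ is already controlled by the intermediate estimate (eq:subgoal:i) from the proof of Lemma~\ref{lem:main:kappaconst:packing_distance:prelim} plus the identity $\|\ve_i\vx_i^T\mLambda^{1/2}\|_{\tti} = \|\mLambda^{1/2}\vx_i\|_2$. Your route avoids re-deriving a bound on $\tmU_i(\mI_d-\tmSig_i)\tmV_i^T$ entry-by-entry and keeps the dependence on $d$ and $\kappa$ cleaner (the paper's entry-wise bound requires choosing $c_0 \lesssim 1/\sqrt{d\kappa}$, whereas your row-wise constants are absolute given $\kappa\ge 3d$ and $n-r\ge n/2$). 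Your remark about the consistency of the choice of $c_0$ across the earlier lemmas is exactly the right thing to flag and is handled correctly: shrinking $c_0$ only rescales the packing separation by a constant, which is immaterial for the minimax lower bound. The one stylistic caveat is that you invoke a display (eq:subgoal:i) established inside the proof of a previous lemma rather than as a standalone statement; for a self-contained write-up you would want to promote that estimate to its own lemma, but this is cosmetic.
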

\begin{proof}
Our strategy will be to show that for any $i \in [m2^{k_0}]$, the matrix $\mU_i$ is sufficiently close to $\mU_0$ entry-wise.
From this fact, it will follow that the entries of $\mP_i$are all close to $\mP_0$.
Toward this end, we begin by recalling from the proof of Lemma~\ref{lem:main:kappaconst:packing_distance} that the matrices $\mU_i$ are defined according to
\begin{equation*} 
\mU_i - \mU_0 = \mU_{i} - \mG_i + \mG_i - \mU_0
    = \tmU_{i} \left(\mI_d - \tmSig_i\right) \tmV_i^T + \ve_i \vx_i^T,
\end{equation*}
where $\mG_i$ is as defined in Equation~\eqref{eq:def:G} and $\mG_i = \tmU_{i} \tmSig_i \tmV^T_i$ is its SVD.
Define $\mR = \tmU_{i} \left(\mI_d - \tmSig_i\right) \tmV_i^T$.
By Lemma~\ref{lem:tech:eigen}, we have for $j \in [n]$ and $k \in [d]$,
\begin{equation*} \begin{aligned}
\left|\mR_{jk}\right|
&= \left|\sum_{\ell = 1}^2 \tmU^{(i)}_{j\ell}
    \left(1 - \sqrt{1 + \Tilde{\sigma}_{i\ell}}\right) \tmV^{(i)}_{k\ell}\right|
  \leq 2\max_{\ell \in [d]}\left\{\left|\tmU_{j\ell}^{(i)}\right|\right\}\left\|\tmSig - \mI_d\right\| \\
&\leq 2 \|\tmU_i\|_{2,\infty} \left\|\tmSig - \mI_d\right\|.
\end{aligned} \end{equation*}
From Equation~\eqref{eq:xi-l2-bound}, we have $\|\vx_i\|_2 \leq c_0\sqrt{\kappa/n}$, and for suitably large $n$ we may ensure that $\| \vx_i \|_2 < 1/2$ so that  Lemmas~\ref{lem:eigenbound} and \ref{lem:Utilde} apply.
If, in addition, we have $n \geq 4d \geq 2r$, then it follows that
\begin{equation*} \begin{aligned}
\left|\mR_{jk}\right|
&\leq \sqrt{\frac{d}{n-r} + \frac{\|\vx_i\|_2}{1 - \|\vx_i\|_2}}\left(\left\|\vx_i\right\|_2^2+4 \sqrt{\frac{d}{n-r}}\left\|\vx_i\right\|_2\right)\\
&\leq 2\sqrt{\frac{d}{n} + \|\vx_i\|_2}\left(\left\|\vx_i\right\|_2+8 \sqrt{\frac{d}{n}}\right)\left\|\vx_i\right\|_2.
\end{aligned} \end{equation*}

Again recalling that $\| \vx_i \|_2 = O(n^{-1})$, it holds for suitably large $n$ that
\begin{equation*}
2\sqrt{\frac{d}{n} + \|\vx_i\|_2}\left(\left\|\vx_i\right\|_2+8 \sqrt{\frac{d}{n}}\right)
\leq \frac{1}{\sqrt{d}}.
\end{equation*}
Noting that $\|\vx_i\|_{\infty} \geq \|\vx_i\|_2/\sqrt{d}$, it follows that $|\mR_{jk}| \leq \|\vx_i\|_\infty$ for $n$ suitably large, and thus 
\begin{equation*} 
\max_{j\in[n], k\in[d]} \left|\mU^{(i)}_{jk} - \mU^{(0)}_{jk} \right|
\leq \left|\mR_{jk}\right| +  \|\vx_i\|_{\infty}
\leq 2 \|\vx_i\|_\infty.
\end{equation*}
From Equation~\eqref{eq:main:abs}, we have 
\begin{equation} \label{eq:kappaconst:c0choice}
\max_{j\in[n], k\in[d]} \left|\mU^{(i)}_{jk} - \mU^{(0)}_{jk}\right|
\leq 2c_0\sqrt{\frac{\kappa}{d n}}.
\end{equation}

For any $\mU \in \Stiefel_d(\R^n)$, unrolling the definition $\mP_{jk} = (\mU \tmLambda \mU^T)_{jk}$,
\begin{equation*} \begin{aligned}
\mP_{jk}
&= \sum_{\ell=1}^d \tmLambda_{\ell\ell} \mU_{j\ell} \mU_{k\ell}
\geq \mP^{(0)}_{jk} - \sum_{\ell=1}^d \lambda_{\ell} |\mU^{(0)}_{j\ell} \mU^{(0)}_{k\ell} - \mU_{j\ell} \mU_{k\ell}| \\
&\geq \left(\frac{1}{3n} - \sum_{\ell=1}^d |\mU^{(0)}_{j\ell} \mU^{(0)}_{k\ell} - \mU_{j\ell} \mU_{k\ell}| \right) \lambda_1.
\end{aligned} \end{equation*}
If each entry of $\mU$ differs from the corresponding entry of $\mU_0$ by at most $C/d\sqrt{n}$ for some constant $C > 0$, it follows from Equation~\eqref{eq:U0:max} that 
\begin{equation*} \begin{aligned}
|\mU^{(0)}_{j\ell} \mU^{(0)}_{k\ell} - \mU_{j\ell} \mU_{k\ell}|
&\leq
  \left|\mU_{j\ell} - \mU_{j\ell}^{(0)}\right|\left|\mU_{k\ell}^{(0)}\right|
  + \left|\mU_{k\ell} - \mU_{k\ell}^{(0)}\right|\left|\mU_{j\ell}^{(0)}\right|\\
&~~~~~~~~~+ \left|\mU_{j\ell} - \mU_{j\ell}^{(0)}\right|\left|\mU_{k\ell} - \mU_{k\ell}^{(0)}\right|\\
    &\leq \frac{2C}{d\sqrt{n}} \frac{1}{\sqrt{n-r}} + \frac{C^2}{d^2 n} \leq \frac{c}{d n},
\end{aligned} \end{equation*}
provided $C>0$ and $c > 0$ are chosen suitably small.

Combining the above two displays,
\begin{equation*}
\mP_{jk} \geq \left(\frac{1}{3n} - \frac{c}{n}\right)\lambda_1.
\end{equation*}
Thus, when $c > 0$ is sufficiently small, we have $\mP_{jk} > 0$.
Similarly, we have 
\begin{equation*} \begin{aligned}
\mP_{jk}
&= \sum_{\ell=1}^d \tmLambda_{\ell\ell} \mU_{j\ell} \mU_{k\ell}
\leq \mP^{(0)}_{jk} + \sum_{\ell=1}^d \lambda_{\ell} |\mU^{(0)}_{j\ell} \mU^{(0)}_{k\ell} - \mU_{j\ell} \mU_{k\ell}| \\
&\leq \frac{2}{3} + \sum_{\ell=1}^d |\mU^{(0)}_{j\ell} \mU^{(0)}_{k\ell} - \mU_{j\ell} \mU_{k\ell}| \lambda_1 \leq \frac{2}{3} + \frac{c}{n}\lambda_1 \leq \frac{2}{3}+c.
\end{aligned} \end{equation*}
It follows that $\mP_{jk} < 1$ for $c$ sufficiently small.
Hence, fixing such a $c > 0$, recalling that the constant $c_0 > 0$ in Lemma~\ref{lem:constructxi} is ours to choose,  we can pick $c_0 < C/2\sqrt{d\kappa}$ in 
Equation~\eqref{eq:main:abs} to be small enough so that the bound in Equation~\eqref{eq:kappaconst:c0choice} becomes
\begin{equation*} 
\max_{j\in[n], k\in[d]} \left|\mU^{(i)}_{jk} - \mU^{(0)}_{jk}\right|
\le \frac{ C }{ d \sqrt{n} }.
\end{equation*}
It follows that for $n$ suitably large, all entries of $\mP_i$ obtained from $\mU_i$ lie between $0$ and $1$, as we set out to show.
\end{proof}

\subsection{Bounding the KL Divergence} \label{subsec:apx:kappaconst:KL}

In addition to the lower-bound guaranteed by Lemma~\ref{lem:main:kappaconst:packing_distance}, Theorem~\ref{thm:main:tsybakov} requires an upper bound on the KL divergences between the distributions encoded by our $\mU_i$ matrices.
To control the KL divergence between $\mP_i$ and $\mP_0$, we use a basic result established by \cite{Zhou2021}, which we restate here for ease of reference.

\begin{lemma}[\cite{Zhou2021}, Lemma 3] \label{lem:Zhou2021}
Let $0 < a \le b < 1$ be such that $a \leq \mP^{(0)}_{ij} \leq b$ for any $i, j \in [n]$.
Then 
\begin{equation*} 
    \KL\left(\mP_i \| \mP_0 \right) \leq \frac{\|\mP_i - \mP_0\|_F^2}{a(1-b)},
\end{equation*}
\end{lemma}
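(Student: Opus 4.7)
The plan is to exploit the product structure of the low-rank-plus-noise model to reduce the KL bound to a per-edge calculation, then apply a standard quadratic bound on the Bernoulli KL divergence. Under Definition~\ref{def:lrpn}, the edges $\{\mA_{jk} : 1 \le j < k \le n\}$ are mutually independent with $\mA_{jk} \sim \Bernoulli(\mP_{jk})$, so by the chain rule for KL divergence,
\begin{equation*}
\KL(\mP_i \| \mP_0) = \sum_{1 \le j < k \le n} \KL\bigl( \Bernoulli(\mP^{(i)}_{jk}) \,\big\|\, \Bernoulli(\mP^{(0)}_{jk}) \bigr).
\end{equation*}
This reduces the problem to controlling a single Bernoulli KL in terms of the squared deviation of its parameters.

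The key step is a second-order bound on the Bernoulli KL. Fix $q \in (a,b)$ and regard $f(p) = p \log(p/q) + (1-p)\log((1-p)/(1-q))$ as a function of $p$. Direct computation gives $f(q) = 0$, $f'(q) = 0$, and $f''(p) = 1/[p(1-p)]$, so Taylor's theorem with Lagrange remainder yields $f(p) = (p-q)^2 / [2\,\xi(1-\xi)]$ for some $\xi$ lying between $p$ and $q$. Since both $\mP^{(i)}_{jk}$ and $\mP^{(0)}_{jk}$ are assumed to lie in $[a,b]$, the intermediate point $\xi$ does too, and monotonicity of the two factors gives $\xi(1-\xi) \ge a(1-b)$. (Equivalently, one may bypass Taylor's theorem altogether and invoke the standard inequality $\KL \le \chi^2$ together with the closed form $\chi^2(\Bernoulli(p)\|\Bernoulli(q)) = (p-q)^2/[q(1-q)]$ and the same lower bound $q(1-q) \ge a(1-b)$.) Either route gives
\begin{equation*}
\KL\bigl( \Bernoulli(\mP^{(i)}_{jk}) \,\big\|\, \Bernoulli(\mP^{(0)}_{jk}) \bigr) \le \frac{(\mP^{(i)}_{jk} - \mP^{(0)}_{jk})^2}{a(1-b)}.
\end{equation*}

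Finally, I would aggregate the per-edge bounds. Summing over $1 \le j < k \le n$ and using the symmetry $\mP^{(i)}_{jk} = \mP^{(i)}_{kj}$, together with the observation that nonnegative diagonal entries only inflate the Frobenius norm, gives
\begin{equation*}
\KL(\mP_i \| \mP_0)
\le \frac{1}{a(1-b)} \sum_{j < k} \bigl(\mP^{(i)}_{jk} - \mP^{(0)}_{jk}\bigr)^2
\le \frac{\|\mP_i - \mP_0\|_F^2}{a(1-b)},
\end{equation*}
which is the claim. There is no real obstacle here; the only thing to be careful about is verifying that the interval $[a,b]$ containment transfers to the Taylor intermediate point (or, in the $\chi^2$ argument, directly to $q$), since that is what controls the curvature factor and hence the constant in the bound.
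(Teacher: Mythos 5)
The paper does not supply a proof of this lemma; it is cited verbatim from Zhou (2021, Lemma~3), so there is no in-paper argument to compare against.

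Your chain-rule decomposition over the independent edges and your aggregation into a Frobenius norm are both fine. The subtlety is in the per-edge bound, where your primary (Taylor) route has a gap. You justify $\xi(1-\xi) \ge a(1-b)$ by saying ``both $\mP^{(i)}_{jk}$ and $\mP^{(0)}_{jk}$ are assumed to lie in $[a,b]$,'' but the lemma's hypothesis only constrains $\mP^{(0)}$; nothing bounds the entries of $\mP_i$ away from $0$ or $1$. If $\mP^{(i)}_{jk} \notin [a,b]$, the Lagrange intermediate point $\xi$ between $\mP^{(i)}_{jk}$ and $\mP^{(0)}_{jk}$ can fall outside $[a,b]$ as well, and the curvature bound $1/[\xi(1-\xi)] \le 1/[a(1-b)]$ fails. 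The parenthetical $\chi^2$ route you mention does not have this defect: there the denominator is exactly $q(1-q) = \mP^{(0)}_{jk}(1-\mP^{(0)}_{jk}) \ge a(1-b)$, which depends only on the reference distribution and hence only on the assumed bound on $\mP^{(0)}$. So the $\chi^2$ argument should be the main route, not a footnote, because it is the one that works under the hypotheses as stated. (Once you make that switch, note also that summing $(\mP^{(i)}_{jk}-\mP^{(0)}_{jk})^2$ over $j<k$ contributes only half of $\|\mP_i-\mP_0\|_F^2$, so the bound you obtain is actually sharper by a factor of two than the lemma requires; both the Taylor and $\chi^2$ calculations naturally carry an extra $1/2$ that you are free to discard.) With the $\chi^2$ route promoted to primary, the proof is correct.
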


In light of this result, it will suffice for us to bound the Frobenius norm between $\mP_0$ and each of the other elements of our packing set.

\begin{lemma} \label{lem:main:kappaconst:frob_bound}
Under the setting of Theorem~\ref{thm:main:main}, suppose that $\kappa = O(1)$.
For latent dimension $d$, let $k_0$ be such that $2^{k_0-1} < d \le 2^{k_0}$ and let $m = \lfloor n/2^{k_0} \rfloor$.
With $\mP_0$ as in Lemma~\ref{lem:main:kappaconst:P0exists}, it holds for all $i \in [2^{k_0}m]$ that
    \begin{equation*}
            \|\mP_i - \mP_0\|_F^2 \leq \frac{\lambda_1(\lambda_d\wedge\log n)}{90 n}.
    \end{equation*}
\end{lemma}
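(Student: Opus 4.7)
The plan is to decompose
\begin{equation*}
\mP_i - \mP_0 = \bigl(\mU_i\tmLambda\mU_i^T - \mG_i\tmLambda\mG_i^T\bigr) + \bigl(\mG_i\tmLambda\mG_i^T - \mU_0\tmLambda\mU_0^T\bigr),
\end{equation*}
where $\mG_i = \mU_0 + \ve_i\vx_i^T$ is the unnormalized perturbation already introduced in the proof of Lemma~\ref{lem:main:kappaconst:packing_distance:prelim}, and to bound each piece in Frobenius norm separately. The target constant $1/90$ will be secured by choosing the constant $c_0$ from Lemma~\ref{lem:constructxi} sufficiently small; since the main eigenvalues here satisfy $\kappa = O(1)$ and $d$ is bounded, the two sides of the decomposition turn out to scale very differently.

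For the ``algebraic'' piece $\mG_i\tmLambda\mG_i^T - \mU_0\tmLambda\mU_0^T$, direct expansion produces three rank-one summands $\mU_0\tmLambda\vx_i\ve_i^T$, $\ve_i\vx_i^T\tmLambda\mU_0^T$, and $(\vx_i^T\tmLambda\vx_i)\ve_i\ve_i^T$. Using $\mU_0^T\mU_0 = \mI_d$, the first two each contribute $\|\tmLambda\vx_i\|_2$ in Frobenius norm, and substituting the explicit form of $\vx_i$ from Equation~\eqref{eq:main:abs} yields
\begin{equation*}
\|\tmLambda\vx_i\|_2^2 = \sum_{\ell=1}^d \lambda_\ell^2 \cdot \frac{c_0^2}{\lambda_\ell^2}\cdot\frac{\lambda_1(\lambda_d\wedge\log n)}{nd} = \frac{c_0^2\lambda_1(\lambda_d\wedge\log n)}{n}.
\end{equation*}
The third summand is bounded by $|\vx_i^T\tmLambda\vx_i| \lesssim c_0^2(\lambda_d\wedge\log n)/n$ and is therefore asymptotically smaller than $\|\tmLambda\vx_i\|_2$ since $\lambda_1 \gtrsim \lambda_d\wedge\log n$. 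So this piece contributes $(2+o(1))\,c_0\sqrt{\lambda_1(\lambda_d\wedge\log n)/n}$ in Frobenius norm.

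For the ``orthogonalization'' piece $\mU_i\tmLambda\mU_i^T - \mG_i\tmLambda\mG_i^T$, writing $\mU_i = \tmU_i\tmV_i^T$ and $\mG_i = \tmU_i\tmSig_i\tmV_i^T$ and setting $\mR = \tmV_i^T\tmLambda\tmV_i$, I would express the difference as $\tmU_i\bigl((\mI_d-\tmSig_i)\mR + \mR(\mI_d-\tmSig_i) - (\mI_d-\tmSig_i)\mR(\mI_d-\tmSig_i)\bigr)\tmU_i^T$ and invoke $\|\tmU_i X \tmU_i^T\|_F = \|X\|_F$ together with $\|\mR\| \leq \lambda_1$. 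Since $\tmSig_i^2 - \mI_d = \vu_i\vx_i^T + \vx_i\vu_i^T + \vx_i\vx_i^T$ has rank at most two, the bounds $\|\vu_i\|_2 \leq \sqrt{d/(n-r)}$ and $\|\vx_i\|_2 \leq c_0\sqrt{\kappa/n}$ from Lemma~\ref{lem:kappaconst:U0exists} and Equation~\eqref{eq:xi-l2-bound} give $\|\mI_d - \tmSig_i\|_F = O(1/n)$, so this piece contributes only $O(\lambda_1/n)$. Its square $O(\lambda_1^2/n^2)$ is dominated by the target $\lambda_1(\lambda_d\wedge\log n)/n$ for any $\kappa = o(n)$: when $\lambda_d \leq \log n$ the ratio reduces to $\kappa/n$, and when $\lambda_d > \log n$ it reduces to $\lambda_1/(n\log n) \leq 1/\log n$.

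Putting the two bounds together gives $\|\mP_i-\mP_0\|_F^2 \leq (4+o(1))\,c_0^2\,\lambda_1(\lambda_d\wedge\log n)/n$, so any choice of $c_0$ with $c_0^2 \leq 1/360$ (say) finishes the proof for all sufficiently large $n$. The main technical hurdle is not the calculation itself but rather confirming that this choice of $c_0$ is simultaneously compatible with the earlier constraints imposed in the proof of Lemma~\ref{lem:main:kappaconst:packing_distance} (for the packing separation) and in Lemma~\ref{lem:i:kappaconst:prob} (for keeping the entries of $\mP_i$ strictly between $0$ and $1$). Since every such constraint takes the form $c_0 \leq \text{const}$ with the constants depending only on $d$ and $\kappa$, a single sufficiently small $c_0$ satisfies all of them at once.
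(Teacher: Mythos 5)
Your decomposition of $\mP_i - \mP_0$ is exactly the one the paper uses, and your bound on the ``algebraic'' piece $\mG_i\tmLambda\mG_i^T - \mU_0\tmLambda\mU_0^T$ mirrors the paper's $3\|\vx_i^T\mLambda\|_2$ bound (you get a slightly sharper $(2+o(1))$). Where you diverge is in the ``orthogonalization'' piece $\mU_i\tmLambda\mU_i^T - \mG_i\tmLambda\mG_i^T$: the paper writes this as $(1+\|\tmSig_i\|)\|(\tmSig_i-\mI_d)\tmV_i^T\tmLambda\|_F$ and then invokes the rather technical Lemma~\ref{lem:tech:refined_eigen_bound}, which requires a careful decomposition in terms of the $\alpha_{i\pm}$ and $d_{i\pm}$ quantities with explicit constants, ultimately arriving at a term of the form $15(\|\vu_i\|_2 + \|\vx_i\|_2)\|\vu_i^T\mLambda\|_2$. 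You instead factor the difference as $\tmU_i(\mE\mR + \mR\mE - \mE\mR\mE)\tmU_i^T$ with $\mE = \mI_d - \tmSig_i$ and $\mR = \tmV_i^T\tmLambda\tmV_i$, use orthogonal invariance of the Frobenius norm, bound $\|\mR\| \leq \lambda_1$ and $\|\mE\|_F = O(1/n)$ (the latter via the rank-two structure of $\mG_i^T\mG_i - \mI_d$ and the magnitudes of $\|\vu_i\|_2, \|\vx_i\|_2$), and conclude the whole piece is $O(\lambda_1/n)$, which you then argue is asymptotically negligible relative to the target $\sqrt{\lambda_1(\lambda_d\wedge\log n)/n}$ under $\kappa = o(n)$ and $\lambda_1 \leq n/3$. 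This is a genuinely cleaner route: it sidesteps Lemma~\ref{lem:tech:refined_eigen_bound} entirely. The tradeoff is that the paper's explicit constant-level bound is more informative if one later wants to relax the $d, \kappa = O(1)$ assumption, whereas your argument leans on asymptotic domination from the outset — but since the paper also invokes $d$ and $\kappa$ bounded at the very end, both approaches require essentially the same hypotheses. Your observation at the end — that every constraint on $c_0$ in the surrounding lemmas is an upper bound, so a single small $c_0$ reconciles them — is correct and is implicitly what the paper does as well.
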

\begin{proof}
Recall that $\mG_i$ is defined in Equation~\eqref{eq:def:G} and $\mG_i = \tmU_i\tmSig_i\tmV_i^T$. We also have $\mU_i = \tmU_i\tmV_i^T$. Applying these definitions and the triangle inequality, 
\begin{equation*}
    \begin{aligned}
    \left\|\mU_i \tmLambda \mU_i^T - \mG_i \tmLambda \mG_i^T\right\|_F &= \left\|\tmU_i\tmV_i^T\tmLambda  \tmV_i \tmU_i^T - \tmU_i\tmSig_i\tmV_i^T\tmLambda  \tmV_i \tmSig_i\tmU_i^T\right\|_F\\
    &\leq \left\|\tmV_i^T\tmLambda  \tmV_i - \tmSig_i\tmV_i^T\tmLambda  \tmV_i \tmSig_i\right\|_F\\
    &= \left\|(\mI_d - \tmSig_i)\tmV_i^T\tmLambda  \tmV_i + \tmSig_i\tmV_i^T\tmLambda\tmV_i (\mI_d - \tmSig_i)\right\|_F
\end{aligned}
\end{equation*}
Applying the triangle inequality, it yields that
\begin{equation}
    \label{eq:kl_bound_pre1}
    \begin{aligned}
        \left\|\mU_i \tmLambda \mU_i^T - \mG_i \tmLambda \mG_i^T\right\|_F &\leq \left\|(\mI_d - \tmSig_i)\tmV_i^T\tmLambda  \tmV_i\right\|_F + \left\|\tmSig_i\tmV_i^T\tmLambda\tmV_i (\mI_d - \tmSig_i)\right\|_F\\
        &\leq \left\|(\mI_d - \tmSig_i)\tmV_i^T\tmLambda\right\|_F + \left\|\tmSig_i\right\|\left\|\tmV_i^T\right\|\left\|\tmLambda\tmV_i (\mI_d - \tmSig_i)\right\|_F\\
        &= \left(1 + \|\tmSig_i\|\right)\left\|(\tmSig_i-\mI_d)\tmV_i^T\tmLambda\right\|_F.
    \end{aligned}
\end{equation}
Also recalling that $\mG_i = \mU_0 + \ve_i\vx_i^T$, we have
\begin{equation}
    \label{eq:kl_bound_pre2}
    \begin{aligned}
        \|\mG_i \tmLambda \mG_i^T - \mU_0 \tmLambda \mU_0^T\|_F &= \| (\mU_0+\ve_i\vx_i^T)\tmLambda(\mU_0+\ve_i\vx_i^T)^T - \mU_0 \tmLambda \mU_0^T\|_F \\
        &\leq 2\|\vx^T_i \mLambda\|_2 + |\vx_i^T\mLambda \vx_i|\\
        &\leq 3\|\vx^T_i \mLambda\|_2 
    \end{aligned}
\end{equation}
where the last inequality holds if $\|\vx_i\|_2 < 1$.

Plugging in Equation~\eqref{eq:kl_bound_pre1} and ~\eqref{eq:kl_bound_pre2},
\begin{equation*} \begin{aligned}
\|\mP_i - \mP_0\|_F
&= \|\mU_i \tmLambda \mU_i^T - \mU_0 \tmLambda \tmU_0^T\|_F \\
&\leq \|\mU_i \tmLambda \mU_i^T - \mG_i \tmLambda \mG_i^T\|_F 
        + \|\mG_i \tmLambda \mG_i^T - \mU_0 \tmLambda \mU_0^T\|_F \\
 &< \left(1 + \|\tmSig_i\| \right)
        \left\|\left(\tmSig_i - \mI_d\right) \tmV_i^T \tmLambda\right\|_F 
        + 3\left\|\vx_i^T \mLambda\right\|_2.
\end{aligned} \end{equation*}
Applying Lemma \ref{lem:eigenbound}, if $\|\vx_i\|_2 < 1$, then we have
$$
\begin{aligned}
    \|\tmSig_i\| &\leq 1 + \|\mI_d - \tmSig_i\| \\
    &\leq 1 + \frac{1}{2}\|\vx_i\|_2^2 + 2\sqrt{\frac{d}{n-r}} \|\vx_i\|_2\\
    &< \frac{3}{2} + 2\sqrt{\frac{d}{n-r}} \|\vx_i\|_2.
\end{aligned}
$$
From Equation~\eqref{eq:xi-l2-bound}, we have $\|\vx_i\|_2 \leq c_0 \sqrt{\kappa/n}$, hence, we have
$$
\|\vx_i\|_2 \leq \frac{1}{4}\sqrt{\frac{n-r}{d}}
$$
when $n > 2d + 4c_0\sqrt{\kappa d}$. It suffices to require $n > c_0^2 \kappa$ to satisfy that $\|\vx_i\|_2 < 1$. Therefore, if $n > 2d + 4c_0 \sqrt{\kappa d} + c_0^2 \kappa$, then we have $ \|\tmSig_i\|  < 2$, and Lemma~\ref{lem:tech:refined_eigen_bound} implies
\begin{equation*} \begin{aligned}
\|\mP_i - \mP_0\|_F 
    &\leq 3 \sqrt{17\left(\|\vu_i\|_2^2 + \|\vx_i\|_2^2\right)\left(\vx_i^T \mLambda^2 \vx_i +\vu_i^T \mLambda^2 \vu_i\right)} + 3\left\|\vx_i^T \mLambda\right\|_2\\
    &\leq \left(15 \|\vu_i\|_2 + 15\|\vx_i\|_2 + 3\right)\left\|\vx_i^T \mLambda\right\|_2 + 15\left(\|\vu_i\|_2 + \|\vx_i\|_2 \right)\left\|\vu_i^T \mLambda\right\|_2.
\end{aligned} \end{equation*}
If $n > 902d$, then we have $\|\vu_i\|_2 \leq 1/30$ by the fact that $\|\vu_i \|_2^2 \leq d/(n-r)$. Furthermore, if $n \geq 900c_0^2\kappa$, then $\|\vx_i\|_2 \leq 1/30$. Hence, setting $n \geq 904d + 4c_0\sqrt{\kappa d} + 900c_0^2\kappa$ and collecting terms, we have
\begin{equation} \label{eq:PiP0:inter}
\begin{aligned}
    \|\mP_i - \mP_0\|_F 
    &\leq 4\left\|\vx_i^T \mLambda\right\|_2 + 15(\|\vu_i\|_2 + \|\vx_i\|_2)\left\|\vu_i^T \mLambda\right\|_2\\
    &\leq 4\left\|\vx_i^T \mLambda\right\|_2 + 15\left(\sqrt{\frac{d}{n-r}} + c_0\sqrt{\frac{\kappa}{n}}\right)\left\|\vu_i^T \mLambda\right\|_2
\end{aligned}
\end{equation}
By construction and Equation~\eqref{eq:U0:max-i},
\begin{equation*}
\|\vu_i^T \mLambda\|_2 = \sqrt{\sum_{\ell = 1}^d \lambda_{\ell}^2 \left(\vu_{i\ell}\right)^2} \leq \lambda_1\sqrt{\frac{d}{n - r}}.
\end{equation*}
and
\begin{equation*}
\vx_i^T \mLambda^2 \vx_i = c_0^2 \frac{\lambda_1 (\lambda_d\wedge\log n)}{n}.
\end{equation*}
Applying the above bounds to Equation~\eqref{eq:PiP0:inter}, we obtain
\begin{equation*}
\begin{aligned}
    \|\mP_i - \mP_0\|_F &\leq 4c_0\sqrt{\frac{\lambda_1(\lambda_d \wedge \log n)}{n}} + 15 \frac{d \lambda_1}{n-r} + 15c_0\lambda_1\sqrt{\frac{\kappa d}{n(n-r)}}\\
    &\leq \left(4c_0 + \frac{30d}{\sqrt{n}} \sqrt{\kappa \vee \frac{\lambda_1}{\log n}} + 15\sqrt{2}c_0 \sqrt{\frac{\kappa d}{n}\left(\kappa \vee \frac{\lambda_1}{\log n}\right)}\right)\sqrt{\frac{\lambda_1(\lambda_d \wedge \log n)}{n}},
\end{aligned}
\end{equation*}
where the last inequality holds if $n \geq 4d \geq 2r$.
To complete the proof, it suffices to have
$$
4c_0 + 30 \frac{d}{\sqrt{n}} \sqrt{\kappa \vee \frac{\lambda_1}{\log n}} + 15\sqrt{2}c_0 \sqrt{\frac{\kappa d}{n}\left(\kappa \vee \frac{\lambda_1}{\log n}\right)} \leq \frac{1}{3\sqrt{10}}, 
$$
which holds true if $c_0 \leq \frac{1}{36\sqrt{10}}$ and
$$
d \leq \max\left\{\frac{\sqrt{n}}{270\sqrt{10}} \left(\frac{1}{\sqrt{\kappa}} \wedge \sqrt{\frac{\log n}{\lambda_1}}\right), \frac{n}{364500 c_0^2 \kappa} \left(\frac{1}{\kappa} \wedge \frac{\log n}{\lambda_1}\right)\right\}.
$$
Since both $\kappa$ and $d$ are assumed to be bounded, the requirements all hold for $n$ sufficiently large.
\end{proof}

Using Equation~\eqref{eq:P0-bounds} to bound the entries of $\mP^{(0)}$ away from zero and applying Lemma~\ref{lem:Zhou2021}, we have
\begin{equation*}
\KL(\mP_i \| \mP_0) \leq \frac{9n\|\mP_i - \mP_0\|_F^2}{\lambda_1}.
\end{equation*}
Finally, the following Lemma yields our desired bound on the KL divergence for use in Theorem~\ref{thm:main:tsybakov}. 

\begin{lemma} \label{lem:main:kappaconst:KL_bound}
Under the setting of Theorem~\ref{thm:main:main}, suppose that $\kappa = O(1)$.
For latent space dimension $d$, let $k_0$ be such that $2^{k_0-1} < d \le 2^{k_0}$ and define $m = \lfloor n/2^{k_0} \rfloor$.
There exists a matrix $\mP_0 \in [0,1]^{n \times n}$ and a collection of $2^{k_0} m $ matrices $\{ \mP_i : i \in [ 2^{k_0} m ] \} \subset [0,1]^{n \times n}$ such that for all suitably large $n$,
\begin{equation*}
\KL\left( \mP_i \| \mP_0 \right) \le \frac{1}{10} \log n.
\end{equation*}
\end{lemma}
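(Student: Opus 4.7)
The plan is to assemble this bound directly from the three preceding lemmas; at this stage of the argument, the KL bound is essentially a bookkeeping computation since all the technical work has been done already. Let $\mP_0$ be the matrix guaranteed by Lemma~\ref{lem:main:kappaconst:P0exists} and, for each $i \in [2^{k_0}m]$, let $\mP_i = \mU_i \mLambda^{1/2} \mI_{p,q} \mLambda^{1/2} \mU_i^T$ with $\mU_i$ the perturbed $d$-frame constructed in the proof of Lemma~\ref{lem:main:kappaconst:packing_distance}. Lemma~\ref{lem:i:kappaconst:prob} ensures that each $\mP_i$ has entries strictly in $(0,1)$, so $\KL(\mP_i\|\mP_0)$ is well-defined.

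Next, I will invoke Lemma~\ref{lem:Zhou2021} with the explicit bounds supplied by Lemma~\ref{lem:main:kappaconst:P0exists}, namely $a = \lambda_1/(3n)$ and $b = 2/3$, so that $a(1-b) = \lambda_1/(9n)$. This immediately yields
\begin{equation*}
\KL(\mP_i \| \mP_0) \le \frac{9n}{\lambda_1} \, \|\mP_i - \mP_0\|_F^2.
\end{equation*}
Applying Lemma~\ref{lem:main:kappaconst:frob_bound}, which gives $\|\mP_i - \mP_0\|_F^2 \le \lambda_1 (\lambda_d \wedge \log n)/(90 n)$, the factor of $\lambda_1/n$ cancels cleanly and leaves
\begin{equation*}
\KL(\mP_i \| \mP_0) \le \frac{9n}{\lambda_1} \cdot \frac{\lambda_1 (\lambda_d \wedge \log n)}{90 n} = \frac{\lambda_d \wedge \log n}{10} \le \frac{\log n}{10}.
\end{equation*}

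There is no substantive obstacle here: the only subtlety is checking that the hypotheses of Lemma~\ref{lem:Zhou2021} and Lemma~\ref{lem:main:kappaconst:frob_bound} are simultaneously available for our chosen $\mLambda$ and for all sufficiently large $n$. Both lemmas are stated under exactly the same regime ($\kappa = O(1)$, $n$ sufficiently large, $\mP_0$ as in Lemma~\ref{lem:main:kappaconst:P0exists}), so taking $n$ large enough for both to apply concludes the proof. The key algebraic observation driving everything is that the entrywise lower bound on $\mP_0$ and the Frobenius bound on $\mP_i - \mP_0$ scale in exactly the same $\lambda_1/n$ units, producing the cancellation above.
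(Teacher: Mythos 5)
Your proof is correct and follows the same route as the paper: invoke the entrywise bounds on $\mP_0$ from Lemma~\ref{lem:main:kappaconst:P0exists}, plug them into Lemma~\ref{lem:Zhou2021}, and then apply the Frobenius bound of Lemma~\ref{lem:main:kappaconst:frob_bound} so that the $\lambda_1/n$ factors cancel. The only minor stylistic difference is that you explicitly spell out the computation $a(1-b) = \lambda_1/(9n)$, which the paper leaves implicit.
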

\begin{proof}
Let $\mP_0$ and $\mP_i$ be as defined in Lemmas~\ref{lem:main:kappaconst:P0exists} and~\ref{lem:i:kappaconst:prob}, respectively.
We note that by Lemma~\ref{lem:main:kappaconst:P0exists}, we can bound the elements of $\mP_0$ as
\begin{equation*}
\frac{ \lambda_1 }{ 3n } \le \mP^{(0)}_{ij} \le \frac{2}{3}~~~\text{ for all }~~~i,j \in [n].
\end{equation*}
Applying Lemma~\ref{lem:Zhou2021} followed by Lemma~\ref{lem:main:kappaconst:frob_bound}, it follows that
\begin{equation*}
\KL\left(\mP_i \| \mP_0 \right)
\leq
\frac{9n \|\mP_i - \mP_0\|_F^2}{ \lambda_1 }
\le
\frac{ \lambda_d \wedge \log n }{ 10 }
\le \frac{ 1 }{10} \log n
\end{equation*}
for all suitably large $n$, completing the proof.
\end{proof}

\section{Theorem~\ref{thm:main:main}: Growing condition number}
\label{sec:apx:kappagrowing}
When $\kappa = \omega(1)$, we require a different construction for our packing set than that used in Section~\ref{sec:apx:kappaconst}.
Were we to use the construction for the $\kappa = O(1)$ case here, we would further require addition assumptions on the growth of $\lambda_1$.
To obtain more general results, we pursue a different construction here.

\subsection{Constructing the Packing Set} \label{subsec:apx:kappagrowing:packing}

Our approach, as in the $\kappa = O(1)$ case, is to first construct a ``base'' parameter $\mU_0 \in \R^{n \times d}$ to have orthonormal columns.
We will then construct additional $d$-frames $\mU_i \in \R^{n \times d}$ by swapping pairs of rows in $\mU_0$.
To construct $\mU_0$, we take its first column to be $\onevec/\sqrt{n}$.
To construct the remaining columns, we stack columns from a $2^{k_0}\times 2^{k_0}$ Hadamard matrix $\mH_{2^{k_0}}$.

\begin{lemma} \label{lem:main:kappagrowing:P0exists}    
Under the setting of Theorem~\ref{thm:main:main}, suppose that $\kappa = \omega(1)$ and that $\kappa \ge 3d$ for all suitably large $n$.
Define $\mLambda = \diag(\lambda_1,\lambda_2,\dots,\lambda_d)$ with
\begin{equation} \label{eq:def:partii:lambdas}
\lambda_1 \leq \frac{ n }{ 3 }
~~~\text{ and }~~~
\lambda_j = \frac{ \lambda_1 }{ \kappa }
~~~\text{ for } 2 \le j \le d.
\end{equation}
There exist matrices $\mU_0 \in \Stiefel_d(\R^n)$ such that $\mP_0 = \mU_0 \mLambda^{1/2}\mI_{p,q}\mLambda^{1/2} \mU_0^T$ satisfies, for all suitably large $n$,
\begin{equation*}
    \frac{\lambda_1}{3n} \leq  \mP_{ij}^{(0)} \leq \frac{2}{3}
    ~~~\text{ for all }~~~i,j \in [n].
\end{equation*}
\end{lemma}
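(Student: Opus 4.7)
The plan is to reuse the Hadamard-based construction from Lemma~\ref{lem:kappaconst:U0exists}, exploiting the fact that now $\kappa \geq 3d$ makes every non-leading eigencomponent of $\mP_0$ negligible compared to the leading one. Concretely, I would take the first column of $\mU_0$ to be $\onevec_n/\sqrt{n}$, and fill the remaining $d-1$ columns by stacking $m = \lfloor n/2^{k_0} \rfloor$ copies of columns $2,\dots,d$ of the Sylvester Hadamard matrix $\mH_{2^{k_0}}$ (scaled by $1/\sqrt{n-r}$), padding the last $r < 2^{k_0}$ rows of these columns with zeros. Orthonormality is then automatic: columns $2,\dots,d$ of $\mH_{2^{k_0}}$ are each orthogonal to $\onevec_{2^{k_0}}$ and pairwise orthogonal, vertical stacking preserves both properties, and zero-padding in the remainder rows is consistent with the first column taking value $1/\sqrt{n}$ throughout.

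Given this $\mU_0$, I would expand entry-wise
\begin{equation*}
\mP^{(0)}_{ij} = \lambda_1 \, \mU^{(0)}_{i1} \mU^{(0)}_{j1} + \sum_{\ell=2}^{p} \lambda_\ell \, \mU^{(0)}_{i\ell} \mU^{(0)}_{j\ell} - \sum_{\ell=p+1}^{d} \lambda_\ell \, \mU^{(0)}_{i\ell} \mU^{(0)}_{j\ell},
\end{equation*}
where the leading term is identically $\lambda_1/n$ by construction of the first column. Each of the remaining $d-1$ terms has modulus at most $\lambda_1/\bigl(\kappa(n-r)\bigr)$, since every entry in columns $\ell \geq 2$ has modulus at most $1/\sqrt{n-r}$ and $\lambda_\ell = \lambda_1/\kappa$ by Equation~\eqref{eq:def:partii:lambdas}. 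The assumption $\kappa \geq 3d$ then bounds the total contribution of the ``noise'' columns by $(d-1)\lambda_1/\bigl(\kappa(n-r)\bigr) \leq \lambda_1/\bigl(3(n-r)\bigr)$.

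For $n$ large enough that $n - r \geq n/2$ (which holds once $n \geq 4d$, since $r < 2^{k_0} \leq 2d$), combining the above with $\lambda_1 \leq n/3$ yields
\begin{equation*}
\frac{\lambda_1}{3n} \leq \frac{\lambda_1}{n} - \frac{2\lambda_1}{3n} \leq \mP^{(0)}_{ij} \leq \frac{\lambda_1}{n} + \frac{2\lambda_1}{3n} = \frac{5\lambda_1}{3n} \leq \frac{5}{9} \leq \frac{2}{3},
\end{equation*}
as required. The only step requiring any real care is verifying orthonormality of $\mU_0$ after the zero-padding; everything else is arithmetic that hinges on $\kappa \geq 3d$, which is exactly what forces $\mP_0$ to lie within an additive $O(\lambda_1/n)$ perturbation of the rank-one ideal $(\lambda_1/n)\onevec\onevec^T$, with both the lower and upper entrywise bounds then essentially immediate from $\lambda_1 \leq n/3$.
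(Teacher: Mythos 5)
Your $\mU_0$ does satisfy the entrywise bounds in the lemma, and the arithmetic is correct: with all of columns $2,\dots,d$ scaled by $1/\sqrt{n-r}$, each non-leading term in $\mP^{(0)}_{ij}$ has modulus at most $\lambda_1/(\kappa(n-r))$, and $\kappa \geq 3d$ together with $n-r \geq n/2$ gives the stated two-sided bound. So as a freestanding proof of the statement as written, this works.

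However, the actual content of the paper's proof is not the entrywise bound itself but the particular $\mU_0$ defined in Equation~\eqref{eq:part-ii-U0}, whose first $2^{k_0}$ rows have entries $\beta_d h_{ij}/\lambda_j$ in columns $j \geq 2$, where $\beta_d = \zeta_d\sqrt{\lambda_1(\lambda_d \wedge \log n)/n}$, with the following $2^{k_0}(M_d-1)$ rows scaled by $\eta_d/\sqrt{n}$ and the rest set to zero; the quantities $\zeta_d$ and $\eta_d$ are tuned so that orthonormality (Equation~\eqref{eq:ii:dnorm}) holds. This is the $\mU_0$ that Lemmas~\ref{lem:kappagrowing:packing} and~\ref{lem:kappagrowing:KL} explicitly take as input (Lemma~\ref{lem:kappagrowing:packing}'s proof opens with ``Recalling the definition of $\mU_0$ from Equation~\eqref{eq:part-ii-U0}''). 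The packing set for the $\kappa = \omega(1)$ regime is built by swapping the first row $\vy_1$ of $\mU_0$ with an all-but-first-coordinate-zero row $\vy_0$, and the resulting separation after multiplying by $\mLambda^{1/2}$ comes out to $\sqrt{(d-1)}\,\beta_d/\sqrt{\lambda_d} \asymp \sqrt{\kappa(\lambda_d \wedge \log n)/n}$, exactly the target rate. With your simpler $\mU_0$ (columns $2,\dots,d$ scaled uniformly by $1/\sqrt{n-r}$), the same row-swap would give $\|\mLambda^{1/2}(\vy_1-\vy_0)\|_2 = \sqrt{(d-1)\lambda_d/(n-r)}$, which is off by a factor of roughly $\sqrt{\kappa(\lambda_d \wedge \log n)/(d\lambda_d)}$ from the claimed lower bound. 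Since $\kappa = \omega(1)$ and $d$ may be bounded while $\lambda_d$ may grow, this discrepancy does not vanish, and the Frobenius/KL bound in Lemma~\ref{lem:kappagrowing:KL} would also come out with the wrong scale. So the construction you propose, while it proves the displayed inequality, does not provide what the subsequent lemmas need; the delicate $\beta_d$-scaled first block is the whole point of this lemma, and reusing the $\kappa = O(1)$ construction here is precisely what the paper is departing from.
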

\begin{proof}
Define
\begin{equation} \label{eq:def:betad}
\beta_d
= \zeta_d 
    \sqrt{
    \frac{\lambda_1 (\lambda_d \wedge \log n) }{ n } },
\end{equation}
where $\zeta_d$ is a quantity depending on $n$ (via dependence on $d$) that we will specify below.
For rows $i=1,2,\dots,2^{k_0}$ and columns $j=2,3,\dots,d$, we take
\begin{equation*}
\mU^{(0)}_{ij}
=
\frac{ \beta_d h_{i,j} }{ \lambda_j }.
\end{equation*}
Letting $M_d = \lfloor n/2^{k_0+1} \rfloor \ge 2$, we take the next $2^{k_0}(M_d-1)$ rows of $\mU_0$ to be, for $i=2^{k_0}+1,2^{k_0}+2,\dots,2^{k_0} M_d$ and $j=2,3,\dots,d$,
\begin{equation*}
\mU^{(0)}_{ij}
= \frac{ \eta_d ~h_{i^*, j}}{ \sqrt{n} }
\end{equation*}
where $i^* = (i \mod 2^{k_0})$ and $\eta_d$ is a quantity, possibly dependent on $n$, to be specified below.
Finally, for $i > 2^{k_0} M_d$ and $j \ge 2$, we take $\mU^{(0)}_{ij} = 0$, so that
\begin{equation} \label{eq:part-ii-U0}
\mU_0 = \begin{bmatrix}
    \frac{1}{\sqrt{n}} & \frac{\beta_d h_{1, 2}}{ \lambda_2 } & \ldots & \frac{\beta_d h_{1, d-1}}{\lambda_{d-1}} & \frac{\beta_d h_{1, d}}{\lambda_d} \\
    \vdots & \vdots & \ddots & \vdots &\vdots \\
    \frac{1}{\sqrt{n}} & \frac{\beta_d h_{2^{k_0}, 2}}{\lambda_2} & \ldots & \frac{\beta_d h_{2^{k_0}, d-1}}{\lambda_{d-1}} & \frac{\beta_d h_{2^{k_0}, d}}{\lambda_d} \\

    \frac{1}{\sqrt{n}} & \frac{\eta_d h_{1,2} }{\sqrt{n}} & \ldots & \frac{\eta_d h_{1, d-1}}{\sqrt{n}} & \frac{\eta_d h_{1, d}}{\sqrt{n}}  \\
    \vdots & \vdots & \ldots & \vdots &\vdots \\
    \frac{1}{\sqrt{n}} & \frac{\eta_d h_{2^{k_0}, 2}}{\sqrt{n}} & \ldots & \frac{\eta_d h_{2^{k_0}, d-1}}{\sqrt{n}} & \frac{\eta_d h_{2^{k_0}, d}}{\sqrt{n}} \\
    \vdots & \vdots & \ddots & \vdots &\vdots \\
    \frac{1}{\sqrt{n}} & 0& \ldots & 0  & 0\\
    \frac{1}{\sqrt{n}} & 0  & \ldots & 0 & 0\\
    \vdots & \vdots & \ldots & \vdots &\vdots \\
    \frac{1}{\sqrt{n}} & 0 & \ldots & 0 & 0\\
\end{bmatrix}.
\end{equation}

To ensure that $\mU_0$ has orthonormal columns, we require for every $2\leq j \leq d$, 
\begin{equation} \label{eq:ii:dnorm}
    2^{k_0} \left(\frac{\beta_d^2}{\lambda_j^2} + (M_d - 1) \frac{\eta^2_d}{n}\right) = 1.
\end{equation} 
Plugging in the definition of $\beta_d$ from Equation~\eqref{eq:def:betad} and under our assumption that $\kappa = o(n)$, we require that 
\begin{equation} \label{eq:ii:norm-require}
2^{k_0} \zeta_d^2 \frac{\kappa}{n}\cdot\frac{\lambda_d \wedge \log n}{\lambda_d}
= o(1).
\end{equation}
Since $2^{k_0} = \Theta(d)$, Equation~\eqref{eq:ii:norm-require} holds when we take $\zeta_d = c/\sqrt{d}$ for any constant $c \leq \sqrt{\frac{1}{640}}$.
We then pick $\eta_d$ in such a way that $\eta_d = \sqrt{2} + o(1)$, so that
\begin{equation*}
\eta_d^2
= \frac{n}{2^{k_0}(M_d-1)}
        - o(1) =  \frac{n}{\lfloor n/2 \rfloor - 2^{k_0}} - o(1) = 2 + o(1),
\end{equation*}
ensuring that Equation~\eqref{eq:ii:dnorm} holds.

Unrolling the definition of $\mP_0 = \mU_0 \mLambda^{1/2}\mI_{p,q}\mLambda^{1/2} \mU_0^T$, it holds for all $1\leq i, j \leq 2^{k_0}$,
\begin{equation*} \begin{aligned}
    \mP^{(0)}_{ij} &\leq \sum_{\ell=1}^d \lambda_{k} \mU_{ik}^{(0)}\mU_{jk}^{(0)} = \frac{\lambda_1}{n} + (d-1)\frac{\beta_d^2 h_{i,k}h_{j,k}}{\lambda_d} \leq \frac{\lambda_1}{n} + (d-1) \frac{\beta_d^2}{\lambda_d} \\
    &= \frac{\lambda_1}{n} + (d-1) \zeta_d^2\frac{\lambda_1(\lambda_d\wedge \log n)}{\lambda_d n} \leq (1+(d-1) \zeta_d^2)\frac{\lambda_1}{n} \\
    &\leq (1 + c^2)\frac{\lambda_1}{n} \leq \frac{2}{3},
\end{aligned} \end{equation*}
where the last inequality holds by our choice of $\lambda_1 \leq n/3$ and any $0 < c \leq 1$.
To lower-bound the entries of $\mP^{(0)}$, observe that
\begin{equation*}
\begin{aligned}
    \mP_{ij}^{(0)} &\geq \frac{\lambda_1}{n} - (d-1) \zeta_d^2\frac{\lambda_1(\lambda_d\wedge \log n)}{\lambda_d n} \geq \frac{\lambda_1}{n} - (d-1) \zeta_d^2\frac{\lambda_1}{n} \\
    &\geq (1- c^2)\frac{\lambda_1}{n}. 
\end{aligned}
\end{equation*}
Choosing $c > 0$ sufficiently small, we have $\mP_{ij}^{(0)} \geq \lambda_1/3n$. 
Combining the above two displays, we conclude that
\begin{equation} \label{eq:P0bounds:block1-1}
\frac{ \lambda_1 }{ 3n }
\le \mP^{(0)}_{ij} 
\le \frac{2}{3}
~~~\text{ for }
~~~i, j \in [2^{k_0}].
\end{equation}

For $1\leq i \leq 2^{k_0} < j \leq 2^{k_0} M_d$, 
\begin{equation*}
    \begin{aligned}
    \mP^{(0)}_{ij} &\leq \frac{\lambda_1}{n} + (d-1)\frac{\beta_d\eta_d h_{i,k}h_{j,k}}{\sqrt{n}} \leq \frac{\lambda_1}{n} + (d-1) \frac{\beta_d \eta_d}{\sqrt{n}} \\
    &= \frac{\lambda_1}{n} + (d-1) \eta_d \zeta_d\frac{\sqrt{\lambda_1(\lambda_d\wedge \log n)}}{n} = \frac{\lambda_1}{n} + o(1) \leq \frac{2}{3},
\end{aligned}
\end{equation*}
where once again the last inequality holds for $n$ suitably large by our choice of $\lambda_1 \leq n/3$.
To lower-bound $\mP^{(0)}$, we observe that
\begin{equation*}
    \begin{aligned}
    \mP^{(0)}_{ij} &\geq \frac{\lambda_1}{n} - (d-1)\frac{\beta_d\eta_d h_{i,k}h_{j,k}}{\sqrt{n}} \geq \frac{\lambda_1}{n} - (d-1) \eta_d \zeta_d\frac{\sqrt{\lambda_1(\lambda_d\wedge \log n)}}{n} \\ &= \frac{\lambda_1}{n} + o\left(\frac{\lambda_1}{n} \right)
    \geq \frac{\lambda_1}{3n},
\end{aligned}
\end{equation*}
using the fact that $\kappa = \omega(1)$.
Combining the above two displays, we have
\begin{equation} \label{eq:P0bounds:block1-2}
\frac{ \lambda_1 }{ 3n }
\le \mP^{(0)}_{ij} 
\le \frac{2}{3}
~~~\text{ for }
~~~1\leq i \leq 2^{k_0} < j \leq 2^{k_0} M_d.
\end{equation}

For $2^{k_0}<i\leq j\leq 2^{k_0} M_d$, since $\kappa \geq 3d$, $\eta_d^2 = 2 + o(1)$ and $\lambda_1 \leq n/3$, we have
\begin{equation*}
\mP_{ij}^{(0)}
\leq \frac{\lambda_1}{n} + (d-1)\frac{\eta_d^2\lambda_d}{n}
\leq \left(1+\frac{d \eta_d^2}{\kappa}\right) \frac{\lambda_1}{n} \leq \frac{2\lambda_1}{n} \leq \frac{2}{3}
\end{equation*}
and
\begin{equation*}
    \mP_{ij}^{(0)} \geq \frac{\lambda_1}{n} - (d-1)\frac{\eta_d^2\lambda_d}{n} \geq  \left(1-\frac{d \eta_d^2}{\kappa}\right) \frac{\lambda_1}{n} \geq \frac{\lambda_1}{3n}. 
\end{equation*}
Combining the above two displays, we have
\begin{equation} \label{eq:P0bounds:block2-2}
\frac{ \lambda_1 }{ 3n }
\le \mP^{(0)}_{ij} 
\le \frac{2}{3}
~~~\text{ for }
~~~2^{k_0}<i\leq j\leq 2^{k_0} M_d.
\end{equation}

Finally, for $i > 2^{k_0} M_d$ and $j \in [n]$, $\mP^0_{ij} = \lambda_1/n$, since $\lambda_1 \leq n/3$, we again have 
\begin{equation} \label{eq:P0bounds:block-others}
\frac{ \lambda_1 }{ 3n }
\le \mP^{(0)}_{ij} 
\le \frac{2}{3}
~~~\text{ for }
~~~i>2^{k_0} M_d, ~j \in [n].
\end{equation}

Thus, combining Equations~\eqref{eq:P0bounds:block1-1} through \eqref{eq:P0bounds:block-others}, we have for sufficiently large $n$,  
\begin{equation*}
    \frac{\lambda_1}{3n} \leq  \mP_{ij}^{(0)} \leq \frac{2}{3}
    ~~~\text{ for all }~~~i,j \in [n],
\end{equation*}
completing the proof.
\end{proof}

As a remark, noting that we can also take $\lambda_1 \leq \frac{n}{2+\epsilon}$, so the condition $(2+\epsilon)\kappa\lambda_d \leq n$ would be suffice for our proof. The condition $\kappa \geq 3d$ can also be relaxed to $\kappa \geq (1+\epsilon) d$ for any constant $\epsilon > 0$. In order to achieve this, we would take $2^{k_0}M_d = \lfloor n / (1+\epsilon/2) \rfloor$, and we have $\eta_d^2 = 1+\epsilon/2 - o(1)$ in this case. Repeating the previous steps of unrolling the definition of $\mP_0$, we would be able to show that $\frac{\epsilon\lambda_1}{(2+2\epsilon)n} \leq \mP^{(0)}_{ij} \leq 2/(2+\epsilon)$ for $n$ sufficiently large. Furthermore, following the proof in Lemma \ref{lem:kappagrowing:packing} and \ref{subsec:apx:kappagrowing:KL}, we can construct a packing set with $\lfloor \epsilon n\rfloor$ instances, so the rest of the proof also goes through with a more careful analysis. We omit the details.  

To construct the rest of our packing set, $\{ \mU_i : i =1,2,\dots,\lfloor n/2 \rfloor \}$, we construct the $i$-th element $\mU_i$ by swapping the first row of $\mU_0$ with the $(i + \lfloor n/2 \rfloor)$-th row of $\mU_0$.
That is, for $i \in [ \lfloor n/2 \rfloor ]$, $\mU_i$ is the same as $\mU_0$ except in its first and $i + \lfloor n/2 \rfloor)$-th rows.
Lemma~\ref{lem:kappagrowing:packing} lower bounds the distance between the elements of our packing set $\mU_0, \mU_1, \dots, \mU_{\lfloor n/2 \rfloor}$ .

\begin{lemma} \label{lem:kappagrowing:packing}
Let $\mU_0 \in \Stiefel_d(\R^n)$ and $\mLambda = \diag(\lambda_1,\lambda_2,\dots,\lambda_d) \in \R^{d \times d}$ be the matrices guaranteed by Lemma~\ref{lem:main:kappagrowing:P0exists}.
There exists a collection $\{ \mU_i : i =1,2,\dots,\lfloor n/2 \rfloor \} \subset \Stiefel_d(\R^n)$, such that for any pair of indexes $0\leq i < j \leq \lfloor n/2 \rfloor$, we have
\begin{equation} \label{eq:ii:packing}
\min_{\mW \in \Od \cap \Opq}
\left\|\mU_i \mLambda^{1/2} \mW - \mU_j \mLambda^{1/2}\right\|_{2, \infty}
\geq \frac{\zeta_d \sqrt{d-1}}{2}\sqrt{\frac{\kappa(\lambda_d \wedge \log n)}{n}},
\end{equation}
where $\zeta_d$ is any quantity such that $\zeta_d \le 1/\sqrt{640 d}$.
\end{lemma}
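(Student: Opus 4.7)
The plan is to exploit the row-swap construction so that a single triangle-inequality computation cancels the mystery rotation $\mW$ and isolates $\|(\vu-\vz)\mLambda^{1/2}\|_2$, where $\vu$ denotes the first row of $\mU_0$ and $\vz = (1/\sqrt{n},0,\dots,0)$ is the common value of $\mU_0$ at every index exceeding $2^{k_0}M_d$. First I will observe that each $\mU_i$ lies in $\Stiefel_d(\R^n)$, since a row swap amounts to left-multiplication by a permutation matrix and so preserves column orthonormality. A direct computation using Equation~\eqref{eq:part-ii-U0} and $\lambda_2 = \dots = \lambda_d = \lambda_1/\kappa$ then yields
\begin{equation*}
\|(\vu-\vz)\mLambda^{1/2}\|_2^2
 = \sum_{k=2}^d \frac{\beta_d^2}{\lambda_k}
 = (d-1)\frac{\zeta_d^2\kappa(\lambda_d \wedge \log n)}{n},
\end{equation*}
so that the target bound in Equation~\eqref{eq:ii:packing} is exactly $\tfrac12\|(\vu-\vz)\mLambda^{1/2}\|_2$.

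Next, I would fix $0 \le i < j \le \lfloor n/2 \rfloor$ and $\mW \in \Od \cap \Opq$. Because the zero block contains $n - 2^{k_0}M_d \ge \lceil n/2 \rceil$ rows, for $n$ sufficiently large one may choose an index $\ell > 2^{k_0}M_d$ avoiding the (at most two) swap partners $i + \lfloor n/2 \rfloor$ and $j + \lfloor n/2 \rfloor$. By construction both $(\mU_i)_\ell = (\mU_j)_\ell = \vz$, while $(\mU_i)_{j+\lfloor n/2 \rfloor} = \vz$ and $(\mU_j)_{j+\lfloor n/2 \rfloor} = \vu$ (this last identity is uniform in $i$: whether $i = 0$ and no swap is performed on $\mU_0$, or $i \ge 1$ and the swap defining $\mU_i$ leaves position $j + \lfloor n/2 \rfloor$ untouched). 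Consequently, rows $\ell$ and $j + \lfloor n/2 \rfloor$ of $\mU_i\mLambda^{1/2}\mW - \mU_j\mLambda^{1/2}$ are $\vz\mLambda^{1/2}(\mW-\mI)$ and $\vz\mLambda^{1/2}\mW - \vu\mLambda^{1/2}$, respectively. Writing $A$ and $B$ for their Euclidean norms, a single triangle inequality gives
\begin{equation*}
A + B \ge \bigl\|\vz\mLambda^{1/2}(\mW-\mI) - \bigl(\vz\mLambda^{1/2}\mW - \vu\mLambda^{1/2}\bigr)\bigr\|_2
 = \|(\vu-\vz)\mLambda^{1/2}\|_2,
\end{equation*}
so $\max(A,B) \ge \tfrac12\|(\vu-\vz)\mLambda^{1/2}\|_2$. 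Since $\max(A,B)$ lower-bounds $\|\mU_i\mLambda^{1/2}\mW - \mU_j\mLambda^{1/2}\|_{2,\infty}$ and this bound holds for every admissible $\mW$, taking the infimum over $\mW \in \Od \cap \Opq$ finishes the argument.

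The only non-routine step is the particular pairing of rows used in the triangle inequality. A naive attempt pairing the two swap-destination rows $i + \lfloor n/2 \rfloor$ and $j + \lfloor n/2 \rfloor$ leaves a residual of the form $(\vu-\vz)\mLambda^{1/2}(\mW \pm \mI)$, which can be made small by choosing $\mW$ with eigenvalues near $\mp 1$ and therefore fails to give a clean lower bound. Pairing instead a swap-destination row with an independent ``witness'' row drawn from the zero block places $\vz\mLambda^{1/2}$ on the left of $\mW$ in both terms, so the $\mW$-dependent pieces cancel exactly under subtraction. This also explains why the proof needs no information about the $(\mI_d,\mI_{p,q})$-preserving block structure of $\mW$ beyond orthogonality, which is consistent with the remark after Corollary~\ref{cor:main:subspace} that the subspace lower bound holds over all of $\Od$.
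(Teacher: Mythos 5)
Your proof is correct and follows essentially the same strategy as the paper's: lower-bound the two-to-infinity norm using two witness rows — one ``zero-block'' row where both $\mU_i$ and $\mU_j$ carry the common value $\vy_0 = (1/\sqrt{n},0,\dots,0)$, and the swap-destination row where they carry $\vy_0$ and $\vy_1$ respectively — and then cancel the dependence on $\mW$ via the triangle inequality. Your phrasing ($A+B \ge \|(\vu-\vz)\mLambda^{1/2}\|_2$ hence $\max(A,B) \ge \tfrac12\|(\vu-\vz)\mLambda^{1/2}\|_2$) is a compact reformulation of the paper's two-case argument on whether $\|\mW^T\mLambda^{1/2}\vy_0 - \mLambda^{1/2}\vy_0\|_2$ is above or below half the target, and your explicit choice of the witness index $\ell > 2^{k_0}M_d$ handles the $i=0$ case more transparently than the paper's presentation.
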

\begin{proof}
Recalling the definition of $\mU_0$ from Equation~\eqref{eq:part-ii-U0}, for each $i \in [ \lfloor n/2 \rfloor ]$, define $\mU_i \in \Stiefel_d(\R^n)$ to have the same rows as $\mU_0$, but switching the the first and $(i+\lfloor n/2 \rfloor)$-th rows of $\mU_0$.

Define the vectors
\begin{equation*} \begin{aligned}
\vy_0 &= \left( \frac{1}{\sqrt{n}}, 0, \dots, 0 \right)^T \in \R^d \\
&\text{and} \\
\vy_1 &= \left(
    \frac{1}{\sqrt{n}},
    \frac{ \beta_d h_{1,2} }{ \lambda_2 },
    \frac{ \beta_d h_{1,3} }{ \lambda_3 },
    \dots ,
    \frac{ \beta_d h_{1,d} }{ \lambda_d }
\right)^T \in \R^d,
\end{aligned} \end{equation*}
noting that $\vy_1 \in \R^d$ is the first row of $\mU_0$.
Fix a matrix $\mW \in \Opq \cap \Od$.
Trivially lower-bounding the maximum in the $(\tti)$-norm by the maximum of two particular rows and making use of the construction of $\mU_i$ and $\mU_j$ for any distinct $i,j \in \{0,1,2,\dots,\lfloor n/2 \rfloor \}$, we have
\begin{equation} \label{eq:ii:pack-bound}
\begin{aligned}
&\left\|\mU_i \mLambda^{1/2} \mW - \mU_j \mLambda^{1/2}\right\|_{2, \infty} \\
&~~~~~~\ge
\max\left\{
  \| \mW^T \mLambda^{1/2} \vy_1 - \mLambda^{1/2} \vy_0 \|_2,
  \| \mW^T \mLambda^{1/2} \vy_0 - \mLambda^{1/2} \vy_0 \|_2 \right\}.
\end{aligned} \end{equation}
Suppose that
\begin{equation} \label{eq:supposition}
\| \mW^T \mLambda^{1/2} \vy_0 - \mLambda^{1/2} \vy_0 \|_2
\ge
\frac{\zeta_d \sqrt{d-1} }{2}
\sqrt{\frac{\kappa(\log n \wedge \lambda_d)}{n}}.
\end{equation}
Then it holds trivially that
\begin{equation*}
\left\|\mU_i \mLambda^{1/2} \mW - \mU_j \mLambda^{1/2}\right\|_{2, \infty}
\ge
\frac{\zeta_d \sqrt{d-1} }{2}
\sqrt{\frac{\kappa(\log n \wedge \lambda_d)}{n}}.
\end{equation*}
If, on the other hand, Equation~\eqref{eq:supposition} does not hold, the triangle inequality implies
\begin{equation} \label{eq:trilb} \begin{aligned}
\| \mW^T \mLambda^{1/2} \vy_1 - \mLambda^{1/2} \vy_0 \|_2
&\ge
\| \mW^T \mLambda^{1/2} \vy_1 - \mW^T \mLambda^{1/2} \vy_0 \|_2 \\
&~~~~~~- \| \mW^T \mLambda^{1/2} \vy_0 - \mLambda^{1/2} \vy_0 \|_2 \\
&= \| \mLambda^{1/2} \vy_1 - \mLambda^{1/2} \vy_0 \|_2
- \| \mW^T \mLambda^{1/2} \vy_0 - \mLambda^{1/2} \vy_0 \|_2.
\end{aligned} \end{equation}
Plugging in the definitions of $\vy_0$ and $\vy_1$ and using the fact that $\lambda_2=\lambda_3=\cdots=\lambda_d$,
\begin{equation*}
\| \mLambda^{1/2} \vy_1 - \mLambda^{1/2} \vy_0 \|_2^2
= \sum_{j=2}^d \frac{ \zeta_d^2 \lambda_1 (\lambda_d \wedge \log n) }{ \lambda_j n}
= \frac{ (d-1) \zeta_d^2 \lambda_1 (\lambda_d \wedge \log n) }
{ \lambda_d n}.               
\end{equation*}
Further, since Equation~\eqref{eq:supposition} fails to hold by assumption,
\begin{equation*}
\left\| \mW^T \mLambda^{1/2} \vy_0 - \mLambda^{1/2} \vy_0 \right\|_2^2
\le \frac{ (d-1) \zeta_d^2 }{ 4\lambda_d }
\frac{\lambda_1 (\log n \wedge \lambda_d)}
{n}.
\end{equation*}
Taking square roots and applying the above two displays to Equation~\eqref{eq:trilb},
\begin{equation*}
\| \mW^T \mLambda^{1/2} \vy_1 - \mLambda^{1/2} \vy_0 \|_2
\ge
\frac{ \zeta_d \sqrt{d-1} }{ 2 }
\sqrt{\frac{\lambda_1 (\log n \wedge \lambda_d)}
{ n \lambda_d } },
\end{equation*}
so that
\begin{equation*}
\left\|\mU_i \mLambda^{1/2} \mW - \mU_j \mLambda^{1/2}\right\|_{2, \infty}
\ge
\frac{\sqrt{d-1} \zeta_d}{2}
\sqrt{\frac{\kappa(\log n \wedge \lambda_d)}{n}}.
\end{equation*}
Note that we have shown this bound to hold whether Equation~\eqref{eq:supposition} holds or not.
Since the right-hand side of this bound does not depend on $\mW$, minimizing over $\mW \in \Opq \cap \Od$ completes the proof.
\end{proof}


\subsection{Bounding the KL Divergence} \label{subsec:apx:kappagrowing:KL}

Now, we proceed to control the KL divergence among the parameters $\mU_0, \mU_1, \ldots,$ $\mU_{\lfloor n/2 \rfloor}$.
To do this, we must again ensure that the Frobenius norms between different probability matrices are small in order to apply Lemma~\ref{lem:Zhou2021}.

\begin{lemma}  \label{lem:kappagrowing:KL}
Under the conditions of Theorem~\ref{thm:main:main}, suppose that $\kappa = \omega(1)$.
Let $\mU_0 \in \Stiefel_d(\R^n)$ and $\mLambda = \diag(\lambda_1,\lambda_2,\dots,\lambda_d) \in \R^{d \times d}$ be the matrices guaranteed by Lemma~\ref{lem:main:kappagrowing:P0exists} and let $\{ \mU_i : i \in [ \lfloor n/2 \rfloor ]$ be the packing set guaranteed by Lemma~\ref{lem:kappagrowing:packing}.
For any $1 \leq i \leq \lfloor n/2 \rfloor$, we have
\begin{equation} \label{eq:ii:KL}
        \left \| \mU_i \tmLambda \mU_i^T - \mU_0 \tmLambda \mU_0^T\right\|_F^2
        \leq \frac{1}{80} \frac{\lambda_1 (\lambda_d \wedge \log n)}{n}. 
\end{equation}
\end{lemma}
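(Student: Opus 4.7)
My plan is to observe that, in the construction of Lemma~\ref{lem:kappagrowing:packing}, $\mU_i$ arises from $\mU_0$ by swapping rows $1$ and $k := i + \lfloor n/2 \rfloor$. Writing $\mPi_i \in \R^{n \times n}$ for the corresponding (symmetric, self-inverse) permutation matrix, we have $\mU_i \tmLambda \mU_i^T = \mPi_i \mP_0 \mPi_i^T$ with $\mP_0 = \mU_0 \tmLambda \mU_0^T$, so $\mP_i - \mP_0$ is supported only on entries whose row or column is in $\{1,k\}$. The plan is to compute $\|\mP_i - \mP_0\|_F^2$ entry-wise over this sparse support and then plug in the explicit form of $\beta_d$.

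Let $\vy_j \in \R^d$ denote (the transpose of) the $j$-th row of $\mU_0$ and set $\mathbf{d} := \vy_0 - \vy_1$. The crucial structural fact, which I will exploit throughout, is that $k > 2^{k_0} M_d$ forces $\vy_0 = (1/\sqrt{n},0,\ldots,0)^T$, so $\mathbf{d}_1 = 0$ and $\vy_0^T \tmLambda \mathbf{d} = 0$. A short entry-wise comparison then gives, for $b \notin \{1,k\}$, $(\mP_i - \mP_0)_{1,b} = -(\mP_i-\mP_0)_{k,b} = \mathbf{d}^T \tmLambda \vy_b$; the $(1,k)$ entry is unchanged because the symmetric swap acts on both indices; and the diagonal difference telescopes to $(\mP_i-\mP_0)_{1,1} = -(\mP_i-\mP_0)_{k,k} = -\mathbf{d}^T \tmLambda \mathbf{d}$ after expanding $\vy_1 = \vy_0 - \mathbf{d}$ and using $\vy_0^T \tmLambda \mathbf{d} = 0$. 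Collecting squares and exploiting the symmetry of $\mP_i - \mP_0$ yields
\begin{equation*}
\|\mP_i - \mP_0\|_F^2 = 4 \sum_{b \notin \{1,k\}} (\mathbf{d}^T \tmLambda \vy_b)^2 + 2(\mathbf{d}^T \tmLambda \mathbf{d})^2.
\end{equation*}

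To finish, I bound the two terms using the explicit structure of $\mathbf{d}$ together with the orthonormality of the columns of $\mU_0$. For the off-diagonal sum, $\sum_{b=1}^n (\mathbf{d}^T \tmLambda \vy_b)^2 = \|\mU_0 \tmLambda \mathbf{d}\|_2^2 = \mathbf{d}^T \tmLambda^2 \mathbf{d} = (d-1)\beta_d^2$, while $|\mathbf{d}^T \tmLambda \mathbf{d}| \leq (d-1)\beta_d^2/\lambda_d$ by reading off the nonzero coordinates of $\mathbf{d}$. Substituting $\beta_d^2 = \zeta_d^2 \lambda_1(\lambda_d \wedge \log n)/n$ and using $(d-1)\zeta_d^2 \leq 1/640$ bounds the first term by $\lambda_1(\lambda_d \wedge \log n)/(160n)$; after using $(\lambda_d \wedge \log n)/\lambda_d \leq 1$, the second is smaller than the first by a factor $O(\kappa/n) = o(1)$ (since $\kappa = o(n)$), so for $n$ sufficiently large it is also $\leq \lambda_1(\lambda_d\wedge\log n)/(160 n)$, giving the claimed $1/80$ bound.

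I expect the only nontrivial step to be the swap bookkeeping, specifically showing that the $(1,k)$ cross entries are unchanged and that the two diagonal differences both collapse to $\pm\mathbf{d}^T \tmLambda \mathbf{d}$ rather than an uglier expression involving $\vy_0,\vy_1$ separately; both of these hinge on $\mathbf{d}_1 = 0$. Once the entry-wise decomposition above is in hand, the remaining bounds are immediate from $\mU_0^T \mU_0 = \mI_d$ and arithmetic with the chosen values of $\beta_d$ and $\zeta_d$.
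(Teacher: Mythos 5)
Your proof is correct. You compute $\mP_i - \mP_0$ entry-wise using the permutation structure $\mP_i = \mPi_i \mP_0 \mPi_i^T$ (and the fact that $\mathbf{d}_1 = 0$ kills the cross term with $\vy_0$), arriving at the exact decomposition $\|\mP_i - \mP_0\|_F^2 = 4\sum_{b\notin\{1,k\}}(\mathbf{d}^T \tmLambda \vy_b)^2 + 2(\mathbf{d}^T \tmLambda \mathbf{d})^2$, then control the first term exactly via $\sum_b (\mathbf{d}^T\tmLambda\vy_b)^2 = \mathbf{d}^T\tmLambda^2\mathbf{d} = (d-1)\beta_d^2$ and the diagonal correction term asymptotically. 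The paper takes a coarser but shorter route: it never looks at individual entries but instead uses the triangle inequality and $\|\mU_j\| = 1$ to get $\|\mP_i - \mP_0\|_F \le 2\|(\mU_i - \mU_0)\tmLambda\|_F$, and then reads off $\|(\mU_i - \mU_0)\tmLambda\|_F^2 = 2\|\tmLambda(\vy_1 - \vy_0)\|_2^2 = 2(d-1)\beta_d^2$ directly from the two-nonzero-row structure, giving $8(d-1)\zeta_d^2 \lambda_1(\lambda_d\wedge\log n)/n \le \tfrac{1}{80}\cdot\lambda_1(\lambda_d\wedge\log n)/n$ without any asymptotic argument. Your route is tighter on the dominant cross term (picking up an extra factor of $2$ compared to the paper's triangle-inequality bound), but pays for it by needing the $O(\kappa/n) = o(1)$ estimate to absorb the diagonal term, so the two approaches land at the same constant. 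The paper's version also avoids needing $n$ sufficiently large within this particular lemma, while yours introduces that mild asymptotic qualifier; neither makes a material difference to Theorem~\ref{thm:main:main}, which is already asymptotic.
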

\begin{proof}
For $i \in [ \lfloor n/2 \rfloor ]$.
Adding and subtracting appropriate quantities and applying the triangle inequality,
\begin{equation*} 
\left \| \mU_i \tmLambda \mU_i^T - \mU_0 \tmLambda \mU_0^T\right\|_F
\le
\left\| \mU_i \tmLambda (\mU_i-\mU_0)^T \right\|_F
+ \left\| (\mU_i-\mU_0) \tmLambda \mU_0^T \right\|_F .
\end{equation*}
Since $\mU_0$ and $\mU_i$ are $d$-frames, basic properties of the Frobenius norm imply
\begin{equation} \label{eq:easierbound}
\left \| \mU_i \tmLambda \mU_i^T - \mU_0 \tmLambda \mU_0^T\right\|_F
\le
2\left\| (\mU_i-\mU_0)\tmLambda \right\|_F.
\end{equation}
We observe that for $i\in [\lfloor n / 2\rfloor]$, $\mU_i$ and $\mU_0$ differ in exactly two rows.
Define, as in the proof of Lemma~\ref{lem:kappagrowing:packing},
\begin{equation*} \begin{aligned}
\vy_0 &= \left( \frac{1}{\sqrt{n}}, 0, \dots, 0 \right)^T \in \R^d \\
&\text{and} \\
\vy_1 &= \left(
    \frac{1}{\sqrt{n}},
    \frac{ \beta_d h_{1,2} }{ \lambda_2 },
    \frac{ \beta_d h_{1,3} }{ \lambda_3 },
    \dots ,
    \frac{ \beta_d h_{1,d} }{ \lambda_d }
\right)^T \in \R^d,
\end{aligned} \end{equation*}
noting that $\vy_1 \in \R^d$ is the first row of $\mU_0$ by construction.

The structure of $\mU_0$ and $\mU_i$ is such that $\mU_i-\mU_0$ has all rows equal to zero except for two, so that
\begin{equation} \label{eq:frob:rowbound}
 \left\| (\mU_i-\mU_0)\tmLambda \right\|_F^2
= 2\| \tmLambda ( \vy_1 - \vy_0 ) \|_2^2.
\end{equation}
Plugging in the definitions of $\tmLambda$, $\vy_0$ and $\vy_1$,
\begin{equation*}
\left\| (\mU_i-\mU_0)\tmLambda \right\|_F^2
=  2 \zeta_d^2 (d-1)
        \frac{ \lambda_1 ( \lambda_d \wedge \log n ) }{n}.
\end{equation*}
Plugging this into Equation~\eqref{eq:frob:rowbound},
\begin{equation*}
\left\| (\mU_i-\mU_0)\tmLambda \right\|_F^2
= 2 \zeta_d^2 (d-1)
        \frac{ \lambda_1 ( \lambda_d \wedge \log n ) }{n}.
\end{equation*}
Applying this to Equation~\eqref{eq:easierbound}, we conclude that
\begin{equation*}
\left \| \mU_i \tmLambda \mU_i^T - \mU_0 \tmLambda \mU_0^T\right\|_F^2
\le 8 \zeta_d^2 (d-1)
        \frac{ \lambda_1 ( \lambda_d \wedge \log n ) }{n}.
\end{equation*}
Lemma~\ref{lem:kappagrowing:packing} guarantees $\zeta_d^2 \le 1/640 d < 1/640(d-1)$, completing the proof.
\end{proof}

\section{Technical Lemmas} \label{apx:technical}

Here we collect a number of technical lemmas related to our packing set constructions.

\begin{lemma} \label{lem:tech:eigen} 
For $i\in [2^{k_0}m]$, let $\mG_i$ be as defined in Equation~\eqref{eq:def:G}. Assume that $\|\vx_i\|_2 < 1$, then
 the singular values of $\mG_i$ are given by
\begin{equation*}
(  \sqrt{1 + \Tilde{\sigma}_{i+}}, \sqrt{1 + \Tilde{\sigma}_{i-}}, 1, \dots, 1 )^T \in \R^d,
\end{equation*}
where 
\begin{equation} \label{eq:def:sigmapm}
  \Tilde{\sigma}_{i\pm} = \vx^T_i \vu_i + \vx_i^T \vx_i \alpha_{i\pm}
\end{equation}
    and 
\begin{equation} \label{eq:roots}
    \alpha_{i \pm} = \frac{1}{2} \pm \frac{1}{2} \sqrt{1 + \frac{4\|\vu_i\|^2_2 + 4\vx^T_i \vu_i}{\|\vx_i\|_2^2}},
\end{equation}
Write the (reordered) right singular subspace matrix as $\tmV_i = (\Tilde{\calV}_i, \Tilde{\calV}^{\perp}_i)$,  where  $\Tilde{\calV}_i \in \R^{n \times 2}$ has as its columns the singular vectors corresponding to $\sqrt{1+\Tilde{\sigma}_{i\pm}}$.
Then $\Tilde{\calV}_i$ is given by
\begin{equation*} \begin{aligned}
        \Tilde{\calV}_i^T &= \begin{bmatrix}
        \frac{\alpha_{i+}}{\|\alpha_{i+} \vx_i + \vu_i\|_2} & \frac{1}{\|\alpha_{i+} \vx_i + \vu_i\|_2} \\
        \frac{\alpha_{i-}}{\|\alpha_{i-} \vx_i + \vu_i\|_2} & \frac{1}{\|\alpha_{i-} \vx_i + \vu_i\|_2} \\
    \end{bmatrix} \begin{bmatrix}
        \vx_i^T\\
        \vu_i^T
    \end{bmatrix} .
    \end{aligned} \end{equation*}
\end{lemma}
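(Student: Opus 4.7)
The plan is to diagonalize $\mG_i^T \mG_i$ by exploiting its rank-two perturbation structure. Since $\mU_0^T \mU_0 = \mI_d$ and $\vu_i = \mU_0^T \ve_i$, a direct expansion gives
\[
\mG_i^T \mG_i = \mI_d + \vu_i \vx_i^T + \vx_i \vu_i^T + \vx_i \vx_i^T.
\]
Call the perturbation $\mM := \vu_i \vx_i^T + \vx_i \vu_i^T + \vx_i \vx_i^T$; its range lies in $\operatorname{span}(\vu_i, \vx_i)$, so $\mM$ has at most two nonzero eigenvalues while the remaining $d-2$ are zero. This already accounts for the claim that $\mG_i$ has singular values $\sqrt{1+\tilde\sigma_{i+}}$, $\sqrt{1+\tilde\sigma_{i-}}$, $1, \ldots, 1$. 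The hypothesis $\|\vx_i\|_2 < 1$ enters only to ensure $\mG_i$ has full column rank: applying the triangle inequality to $\mG_i \vw = \mU_0 \vw + (\vx_i^T\vw)\ve_i$ gives $\|\mG_i \vw\|_2 \geq (1-\|\vx_i\|_2)\|\vw\|_2$, so all singular values are strictly positive and the square roots above make sense.

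Next, to identify the two nonzero eigenvalues of $\mM$, I will write $\mM = [\vx_i~\vu_i]\mB[\vx_i~\vu_i]^T$ with $\mB = \left[\begin{smallmatrix} 1 & 1 \\ 1 & 0 \end{smallmatrix}\right]$, and use the fact that $\mA_1 \mA_2$ and $\mA_2 \mA_1$ share nonzero spectrum. The nonzero eigenvalues of $\mM$ therefore equal those of the $2\times 2$ matrix $\mB [\vx_i~\vu_i]^T[\vx_i~\vu_i] = \left[\begin{smallmatrix} a+b & b+c \\ a & b \end{smallmatrix}\right]$, where $a = \|\vx_i\|_2^2$, $b = \vx_i^T \vu_i$, $c = \|\vu_i\|_2^2$. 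Its characteristic polynomial is $\tilde\sigma^2 - (a+2b)\tilde\sigma + (b^2 - ac) = 0$, whose roots simplify, using $\sqrt{(a+2b)^2 - 4(b^2 - ac)} = \sqrt{a}\sqrt{a + 4b + 4c}$, to $b + a\alpha_{i\pm}$ with $\alpha_{i\pm}$ as in Equation~\eqref{eq:roots}. This matches the definition of $\tilde\sigma_{i\pm}$ in Equation~\eqref{eq:def:sigmapm}.

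Finally, for the associated right singular vectors, I will note that any eigenvector of $\mM$ with nonzero eigenvalue lies in $\operatorname{span}(\vu_i, \vx_i)$, and parametrize it as $\vv = \alpha \vx_i + \vu_i$; the two vectors are linearly independent under the construction in Lemma~\ref{lem:constructxi}. Substituting into $\mM \vv = \tilde\sigma \vv$ and equating coefficients of $\vx_i$ and $\vu_i$ yields the coupled system $\tilde\sigma = \alpha a + b$ together with $a \alpha^2 - a \alpha - (b+c) = 0$, whose roots are exactly $\alpha_{i\pm}$. Normalizing the two resulting eigenvectors $\alpha_{i\pm}\vx_i + \vu_i$ produces the columns of $\tilde{\calV}_i$ in the form stated in the lemma. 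Their orthogonality drops out of Vieta's formulas applied to the $\alpha$-quadratic: $\alpha_{i+} + \alpha_{i-} = 1$ and $\alpha_{i+}\alpha_{i-} = -(b+c)/a$ together give $(\alpha_{i+}\vx_i+\vu_i)^T(\alpha_{i-}\vx_i+\vu_i) = 0$. No step poses a substantive obstacle; once the rank-two perturbation structure is isolated, everything reduces to elementary $2\times 2$ linear algebra.
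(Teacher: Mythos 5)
Your proposal is correct and follows essentially the same strategy as the paper's proof: compute $\mG_i^T\mG_i = \mI_d + \vx_i\vu_i^T + \vu_i\vx_i^T + \vx_i\vx_i^T$, observe that the perturbation has rank at most two with range contained in $\operatorname{span}(\vx_i,\vu_i)$, and parametrize the nontrivial eigenvectors as $\alpha\vx_i + \vu_i$ to reduce to a quadratic in $\alpha$. Your two small refinements---deriving $\tilde\sigma_{i\pm}$ from the characteristic polynomial of a $2\times2$ matrix via the shared-nonzero-spectrum identity (the paper instead simply verifies that the stated $\alpha_{i\pm}$ and $\tilde\sigma_{i\pm}$ satisfy the eigenequation), and getting $1+\tilde\sigma_{i-}>0$ from the reverse-triangle-inequality bound $\sigma_{\min}(\mG_i)\ge 1-\|\vx_i\|_2$ rather than from an explicit lower bound on $\tilde\sigma_{i-}$---are sound and arguably slightly cleaner, but do not change the overall route.
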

\begin{proof}
Fix $i \in [2^{k_0} m ]$.
Recalling that $\mG_i = \tmU_i \tmSig_i \tmV_i^T$ is the SVD of $\mG_i$,
\begin{equation*}
\tmV_i \tmSig_i^2 \tmV_i^T
=
\mG_i^T \mG_i
= \left(\mU_0 + \ve_i \vx_i^T\right)^T \left(\mU_0 + \ve_i \vx_i^T\right)
= \mI_d + \vx_i\vu_i^T + \vu_i \vx_i^T + \vx_i\vx_i^T.
\end{equation*}
We observe that for any vector $\vv\in \R^d$, if $\vv$ is orthogonal to both $\vx_i$ and $\vu_i$, we have $\mG_i^T \mG_i \vv = \vv$.
Since $\vu_i$ and $\vx_i$ are linearly independent in our construction, the subspace orthogonal to the span of $\vx_i$ and $\vu_i$ has dimension $d-2$, and it follows that $1$ appears as a singular value of $\mG_i$ with multiplicity $d-2$.
Now, suppose that $\vw = \alpha \vx_i + \vu_i$ is an eigenvector of $\mG_i^T \mG_i$, so that
\begin{equation*} 
\left(\mI_d + \vx_i \vu_i^T +\vu_i \vx_i^T + \vx_i \vx_i^T\right) \vw 
= \lambda \vw
\end{equation*}
for some $\lambda \in \R$.
One can verify that $\vw = \alpha_{i\pm} \vx_i + \vu_i$ and $\lambda = 1 + \Tilde{\sigma}_{i\pm}$ satisfy the above, with $\alpha_{i\pm}$ and $\Tilde{\sigma}_{i\pm}$ as given in Equations~\eqref{eq:roots} and~\eqref{eq:def:sigmapm}, respectively. Renormalizing appropriately yields the claimed value of $\Tilde{\calV}_i$. It remains to show that $1 + \Tilde{\sigma}_{i-} > 0$. Explicitly write down the equation for $\Tilde{\sigma}_{i-}$, we have 
\begin{equation*}
\Tilde{\sigma}_{i-} = \vx_i^T\vu_i + \frac{1}{2} \|\vx_i\|_2^2 - \frac{1}{2}\|\vx_i\|_2 \sqrt{\|\vx_i\|_2^2 + 4\vx_i^T\vu_i + 4 \|\vu_i\|_2^2}.
\end{equation*}
Since 
\begin{equation*}
\frac{1}{4} \|\vx_i\|_2^4 + \|\vx_i\|_2^2 \vx_i^T \vu_i + (\vx_i^T\vu_i)^2 \leq \frac{1}{4} \|\vx_i\|_2^4 + \|\vx_i\|_2^2 \vx_i^T \vu_i + \|\vx_i\|_2^2\|\vu_i\|_2^2,
\end{equation*}
it follows that $\Tilde{\sigma}_{i-} < 0$. Noting that
\begin{equation} \label{eq:sigma-:lowerbound} \begin{aligned}
\tilde{\sigma}_{i -}
&\geq \vx_i^T \vu_i+ \frac{1}{2} \vx_i^T \vx_i
	- \frac{1}{2} (\|\vx_i\|_2 + 2\|\vu_i\|_2) \\
&= \vx_i^T \vu_i - \|\vx_i\|_2 \|\vu_i\|_2 \\
&\geq - \|\vx_i\|_2 \|\vu_i\|_2.
\end{aligned} \end{equation}
From our construction, we have $\|\vu_i\|_2 \leq \sqrt{d/n-r}$. Since $\|\vx_i\|_2 < 1$ and we always have $d \leq n-r$, it follows that $1 + \Tilde{\sigma}_{i-} > 0$.
\end{proof}


\begin{lemma} \label{lem:eigenbound}
For any $i\in [2^{k_0}m]$, let $\mG_i$ be as defined in Equation~\eqref{eq:def:G}, with singular value decomposition $\mG_i = \tmU_i \tmSig_i \tmV_i^T$.
Recalling the vector $\vx_i \in \R^d$ from Lemma~\ref{lem:constructxi}, if $\|\vx_i\|_2 < 1$,
then
\begin{equation*}
\left\|\tmSig_i - \mI_d\right\|
\leq \frac{1}{2} \|\vx_i\|_2^2 + 2\sqrt{\frac{d}{n-r}}\|\vx_i\|_2. 
\end{equation*}
\end{lemma}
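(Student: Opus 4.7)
The plan is to leverage Lemma~\ref{lem:tech:eigen}, which tells us that $\tmSig_i$ is diagonal with entries $\sqrt{1+\Tilde{\sigma}_{i+}}$, $\sqrt{1+\Tilde{\sigma}_{i-}}$, and $d-2$ copies of $1$. Hence
\begin{equation*}
\|\tmSig_i - \mI_d\| = \max\left\{\left|\sqrt{1+\Tilde{\sigma}_{i+}}-1\right|,\, \left|\sqrt{1+\Tilde{\sigma}_{i-}}-1\right|\right\}.
\end{equation*}
The identity $\sqrt{1+t}-1 = t/(\sqrt{1+t}+1)$, together with the observation that $\Tilde{\sigma}_{i+}$ and $\Tilde{\sigma}_{i-}$ have opposite signs (their product equals $(\vx_i^T\vu_i)^2 - \|\vx_i\|_2^2\|\vu_i\|_2^2 \le 0$ by Cauchy--Schwarz) and the fact that $1+\Tilde{\sigma}_{i-} > 0$ (already shown inside the proof of Lemma~\ref{lem:tech:eigen} under the assumption $\|\vx_i\|_2 < 1$), will let us conclude that $|\sqrt{1+\Tilde{\sigma}_{i+}}-1| \leq \Tilde{\sigma}_{i+}/2$ while $|\sqrt{1+\Tilde{\sigma}_{i-}}-1| \leq |\Tilde{\sigma}_{i-}|$.

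Next, the task becomes bounding $\Tilde{\sigma}_{i+}$ and $|\Tilde{\sigma}_{i-}|$ using the closed-form expressions from Equations~\eqref{eq:def:sigmapm} and~\eqref{eq:roots}, which combine to yield
\begin{equation*}
\Tilde{\sigma}_{i\pm} = \vx_i^T \vu_i + \tfrac{1}{2}\|\vx_i\|_2^2 \pm \tfrac{1}{2}\sqrt{\|\vx_i\|_2^4 + 4\|\vx_i\|_2^2 (\|\vu_i\|_2^2 + \vx_i^T \vu_i)}.
\end{equation*}
Cauchy--Schwarz gives $|\vx_i^T \vu_i| \leq \|\vx_i\|_2\|\vu_i\|_2$, so the expression under the radical is upper bounded by the perfect square $\|\vx_i\|_2^2(\|\vx_i\|_2 + 2\|\vu_i\|_2)^2$. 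Collecting terms then yields $\Tilde{\sigma}_{i+} \leq \|\vx_i\|_2^2 + 2\|\vx_i\|_2\|\vu_i\|_2$, and the same bound applied to the subtracted radical (together with $\vx_i^T \vu_i \geq -\|\vx_i\|_2\|\vu_i\|_2$) produces $\Tilde{\sigma}_{i-} \geq -\|\vx_i\|_2\|\vu_i\|_2$, so $|\Tilde{\sigma}_{i-}| \leq \|\vx_i\|_2\|\vu_i\|_2$.

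Combining these estimates, each of $\Tilde{\sigma}_{i+}/2$ and $|\Tilde{\sigma}_{i-}|$ is at most $\tfrac{1}{2}\|\vx_i\|_2^2 + \|\vx_i\|_2\|\vu_i\|_2$, and hence at most $\tfrac{1}{2}\|\vx_i\|_2^2 + 2\|\vx_i\|_2\|\vu_i\|_2$. Invoking the row-norm bound $\|\vu_i\|_2 \leq \sqrt{d/(n-r)}$ from Equation~\eqref{eq:U0:l2} of Lemma~\ref{lem:kappaconst:U0exists} will then complete the argument. No step poses any serious obstacle; the only mildly delicate point is extracting the correct constants from the radical, which Cauchy--Schwarz handles cleanly by turning the expression under the root into a perfect square.
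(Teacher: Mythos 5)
Your proposal is essentially correct and follows the same structure as the paper's proof: both start from Lemma~\ref{lem:tech:eigen}'s explicit singular values, bound $\Tilde{\sigma}_{i+}$ by Cauchy--Schwarz (this is exactly the paper's Equation~\eqref{eq:bound:sigmaplus}), lower-bound $\Tilde{\sigma}_{i-}$ (this is the paper's Equation~\eqref{eq:sigma-:lowerbound}), and finish with $\|\vu_i\|_2 \le \sqrt{d/(n-r)}$. Your handling of the square roots via the identity $\sqrt{1+t}-1 = t/(\sqrt{1+t}+1)$ is a cleaner alternative to the paper's polynomial inequality $\sqrt{1-b} \ge 1-b/2-b^2/2$ followed by bounding a max by a sum; the conclusions are equivalent.

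One small slip worth flagging: you claim that $\vx_i^T\vu_i \ge -\|\vx_i\|_2\|\vu_i\|_2$ (Cauchy--Schwarz) yields $\Tilde{\sigma}_{i-} \ge -\|\vx_i\|_2\|\vu_i\|_2$, but plugging that into your expression actually gives $\Tilde{\sigma}_{i-} \ge -2\|\vx_i\|_2\|\vu_i\|_2$. To get the sharper $-\|\vx_i\|_2\|\vu_i\|_2$ you need $\vx_i^T\vu_i \ge 0$, which is guaranteed by Equation~\eqref{eq:main:sgn} of Lemma~\ref{lem:constructxi} and is what the paper implicitly invokes. Either way the final bound goes through, since even $|\Tilde{\sigma}_{i-}| \le 2\|\vx_i\|_2\|\vu_i\|_2$ still sits under $\tfrac{1}{2}\|\vx_i\|_2^2 + 2\|\vx_i\|_2\|\vu_i\|_2$, but you should cite Equation~\eqref{eq:main:sgn} rather than Cauchy--Schwarz for the stated intermediate inequality.
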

\begin{proof}
Notice that for $a \in [0, 1]$, we have 
\begin{equation}
\label{eq:sqrt-ineq}
    \sqrt{a} \geq \frac{a+1}{2}-\frac{(a-1)^2}{2}
\end{equation}
and that for any $b \in [0,1]$, substituting $a = 1-b$ into Equation~\eqref{eq:sqrt-ineq}, we have 
\begin{equation}
\label{eq:sqrt-ineq2}
\sqrt{1 - b} \geq 1 - \frac{b}{2} - \frac{b^2}{2}.
\end{equation}

Applying Lemma \ref{lem:tech:eigen} and then applying Equation~\eqref{eq:sqrt-ineq2} to $\tilde\sigma_{i-}$ we have
\begin{equation*}
\begin{aligned}
    \left\|\tmSig_i - \mI_d\right\|
&= \max \left\{\sqrt{1 + \Tilde{\sigma}_{i+}} - 1,
	1 - \sqrt{1 + \Tilde{\sigma}_{i-}}\right\}\\
&\leq \max \left\{ \frac{\Tilde{\sigma}_{i+}}{1+\sqrt{1 + \Tilde{\sigma}_{i+}}}, \frac{\tilde\sigma_{i-}^2}{2} - \frac{\tilde\sigma_{i-}}{2}\right\}\\
&\leq \frac{1}{2} \max \left\{\tilde\sigma_{i+}, \tilde\sigma_{i-}^2-\tilde\sigma_{i-}\right\}. 
\end{aligned}
\end{equation*}

From the proof of Lemma \ref{lem:tech:eigen}, we have $|\Tilde{\sigma}_{i-}| < 1$ if $\|\vx_i\|_2 < 1$ and thus, $\Tilde{\sigma}_{i-}^2 \leq - \Tilde{\sigma}_{i-}$.
Therefore,
\begin{equation} \label{eq:opnorm:checkpt}
\left\|\tmSig_i - \mI_d\right\|
\leq \frac{1}{2} \max \left\{\tilde\sigma_{i+},
			\tilde\sigma_{i-}^2-\tilde\sigma_{i-}\right\}
\le \frac{1}{2} \left( \tilde\sigma_{i+}
		+ \tilde\sigma_{i-}^2-\tilde\sigma_{i-} \right)
\le \frac{1}{2} \tilde\sigma_{i+} - \tilde\sigma_{i-}.
\end{equation}
Since
\begin{equation} \label{eq:bound:sigmaplus} \begin{aligned}
\tilde{\sigma}_{i +}
&=\vx_i^T \vu_i+\alpha_{i +} \vx_i^T \vx_i \\
&= \vx_i^T \vu_i + \frac{1}{2} \vx_i^T \vx_i
	+ \frac{1}{2} \|\vx_i\|_2 \sqrt{\vx_i^T\vx_i
	+ 4 \vx_i^T \vu_i + 4\|\vu_i\|_2^2}\\
&\leq 2\left\|\vx_i\right\|_2\left\|\vu_i\right\|_2+\left\|\vx_i\right\|_2^2,
\end{aligned} \end{equation}
applying this and Equation~\eqref{eq:sigma-:lowerbound} to Equation~\eqref{eq:opnorm:checkpt}, using the fact that $\vx_i^T\vu_i \ge 0$ and recalling the construction of $\vu_i$ from Equation~\ref{eq:par1:base_U}, we conclude that
\begin{equation*}
\left\|\tmSig_i - \mI_d\right\|
\leq 2\left\|\vx_i\right\|_2\left\|\vu_i\right\|_2
	+\frac{1}{2}\left\|\vx_i\right\|_2^2
= \frac{1}{2} \|\vx_i\|_2^2 + 2\sqrt{\frac{d}{n-r}}\|\vx_i\|_2,
\end{equation*}
completing the proof.
\end{proof}


\begin{lemma} \label{lem:Utilde}
For any $i\in [2^{k_0}m]$, recall the vector $\vx_i \in \R^d$ from Lemma~\ref{lem:constructxi} and the matrix $\mG_i$ from Equation~\eqref{eq:def:G}.
If $\|\vx_i\|_2 < 1$, then, with $\tmU_i$ as defined in 
\begin{equation*}
\left\| \tmU_i \right\|_{\tti}
\leq \sqrt{\frac{d}{n-r} + \frac{\|\vx_i\|_2}{1 - \|\vx_i\|_2}},
\end{equation*}
where $\tmU_i \in \R^{n \times d}$ is the left singular subspace of $\mU_0 + \ve_i\vx_i^T \in \R^{n \times d}$. 
\end{lemma}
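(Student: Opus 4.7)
The plan is to identify $\tmU_i$ through the polar decomposition of $\mG_i = \mU_0 + \ve_i \vx_i^T$, namely $\tmU_i = \mG_i (\mG_i^T \mG_i)^{-1/2}$, and then exploit the submultiplicative inequality $\|\mA \mB\|_{\tti} \le \|\mA\|_{\tti} \|\mB\|$. This reduces the problem to bounding $\|\mG_i\|_{\tti}$ and $\|(\mG_i^T \mG_i)^{-1/2}\|$ separately.

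For the first factor, I would split over rows: for $j \neq i$, the $j$-th row of $\mG_i$ is just $\vu_j^{(0)}$, whose Euclidean norm is at most $\sqrt{d/(n-r)}$ by Equation~\eqref{eq:U0:l2}; for $j = i$, the row is $\vu_i + \vx_i$, whose norm is at most $\sqrt{d/(n-r)} + \|\vx_i\|_2$ by the triangle inequality. Hence $\|\mG_i\|_{\tti} \le \sqrt{d/(n-r)} + \|\vx_i\|_2$. For the second factor, Lemma~\ref{lem:tech:eigen} gives $\|(\mG_i^T\mG_i)^{-1/2}\| = 1/\sqrt{1 + \tilde\sigma_{i-}}$, and the intermediate bound $\tilde\sigma_{i-} \ge -\|\vx_i\|_2 \|\vu_i\|_2$ established in Equation~\eqref{eq:sigma-:lowerbound}, combined with $\|\vu_i\|_2 \le 1$, yields $1 + \tilde\sigma_{i-} \ge 1 - \|\vx_i\|_2$. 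Under the hypothesis $\|\vx_i\|_2 < 1$, this is strictly positive, so $\|(\mG_i^T\mG_i)^{-1/2}\| \le 1/\sqrt{1 - \|\vx_i\|_2}$.

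Multiplying the two bounds produces the intermediate estimate $\|\tmU_i\|_{\tti}^2 \le (\sqrt{d/(n-r)} + \|\vx_i\|_2)^2/(1 - \|\vx_i\|_2)$. The last step is to verify the elementary inequality
\begin{equation*}
\frac{(\sqrt{a}+b)^2}{1-b} \le a + \frac{b}{1-b}
\end{equation*}
with $a = d/(n-r)$ and $b = \|\vx_i\|_2$, which after cross-multiplication reduces to $2\sqrt{a} + a + b \le 1$. I expect this algebraic comparison to be the main subtlety: the spectral steps are straightforward once Lemma~\ref{lem:tech:eigen} is invoked, whereas matching the precise form stated in the lemma requires a mild smallness assumption that is tacit in the ambient setting where $d$ is bounded, $n$ is large, and $\|\vx_i\|_2 = O(\sqrt{\kappa/n})$ by Equation~\eqref{eq:xi-l2-bound}. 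In those regimes the condition is automatic for $n$ sufficiently large, and the claim follows.
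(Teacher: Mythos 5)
Your route is genuinely different from the paper's. The paper observes that $\|\tmU_i\|_{\tti}^2 = \max_\ell (\tmU_i\tmU_i^T)_{\ell\ell}$, so $\big|\|\tmU_i\|_{\tti}^2 - \|\mU_0\|_{\tti}^2\big| \le \max_\ell |(\tmU_i\tmU_i^T - \mU_0\mU_0^T)_{\ell\ell}| \le \|\tmU_i\tmU_i^T - \mU_0\mU_0^T\|$, and then invokes Wedin's $\sin\Theta$ theorem to bound the last quantity by $\|\vx_i\|_2/(1-\|\vx_i\|_2)$. Adding this to $\|\mU_0\|_{\tti}^2 \le d/(n-r)$ gives the claimed bound immediately, valid whenever $\|\vx_i\|_2 < 1$. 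You instead write $\tmU_i$ (up to right-multiplication by the orthogonal factor $\tmV_i^T$, which is harmless for $\|\cdot\|_{\tti}$; strictly $\mG_i(\mG_i^T\mG_i)^{-1/2} = \tmU_i\tmV_i^T$ rather than $\tmU_i$) as $\mG_i(\mG_i^T\mG_i)^{-1/2}$ and use submultiplicativity, which splits the work into an easy entrywise bound on $\|\mG_i\|_{\tti}$ and a spectral bound via Lemma~\ref{lem:tech:eigen}. Both approaches lean on the same spectral facts, but the paper's bound is additive from the start while yours is multiplicative.

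The gap you yourself flag is real and worth dwelling on: the chain of inequalities you propose yields $\|\tmU_i\|_{\tti}^2 \le (\sqrt{d/(n-r)}+\|\vx_i\|_2)^2/(1-\|\vx_i\|_2)$, which exceeds the stated bound $d/(n-r) + \|\vx_i\|_2/(1-\|\vx_i\|_2)$ unless $2\sqrt{d/(n-r)} + d/(n-r) + \|\vx_i\|_2 \le 1$. This extra smallness condition is \emph{not} a consequence of the lemma's sole hypothesis $\|\vx_i\|_2 < 1$; for example, if $d/(n-r) > 1/4$ the comparison already fails even with $\|\vx_i\|_2$ arbitrarily small, and if $\|\vx_i\|_2$ is close to $1$ the cross term $2\|\vx_i\|_2\sqrt{d/(n-r)}$ overwhelms the budget. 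So your argument proves a weaker statement than Lemma~\ref{lem:Utilde} as written, and downstream uses of the lemma would need the extra condition propagated. You are right that in the regime where the lemma is actually invoked ($n-r \gtrsim d$ and $\|\vx_i\|_2 = O(\sqrt{\kappa/n})$) the condition is automatic for large $n$, but the paper's $\sin\Theta$ route sidesteps the issue entirely and delivers the clean bound under the minimal hypothesis, which is what makes it the more economical proof here.
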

\begin{proof}
For any $i \in [2^{k_0} m]$,
\begin{equation*} \begin{aligned}
\left| \| \tmU_i \|_{\tti}^2 - \| \mU_0 \|_{\tti}^2 \right|
&=
\left|
\max_{\ell \in [n]}
	\left(\tmU_i \tmU_i^T\right)_{\ell \ell}
- 
\max_{\ell^\prime \in [n]}
 \left(\mU_0 \mU_0^T\right)_{\ell^\prime \ell^\prime}
\right| \\
&\le \max_{\ell \in [n]}
	\left| \left( \tmU_i \tmU_i^T - \mU_0 \mU_0^T \right)_{\ell \ell}
	\right| \\
&= \left\| \tmU_i \tmU_i^T - \mU_0 \mU_0^T \right\|.
\end{aligned} \end{equation*}
Noting that $\| \mU_0 \|_{\tti} \leq \sqrt{ d/(n-r) }$ by construction, it follows that
\begin{equation} \label{eq:tmUi:checkpt}
\|\tmU_i\|_{\tti}^2
\le 
\|\mU_0\|_{\tti}^2
+ \left| \| \tmU_i \|_{\tti}^2  - \| \mU_0 \|_{\tti}^2 \right|
\leq  \frac{d}{n-r}
  + \left\| \mU_0 \mU_0^T - \tmU_i \tmU_i^T \right\|.
\end{equation}
We then apply Wedin's $\sin \Theta$ theorem \citep[see Theorem 2.9 in][]{chen2021spectral} to the left singular subspace of $\mU_0$ and $\mU_0 + \ve_i \vx_i^T$.
Denote the $d$-th singular value of $\mU_0$ as $\sigma_d(\mU_0)$ and the $(d+1)$-th singular value as $\sigma_{d+1}(\mU_0)$.
From our assumption that $\|\vx_i\|_2\leq 1$ and the fact that $\|\ve_i \vx_i^T\| = \|\vx_i\|_2$, we have
\begin{equation*}
\begin{aligned}
\left\|\mU_0 \mU_0^T - \tmU_i \tmU_i^T\right\|
&\leq \frac{\|\ve_i \vx_i^T\| }
	{\sigma_d(\mU_0) - \sigma_{d+1}(\mU_0) - \|\ve_i \vx_i^T\| }\\
    &= \frac{\|\vx_i\|_2}{1 - \|\vx_i\|_2}. 
\end{aligned}
\end{equation*}
Applying this bound to Equation~\eqref{eq:tmUi:checkpt} and taking square roots,
\begin{equation*}
\left\|\tmU_i\right\|_{\tti}
\leq \sqrt{\frac{d}{n-r} + \frac{\|\vx_i\|_2}{1 - \|\vx_i\|_2}},
\end{equation*}
which completes the proof.
\end{proof}


\begin{lemma} \label{lem:tech:refined_eigen_bound}
For any $i\in [2^{k_0}m]$, let $\mG_i$ be as defined in Equation~\eqref{eq:def:G}, with singular value decomposition $\mG_i = \tmU_i \tmSig_i \tmV_i^T$. If $\|\vx_i\|_2 < 1$, then writing $\tmLambda = \mLambda^{1/2} \mI_{p,q} \mLambda^{1/2}$,
\begin{equation*}
\left\|\left(\tmSig_i - \mI_d\right)\tmV_i^T \tmLambda \right\|^2_F
\le 17\left( \left\|\vu_i\right\|_2^2 +  \left\|\vx_i\right\|_2^2 \right)
	\left( \vx_i^T\mLambda^2 \vx_i + \vu_i^T \mLambda^2 \vu_i \right).
\end{equation*}
\end{lemma}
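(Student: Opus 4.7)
The plan is to reduce $\|(\tmSig_i - \mI_d)\tmV_i^T\tmLambda\|_F^2$ to the Frobenius norm of $(\mG_i^T\mG_i - \mI_d)\tmLambda$, which expands into a handful of outer products that are straightforward to bound. The key observation driving the reduction is that $\tmSig_i - \mI_d$ is diagonal with only two nonzero entries $\tau_{i\pm} = \sqrt{1+\tilde\sigma_{i\pm}}-1$ at positions $1,2$ (by Lemma~\ref{lem:tech:eigen}), and the identity $\tau_{i\pm}\bigl(1+\sqrt{1+\tilde\sigma_{i\pm}}\bigr) = \tilde\sigma_{i\pm}$ combined with $1+\sqrt{1+\tilde\sigma_{i\pm}} \ge 1$ (which uses $1+\tilde\sigma_{i-} > 0$ from the proof of Lemma~\ref{lem:tech:eigen}, a consequence of $\|\vx_i\|_2 < 1$) yields $|\tau_{i\pm}| \le |\tilde\sigma_{i\pm}|$. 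Since each nonzero row of $(\tmSig_i - \mI_d)\tmV_i^T$ is the corresponding row of $(\tmSig_i^2 - \mI_d)\tmV_i^T$ rescaled by a factor of magnitude at most $1$, a row-by-row comparison gives
\begin{equation*}
\|(\tmSig_i - \mI_d)\tmV_i^T\tmLambda\|_F^2 \le \|(\tmSig_i^2 - \mI_d)\tmV_i^T\tmLambda\|_F^2.
\end{equation*}

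Next, I would invoke the orthogonality of the $d\times d$ matrix $\tmV_i$ (so that left-multiplication by $\tmV_i$ preserves Frobenius norm) together with $\tmV_i \tmSig_i^2 \tmV_i^T = \mG_i^T\mG_i$ to rewrite the right-hand side as $\|(\mG_i^T\mG_i - \mI_d)\tmLambda\|_F^2$. Expanding $\mG_i = \mU_0 + \ve_i\vx_i^T$ with $\mU_0^T\mU_0 = \mI_d$ and $\mU_0^T\ve_i = \vu_i$ gives
\begin{equation*}
\mG_i^T\mG_i - \mI_d \;=\; \vu_i\vx_i^T + \vx_i\vu_i^T + \vx_i\vx_i^T \;=\; (\vx_i+\vu_i)\vx_i^T + \vx_i\vu_i^T.
\end{equation*}

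Finally, noting that $\tmLambda^2 = \mLambda^2$ (since $\mI_{p,q}$ commutes with $\mLambda$), the elementary identity $\|\vv\vw^T \tmLambda\|_F^2 = \|\vv\|_2^2\,\vw^T\mLambda^2\vw$ applied to the two outer-product terms, followed by the triangle inequality and the scalar bounds $(a+b)^2 \le 2(a^2+b^2)$ and $\|\vx_i+\vu_i\|_2^2 \le 2(\|\vx_i\|_2^2 + \|\vu_i\|_2^2)$, produce
\begin{equation*}
\|(\mG_i^T\mG_i-\mI_d)\tmLambda\|_F^2 \;\le\; 4\bigl(\|\vx_i\|_2^2 + \|\vu_i\|_2^2\bigr)\bigl(\vx_i^T\mLambda^2\vx_i + \vu_i^T\mLambda^2\vu_i\bigr),
\end{equation*}
which is in fact sharper than the claimed bound with constant $17$. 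I expect the main obstacle to be the first step, specifically confirming $|\tau_{i-}| \le |\tilde\sigma_{i-}|$ for the ``$-$'' singular value, where $\tilde\sigma_{i-}$ is negative and $1+\sqrt{1+\tilde\sigma_{i-}}$ could be close to $1$ rather than $2$; this is precisely where the hypothesis $\|\vx_i\|_2 < 1$ is used (it ensures $1+\tilde\sigma_{i-}>0$ so the square root is real and the inequality is non-degenerate). Everything after that reduction is mechanical.
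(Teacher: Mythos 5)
Your proof is correct, and it takes a genuinely different and substantially cleaner route than the paper. The paper works directly with the explicit formulas for the singular vectors $\tilde{\calV}_i$ from Lemma~\ref{lem:tech:eigen}: it writes $\left(\tmSig_i - \mI_d\right)\tmV_i^T\tmLambda$ as $\mA_i \begin{bmatrix}\vx_i^T\tmLambda\\ \vu_i^T\tmLambda\end{bmatrix}$ for an explicit $2\times 2$ matrix $\mA_i$ built from $\alpha_{i\pm}$, $d_{i\pm}$, and the normalizing factors $\|\alpha_{i\pm}\vx_i + \vu_i\|_2$, and then bounds the resulting quadratic form term by term, which requires separate upper bounds on $\alpha_{i\pm}$, $d_{i\pm}$, and lower bounds on the denominators. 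Your argument instead collapses all of that via the row-wise comparison $|\sqrt{1+\tilde\sigma_{i\pm}}-1| \le |\tilde\sigma_{i\pm}|$ (valid since $1+\tilde\sigma_{i-}>0$ when $\|\vx_i\|_2<1$), which converts $\tmSig_i-\mI_d$ into $\tmSig_i^2-\mI_d$ at the cost of only an inequality, and then the orthogonal invariance $\|(\tmSig_i^2-\mI_d)\tmV_i^T\tmLambda\|_F = \|(\mG_i^T\mG_i-\mI_d)\tmLambda\|_F$ reduces everything to the rank-$\le 3$ matrix $\vu_i\vx_i^T + \vx_i\vu_i^T + \vx_i\vx_i^T$, which is elementary to bound. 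Your approach never needs the explicit form of the singular vectors at all, only the fact that exactly two singular values differ from one, and it yields a sharper constant ($4$ rather than $17$). The paper's more explicit expansion is the harder way to get there; the only thing the longer computation might buy in principle is finer control of the cross term $\vx_i^T\mLambda^2\vu_i$ and the possibility of isolating the dominant contribution, but the paper never exploits that, so for the stated lemma your reduction strictly dominates.
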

\begin{proof}
We first note that Lemma~\ref{lem:tech:eigen} ensures that
\begin{equation*}
        \tmSig_i - \mI_d
	= \diag\left(
        	\sqrt{1 + \Tilde{\sigma}_{i+}} - 1,
        	\sqrt{1 + \Tilde{\sigma}_{i-}} - 1,
		0, \dots, 0 \right) \in \R^{d \times d}
\end{equation*}
where $ \Tilde{\sigma}_{i\pm}$ are defined in Equation~\eqref{eq:def:sigmapm} and we have $\Tilde{\sigma}_{i-} < 0 < \Tilde{\sigma}_{i+}$.
Recalling $\alpha_{i\pm}$ as defined in Equation~\eqref{eq:roots}, Lemma~\ref{lem:tech:eigen} further implies that $\Tilde{\calV}_i \in \R^{n \times 2}$, given by
\begin{equation*} \begin{aligned}
    \Tilde{\calV}_i^T &= \begin{bmatrix}
    \frac{\alpha_{i+}}{\|\alpha_{i+} \vx_i + \vu_i\|_2} & \frac{1}{\|\alpha_{i+} \vx_i + \vu_i\|_2} \\
    \frac{\alpha_{i-}}{\|\alpha_{i-} \vx_i + \vu_i\|_2} & \frac{1}{\|\alpha_{i-} \vx_i + \vu_i\|_2} 
\end{bmatrix} \begin{bmatrix}
    \vx_i^T\\
    \vu_i^T
\end{bmatrix},
\end{aligned} \end{equation*}
encodes the singular vectors of $\mG_i$ corresponding to $\sqrt{1+\Tilde{\sigma}_{i\pm}}$.
Defining the quantities
\begin{equation} \label{eq:def:dpm}
d_{i\pm} = \sqrt{1 + \Tilde{\sigma}_{i\pm}} - 1,
\end{equation}
and defining
\begin{equation} \label{eq:def:mA}
\mA_i = \begin{bmatrix}
        \frac{\alpha_{i+}d_{i+}}{\|\alpha_{i+} \vx_i + \vu_i\|_2} & \frac{d_{i+}}{\|\alpha_{i+} \vx_i + \vu_i\|_2} \\
        \frac{\alpha_{i-}d_{i-}}{\|\alpha_{i-} \vx_i + \vu_i\|_2} & \frac{d_{i-}}{\|\alpha_{i-} \vx_i + \vu_i\|_2} \\
    \end{bmatrix},
\end{equation}
we have
\begin{equation*} 
\left\|\left(\tmSig_i - \mI_d\right)\tmV_i^T \tmLambda\right\|_F^2
= \left\|\mA_i \begin{bmatrix}
            \vx_i^T \tmLambda \\
            \vu_i^T \tmLambda
        \end{bmatrix}\right\|_F^2
= \tr \left(\mA_i^T \mA_i \begin{bmatrix}
            \vx_i^T \mLambda^2 \vx_i & \vx_i^T \mLambda^2 \vu_i \\
            \vx_i^T \mLambda^2 \vu_i & \vu_i^T \mLambda^2 \vu_i
        \end{bmatrix}\right).
\end{equation*}
Applying our definition of $\mA_i$ from Equation~\eqref{eq:def:mA},
\begin{equation} \label{eq:frobexpansion} \begin{aligned}
\left\|\left(\tmSig_i - \mI_d\right)\tmV_i^T \tmLambda\right\|_F^2
&=
\left(\frac{\alpha_{i+}^2 d_{i+}^2}{\|\alpha_{i+} \vx_i + \vu_{i}\|_2^2} + \frac{\alpha_{i-}^2 d_{i-}^2}{\|\alpha_{i-}\vx_i + \vu_{i}\|_2^2}\right) \vx_i^T\mLambda^2 \vx_i \\
&~~~~~~+ 2\left(\frac{\alpha_{i+} d_{i+}^2}{\|\alpha_{i+} \vx_i + \vu_{i}\|_2^2} + \frac{\alpha_{i-} d_{i-}^2}{\|\alpha_{i-} \vx_i + \vu_{i}\|_2^2}\right) \vx_i^T \mLambda^2 \vu_i \\
&~~~~~~+ \left(\frac{d_{i+}^2}{\|\alpha_{i+} \vx_i + \vu_{i}\|_2^2} + \frac{d_{i2}^2}{\|\alpha_{i-} \vx_i + \vu_{i}\|_2^2} \right) \vu_i^T \mLambda^2 \vu_i.
\end{aligned} \end{equation}

Our proof will be complete once we establish an upper bound on the $\alpha_{i\pm}$ and $d_{i\pm}$ terms and a lower-bound on $\| \alpha_{i\pm} \vx_i + \vu_i \|$.
Toward this end, rearranging the definition of $\alpha_{i+}$ and applying the triangle inequality,
\begin{equation} \label{eq:alphai+:ub}
\alpha_{i+}
= \frac{1}{2}
 + \frac{ \sqrt{ \|\vx_i\|_2^2 + 4 \vx_i^T \vu_i + 4 \| \vu_i \|^2_2 } }
	{ 2\|\vx_i\|_2}
= \frac{1}{2} + \frac{ \| \vx_i + 2\vu_i \|_2 }{2\|\vx_i\|_2}  \\
\le 1 + \frac{  \| \vu_i \|_2 }{ \|\vx_i\|_2 }.
\end{equation}
A similar argument yields
\begin{equation} \label{eq:alphai-:ub}
|\alpha_{i -}| \leq  \frac{\left\|\vu_i\right\|_2}{\left\|\vx_i\right\|_2}.
\end{equation}
Observing that the function $z \mapsto \sqrt{1+z}-1$ is upper-bounded by $z/2$ for $z \ge 0$, and recalling our definition of $d_{i\pm}$ from Equation~\eqref{eq:def:dpm} above,
Equation~\eqref{eq:bound:sigmaplus} (established in the proof of Lemma~\ref{lem:eigenbound}) implies
\begin{equation} \label{eq:di+:ub}
d_{i+}
\le \frac{  \Tilde{\sigma}_{i+} }{ 2 }
\leq \left\|\vx_i\right\|_2 \left( \left\|\vu_i\right\|_2
	+\frac{1}{2}\left\|\vx_i\right\|_2 \right).
\end{equation}
From the proof of Lemma \ref{lem:tech:eigen} and \ref{lem:eigenbound}, as long as $\|\vx_i\|_2 < 1$, we have 
\begin{equation} \label{eq:di-:ub}
\begin{aligned}
    |d_{i-}| &= 1 - \sqrt{1+\Tilde{\sigma}_{i-}} \\
    &\leq -\Tilde{\sigma}_{i-}\\
    &\leq \left\|\vx_i\right\|_2\left\|\vu_i\right\|_2. 
\end{aligned}
\end{equation}

To control the entries of $\mA$, we must also control the denominator terms,
\begin{equation*}
\left\|\alpha_{i +} \vx_i+\vu_i\right\|_2
~~~\text{ and }~~~ \left\|\alpha_{i -} \vx_i+\vu_i\right\|_2.
\end{equation*}
Expanding the square and using the fact that by construction from Equation~\eqref{eq:roots}, $\alpha_{i+} \ge 1$ and $\vx_i^T \vu_i \ge 0$,
\begin{equation} \label{eq:mA:denombound+}
\left\|\alpha_{i +} \vx_i + \vu_i\right\|_2^2
=\left\|\vu_i\right\|_2^2 + 2 \alpha_{i +} \vx_i^T \vu_i
	+\alpha_{i +}^2 \vx_i^T \vx_i \\
\geq \left\|\vu_i\right\|_2^2 + \left\|\vx_i\right\|_2^2.
\end{equation}

Expanding the definition of $\alpha_{i-}$ and rearranging,
\begin{equation} \label{eq:alpha+xu:lb}
\alpha_{i-}^2 \| \vx_i^T \|_2^2
= \left( \frac{1}{2}
		- \frac{ \| \vx_i + 2 \vu_i \|_2 }{ 2 \| \vx_i \|_2 }
	\right)^2 \| \vx_i \|_2^2 
= \frac{1}{4} \left( \| \vx_i \|_2 - \| \vx_i + 2 \vu_i \|_2 \right)^2.
\end{equation}

Again expanding the definition of $\alpha_{i-}$,
\begin{equation*}
2 \alpha_{i-} \vx_i^T \vu_i
= \left( 1 - \frac{ \| \vx_i + 2\vu_i \|_2 }{ \| \vx \|_2 } \right)
	\vx_i^T \vu_i
= \frac{ \vx_i^T \vu_i }{ \| \vx_i \|_2 }
	\left( \| \vx_i \|_2 - \| \vx_i + 2\vu_i \|_2 \right)
\end{equation*}
and it follows that
\begin{equation*} \begin{aligned}
\| \alpha_{i-} \vx_i + \vu_i \|_2^2
&= \| \alpha_{i-} \vx_i \|_2^2
	+ 2 \alpha_{i-} \vx_i^T \vu_i + \| \vu_i \|_2^2 \\
&= \frac{1}{4}\left( \| \vx_i \|_2 - \| \vx_i + 2 \vu_i \|_2 \right)^2
	+ \frac{ \vx_i^T \vu_i }{ \| \vx_i \|_2 }
     \left( \| \vx_i \|_2 - \| \vx_i + 2\vu_i \|_2 \right) + \| \vu_i \|_2^2.
\end{aligned} \end{equation*}
Using non-negativity of the square and the reverse triangle inequality,
\begin{equation*}
\| \alpha_{i-} \vx_i + \vu_i \|_2^2
\ge \| \vu_i \|_2^2
	-  \frac{ \vx_i^T \vu_i  \| \vu_i \|_2 }{ \| \vx_i \|_2 }
= \| \vu_i \|_2^2
  \left( 1 - \frac{ \vx_i^T \vu_i }{ \| \vu_i \|_2  \| \vx_i \|_2 } \right).
\end{equation*}
By construction, $\vx_i$ and $\vu_i$ obey Equation~\eqref{eq:main:cos}, from which
\begin{equation} \label{eq:alpha-xu:lb}
\| \alpha_{i-} \vx_i + \vu_i \|_2^2 \ge \frac{1}{8} \| \vu_i \|_2^2.
\end{equation}

Combining Equations~\eqref{eq:alphai+:ub},~\eqref{eq:di+:ub} and~\eqref{eq:mA:denombound+} and using the fact that $(a+b)^2 \le 2(a^2+b^2)$,
\begin{equation*}
\frac{ \left( \alpha_{i+} d_{i+} \right)^2 }
	{ \left\|\alpha_{i +} \vx_i + \vu_i\right\|_2^2 }
\le 
\frac{ \left( \|\vx_i\|_2 +  \| \vu_i \|_2 \right)^2 }
	{  \left\|\vu_i\right\|_2^2 + \left\|\vx_i\right\|_2^2 }
 \left( \left\|\vu_i\right\|_2
        +\frac{1}{2}\left\|\vx_i\right\|_2 \right)^2
\le 4  \| \vu_i \|_2^2 + \| \vx_i \|_2^2.
\end{equation*}

Similarly, combining Equations~\eqref{eq:alphai-:ub},~\eqref{eq:di-:ub} and~\eqref{eq:alpha-xu:lb},
\begin{equation*}
\frac{ \left| \alpha_{i-} d_{i-} \right|^2 }
	{ \left\|\alpha_{i -} \vx_i + \vu_i\right\|_2^2 }
\le
\frac{ 8 }{ \| \vu_i \|_2^2 }
\frac{\left\|\vu_i\right\|_2^2}{\left\|\vx_i\right\|_2^2}
\left\|\vx_i\right\|_2^2 \left\|\vu_i\right\|_2^2
\le 8\left\|\vu_i\right\|_2^2.
\end{equation*}
Combining the above two displays,
\begin{equation} \label{eq:quadratic:xterm}
\left(
  \frac{\alpha_{i+}^2 d_{i+}^2}{\|\alpha_{i+} \vx_i + \vu_{i}\|_2^2}
  + \frac{\alpha_{i-}^2 d_{i-}^2}{\|\alpha_{i-}\vx_i + \vu_{i}\|_2^2}
\right) \vx_i^T\mLambda^2 \vx_i
\le
\left( 12 \| \vu_i \|_2^2 + \| \vx_i \|_2^2 \right) \vx_i^T\mLambda^2 \vx_i.
\end{equation}

By Equations~\eqref{eq:di+:ub} and~\eqref{eq:mA:denombound+},
again using the fact that $(a+b)^2 \le 2(a^2+b^2)$,
\begin{equation*}
\frac{ d_{i+}^2 }
	{ \left\|\alpha_{i +} \vx_i + \vu_i\right\|_2^2 }
\le
\frac{ \left\|\vx_i\right\|_2^2 }
	{ \left\|\vu_i\right\|_2^2 + \left\|\vx_i\right\|_2^2 }
\left( \left\|\vu_i\right\|_2
        +\frac{1}{2}\left\|\vx_i\right\|_2 \right)^2
\le
2\left\|\vu_i\right\|_2^2 + \frac{1}{2} \left\|\vx_i\right\|_2^2,
\end{equation*}
and Equations~\eqref{eq:di-:ub} and~\eqref{eq:alpha-xu:lb} yield
\begin{equation*}
\frac{ d_{i-}^2 } { \left\|\alpha_{i -} \vx_i + \vu_i\right\|_2^2 }
\le
\frac{ 8 \left\|\vx_i\right\|_2^2 \left\|\vu_i\right\|_2^2 }{ \| \vu_i \|_2^2 }
\le 8 \left\|\vx_i\right\|_2^2.
\end{equation*}
Combining the above two displays,
\begin{equation} \label{eq:quadratic:uterm}
\left(\frac{d_{i+}^2}{\|\alpha_{i+} \vx_i + \vu_{i}\|_2^2}
	+ \frac{d_{i2}^2}{\|\alpha_{i-} \vx_i + \vu_{i}\|_2^2} \right)
	\vu_i^T \mLambda^2 \vu_i
\le \left( 2\left\|\vu_i\right\|_2^2
	+ \frac{ 17 }{ 2 } \| \left\|\vx_i\right\|_2^2 \right)
	\vu_i^T \mLambda^2 \vu_i.
\end{equation}

Combining Equations~\eqref{eq:alphai+:ub},~\eqref{eq:di+:ub} and~\eqref{eq:alpha+xu:lb},
\begin{equation*} \begin{aligned}
\frac{ \alpha_{i+} d_{i+}^2 }{ \|\alpha_{i+} \vx_i + \vu_{i}\|_2^2 }
& \le
\left( \|\vx_i\|_2 + \| \vu_i \|_2 \right)
\left( 2\left\|\vu_i\right\|_2^2 + \frac{ \left\|\vx_i\right\|_2^2 }{ 2 } \right)
\frac{ \left\|\vx_i\right\|_2 }
	{ \left\|\vu_i\right\|_2^2 + \left\|\vx_i\right\|_2^2 } \\
&\le
2 \left( \|\vx_i\|_2 + \| \vu_i \|_2 \right) \left\|\vx_i\right\|_2 \\
&\le 3 \|\vx_i\|_2^2 +  \| \vu_i \|_2^2,
\end{aligned} \end{equation*}
where we have used the fact that $2ab \le a^2 + b^2$.

Combining Equations~\eqref{eq:alphai-:ub},~\eqref{eq:di-:ub} and~\eqref{eq:alpha-xu:lb} and again using the fact that $2ab \le a^2 + b^2$,
\begin{equation*}
\frac{ \alpha_{i-} d_{i-}^2 }{ \|\alpha_{i-} \vx_i + \vu_{i}\|_2^2 }
\le
 \frac{\left\|\vu_i\right\|_2}{\left\|\vx_i\right\|_2}
\left\|\vx_i\right\|_2^2 \left\|\vu_i\right\|_2^2
\frac{ 8 }{ \| \vu_i \|_2^2 }
\le 
8 \left\|\vu_i\right\|_2 \left\|\vx_i\right\|_2
\le 4\left( \left\|\vu_i\right\|_2^2 + \left\|\vx_i\right\|_2^2 \right).
\end{equation*}
Combining the above two displays,
\begin{equation*}
2\left(\frac{\alpha_{i+} d_{i+}^2}{\|\alpha_{i+} \vx_i + \vu_{i}\|_2^2}
        + \frac{\alpha_{i-} d_{i-}^2}{\|\alpha_{i-} \vx_i + \vu_{i}\|_2^2}
	\right) \vx_i^T \mLambda^2 \vu_i
\le
2\left(7 \|\vx_i\|_2^2 + 5\| \vu_i \|_2^2 \right)
	\vx_i^T \mLambda^2 \vu_i.
\end{equation*}
Using the fact that $2 \vx_i^T \mLambda^2 \vu_i \le \vx_i^T \mLambda^2 \vx_i + \vu_i^T \mLambda^2 \vu_i$,
\begin{equation} \label{eq:quadratic:crossterm}
\begin{aligned}
2 & \left(\frac{\alpha_{i+} d_{i+}^2}{\|\alpha_{i+} \vx_i + \vu_{i}\|_2^2}
        + \frac{\alpha_{i-} d_{i-}^2}{\|\alpha_{i-} \vx_i + \vu_{i}\|_2^2}
	\right) \vx_i^T \mLambda^2 \vu_i \\
&~~~~~~\le \left(7 \|\vx_i\|_2^2 + 5\| \vu_i \|_2^2 \right)
	\left( \vx_i^T \mLambda^2 \vx_i + \vu_i^T \mLambda^2 \vu_i \right).
\end{aligned} \end{equation}
Applying this, along with Equations~\eqref{eq:quadratic:xterm} and~\eqref{eq:quadratic:uterm} to bound the right-hand side of Equation~\eqref{eq:frobexpansion},
\begin{equation*} \begin{aligned}
\left\|\left(\tmSig_i - \mI_d\right)\tmV_i^T \tmLambda\right\|_F^2
&\le
\left( 12 \| \vu_i \|_2^2 + \| \vx_i \|_2^2 \right) \vx_i^T\mLambda^2 \vx_i \\
&~~~~~~+ \left(7 \|\vx_i\|_2^2 + 5\| \vu_i \|_2^2 \right)
        \left( \vx_i^T \mLambda^2 \vx_i + \vu_i^T \mLambda^2 \vu_i \right) \\
&~~~~~~+
\left( 2\left\|\vu_i\right\|_2^2
        + \frac{ 17 }{ 2 } \| \left\|\vx_i\right\|_2^2 \right)
        \vu_i^T \mLambda^2 \vu_i \\
&\le \left( 17 \left\|\vu_i\right\|_2^2 + 8 \left\|\vx_i\right\|_2^2
	\right) \vx_i^T\mLambda^2 \vx_i
	+
	\left( 7 \left\|\vu_i\right\|_2^2 + \frac{27}{2} 
						\left\|\vx_i\right\|_2^2
		\right) \vu_i^T \mLambda^2 \vu_i .
\end{aligned} \end{equation*}
The result follows by trivially upper bounding the coefficients of $\|\vu_i\|_2^2$ and $\|\vx_i \|_2^2$.
\end{proof}

\begin{lemma} \label{lem:tech:clever}
For any vector $\va \in \R^d$ for $d \ge 2$ such that $\|\va\|_2 = 1$, there exists a vector $\vz \in \R^d$ with $|\vz_l| = 1/\sqrt{d}$ for all $l \in [d]$, such that $|\vz^T \va| \leq \sqrt{2/3}.$
\end{lemma}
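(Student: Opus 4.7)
The plan is to use a simple averaging (probabilistic) argument over random sign patterns. Every candidate $\vz$ with $|\vz_\ell|=1/\sqrt{d}$ has the form $\vz = \vs/\sqrt{d}$ for some $\vs \in \{-1,+1\}^d$, so the goal is to exhibit $\vs$ with $(\vs^T\va)^2 \le (2/3)\,d$.

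First I would let $\vs$ be a uniformly random element of $\{-1,+1\}^d$, viewed as a vector of i.i.d.\ Rademacher entries, and compute the second moment of $\vs^T\va$. Expanding the square and using that $\E[s_i s_j] = \mathbb{I}\{i=j\}$, one obtains
\begin{equation*}
\E\left[(\vs^T\va)^2\right] = \sum_{i=1}^d a_i^2\,\E[s_i^2] + \sum_{i\neq j} a_i a_j\,\E[s_i s_j] = \|\va\|_2^2 = 1.
\end{equation*}
In particular, there exists at least one realization $\vs^\star \in \{-1,+1\}^d$ for which $(\vs^{\star T}\va)^2 \le 1$.

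Setting $\vz = \vs^\star/\sqrt{d}$ gives a vector with $|\vz_\ell| = 1/\sqrt{d}$ for every $\ell \in [d]$, and
\begin{equation*}
|\vz^T \va| = \frac{|\vs^{\star T}\va|}{\sqrt{d}} \le \frac{1}{\sqrt{d}}.
\end{equation*}
Since $d \ge 2$, we have $1/\sqrt{d} \le 1/\sqrt{2} < \sqrt{2/3}$, which yields the desired conclusion. There is no real obstacle here; the only thing to check carefully is the arithmetic comparison $1/\sqrt{2} \le \sqrt{2/3}$ (equivalently $3 \le 4$), and the conclusion is in fact somewhat loose, leaving room to spare in the bound $\sqrt{2/3}$. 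Note also that the argument is purely existential, which is all that is needed at the point of use in Lemma~\ref{lem:constructxi}.
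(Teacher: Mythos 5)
Your probabilistic (averaging) argument is correct, and in fact proves a strictly stronger statement than the lemma asserts: with $\vs$ uniform Rademacher, $\E\bigl[(\vs^T\va)^2\bigr] = \|\va\|_2^2 = 1$, so some $\vs^\star$ has $|\vs^{\star T}\va| \le 1$, hence $|\vz^T\va| \le 1/\sqrt{d} \le 1/\sqrt{2} < \sqrt{2/3}$ for $d \ge 2$. The paper takes a genuinely different, combinatorial route: it restricts attention to $\vz$ whose signs agree with $\sign(\va)$ on a subset $S \subseteq [d]$ and disagree on $S^c$, bounds $|\vz^T\va|$ via Jensen's inequality by $\max\{\sqrt{|S|}\,\|\va_S\|_2,\ \sqrt{|S^c|}\,\|\va_{S^c}\|_2\}/\sqrt{d}$, and then argues by a removal/contradiction step that some $S$ achieves a balanced split $\|\va_S\|_2^2,\ \|\va_{S^c}\|_2^2 \le 2/3$ --- which is where the constant $\sqrt{2/3}$ comes from. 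Your route is shorter, dispenses with the set-splitting case analysis, and yields the sharper $d$-dependent bound $1/\sqrt{d}$; both arguments are existential in the same essential sense, and either suffices for the single use in Lemma~\ref{lem:constructxi}, where only the conclusion $|\va_i^T\vz_i| \le \sqrt{2/3}$ is invoked.
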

\begin{proof}
For a set $S \subseteq [d]$, define $\vz \in \R^d$ according to
\begin{equation*}
\vz_\ell = \begin{cases}
		\operatorname{sign}(\va_{\ell})/\sqrt{d} &\mbox{ if } \ell \in S \\
		-\operatorname{sign}(\va_{\ell})/\sqrt{d} &\mbox{ otherwise. }
	\end{cases}
\end{equation*}
To see that $|\vz^T \va| \le \sqrt{2/3}$, note that by definition of $\vz$,
\begin{equation*} 
|\vz^T \va|
= \frac{1}{\sqrt{d}}
	\left|\sum_{l \in S} |\va_l| - \sum_{l \in S^c} |\va_l|\right|\\
\leq \frac{1}{\sqrt{d}}
     \max\left\{
		\sum_{l \in S} |\va_l| - \sum_{l \in S^c} |\va_l|,
		\sum_{l \in S^c} |\va_l| - \sum_{l \in S} |\va_l|
		\right\}.
\end{equation*}
Letting $\va_S$ denote the vector $\va$ with indices outside of $S$ set to zero, and defining $\va_{S^c}$ analogously, Jensen's inequality implies
\begin{equation} \label{eq:Jensen-bound}
|\vz^T \va|
\leq \frac{1}{\sqrt{d}}
	\max\left\{
		\sqrt{|S|}\|\va_S\|_2,
		\sqrt{|S^c|}\|\va_{S^c}\|_2
		\right\}
\end{equation}
If there exists a set $S$ is such that both $\|\va_S\|^2_2 \leq 2/3$ and $\|\va_{S^c}\|^2_2 \leq 2/3$, then the proof is complete, since then
\begin{equation*}
|\vz^T \va| \le 
\sqrt{ \frac{ 2 }{ 3 d } \max\{ |S|, |S^c| \} }
\le \sqrt{ \frac{ 2 }{ 3 } }.
\end{equation*}

Suppose, then, that no such $S$ exists.
That is, for any $S \subseteq [d]$,
either $\|\va_S\|^2_2 > 2/3$ or $\|\va_{S^c}\|^2 > 2/3$.
Observe that $\va_\ell^2 \le 1/3$ for any $\ell \in [d]$,
since if $\va_\ell^2 > 1/3$,
taking $S = \{ \ell \}$ so that $|S| = 1$, 
Equation~\eqref{eq:Jensen-bound} implies
\begin{equation*}
|\vz^T \va|
\le
\max\left\{
        \frac{ \sqrt{|S|}\|\va_S\|_2 }{ \sqrt{d} },                                     \frac{ \sqrt{|S^c|}\|\va_{S^c}\|_2 }{ \sqrt{d} } \right\}
\le
\max\left\{ \frac{1}{\sqrt{d}}, \sqrt{ \frac{2(d-1)}{3d} } \right\}
\le \sqrt{ 2/3 }.
\end{equation*}

Without loss of generality, 
suppose that $S$ is such that $\|\va_S\|^2_2 > 2/3$
and $\|\va_{S^c}\|^2  < 1/3$.
For any $\ell \in S$, consider removing $\ell$ from $S$ to obtain
$\Stilde = S \setminus \{\ell\}$.
If $\| \va_\Stilde \|^2 \le 2/3$ and $\| \va_{\Stilde^c} \|^2 \le 2/3$,
then we have contradicted our assumption.
Thus, either $\| \va_\Stilde \|^2 > 2/3$ or $\| \va_{\Stilde^c} \|^2 > 2/3$.
If the latter, then $\va_\ell^2 > 1/3$, leading to a contradiction.
Therefore,
\begin{equation*}
2/3 < \|\va_\Stilde \|^2_2 \le \| \va_S \|^2.
\end{equation*}
Note that $\Stilde$ must be non-empty, since otherwise $\| \va_\Stilde \|=0$, and therefore we can repeat our argument.
Repeating this argument enough times, we arrive at a minimal set $T \subseteq S$
such that $\| \va_T \|_2^2 > 2/3$,
and for any $\ell \in T$, $\| \va_{T \setminus \{\ell \} } \|_2^2 \le 2/3$.
If $\| \va_{T \setminus \{\ell\}} \|_2^2 \le 1/3$, we have again found $\ell \in [d]$ such that $|\va_\ell|>1/3$, a contradiction.
Therefore, 
\begin{equation*}
\frac{1}{3} \le \| \va_{T \setminus \{\ell\}} \|_2^2 \le \frac{2}{3},
\end{equation*}
and a similar bound holds for $\| \va_{T^c \cup \{\ell\}} \|_2^2$,
completing the proof.
\end{proof}

\end{document}